\newcommand\reallywidehat[1]{%
\savestack{\tmpbox}{\stretchto{%
  \scaleto{%
    \scalerel*[\widthof{\ensuremath{#1}}]{\kern-.7pt\bigwedge\kern-.7pt}%
    {\rule[-\textheight/2]{1.5ex}{\textheight}}
  }{\textheight}%
}{0.9ex}}%
\stackon[2.25pt]{#1}{\tmpbox}%
}
\newcommand{\E}{\mathbb{E}}
\newcommand{\F}{\mathcal{F}}
\newcommand{\A}{{\mathbb{A}}}
\newcommand{\Schwartz}{{\bm S}}
\newcommand{\V}{\bm{V}}
\renewcommand{\mod}{{\ \mathrm{mod}\ }}
\newcommand{\PP}{\mathbb  P}
\newcommand{\I}{\mathbb I}
\newcommand{\RR}{\mathbb R}
\newcommand{\R}{\RR}
\newcommand{\TT}{\mathbb T}
\newcommand{\NN}{{\mathbb N}}
\newcommand{\N}{\NN}
\newcommand{\Z}{\mathbb Z}
\newcommand{\QQ}{{\mathbb Q}}
\newcommand{\Q}{\QQ}
\newcommand{\C}{{\mathbb C}}
\newcommand{\D}{{\mathbb D}}
\newcommand{\G}{{\mathbb G}}
\newcommand{\eps}{\varepsilon}
\newcommand{\naive}{\mathrm{naive}}
\newcommand{\lrg}{{>}}
\newcommand{\sml}{{\leq}}
\newcommand{\T}{{\mathrm{T}}}
\newcommand{\B}{{\mathrm{B}}}
\newcommand{\Sample}{{\mathcal{S}}}
\newcommand{\Proj}{{\mathcal{P}}}
\newcommand{\nn}{\mathrm{n}}
\newcommand{\ind}[1]{\mathds{1}_{{#1}}}
\numberwithin{equation}{section}
\newtheorem{theorem}[equation]{Theorem}
\newtheorem{definition}[equation]{Definition}
\newtheorem{proposition}[equation]{Proposition}
\newtheorem{corollary}[equation]{Corollary}
\newtheorem{lemma}[equation]{Lemma}
\theoremstyle{definition}
\newtheorem{remark}[equation]{Remark}
\newtheorem{example}[equation]{Example}
\DeclareMathOperator{\Height}{h}
\DeclareMathOperator{\Log}{Log}
\begin{document}
\title[Pointwise ergodic theorems for bilinear polynomial
averages]{Pointwise ergodic theorems for non-conventional \\ bilinear
polynomial averages}

\author{Ben Krause}
\address[Ben Krause]{
Department of Mathematics,
Princeton University\\
Princeton, NJ 08544, USA}
\email{bkrause@princeton.edu}

\author{Mariusz Mirek }
\address[Mariusz Mirek]{
Department of Mathematics,
Rutgers University,
Piscataway, NJ 08854-8019, USA \\
\&
Instytut Matematyczny,
Uniwersytet Wroc{\l}awski,
Plac Grunwaldzki 2/4,
50-384 Wroc{\l}aw
Poland}
\email{mariusz.mirek@rutgers.edu}

\author{Terence Tao}
\address[Terence Tao]{
Department of Mathematics,
University of California Los Angeles\\
Los Angeles, CA 90095-1555, USA}
\email{tao@math.ucla.edu}

\newcommand{\bk}[1]{{\color{red}{#1}}}

\begin{abstract}
We establish convergence in norm and pointwise almost everywhere for the non-conventional (in the sense of Furstenberg) bilinear polynomial ergodic averages
\[ A_N(f,g)(x) \coloneqq \frac{1}{N} \sum_{n =1}^N f(T^nx) g(T^{P(n)}x)\]
as $N \to \infty$, where $T \colon X \to X$ is a measure-preserving transformation of a $\sigma$-finite measure space $(X,\mu)$, $P(\mathrm{n}) \in \mathbb Z[\mathrm{n}]$ is a polynomial of degree $d \geq 2$, and $f \in L^{p_1}(X), \ g \in L^{p_2}(X)$ for some $p_1,p_2 > 1$ with  $\frac{1}{p_1} + \frac{1}{p_2} \leq 1$.
We also establish an  $r$-variational inequality for these averages (at lacunary scales) in the optimal range $r > 2$. We are also able to ``break duality'' by handling some ranges of exponents $p_1,p_2$ with $\frac{1}{p_1}+\frac{1}{p_2} > 1$, at the cost of increasing $r$ slightly.

This gives an affirmative answer to Problem 11 from Frantzikinakis' open problems survey for the Furstenberg--Weiss averages (with $P(\mathrm{n})=\mathrm{n}^2$), which is a bilinear variant of Question 9 considered by Bergelson in his survey on Ergodic Ramsey Theory from 1996. This also gives a contribution to the Furstenberg--Bergelson--Leibman conjecture.  Our methods combine techniques from harmonic analysis with the recent inverse theorems of Peluse and Prendiville in additive combinatorics. At large scales, the harmonic analysis of the adelic integers $\mathbb A_{\mathbb Z}$ also plays a role.
\end{abstract}

\date{\today}

\thanks{Ben Krause was partially supported by the Simons Foundation Analysis and Geometry Research Grant,  Mariusz Mirek was
partially supported by Department of Mathematics at Rutgers
University, and by the National Science Centre in Poland, grant Opus
2018/31/B/ST1/00204.  Terence Tao was partially supported by NSF grant DMS-1764034 and by a Simons Investigator Award.
MSC class: 37A30, 37A46, 42A45, 42A50, 42A85, 43A25, 11L03, 11L07, 11L15, 11P55.}

\maketitle

\begin{center}
\large\textit{Dedicated to the memory of Jean Bourgain and  Elias M. Stein.}    
\end{center}

\tableofcontents

\section{Introduction}
\label{sec:1}

\subsection{Non-conventional polynomial ergodic averages}  

Define a \emph{measure-preserving system} to be a triple $X = (X,\mu,T)$, where $X = (X,\mu)$ is a $\sigma$-finite measure space, and $T \colon X \to X$ is an invertible bimeasurable map which is measure-preserving in the sense that $\mu(T(E)) = \mu(E)$ for all measurable $E$.  In the literature it is common to also require $X = (X,\mu)$ to have finite measure (and often one normalizes $(X,\mu)$ to be a probability space), but our main theorem will not require this hypothesis.

Let $\Z[\nn]$ denote the space of all formal polynomials $P(\nn)$ in one indeterminate $\nn$ with integer coefficients.  Such a polynomial $P(\nn) \in \Z[\nn]$ can of course be identified with a function $P\colon \Z \to \Z$, thus for instance $\nn$ is identified with the identity function $n \mapsto n$ and $\nn^2$ is identified with the quadratic function $n \mapsto n^2$.  (Later on we will also identify $P$ with maps $P \colon R \to R$ on other commutative rings $R$, such as the reals $\R$, the $p$-adic integers $\Z_p$, or the profinite integers $\hat \Z$.) Given any polynomials $P_1(\nn),\dots,P_k(\nn) \in \Z[\nn]$, measurable functions $f_1,\dots,f_k \in L^0(X)$ (see Section \ref{notation-sec} for a definition of this space), and a real number $N \geq 1$, we can define the non-conventional polynomial ergodic average $A_{N;X}^{P_1(\nn),\dots,P_k(\nn)}(f_1,\dots,f_k) \in L^0(X)$ by the formula
\begin{equation}\label{anx} A_{N;X}^{P_1(\nn),\dots,P_k(\nn)}(f_1,\dots,f_k)(x) \coloneqq \E_{n \in [N]} f_1(T^{P_1(n)} x) \dots f_k(T^{P_k(n)} x),
\end{equation}
where $\E_{n \in [N]} f(n) \coloneqq \frac{1}{\lfloor N\rfloor} \sum_{n=1}^{\lfloor N\rfloor} f(n)$ (see Section \ref{notation-sec} for a more general definition of this averaging notation).  The terminology ``non-conventional'' for such multilinear averages was introduced in \cite{Fur1} and is now standard in the ergodic theory literature (see e.g., \cite{FurWei, HK}). We will usually abbreviate $A_{N;X}^{P_1(\nn),\dots,P_k(\nn)}$ as $A_N^{P_1,\dots,P_k}$ or even $A_N$ when this does not cause confusion.  As $A_N$ only depends on the integer part $\lfloor N \rfloor$ of $N$, one could have restricted $N$ to the positive integers $\Z_+$; however it will be convenient to generalize to real-valued $N$ in order to use certain scaling arguments.

\begin{example}[Integer shift system]\label{integer-shift}  The \emph{integer shift system} $\Z = (\Z,\mu_\Z,T_\Z)$ is the set of integers $\Z$ equipped with counting measure $\mu_\Z$ and the shift $T_\Z(x) \coloneqq x-1$.  For our purposes, this system will be ``universal'' for all other measure-preserving systems, in a sense formalized by the Calder\'on transference principle; see Proposition \ref{transf}(ii).  This will be a particularly convenient system to work in due to the extensive Fourier-analytic structure available on the additive group of integers $\Z$, which can be connected in particular (in the ``major arc'' regime) to the corresponding Fourier-analytic structures on other locally compact abelian groups, such as the adelic integers $\A_\Z$; see Figure \ref{fig:major-l2}.  In this system one has
$$ A_N^{P_1,\dots,P_k} (f_1,\dots,f_k)(x) = \E_{n \in [N]} f_1(x-P_1(n)) \dots f_k(x-P_k(n)).$$
\end{example}

Our main results will concern the bilinear averages
$$ A_N^{\nn, P(\nn)}(f,g)(x) = \E_{n \in [N]} f(T^n x) g(T^{P(n)} x) = \frac{1}{\lfloor N\rfloor} \sum_{n=1}^{\lfloor N\rfloor} f(T^n x) g(T^{P(n)} x)$$
for a given polynomial $P(\nn) \in \Z[\nn]$, but as motivation we shall also discuss the classical ergodic average
$$ A_N^\nn f(x) = \E_{n \in [N]} f(T^n x) = \frac{1}{\lfloor N\rfloor} \sum_{n=1}^{\lfloor N\rfloor} f(T^n x)$$
and the linear polynomial average
$$ A_N^{P(\nn)} f(x) = \E_{n \in [N]} f(T^{P(n)} x) = \frac{1}{\lfloor N\rfloor} \sum_{n=1}^{\lfloor N\rfloor} f(T^{P(n)} x).$$

A central problem in ergodic theory is to understand convergence in norm and pointwise almost everywhere for the non-conventional polynomial ergodic averages \eqref{anx} as $N\to\infty$. This line of investigations has been initiated in the early 1930's by von Neumann's mean ergodic theorem \cite{vN} and Birkhoff's pointwise ergodic theorem \cite{BI} (see Theorem \ref{clas}) and led to profound generalizations such as Bourgain's polynomial pointwise ergodic theorem  \cite{B1, B2, B3} (see Theorem \ref{lin-poly}) and Furstenberg's ergodic proof \cite{Fur0} of Szemer{\'e}di's theorem \cite{Sem1}. Furstenberg's proof was also the starting point of the multiple/multilinear ergodic  theory (see Theorem \ref{walsh-thm} and Theorem \ref{twolin}) arising in  ergodic Ramsey theory that also motivates this paper. 
Pointwise convergence is the most natural as well as the most difficult type of convergence to
establish. It requires sophisticated tools in analysis, ergodic theory and probability. Especially, the context of pointwise convergence of \eqref{anx} will require to understand quantitative forms of pointwise convergence, which we briefly illustrate below.

Given some non-conventional average $A_N(f_1,\dots,f_k)$ of some functions $f_1,\dots,f_k$, with each $f_i$ belonging to some Lebesgue space $L^{p_i}(X)$, one can pose the following questions:
\begin{itemize}
\item[(i)] (Norm convergence)  Does $A_N(f_1,\dots,f_k)$ converge in $L^p(X)$ norm as $N \to \infty$ for some exponent $p>0$?
\item[(ii)] (Almost everywhere convergence)  Does $A_N(f_1,\dots,f_k)$ converge pointwise almost everywhere (with respect to $\mu$, of course) as $N \to \infty$?
\item[(iii)] (Maximal inequality) Can one bound the $L^p(X)$ norm of the maximal function $\sup_{N \in \Z_+} |A_N(f_1,\dots,f_k)|$, (or equivalently, the $L^p(X;\ell^\infty)$ norm of the sequence of averages $(A_N(f_1,\dots,f_k))_{N \in \Z_+}$) for some $p>0$ in terms of the norms $\|f_i\|_{L^{p_i}(X)}$? More precisely, one is concerned with the following bound
\begin{align}
\label{eq:10}
\|\sup_{N \in \Z_+} |A_N(f_1,\dots,f_k)|\|_{L^p(X)} \lesssim_{p_1,\ldots, p_k, p} \|f_1\|_{L^{p_1}(X)} \dots \|f_k\|_{L^{p_k}(X)}.
\end{align}
(See Section \ref{notation-sec} for the asymptotic notation used in this paper.)
\item[(iv)] (Variational inequality)  Can one bound the $L^p(X)$ norm of the $r$-variational norm $\|(A_N(f_1,\dots,f_k))_{N \in \Z_+}\|_{\V^r}$, (or equivalently, the  $L^p(X; \V^r )$ norm of the sequence of averages $(A_N(f_1,\dots,f_k))_{N \in \Z_+}$) for some $p>0$ and some $1 \leq r < \infty$ in terms of the norms $\|f_i\|_{L^{p_i}(X)}$?
More precisely, one is concerned with the following bound
\begin{align}
\label{eq:8}
\quad\big\|\|(A_N(f_1,\dots,f_k))_{N \in \Z_+}\|_{\V^r}\big\|_{L^p(X)} \lesssim_{p_1,\ldots, p_k, p,r} \|f_1\|_{L^{p_1}(X)} \dots \|f_k\|_{L^{p_k}(X)}.
\end{align}
The $r$-variational norm is defined by
\[
\qquad \qquad \|(A_N(f_1,\dots,f_k))_{N \in \Z_+}\|_{\V^r}:=\sup_{N \in \Z_+} |A_N(f_1,\dots,f_k)|+\|(A_N(f_1,\dots,f_k))_{N \in \Z_+}\|_{V^r},
\]
where $\|(A_N(f_1,\dots,f_k))_{N \in \Z_+}\|_{V^r}$ is given by the following expression
\begin{align}
\label{eq:9}
\sup_{J\in\Z_+} \sup_{\substack{N_{0} \leq \dotsb \leq N_{J}\\ N_{j}\in\Z_+}}
\Big(\sum_{j=0}^{J-1}  |A_{N_{j+1}}(f_1,\dots,f_k)-A_{N_{j}}(f_1,\dots,f_k)|^{r} \Big)^{1/r},
\end{align}
here the supremum is taken over all finite increasing sequences in $\Z_+$. (See Section \ref{notation-sec} for a more general definition of the variational norm $\V^r$ and its properties.)
\end{itemize}

These questions are all related to each other.  For instance, if  variational inequality \eqref{eq:8} holds, then one automatically has a maximal inequality \eqref{eq:10}. Moreover, \eqref{eq:8} immediately ensures that the quantity in \eqref{eq:9} is finite  almost everywhere, which in turn implies  almost everywhere convergence
of the sequence $(A_N(f_1,\dots,f_k))_{N \in \Z_+}$ as $N\to \infty$. Norm convergence then also follows (for $p<\infty$) by \eqref{eq:10} and the dominated convergence theorem.  This variational norm approach to ergodic theorems was advocated in particular by Bourgain \cite{B2}, and is very useful in pointwise convergence problems with arithmetic features.

We say that a tuple $(p_1,\dots,p_k,p)$ of exponents is \emph{H\"older} if $\frac{1}{p} = \frac{1}{p_1} + \dots + \frac{1}{p_k}$ and \emph{Banach} if $p_1,\dots,p_k,p \geq 1$.  If a tuple $(p_1,\dots,p_k,p)$ is both H\"older and Banach, then from H\"older's inequality and the triangle inequality in the Banach space $L^p(X)$ one has
\begin{equation}\label{anf}
\| A_N(f_1,\dots,f_k)\|_{L^p(X)} \leq \|f_1\|_{L^{p_1}(X)} \dots \|f_k\|_{L^{p_k}(X)}
\end{equation}
regardless of the choice of polynomials $P_1(\nn),\dots,P_k(\nn)$.  Thus it is natural to restrict attention to the case of exponents that are both H\"older and Banach.  The H\"older hypothesis is particularly essential for ergodic theory applications as it is needed in order to apply the Calder\'on transference principle; see Proposition \ref{transf}(ii). However, we will be able to ``break duality'' in our main result by allowing certain non-Banach exponents $p<1$ while still maintaining the H\"older property; see Section \ref{pless}.  On the integer shift model $\Z$, the estimates become trivial (and of little use) in the super-H\"older regime $\frac{1}{p} < \frac{1}{p_1} + \dots + \frac{1}{p_k}$, and false in the opposite sub-H\"older regime $\frac{1}{p} > \frac{1}{p_1}+\dots+\frac{1}{p_k}$; see Remark \ref{need-hold}.

It is technically convenient to sparsify the set of scales $N$ that one is ranging over to define a maximal or variational function.  For instance, one could replace the positive integers $\Z_+$ by the dyadic integers
$$ 2^\N \coloneqq \{2^k: k \in \N \}.$$
More generally, we can work with sets $\D = \{ N_1,N_2,\dots\}$ of positive reals $1 \leq N_1 < N_2 < \dots$ that are \emph{$\lambda$-lacunary} for some $\lambda > 1$, in the sense that
$$ N_{j+1}/N_j > \lambda$$
for all $j \in \Z_+$; one defines $\lambda$-lacunarity for finite sequences $\{N_1,\dots,N_k\}$ of positive reals $1 \leq N_1 < \dots < N_k$ in a similar fashion.  Variational estimates on such lacunary sets are sometimes referred to as ``long variation estimates'' in the literature; they are somewhat weaker than full variation estimates but are often still sufficient for applications such as demonstrating almost everywhere convergence.

We will only concern ourselves in this paper with the existence of a limit of an ergodic average, and not attempt to compute what the limiting average actually is.  The nature of this limiting average is now fairly well understood (at least when $f_1,\dots,f_k \in L^\infty(X)$ and $X$ has finite measure) thanks to the theory of characteristic factors, and the equidistribution theory of nilmanifolds; see for instance \cite{Ber1}, \cite{Ber2}, \cite{Fra} for further discussion.
In particular, for a description of the limit in the case when the polynomials all have distinct degrees, which is of course the case of primary interest here,
we refer to \cite{CFH}. We also remark that the limit in this case is determined entirely by the projection of the functions to the rational factor (the factor spanned by periodic functions), which is the ergodic theory analogue of the ``major arc'' component of the functions. These results are also related to recurrence and Roth and Szemer\'edi type theorems (see e.g., \cite{Fur0}, \cite{Fur1}, \cite{Fur2}, \cite{BL1}, \cite{Sem1}), which also motivate this paper, but we will not discuss these topics further here.

\subsection{Linear averages}

We now recall the standard ergodic theorems for the classical ergodic averages $A^\nn_N$:

\begin{theorem}[Classical ergodic averages]\label{clas}  Let $X = (X,\mu,T)$ be a measure-preserving system, and let $f \in L^p(X)$ for some $1 \leq p \leq \infty$.
\begin{itemize}
    \item [(i)] (Mean ergodic theorem)  If $1 < p < \infty$, then $A^\nn_N f$ converges in $L^p(X)$ norm.
    \item [(ii)]  (Pointwise ergodic theorem)  If $1 \leq p < \infty$, then $A^\nn_N f$ converges pointwise almost everywhere.
    \item[(iii)] (Maximal ergodic theorem)  If $1 < p \leq \infty$, one has 
    $$\| (A^\nn_N f)_{N \in \Z_+} \|_{L^p(X; \ell^\infty)} \lesssim_p \|f\|_{L^p(X)}.$$ 
    \item[(iv)] (Variational ergodic theorem) If $1 < p < \infty$ and $r > 2$, then one has
    $$\| (A^\nn_N f)_{N \in \Z_+} \|_{L^p(X; \V^r)} \lesssim_{p,r} \|f\|_{L^p(X)}.$$
\end{itemize}
\end{theorem}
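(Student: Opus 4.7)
The plan is to reduce all four conclusions to estimates on the integer shift system $\Z$ of Example \ref{integer-shift} via the Calder\'on transference principle (Proposition \ref{transf}(ii)), and then to deduce the qualitative statements (i), (ii) from the quantitative bounds (iii), (iv) by a standard density argument. On $\Z$ the average takes the explicit convolution form
\[
A_N^{\nn} f(x) = \frac{1}{\lfloor N\rfloor}\sum_{n=1}^{\lfloor N\rfloor} f(x-n),
\]
which is pointwise dominated by the discrete Hardy--Littlewood maximal function.

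For (iii), the bound on $\Z$ follows from the trivial $L^\infty$ estimate combined with the classical weak-type $(1,1)$ inequality for the discrete Hardy--Littlewood maximal function (proved by a Vitali covering argument) and Marcinkiewicz interpolation; transferring gives the bound on arbitrary $X$. The main obstacle is (iv). I would decompose the $r$-variation of $(A_N^{\nn} f)_{N \in \Z_+}$ into a \emph{long variation} at the lacunary scales $N \in 2^{\N}$ plus the sequence of \emph{short variations} inside each dyadic block $[2^k, 2^{k+1}]$. The long variation is controlled for $r > 2$ on $\Z$ by comparing each dyadic average to the corresponding conditional expectation against the dyadic $\sigma$-algebra on intervals of length $2^k$, applying L\'epingle's inequality for martingales to the martingale piece, and absorbing the comparison error into an $L^p$ square function; the restriction $r > 2$ is sharp precisely because L\'epingle's inequality is sharp at $r = 2$. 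The short variations are handled by a Calder\'on-type $L^2$ square-function estimate, interpolated against trivial $L^\infty$ and $L^1$ bounds to recover the full range $1 < p < \infty$.

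Given (iii) and (iv), claims (i) and (ii) follow from a density argument. The maximal inequality (iii), together with its weak-type $(1,1)$ endpoint, implies that the sets of $f \in L^p(X)$ for which the conclusions of (i) or (ii) hold are closed in $L^p(X)$ (and closed in convergence-in-measure at $p = 1$ for (ii)). On the dense subspace spanned by $T$-invariant functions and coboundaries of the form $h - h \circ T$ with $h \in L^p(X) \cap L^\infty(X)$, convergence is immediate: invariant $f$ satisfies $A_N^{\nn} f = f$, while for coboundaries the telescoping identity
\[
A_N^{\nn}(h - h \circ T) = \frac{1}{\lfloor N\rfloor}\bigl(h \circ T - h \circ T^{\lfloor N\rfloor + 1}\bigr)
\]
produces $O(1/N)$ decay in $L^p$ and pointwise a.e. Density of this subspace in $L^2(X)$ is von Neumann's mean ergodic theorem, established by the orthogonal decomposition $L^2(X) = \ker(I - T) \oplus \overline{(I - T)L^2(X)}$; density in $L^p(X)$ for general $1 \leq p < \infty$ then follows by a standard truncation argument, completing the proof.
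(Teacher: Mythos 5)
Your proposal is essentially the argument the paper has in mind (the paper itself only cites the classical sources rather than spelling out the details): transference to the integer shift, Hardy--Littlewood for the maximal inequality, and the Bourgain/Jones--Kaufman--Rosenblatt--Wierdl long/short-variation decomposition with L\'epingle's martingale inequality for the $r$-variation, including the correct identification of where the $r>2$ restriction enters. For (iii) the discrete Hardy--Littlewood + Marcinkiewicz route is exactly what the paper indicates, and it correctly produces the weak $(1,1)$ endpoint. The one structural difference worth flagging: the paper recovers (i) and (ii) in the $\sigma$-finite setting directly from the variational estimate (iv), by the mechanism of Proposition \ref{transf}(i) --- finiteness of the $\V^r$ norm a.e.\ forces the sequence to be pointwise Cauchy, and then (i) follows via the maximal inequality and dominated convergence. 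You instead invoke a Banach-principle density argument over the class of $T$-invariant functions and coboundaries. This is a valid alternative, but it is slightly more delicate in the $\sigma$-finite case than you indicate: the density of $\{\text{invariant}\} + \{h - h\circ T : h \in L^\infty\cap L^p\}$ in $L^p$ for $p\neq 2$ does not follow from a ``truncation argument'' applied to the $L^2$ case. One clean repair is to first prove a.e.\ convergence for all of $L^2$ (von Neumann decomposition gives the dense class in $L^2$, then the $L^2$ maximal bound upgrades from the dense class to all of $L^2$), and then use that $L^2\cap L^p$ is norm-dense in $L^p$ together with the $L^p$-maximal (resp.\ weak $(1,1)$ at $p=1$) estimate. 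Alternatively, for $1<p<\infty$ one can show density of the coboundary class in $L^p$ directly by a duality argument: a functional $g\in L^{p'}$ annihilating all such coboundaries satisfies $g = g\circ T^{-1}$. Either patch closes the gap; as written, that one sentence overreaches.
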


\begin{proof}
Parts (i)-(iii) are standard, particularly in the case when $X$ has finite measure, and are due to von Neumann \cite{vN}, Birkhoff \cite{BI}, and Hopf \cite{Hopf}; the maximal inequality (for $\sigma$-finite $X$) can also be established by transference to the integer shift case $(\Z,\mu_\Z,T_\Z)$ and then applying the Hardy--Littlewood maximal inequality.  (This also gives a weak-type endpoint for (iii).)  The variational estimate was established by Bourgain \cite[Corollary 3.26]{B2} in the $p=2$ case, and the general case was established in \cite{J+}; this estimate can then be used to recover the mean and pointwise ergodic theorems in the $\sigma$-finite case as mentioned previously.
\end{proof}

We have (slightly weaker) analogues of these results for other linear polynomial averages:

\begin{theorem}[Linear polynomial averages]\label{lin-poly}  Let $X = (X,\mu,T)$ be a measure-preserving system, let $P(\nn) \in \Z[\nn]$, and let $f \in L^p(X)$ for some $1 \leq p \leq \infty$.
\begin{itemize}
    \item [(i)] (Mean ergodic theorem)  If $1 < p < \infty$, then $A^{P(\nn)}_N f$ converges in $L^p(X)$ norm.
    \item [(ii)]  (Pointwise ergodic theorem)  If $1 < p < \infty$, then $A^{P(\nn)}_N f$ converges pointwise almost everywhere.
    \item[(iii)] (Maximal ergodic theorem)  If $1 < p \leq \infty$, one has
    \begin{align}\label{eq:6}
\| (A^{P(\nn)}_N f)_{N \in \Z_+} \|_{L^p(X;\ell^\infty)} \lesssim_{p,P} \|f\|_{L^p(X)}.
    \end{align}
    \item[(iv)] (Variational ergodic theorem) If $1 < p < \infty$ and $r > 2$, then one has
    \begin{equation}\label{var-poly}
    \| (A^{P(\nn)}_N f)_{N \in \Z_+} \|_{L^p(X; \V^r)} \lesssim_{p,r,P} \|f\|_{L^p(X)}.
    \end{equation}
\end{itemize}
\end{theorem}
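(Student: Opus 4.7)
By the Calder\'on transference principle (Proposition~\ref{transf}(ii)) it suffices to prove the variational estimate \eqref{var-poly} on the integer shift system $\Z$ of Example~\ref{integer-shift}; the maximal bound \eqref{eq:6} is then automatic, and norm and pointwise convergence follow from \eqref{var-poly} together with convergence on a dense subclass (e.g.\ finitely supported step functions whose Fourier transforms vanish at rational frequencies). On $\Z$, $A_N^{P(\nn)}f$ equals convolution with $K_N(y) = \tfrac{1}{\lfloor N\rfloor}\sum_{n=1}^{\lfloor N\rfloor}\ind{y = P(n)}$, and its Fourier multiplier is the Weyl sum $m_N(\xi) = \E_{n \in [N]} e(P(n)\xi)$, so everything reduces to uniform $\ell^p(\Z)$-variational estimates for the family of convolution multipliers $\{m_N\}_N$.

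The standard way to analyse $m_N$ is via the Hardy--Littlewood circle method. For a small parameter $\theta = \theta(\deg P) > 0$, Dirichlet's theorem decomposes the torus $\TT$ into major arcs $\mathfrak{M}_N$ of radius $N^{-\deg P + \theta}$ centred at rationals $a/q$ with $q \leq N^\theta$ and a minor-arc complement. On the minor arcs, Weyl's inequality furnishes the pointwise bound $|m_N(\xi)| \lesssim_P N^{-\delta}$ for some $\delta = \delta(\deg P) > 0$; combining this with Plancherel and interpolating against the trivial $\ell^\infty\to\ell^\infty$ bound produces rapid decay in $N$ on all $\ell^p(\Z)$, $1<p<\infty$. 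Summed geometrically along dyadic scales, this handles the minor-arc contribution to both the maximal and $\V^r$-norms with room to spare. Near $a/q$ one has the classical approximation $m_N(\xi) \approx G(a,q)\,\Phi_N(\xi - a/q)$, where $G(a,q) = \tfrac{1}{q}\sum_{r=1}^{q} e(P(r)a/q)$ is a complete exponential sum satisfying $|G(a,q)| \lesssim_P q^{-\delta'}$, and $\Phi_N(\eta) = \tfrac{1}{N}\int_0^N e(P(t)\eta)\,dt$ is the smooth continuous-model multiplier.

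One then writes the major-arc part of $m_N$ as a superposition, indexed by denominators $q \leq N^\theta$, of arithmetic multipliers $\sum_{(a,q)=1} G(a,q)\,e_{a/q}$ convolved against translated copies of $\Phi_N$. The continuous model $\Phi_N$ enjoys L\'epingle-type $r$-variational $L^p(\R)$-bounds in the sharp range $r>2$, $1<p<\infty$, via Littlewood--Paley theory, and these transfer to $\ell^p(\Z)$ by sampling; the arithmetic factors are controlled uniformly over $q$ by Ionescu--Wainger multiplier theory. Splitting $\V^r$ into long variations along a lacunary set of scales and short variations inside each dyadic block, one controls the former by a jump-type argument (as in Bourgain \cite{B2}, generalised in \cite{J+}) combined with the major/minor arc bounds above, and the latter by a standard $L^2$ square function that exploits the smoothness of $\Phi_N$.

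The principal obstacle is pushing the variational exponent down to the sharp value $r > 2$ while simultaneously covering the full range $1 < p < \infty$: Bourgain \cite{B1, B2, B3} obtained the maximal inequality for all such $p$, but the $V^r$-estimate only at $p = 2$. Reaching the general case requires replacing soft $\ell^2$ square-function bounds by Littlewood--Paley decompositions carefully adapted to the arithmetic singularities, which is exactly where the Ionescu--Wainger framework and the jump formulation of $\V^r$ enter essentially.
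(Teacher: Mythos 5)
Your proposal is correct and follows essentially the same circle-method strategy that the paper itself sketches immediately after the theorem (transference to $\Z$, Weyl sum estimates plus Plancherel on minor arcs, and multifrequency/Ionescu--Wainger analysis with L\'epingle's inequality on major arcs); the paper's own ``proof'' simply cites Bourgain \cite{B1,B2,B3}, \cite{Kr}, and \cite{MST} for these steps. One small simplification: once \eqref{var-poly} is known, almost everywhere and norm convergence follow directly from finiteness of the $\V^r$-norm a.e.\ (as in Proposition~\ref{transf}(i)), so the appeal to a dense subclass is unnecessary.
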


\begin{proof} Part (i) follows for $p=2$ by a routine application of the spectral theorem (or one can invoke Theorem \ref{walsh-thm} below), and the other values of $p$ then follow from a density argument.  Parts (ii), (iii) were established by Bourgain \cite[Theorem 1]{B2} (see also \cite{B1}, \cite{B3}).  Part (iv) was established in the $p=2$ case by the first author in \cite[Proposition 1.5]{Kr} by adapting the methods of Bourgain, and in full generality by the second author and his collaborators in \cite{MST}, see also \cite{MSZ3}. In \cite[\S 8]{Kr} it is also shown that \eqref{var-poly} fails at the endpoint $p=r=2$. For $p=1$, in contrast to Theorem \ref{clas}(ii), pointwise convergence  in  Theorem \ref{lin-poly}(ii) fails for any monomial $P(\nn)=\nn^d$ of degree  $d\ge 2$, as was shown in \cite{BM, LaV1}.
\end{proof}

Theorem \ref{lin-poly} is proven via the circle method.  The implementation of this method can be summarized in the following two sentences:
\begin{itemize}
\item[(i)]  Plancherel's theorem and Weyl sum estimates are used to control the contribution of minor arcs.
\item[(ii)]  Multifrequency harmonic analysis is used to control the contribution of major arcs.
\end{itemize}

We now briefly sketch some more details of Bourgain's proof for maximal inequality \eqref{eq:6}.  The key estimate to establish is \eqref{eq:6} when $p=2$ and $(X,\mu, T)$ is the integer shift system, where $N$ is restricted to a finite lacunary set $\I$, and with $f$ assumed to be in the Schwartz--Bruhat space $\Schwartz(\Z) \subset \ell^1(\Z)$ to avoid technicalities, see Section \ref{abstract-sec} for a definition of this space.
In this setting we have the convenient Fourier representation
$$ \F_\Z A^{P(\nn)}_N f(\xi) = \varphi_{N,\Z}(\xi) \F_\Z f(\xi)$$
for any $\xi \in \TT$, where using the averaging notation \eqref{eq:12} the symbol  $\varphi_{N,\Z}(\xi)$ is given by
\begin{equation}\label{symbol-def}
\varphi_{N,\Z}(\xi) \coloneqq \E_{n \in [N]} e( P(n) \xi ),
\end{equation}
where $e(\theta):=e^{2\pi i \theta}$ and the Fourier transform $\F_\Z f$ are defined in Section \ref{abstract-sec}.  Standard Weyl sum estimates (see \cite[Lemma 20.3, p. 462]{IK}) reveal that for some small  $\delta, \varepsilon>0$ one has 
\begin{align}
\label{eq:13}
|\varphi_{N,\Z}(\xi)|\lesssim_P N^{-\delta},
\end{align}
unless $\xi$ is in a \emph{major arc}, which roughly speaking means that $\xi$ is close to $\frac{a}{q} \mod 1$ for some $a \in \Z$ and some small positive integer $1\le q\le N^{\varepsilon}$.  One can then use \eqref{eq:13} and  Plancherel's theorem to dispose of the \emph{minor arc} case when $\xi$ is not in a major arc, and then after a dyadic decomposition the main task is to establish an estimate roughly of the shape
$$    \| ( A^{P(\nn)}_N f )_{N \in \I} \|_{\ell^2(\Z; \ell^\infty)} \lesssim_{r,P,\lambda} 2^{-cl} \|f\|_{\ell^2(\Z)}.
$$    
for all $l \in \N$, $\lambda > 1$ and some constant $c = c_{r,P} >0$, where $\I \subset [1,+\infty)$ is an arbitrary finite $\lambda$-lacunary set and the Fourier transform of $f$ is restricted to the set of ``$l$-major arc'' frequencies $\xi$ of the form $\xi = \frac{a}{q} + O( 2^{-10l} ) \mod 1$ (say) for some $q \sim 2^l$.  (Informally, this is morally equivalent by the uncertainty principle to $f$ being a linear combination of functions that are approximately constant on arithmetic progressions of spacing $q$ for various $q \sim 2^l$ and diameter $\sim 2^{10l}$; see Remark \ref{uncertainty}.)  In fact, at a given (large) scale $N$ one can restrict to even narrower major arcs, of width $O(2^{dl}/N^d)$ say.  A finer analysis of the symbol \eqref{symbol-def} reveals for a major arc frequency $\xi = \frac{a}{q} + \theta \mod 1$, that
\[
\varphi_{N,\Z}\left( \frac{a}{q} + \theta \mod 1 \right)
\]
has an approximate factorization
\begin{equation}\label{approx-factor}
\varphi_{\hat \Z}\left(\frac{a}{q} \mod 1 \right) \varphi_{N,\R}(\theta),
\end{equation}
 where the ``arithmetic symbol'' $\varphi_{\hat \Z} \colon \Q/\Z \to \C$ is defined by
\begin{equation}\label{arith-symbol}
\varphi_{\hat \Z}\left(\frac{a}{q} \mod 1 \right) \coloneqq \E_{n \in \Z/q\Z} e\left( \frac{a P(n)}{q} \right)
\end{equation}
and the ``continuous symbol'' $\varphi_{N,\R} \colon \R \to \C$ is defined by
$$ \varphi_{N,\R}(\theta) \coloneqq \frac{1}{N} \int_0^N e(\theta P(t))\ dt.$$
The influence of the arithmetic symbol $\varphi_{\hat \Z}$ (which does not depend on $N$) can be easily factored out in the $p=2$ case by Plancherel's theorem, and the task then readily reduces to that of establishing a multifrequency maximal inequality (see \cite[Lemma 4.1]{B3}).  This result in turn is ultimately derived from a variational inequality for averages of vector-valued $L^2$ functions (see \cite[Lemma 3.30]{B3}), in the spirit of L\'epingle's inequality.

\subsection{Bilinear averages}

Now we turn to multilinear averages. For the norm convergence problem in the case of finite measure and Banach exponents the situation is well understood, thanks to the following result of Host--Kra and Leibman:

\begin{theorem}[Multilinear mean ergodic theorem]\label{walsh-thm}  Let $(X,\mu,T)$ be a measure-preserving system of finite measure, let $P_1(\nn),\dots,P_k(\nn) \in \Z[\nn]$, and let $f_i \in L^{p_i}(X)$ for all $i=1,\dots,k$ and some exponents $1 \leq p_i \leq \infty$ with $\frac{1}{p_1}+\dots+\frac{1}{p_k} \leq 1$.  Then the averages $A^{P_1,\dots,P_k}_N(f_1,\dots,f_k)$ converge in $L^p(X)$ norm for any $0 < p < \infty$ with $\frac{1}{p_1}+\dots+\frac{1}{p_k} < \frac{1}{p}$.
\end{theorem}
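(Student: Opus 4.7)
The plan is to reduce the statement to the $L^2$-norm convergence of $A_N^{P_1,\dots,P_k}$ on \emph{bounded} functions over a \emph{probability} space, which is the Host--Kra--Leibman theorem, and then to pass to $L^p$ by finite-measure embedding and density.

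After rescaling, assume $(X,\mu)$ is a probability space. Define the H\"older exponent $q \in [1, \infty]$ by $\tfrac{1}{q} = \sum_{i=1}^k \tfrac{1}{p_i}$; the hypothesis $\sum_i 1/p_i \leq 1$ gives $q \geq 1$ and the strict hypothesis $1/p > 1/q$ gives $p < q$. On a probability space $L^q(X) \hookrightarrow L^p(X)$, so it suffices to prove $L^q$-norm convergence. H\"older's inequality yields the uniform multilinear bound
\begin{equation*}
    \|A_N^{P_1,\dots,P_k}(f_1,\dots,f_k)\|_{L^q(X)} \leq \prod_{i=1}^k \|f_i\|_{L^{p_i}(X)},
\end{equation*}
and combining it with the multilinear telescoping identity
\begin{equation*}
A_N(f_1,\dots,f_k) - A_N(f_1',\dots,f_k') = \sum_{j=1}^k A_N(f_1',\dots,f_{j-1}',f_j-f_j',f_{j+1},\dots,f_k),
\end{equation*}
the density of $L^\infty(X)$ in each $L^{p_i}(X)$ (for $p_i < \infty$) reduces the problem to the case $f_1,\dots,f_k \in L^\infty(X)$.

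For such bounded functions on the probability space, the Host--Kra--Leibman theorem guarantees convergence of $A_N^{P_1,\dots,P_k}(f_1,\dots,f_k)$ in $L^2(X)$. Combined with the pointwise bound $|A_N(f_1,\dots,f_k)| \leq \prod_i \|f_i\|_{L^\infty(X)}$, this upgrades to convergence in $L^q(X)$: for $q \leq 2$ via $\|\cdot\|_{L^q} \leq \|\cdot\|_{L^2}$ on the probability space, and for $q > 2$ via
\begin{equation*}
\|A_N - A_M\|_{L^q(X)}^q \leq \Bigl(2\prod_i \|f_i\|_{L^\infty(X)}\Bigr)^{q-2} \|A_N - A_M\|_{L^2(X)}^2 \to 0
\end{equation*}
as $N, M \to \infty$. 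The main obstacle is the Host--Kra--Leibman $L^2$-convergence result itself---a deep theorem relying on characteristic factor theory and the equidistribution of polynomial orbits on nilmanifolds---which we invoke here as a black box; everything else is a routine combination of H\"older's inequality, density, and elementary interpolation on a finite-measure space.
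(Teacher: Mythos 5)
Your proof is correct and follows essentially the same route as the paper's: invoke Host--Kra--Leibman for $L^2$ convergence of bounded functions on a probability space, upgrade the exponent via H\"older/interpolation, and pass to general $L^{p_i}$ by density using the uniform bound \eqref{anf}. The only cosmetic wrinkle is the edge case $q=\infty$ (all $p_i=\infty$), where the phrase ``reduce to $L^q$ convergence'' should read ``reduce to $L^p$ convergence for each finite $p$''; the density step is vacuous there and the interpolation step still applies, so the argument goes through unchanged.
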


\begin{proof}  The case $p_1=\dots=p_k=\infty$, $p=2$ is established in \cite{HK2}, \cite{Leibman} (see also \cite{W} and \cite{A1} for quite different proofs and  generalizations); one can then extend to other $0 < p < \infty$ by H\"older's inequality, and the case of general $p_1,\dots,p_k$ then follows by a standard limiting argument using \eqref{anf}.
\end{proof}

There is a long history of prior partial results (e.g., \cite{HK}, \cite{Z1}, \cite{Ber0}, \cite{BL1}, \cite{FraKra}, \cite{FurWei}) towards Theorem \ref{walsh-thm}, as well as generalizations to actions of other nilpotent groups than $\Z$ (i.e., averages involving multiple measure-preserving transformations $T_1,\dots,T_k$ that generate a nilpotent group); we refer the reader to \cite{Ber1}, \cite{Ber2}, \cite{Fra} for surveys.  In several cases it is possible to ``break duality'' by permitting $\frac{1}{p_1}+\dots+\frac{1}{p_k}$ to exceed $1$; see Section \ref{pless} below.

For pointwise convergence and for two linear polynomials, one also has the following results:

\begin{theorem}[Two linear polynomials]\label{twolin}  Let $(X,\mu,T)$ be a measure-preserving system with finite measure, let $P_1(\nn), P_2(\nn) \in \Z[\nn]$ have degree $1$ with distinct leading coefficients, and let $1 < p_1,p_2 \leq \infty$ be such that $\frac{1}{p_1} + \frac{1}{p_2} < \frac{3}{2}$.  Then for $f \in L^{p_1}(X), g \in L^{p_2}(X)$, the averages $A^{P_1,P_2}_N(f,g)$ converge pointwise almost everywhere.
\end{theorem}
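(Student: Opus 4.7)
The plan is to combine a bilinear maximal inequality with a dense-class convergence argument, following the strategy initiated by Bourgain for $L^\infty$ functions and extended by Lacey and others into the non-Banach range. By absorbing the constant terms of $P_1$ and $P_2$ into $f$ and $g$ (which preserves the $L^{p_i}$ norms and commutes with the pointwise limit), we may assume $P_1(\nn) = a_1 \nn$ and $P_2(\nn) = a_2 \nn$ with $a_1 \neq a_2$. Calder\'on transference (Proposition \ref{transf}(ii)) then allows us to focus on proving the requisite maximal bound for
\[
A_N^{P_1, P_2}(f, g)(x) = \frac{1}{N} \sum_{n=1}^N f(x - a_1 n) g(x - a_2 n)
\]
on the integer shift system $\Z$, and pass it back to the arbitrary system $(X,\mu, T)$.

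The main analytic step is to establish the discrete bilinear maximal inequality
\[
\Bigl\| \sup_{N \in \Z_+} |A_N^{P_1, P_2}(f, g)| \Bigr\|_{\ell^r(\Z)} \lesssim_{p_1, p_2} \|f\|_{\ell^{p_1}(\Z)} \|g\|_{\ell^{p_2}(\Z)}
\]
for $\frac{1}{r} = \frac{1}{p_1} + \frac{1}{p_2}$ in the range $1 < p_1, p_2 \leq \infty$ with $\frac{1}{p_1} + \frac{1}{p_2} < \frac{3}{2}$. A standard sampling/discretization argument dominates the discrete averages pointwise by the continuous bilinear maximal operator
\[
M(F, G)(x) \coloneqq \sup_{N > 0} \frac{1}{N} \int_0^N |F(x - a_1 t) G(x - a_2 t)| \, dt
\]
acting on $\R$, whose $L^{p_1} \times L^{p_2} \to L^r$ boundedness in this range is Lacey's bilinear maximal theorem. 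Lacey's proof is based on the time-frequency (trees/tiles) analysis developed by Lacey and Thiele for the bilinear Hilbert transform, together with Carleson-type tree-selection arguments.

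With the maximal inequality in hand, pointwise a.e.\ convergence on all of $L^{p_1}(X) \times L^{p_2}(X)$ reduces to convergence on a dense subclass. The Host--Kra/Furstenberg structure theory identifies the characteristic factor for two linear polynomials with distinct slopes as the Kronecker factor of $(X,\mu,T)$, i.e., the $\sigma$-algebra generated by eigenfunctions of $T$. For eigenfunctions (or finite linear combinations thereof) the averages can be computed essentially explicitly and their pointwise convergence follows from Weyl equidistribution of the polynomials $a_i n$. For $f$ or $g$ orthogonal to the Kronecker factor, a van der Corput/Cauchy--Schwarz (equivalently $U^2$-norm) argument shows that $A_N^{P_1, P_2}(f,g) \to 0$ in $L^2(X)$, and the maximal inequality upgrades this $L^2$ convergence to a.e.\ convergence. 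A standard density/approximation argument then yields a.e.\ convergence for all $f \in L^{p_1}(X), g \in L^{p_2}(X)$.

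The main obstacle is the bilinear maximal inequality in the non-Banach range $1 < \frac{1}{p_1} + \frac{1}{p_2} < \frac{3}{2}$. In the Banach range $\frac{1}{p_1} + \frac{1}{p_2} \leq 1$, elementary positivity and interpolation arguments (in the spirit of Bourgain) are sufficient, but accessing the full H\"older range up to $3/2$ genuinely requires Lacey's delicate time-frequency machinery.
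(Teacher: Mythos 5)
Your overall architecture — reduce to pure-monomial shifts, transfer to $\Z$, invoke Lacey's $L^{p_1}\times L^{p_2}\to L^r$ bilinear maximal bound in the full sub-duality range, and then run a dense-class argument — matches the structure the paper uses. The paper, however, does not attempt to \emph{re-derive} the dense-class ingredient: it simply cites Bourgain's double recurrence theorem \cite{B4} (or Demeter's alternate proof \cite{Demeter}) for the $p_1=p_2=\infty$ case and then runs the limiting argument with Lacey's maximal inequality \cite{LAC}. Your version tries to substitute an argument of your own for that citation, and this is where the proof breaks down.

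The gap is in the sentence ``a van der Corput/Cauchy--Schwarz argument shows that $A_N^{P_1,P_2}(f,g)\to 0$ in $L^2(X)$, and the maximal inequality upgrades this $L^2$ convergence to a.e.\ convergence.'' A maximal inequality does \emph{not} upgrade norm convergence to pointwise convergence: it only shows that the set of inputs on which a.e.\ convergence holds is closed in $L^{p_1}\times L^{p_2}$, so you still need a \emph{dense} subclass on which a.e.\ convergence is already known. Knowing $\|A_N(f,g_\perp)\|_{L^2}\to 0$ gives you nothing pointwise (think of a typewriter-type sequence: $L^2$-null but pointwise divergent everywhere), and the maximal bound cannot repair this — otherwise Bourgain's theorem would be a trivial corollary of the Furstenberg--Weiss mean ergodic theorem plus Lacey's maximal inequality, which it is not. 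The pointwise treatment of the part orthogonal to the Kronecker factor is precisely the hard core of Bourgain's argument in \cite{B4} and of Demeter's time-frequency proof in \cite{Demeter}; a soft van der Corput/$U^2$ estimate does not reach it. The rest of your proposal (absorbing constants, Calder\'on transference, the discrete-to-continuous domination of the maximal operator, H\"older scaling $1/r = 1/p_1 + 1/p_2$, and the concluding density step \emph{once} the $L^\infty$ dense class is in hand) is sound.
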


\begin{proof}  For the case $p_1=p_2=\infty$ see Bourgain \cite{B4}; an alternate proof was also given by Demeter \cite{Demeter}.  To extend to the remaining cases of $p_1,p_2$ one applies a bilinear maximal inequality of Lacey \cite{LAC} and a standard limiting argument. 
\end{proof}

We now at last come to the main result of our paper, which concerns an opposing case to Theorem \ref{twolin} in which one has one linear polynomial and one strictly nonlinear polynomial.

\begin{theorem}[Main theorem]\label{main}  Let $(X,\mu,T)$ be a measure-preserving system, let $P(\nn) \in \Z[\nn]$ have degree $d \geq 2$, and let $f \in L^{p_1}(X), g \in L^{p_2}(X)$ for some $1 < p_1,p_2 < \infty$ with $\frac{1}{p_1} + \frac{1}{p_2} = \frac{1}{p} \leq 1$.
\begin{itemize}
    \item[(i)] (Mean ergodic theorem)  The averages $A^{\nn,P(\nn)}_N(f,g)$ converge in $L^p(X)$ norm.
    \item[(ii)]  (Pointwise ergodic theorem) The averages $A^{\nn,P(\nn)}_N(f,g)$ converge pointwise almost everywhere.
    \item[(iii)] (Maximal ergodic theorem)  One has
    $$\| (A^{\nn,P(\nn)}_N (f,g))_{N \in \Z_+} \|_{L^p(X; \ell^\infty)} \lesssim_{p_1,p_2,P} \|f\|_{L^{p_1}(X)} \|g\|_{L^{p_2}(X)}.$$ 
    \item[(iv)]  (Long variational ergodic theorem) If $r > 2$ and $\lambda > 1$, one has
    \begin{equation}\label{var-poly-main}
    \| (A^{\nn,P(\nn)}_N(f,g))_{N \in \D} \|_{L^p(X; \V^r)} \lesssim_{p_1,p_2,r,P,\lambda} \|f\|_{L^{p_1}(X)} \|g\|_{L^{p_2}(X)}
    \end{equation}
    whenever $\D \subset [1,+\infty)$ is $\lambda$-lacunary.
\end{itemize}
\end{theorem}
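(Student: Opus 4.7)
The plan is to first reduce via Calder\'on's transference principle (Proposition \ref{transf}(ii)) to the integer shift system $(\Z, \mu_\Z, T_\Z)$; the H\"older condition $\frac{1}{p_1}+\frac{1}{p_2} = \frac{1}{p}$ is exactly what allows this reduction. It then suffices to prove the long variational estimate \eqref{var-poly-main} on $\Z$, because this estimate implies the maximal inequality (iii) (from $\|\cdot\|_{\ell^\infty}\le \|\cdot\|_{\V^r}$), the pointwise convergence (ii) (since the $\V^r$-norm being finite almost everywhere forces convergence once one verifies it on a dense subclass, e.g.\ Schwartz--Bruhat functions on $\Z$), and the norm convergence (i) (via (ii), (iii), and dominated convergence, exactly as in the linear case).

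On $\Z$ the bilinear average has the Fourier representation
\[
A_N^{\nn,P(\nn)}(f,g)(x) = \int_{\TT^2} \Phi_{N,\Z}(\xi_1,\xi_2)\, \F_\Z f(\xi_1)\, \F_\Z g(\xi_2)\, e((\xi_1+\xi_2)x)\, d\xi_1\, d\xi_2
\]
with bilinear symbol $\Phi_{N,\Z}(\xi_1,\xi_2) \coloneqq \E_{n\in[N]} e(n\xi_1 + P(n)\xi_2)$. Following the circle-method paradigm that proved Theorem \ref{lin-poly}, I would split $\TT^2$ into major and minor arcs relative to a scale parameter $l$: a pair $(\xi_1,\xi_2)$ is \emph{major at level $l$} if $\xi_i \equiv a_i/q_i + \theta_i \pmod 1$ with $q_i \sim 2^l$ and $|\theta_i|$ narrow (powers of $N^{-1}$), and \emph{minor} otherwise. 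The argument then splits into a minor-arc bound (to be summed geometrically across scales) and a major-arc bound.

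For the minor-arc contribution, the Weyl inequality \eqref{eq:13} applied in one variable at a time is insufficient; the decisive new input---and in my view the principal obstacle---is the quantitative \emph{inverse theorem of Peluse--Prendiville} from additive combinatorics. Roughly, this theorem asserts that if the trilinear form $\langle A_N^{\nn,P(\nn)}(f,g),h\rangle$ is not a power of $N$ smaller than its trivial bound, then $g$ must correlate with a periodic function of period $q \le N^{O(1)}$. Dualizing yields an $L^\infty$-type bilinear minor-arc estimate of polynomial strength $N^{-\delta}$, which, together with the trivial H\"older bound \eqref{anf}, proves the variational inequality on the minor-arc projection at $p_1 = p_2 = \infty$. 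The general H\"older range is reached by multilinear interpolation with \eqref{anf}; the same interpolation scheme, but with a worse trivial endpoint, is what yields the ``break duality'' feature described in the abstract, at the cost of a slight increase in $r$.

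For the major-arc contribution, an approximate factorization in the spirit of \eqref{approx-factor} gives
\[
\Phi_{N,\Z}(a_1/q_1 + \theta_1,\, a_2/q_2 + \theta_2) \approx \varphi_{\hat\Z}(a_1/q_1, a_2/q_2)\, \varphi_{N,\R}(\theta_1,\theta_2),
\]
separating an arithmetic Gauss-sum-type symbol (independent of $N$) from a continuous bilinear symbol on $\R^2$. The arithmetic piece is $N$-independent and is handled by harmonic analysis on the adelic integers $\A_\Z$: after projecting onto the rational/periodic factor one reduces to a bilinear multi-frequency variational estimate of L\'epingle type. The continuous piece is handled by a bilinear analogue of Bourgain's continuous-scale variational bound. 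A secondary obstacle is to choose a crossover scale $N \sim \exp(\mathrm{poly}(l))$ between the two regimes so that the Peluse--Prendiville savings in the small-$N$ regime and the adelic savings in the large-$N$ regime combine to a geometrically summable total in $l$, yielding the long variational estimate for every $r > 2$.
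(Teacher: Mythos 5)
Your high-level framework is correct and matches the paper's: Calder\'on transference to $\Z$, reduction to a long variational estimate, a circle-method split into major and minor arcs, Peluse--Prendiville on the minor arcs, and adelic analysis on the major arcs. However, there are two genuine gaps where the sketch would break down as written.

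First, on the minor arcs, you claim that Peluse--Prendiville ``dualizes'' to an $\ell^\infty$-type bilinear estimate of polynomial strength, after which interpolation with \eqref{anf} finishes. This passes over the central technical difficulty: the inverse theorem is an $\ell^\infty$ statement about $1$-bounded functions, but the estimate that actually closes the argument (Theorem \ref{improv}) is an $\ell^2(\Z)\times\ell^2(\Z)\to\ell^1(\Z)$ bound with gain $2^{-cl}+\langle \Log N\rangle^{-cC_1}$ under a Fourier-support hypothesis. Getting from $\ell^\infty$ to $\ell^2$ control is not an interpolation step: the paper goes through a Hahn--Banach separation argument (Lemma \ref{hahn}, Corollary \ref{struct}) to turn the inverse theorem into a structural decomposition of the dual function $\tilde A_N^*(h,g)$, then uses Ionescu--Wainger projections to isolate the error terms, and then uses the $\ell^p(\Z)$-improving estimates of Han--Kova\v{c}--Lacey--Madrid--Yang (Proposition \ref{lp-improv}) to trade $\ell^\infty$ control on $g$ for $\ell^2$ control, plus an off-diagonal decay argument to remove the compact-support hypothesis. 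If you only interpolate the $\ell^\infty\times\ell^\infty$ bound against \eqref{anf} you never get below the line of duality, and you never reach the $p_1=p_2=2$ case that the later scale-summation in $l$ relies on. The ``breaking duality'' phenomenon is likewise not an artifact of a worse trivial endpoint; it comes from the same $L^p$-improving machinery (Lemma \ref{single-below}).

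Second, on the major arcs, your plan invokes ``a bilinear multi-frequency variational estimate of L\'epingle type'' and ``a bilinear analogue of Bourgain's continuous-scale variational bound'' as if these were available black boxes. They are not; constructing them is most of the paper. After the adelic approximation \eqref{mnsn} the operator still depends on $N$ in three separate ways (through $F_N$, $G_N$, and $\tilde A_N$), and a genuine bilinear L\'epingle inequality is not known. The paper instead performs a paraproduct decomposition \eqref{FN-def}--\eqref{GN-def} around the arithmetic frequencies, splits into low-low/low-high/high-low/high-high cases and into small and large scales (with crossover $N\sim 2^{2^u}$, $u\sim 2^{\rho l}$, close to what you proposed), and then: uses a two-parameter Rademacher--Menshov inequality (Lemma \ref{2drm}, Corollary \ref{rm-bil}) plus the single-scale minor arc bound to handle small scales; uses the quantitative Shannon sampling theorem (Theorem \ref{Sampling}) to pass to $\A_\Z$, a variational-norm interchange lemma (Lemma \ref{fkr}, Corollary \ref{bilinear-fkr}), vector-valued L\'epingle only in the linear $\R$-direction, and an $L^p$-improving estimate for the arithmetic average on $\hat\Z$ (Theorem \ref{bile}, proved via $p$-adic van der Corput estimates) for large scales; and uses shifted Calder\'on--Zygmund theory (Theorem \ref{cz-shift}) together with an integration-by-parts identity (Lemma \ref{integ-ident}) to control the low-high/high-low paraproducts. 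None of these is a routine adaptation of the linear theory, and without at least the Rademacher--Menshov reduction and the sampling-to-adeles step the major-arc argument does not close.

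The remaining components of your sketch (the reduction to variational estimates, the approximate factorization \eqref{mnsn}, the choice of crossover scale) are all in the right spirit and consistent with the paper.
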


We now give some remarks about this theorem.

\begin{enumerate}
    \item Theorem \ref{main}(i) already follows from Theorem \ref{walsh-thm} when $(X,\mu)$ has finite measure; in fact for this particular average the results are essentially already contained in \cite{FurWei}.  However it appears to be new in the $\sigma$-finite setting, and the proof method is completely different from methods used to establish Theorem \ref{walsh-thm}.  
    \item Theorem \ref{main}(ii) is completely new for general measure-preserving systems\footnote{This result (and also part (iii)) was claimed in \cite{EA}.  However, there appear to be several gaps in the arguments.  Firstly, in \cite[pp. 23]{EA} it is claimed without giving details that the Cald\'eron transference principle can be applied for the super-H\"older exponent triplet $\ell^2 \times \ell^2 \to \ell^2$, but if one carefully works through the arguments provided in \cite[pp. 10--11]{EA} for these exponents, one loses a factor of $N^{1/2}$ in the estimates (as $h$ now needs to be controlled in $\ell^2$ norm rather than $\ell^\infty$ norm) and thus cannot pass to the limit $N \to \infty$. Secondly, in \cite[pp. 26]{EA}, bilinear maximal estimates are obtained for the super-H\"older exponent triplets $\ell^1 \times \ell^r \to \ell^r$ and $\ell^r \times \ell^1 \to \ell^r$, but the assertion in that paper that bilinear interpolation then gives H\"older exponent estimates such as $\ell^1 \times \ell^\infty \to \ell^1$ or $\ell^r \times \ell^{r'} \to \ell^1$ is false.}, even when $f,g \in L^\infty(X)$ and $X$ has finite measure. In particular, Theorem \ref{main}(ii) when specialized to the case $P(\nn)=\nn^2$ answers the second part of \cite[Problem 11]{Fra} for the Furstenberg--Weiss averages \cite{FurWei} (see also \cite{Fur1}), which is a bilinear variant of the problem considered by Bergelson \cite[Question 9, pp. 52]{Ber1}; see also \cite[\S 6, pp. 838]{Ber2}.
Theorem \ref{main} is also a contribution towards establishing the Furstenberg--Bergelson--Leibman conjecture {\cite[Section 5.5, p. 468]{BeLe}}, which asserts the following. Given integers $d, k, m, N\in\Z_+$, let $T_1,\ldots, T_d:X\to X$ be a family  of  invertible measure-preserving transformations of a probability measure space    $(X, \mu)$ that generates a nilpotent group of step $m$. Assume that
 $P_{1, 1},\ldots,P_{i, j},\ldots, P_{d, k}\in \Z[\mathrm n]$. Then for any  $f_1, \ldots, f_k\in L^{\infty}(X)$, the non-conventional multiple
polynomial averages
\begin{align}
\E_{n\in[N]}\prod_{j=1}^kf_j(T_1^{P_{1, j}(n)}\cdots T_d^{P_{d, j}(n)} x)
\end{align}
converge pointwise for $\mu$-almost every $x\in X$ as $N\to\infty$. 
This conjecture is a widely open problem in ergodic theory that was promoted in person by Furstenberg, see \cite[p. 6662]{A1} and \cite{Kra}, before being published in \cite{BeLe}. Bergelson--Leibman \cite{BeLe}  showed that convergence may fail if the transformations $T_1,\ldots, T_d$ generate a solvable group. Our main theorem  solves this conjecture  in the case $d=1$, $k=2$ with $P_{1, 1}(\nn)=\nn$ and $P_{1, 2}(\nn)=P(\nn)\in\Z[\nn]$ with $\deg P\ge 2$.
Pointwise convergence for non-conventional polynomial averages has previously been established for some special measure-preserving systems, such as exact endomorphisms and $K$-automorphisms \cite{DL} and nilsystems \cite{Leibman2}. 
    \item Our methods of proofs break down in the linear case $d=1$ (as the minor arc contributions are no longer negligible), and so we are unable to give an alternate proof of Theorem \ref{twolin}.
    \item For $p>1$ (i.e., above the line of duality), Theorem \ref{main}(iii) follows easily from past results. Indeed, from several applications of H\"older's inequality one has
    \begin{align*}
        \| (A^{\nn,P(\nn)}_N &(f,g))_{N \in \Z_+} \|_{L^p(X;\ell^\infty)}\\
&\leq \| (A^{\nn}_N (|f|^{p_0})|)_{N \in \Z_+} \|_{L^{p_1/p_0}(X;\ell^\infty)}^{1/p_0}
\| (A^{P(\nn)}_N (|g|^{p'_0})|)_{N \in \Z_+} \|_{L^{p_2/p'_0}(X;\ell^\infty)}^{1/p'_0}
    \end{align*}
for any $1 < p_0 < \infty$.  In the $p>1$ case one can select $p_0$ so that $p_0 < p_1$ and $p'_0 < p_2$, and the claim now follows from Theorem \ref{lin-poly}(iii).  However, the $p=1$ case (i.e., on the line of duality) is new, even when $p_1=p_2=2$.  Also, the simple argument given above does not seem to easily adapt to give the $p>1$ cases of the other components (i), (ii), (iv) of the theorem, although it does permit one to reduce those cases of (i), (ii) to the case in which $f,g \in L^\infty(X)$ by the usual limiting argument.  A continuous analogue of Theorem \ref{main}(iii) was previously established in \cite{LiXi} (see also \cite{Li}, \cite{GL}).
    \item Theorem \ref{main}(iv) is the key result in the theorem, and easily implies the other parts of the theorem, as we shall show in Section \ref{transfer-sec}.  The condition $r>2$ is necessary, as no variational estimate is possible for $r \leq 2$; see Corollary \ref{r-counter}. The situation can be contrasted with that in \cite{DMT}, in which a certain bilinear paraproduct was shown to enjoy $r$-variation estimates for some values of $r < 2$.
    \item A modification of our arguments (taking particular advantage of linear $L^p$ improving estimates) is able to ``break duality'' and establish some cases of Theorem \ref{main} in the non-Banach regime $p < 1$, with the range of exponents being particularly strong in the case of norm convergence on spaces of finite measure.  See Section \ref{pless}.  A similar ``breaking duality'' phenomenon occurred in \cite{DLTT}; also, in \cite[Theorem 2]{LiXi} a continuous analogue of part (iii) of the theorem was established that ``broke duality'' by allowing $p$ to lie in the range $p>\frac{d-1}{d}$, which is best possible up to the endpoint; see \cite[\S 3]{LiXi}. 
    \item The requirement that $X$ be $\sigma$-finite can be dropped by observing that $f \in L^{p_1}(X), g \in L^{p_2}(X)$ have $\sigma$-finite supports (since $p_1,p_2 < \infty$), and hence the invariant set $\bigcup_{n \in \Z} T^n(\mathrm{supp}(f) \cup \mathrm{supp}(g))$ is also $\sigma$-finite.  Since the averages $A^{\nn,P(\nn)}_N(f,g)$ are all supported on this invariant $\sigma$-finite set, one can restrict to the $\sigma$-finite case without loss of generality.
\end{enumerate}

\subsection{Overview of proof}

We now give an overview of the proof of Theorem \ref{main}.  The arguments follow the basic framework of the arguments used to establish the linear results in Theorem \ref{lin-poly}, but with several new difficulties arising that require substantial new ideas to overcome.  Most notably:

\begin{itemize}
\item[(a)] Plancherel's theorem and Weyl sum estimates (see \cite[Lemma 20.3, p. 462]{IK}) are no longer sufficient by themselves to control the contribution of the minor arcs, thus defeating a ``naive'' implementation of the circle method.
\item[(b)] The bilinear analogue
\begin{equation}\label{symbol-bil-def}
m_{\hat \Z}\left( \frac{a_1}{q} \mod 1, \frac{a_2}{q} \mod 1 \right) \coloneqq \E_{n \in \Z/q\Z} e\left( \frac{a_1n + a_2P(n)}{q}\right)
\end{equation}
of the arithmetic symbol $\varphi_{\hat \Z}$ defined in \eqref{arith-symbol} cannot be factorized as a tensor product of a function of $\frac{a_1}{q} \mod 1$ and a function of $\frac{a_2}{q} \mod 1$.  As a consequence,  symbol \eqref{symbol-bil-def}, despite being independent of $N$; cannot be disposed of purely by linear tools such as Plancherel's theorem (even in the model case $p_1=p_2=2$) due to a bilinear nature of the problem and must be treated in tandem with the continuous features.
\end{itemize}

Our resolution to these problems can be summarized by the following two sentences:

\begin{itemize}
\item[(i)]  Additive combinatorics (and more specifically, Peluse--Prendiville theory), as well as Hahn--Banach separation theorem, Ionescu--Wainger multiplier theory and the $\ell^p(\Z)$ improving theory of Han--Kova\v{c}--Lacey--Madrid--Yang on the integers $\Z$, are used to control the contribution of minor arcs. This is a bilinear theory of the minor arcs.
\item[(ii)]  Adelic harmonic analysis (which combines the continuous harmonic analysis of the reals $\R$ with the arithmetic harmonic analysis of the profinite integers $\hat \Z$), as well as Ionescu--Wainger multiplier theory, two-parameter Rademacher--Menschov argument, shifted square function estimates and the $L^p(\hat \Z)$ improving theory on the profinite integers $\hat \Z$, are used to control the contribution of major arcs. 
\end{itemize}

We now discuss the strategy in more detail.

\subsubsection{Standard reductions}
Following Bourgain \cite{B1}, it suffices to establish the variational estimate \eqref{Var-est} on a finite $\lambda$-dyadic set $\D$ of scales, and by using the Calder\'on transference principle we can work with the integer shift system $\Z$.  For technical reasons it is also convenient to remove the lower half $n \leq N/2$ of the averaging operator \eqref{anx} and only retain the upper half $n > N/2$, but we ignore this step for sake of discussion.  These standard reductions are reviewed in Section \ref{transfer-sec}.  We will need to establish the variational estimate for all choices of $(p_1,p_2)$, but the most important case is when $p_1=p_2=2$ (and hence $p=1$), where it is easiest to establish a certain exponential decay that can then be propagated to all other choices of exponents $(p_1,p_2)$ by interpolation.  For sake of discussion we therefore restrict attention to the $p_1=p_2=2$ case.

\subsubsection{Minor arcs estimates}
Again following Bourgain, we would now like to restrict the functions $f,g$ to major arcs in Fourier space.  In the linear setting this could be accomplished relatively easily using Plancherel's theorem and decay estimates \eqref{eq:13} for the symbol \eqref{symbol-def} on minor arcs.  However, in the bilinear setting Plancherel's theorem and the classical Weyl estimate \cite[Lemma 20.3, p. 462]{IK} are insufficient to obtain satisfactory control on the minor arc contribution.  Instead we use a deep recent inverse theorem of Peluse and Prendiville \cite{PP1} and Peluse \cite{P2} from the additive combinatorics literature, see Theorem \ref{pp}, which asserts that for every $0<\delta\le 1$ and bounded functions  $f,g:[-O(N^d), O(N^d)]\to\C$ with $\|f\|_{\ell^\infty}, \|g\|_{\ell^\infty}\le 1$ if $\|A_N(f, g)\|_{\ell^1}\ge \delta N^d$, then $f$ must weakly correlate with the indicator function of a progression ${P}=\{qm\in\Z_+: m\in[N']\}$ with $q\lesssim \delta^{-O(1)}$ and $\delta^{O(1)}N\lesssim N'\le N$, in the sense that $\|f*\ind{-P}\|_{\ell^1}\gtrsim \delta^{O(1)}N'N^d$ provided that $N\gtrsim \delta^{-O(1)}$. In other words, it says that  the function $f$ has a major arc structure at scale $N$, which is precisely stated  using Fourier-transform language in Proposition \ref{lip}. However, for our application we need to replace the $\ell^\infty$ control with (suitably normalized) $\ell^2$ control.  To do this we shall use the Hahn--Banach theorem to interpret this inverse theorem as a structural description of certain \emph{dual functions} associated to the averaging operator $A_N$, see Corollary \ref{struct}. We also need to utilize the multiplier theory of Ionescu and Wainger \cite{IW} to maintain the separation of major and minor arcs during this process, see Proposition \ref{propdual}.
Then combine the latter with recent linear $L^p$-improving estimates on $\Z$ by Han--Kova{\v c}--Lacey--Madrid--Yang \cite{HKLMY} (see also Dasu--Demeter--Langowski \cite{DDL}) to relax the hypotheses to $\ell^2$, see Corollary \ref{struct-iii}.   The final conclusion of this analysis is the single-scale minor arc estimate in Theorem \ref{improv}, which roughly speaking (with the notation from \eqref{log-scale}) asserts that
\begin{align}
\label{eq:15}
\|A_N(f, g)\|_{\ell^1}\lesssim (2^{-O(l)} + \langle \Log N \rangle^{-O(1)}) \|f\|_{\ell^2}\|g\|_{\ell^2}
\end{align}
unless the Fourier transform of $f$ and $g$ are supported on major arcs of width respectively $O(2^lN^{-1})$ and $O(2^{dl}N^{-d})$, (the disparity is due to the different degrees in the polynomials $\nn, P(\nn)$). Inequality \eqref{eq:15} can be thought of as a bilinear variant of inequality \eqref{eq:13}, which was derived from
classical Weyl's inequality \cite[Lemma 20.3, p. 462]{IK}. This bilinear inequality \eqref{eq:15} is a very useful result that we will apply repeatedly in our arguments.

\subsubsection{Major arcs estimates: a first glimpse}
One can now restrict attention to major arcs, in which $f$ has Fourier support supported at combinations $\alpha + \theta \mod 1$ of ``arithmetic frequencies'' $\alpha \in \Q/\Z$ and ``continuous frequencies'' $\theta \in \R$.  The ``height'' of the arithmetic frequency $\alpha$ will be bounded by some threshold $2^{l_1}$, and the magnitude $|\theta|$ of the continuous frequency will similarly be bounded by some threshold $2^{k_1}$ for some large negative $k_1$.  With some additional effort, $g$ can similarly be restricted to major arc frequencies that are the combination of an arithmetic frequency of height at most $2^{l_2}$ and a continuous frequency of magnitude at most $2^{k_2}$.  Naively, the height of an arithmetic frequency $\alpha = \frac{a}{q} \mod 1$ with $(a,q)=1$ might be defined to equal $q$ (or $\inf \{ 2^l: q \leq 2^l\}$, if one wishes to view height as a dyadic integer).  However for technical reasons it is often more convenient to replace this naive notion of height with a more complicated variant of height implicitly introduced by Ionescu and Wainger \cite{IW} that enjoys better multiplier theory (the losses incurred here are only polynomial in $l$ rather than exponential); see Appendix \ref{iw-app}.  In order to decouple the continuous aspects of the analysis from the arithmetic aspects, it turns out to be convenient to embed the integers $\Z$ into the adelic integers\footnote{One could also work with various projections $\R \times \Z/Q\Z$ of the adelic integers, which amounts to requiring a common denominator $Q$ to the arithmetic frequencies being used; but the adelic formalism is cleaner in that it automatically handles uniformity in the $Q$ parameter.  Also we believe it lends some conceptual clarity to the strategy of separating the continuous and arithmetic aspects of the analysis.} $\A_\Z \coloneqq \R \times \hat \Z = \R \times \prod_p \Z_p$; this embedding $\iota \colon \Z \to \A_\Z$ is the Fourier adjoint of the addition map $\pi \colon \R \times \Q/\Z \to \TT$ defined by $\pi(\theta,\alpha) \coloneqq \alpha+\theta$ that was implicitly used to define major arcs.  The advantage of working in the adelic framework is that several key linear and bilinear Fourier symbols on the integers, when transferred to the adelic integers, can be treated in a fairly unified way and can be  cleanly decomposed or approximated into simpler symbols that exhibit a useful tensor product structure, so that the continuous and arithmetic aspects of the symbols involved become almost completely decoupled; see also Figures \ref{fig:phys}, \ref{fig:freq}.

\subsubsection{Major arcs estimates: paraproduct-type decomposition}
The objective is now to obtain, for a given choice of height scales $l_1,l_2$, variational bounds on the average $A_N(f,g)$ under the assumption that $f,g$ have Fourier supports associated to major arcs of heights $2^{l_1}, 2^{l_2}$ respectively, with the bounds enjoying exponential decay in the parameter $l \coloneqq \max(l_1,l_2)$.  At a given scale $N$, one can use the Ionescu--Wainger multiplier theory to restrict the Fourier transform of $f$ to major arcs of width about $2^{l} N^{-1}$, and similarly restrict the Fourier transform of $g$ to major arcs of width about $2^{dl} N^{-d}$ (as before the disparity is due to the different degrees in the polynomials $\nn, P(\nn)$).  For any given scale $N$, Theorem \ref{improv} gives the desired exponential gain in $l$; the problem is how to sum in $N$. To overcome this difficulty we perform a certain paraproduct decomposition \eqref{FN-def}, \eqref{GN-def} centered around a finite number of (arithmetic) frequencies. This  contrasts sharply with the classical theory of paraproducts that are centered at the frequency origin.  Here, again an indispensable role is played by the Ionescu--Wainger projections \eqref{iw} and \eqref{iw-2}, which will allow us to control ``low-low'', ``low-high'', ``high-low'' and ``high-high'' paraproducts by employing the methods from continuous harmonic analysis. 

\subsubsection{Major arcs estimates: ``low-low'' case and ``small scales''}
For sake of exposition let us initially focus on the ``low-low'' case when one can restrict the width of the major arcs further to $2^{-u} N^{-1}$ and $2^{-du} N^{-d}$ where $u$ is moderately large (about $2^{\rho l}$ for some small constant $\rho$).  The argument then splits into the treatment of ``small scales'' $2^u < N < 2^{2^u}$ and ``large scales'' $N > 2^{2^u}$. (The contribution of extremely small scales $N \leq 2^{u}$ can be easily discarded, thanks to the exponential decay factors present in the single scale estimates). For small scales,  in the linear theory we used the Rademacher--Menshov type inequality \cite{MST}, which was quite efficient. Here, due to the bilinear nature of the problem the situation is much more complicated. We begin with performing some Fourier-analytic approximations at the adelic integer level, analogous to \eqref{approx-factor}, to replace averages such as $A_N(f,g)$ with an expressions of the form $B(f_N,g_N)$, where the bilinear operator $B$ is now independent of $N$. This is the key idea of the major arcs analysis, which  is encapsulated in the model estimate \eqref{other-small} of  Theorem \ref{model-est}. The same idea is also exploited in the ``large scales'' to establish estimate \eqref{all-all-large} of  Theorem \ref{model-est}.   After these approximations, we use a two-parameter Rademacher--Menshov argument and Khinchine's inequality to reduce the variational estimates to a single scale estimates; such arguments lose factors that are essentially logarithmic in the number of scales, which in the small scale regime gives a loss of $u^{O(1)}$, but this is acceptable thanks to the exponential gains in $l$, which again can be  derived from \eqref{eq:15}.

\subsubsection{Major arcs estimates: ``low-low'' case and ``large scales''} At large scales, the major arcs become extremely narrow, so much so that the arithmetic frequencies at the center of these arcs can be given a common denominator $Q$ with $\frac{1}{Q}$ much larger than the width of these arcs.  In this regime it becomes possible to use a quantitative version of the Shannon sampling theorem (Theorem \ref{Sampling}) to transfer from the integers $\Z$ to the adelic integers $\A_\Z = \R \times \hat \Z$ while essentially preserving all function space norms of interest. The behaviour in the continuous variable $\R$ is relatively tractable due to the Ionescu--Wainger multiplier theory and \cite{MST}. The main difficulty is to understand the nature of the associated ``arithmetic'' average $A_{\hat \Z}$ on the profinite integers $\hat \Z$, which is a compact commutative ring.  By some use of $p$-adic methods (see Appendix \ref{sec:app1}), we will obtain a non-trivial $L^p$-improving estimate for this average, while from yet another invocation of Theorem \ref{improv} we will also obtain exponential decay in $l$ for these averages (for the $L^2$ theory at least, and the remaining cases can then be treated by interpolation).  By combining these estimates with some general manipulation of variational norms, and also relying primarily on a vector-valued version of L\'epingle's inequality from \cite{MSZ1} to handle the variational behavior in the continuous variable $\R$, we can obtain acceptable control on the contribution of the large scales.

\subsubsection{Major arcs estimates: remaining cases}

The other cases (``high-high'', ``low-high'', ``high-low'') can be treated by modifications of the method; the main new difficulties are to obtain some additional decay when one is relatively far from the arithmetic frequencies at the center of the major arcs (that is to say, when the continuous component of the frequency is large).  By interpolation one only needs to obtain this decay for the $\ell^2$ theory.  In the ``high-high'' case one can obtain such a decay using Theorem \ref{improv} once again, exploiting almost orthogonality in order to sum over scales $N$.  In the remaining ``low-high'' and ``high-low'' cases we will obtain the required decay by applying an elementary integration by parts to a certain bilinear symbol associated to the averaging operation $A_N$ (see Lemma \ref{integ-ident}).  On the  other hand, this decay is at risk of being overwhelmed by the increased oscillations present in the symbol.  To avoid this we use shifted Calder\'on--Zygmund theory (see Appendix \ref{shift-app}), of the type used for instance in \cite{Lie2}, that allows one to handle certain types of oscillating Fourier multipliers losing only acceptable logarithmic factors in the estimates. The idea of shifted maximal estimates was also recently exploited in \cite{IMMS} in the context of establishing of pointwise ergodic theorems for the polynomial averages on nilpotent groups; and it seems to be decisive in problems when the operators in question cannot be interpreted as convolution operators corresponding to an abelian convolution. 

\subsubsection{Final remarks}
Finally, we emphasize that the proof of Theorem \ref{main} can also be adapted (and simplified) to give an alternate proof of Theorem \ref{lin-poly} (but in which one only controls the long variation rather than the full variation).  We sketch the changes needed to the argument as follows.  The exponent $p_1$ is now fixed to equal $\infty$ (so that $p=p_2$), and the first function $f$ is fixed to equal $1$ (which allows for several simplifications, for instance the parameter $l_1$ can be taken to be $0$, and $s_1$ can be taken to be $-u$).  All appearances of $\ind{p_1=p_2=2}$ are now  replaced by $\ind{p_1=\infty,p_2=2}$.  Various linear estimates, such as Ionescu--Wainger multiplier estimates, shifted Calder\'on--Zygmund estimates, and Lepingle's inequality, do not hold in general at the $\ell^\infty$ endpoint, but are trivially true when applied to the specific function $f=1$ in $\ell^\infty$, so this does not cause difficulty.  Theorem \ref{improv} needs to be modified to an $\ell^\infty \times \ell^2 \to \ell^2$ estimate with $f = 1$, but in this case the required gain of $2^{-cl} + \langle \Log N \rangle^{-cC_1}$ is immediate from Plancherel's theorem and Weyl sum estimates \cite[Lemma 20.3, p. 462]{IK}, thus avoiding the need to invoke the Peluse--Prendiville theory.

\subsection{Open questions}
While our main interest is in averaging operators on the integers $\Z$, in the course of our arguments it became natural to also consider the analogous averaging operators on other locally compact abelian domains such as $\R, \Z/Q\Z, \R \times \Z/Q\Z, \Z/p^j\Z, \Z_p, \hat \Z$, and $\A_\Z$, with the adelic integers $\A_\Z$ playing a particularly central role, at least on a conceptual level; see Figure \ref{fig:phys}.  The connection can be summarized by the slogan
$$ \hbox{Major arc analysis on } \Z \approx \hbox{Low frequency analysis on } \A_\Z,$$
where ``low frequency'' has to be interpreted in both a continuous and arithmetic sense; see Figure \ref{fig:major-l2}.  In particular, the adelic averaging operators $A_{N,\A_\Z}$ defined in \eqref{adelic-average} emerge as a simplified model for the integer averaging operators $A_{N,\Z}$, and further investigations into similar problems in discrete harmonic analysis may wish to begin by first understanding adelic models of such problems, particularly in ``true complexity zero'' situations in which one suspects that the major arc contributions are dominant or equivalently that the minor arc contribution is negligible. In fact, the method of proof of Theorem \ref{main} relies in an essential way on the negligibility of the minor arc contribution; in the language of additive combinatorics, this reflects the fact that the pattern $(x,x-n,x-P(n))$ has ``true complexity zero'' in the sense of Gowers and Wolf \cite{GW}.  In the language of ergodic theory, the corresponding assertion is that the minimal characteristic factor of the averages $A^{\nn,P(\nn)}_N$ is the rational Kronecker (or profinite) factor ${\mathcal K}_{\mathrm{rat}}$ generated by the periodic functions.

\medskip

We close our introduction with some questions relating to Theorem \ref{main} that remain open.

\begin{enumerate}
\item  Does Theorem \ref{main} continue to hold if one of $p_1,p_2$ is allowed to be infinite?  Certainly from Theorem \ref{lin-poly} the maximal inequality (Theorem \ref{main}(iii)) will still hold if one or both of $p_1,p_2$ are infinite, but the situation for the other parts of the theorem are less clear (except in the special case where $p_1=\infty$ and $f$ is constant, or $p_2=\infty$ and $g$ is constant).  Given the ability to break duality, the endpoints $p_1=1$, $p_2=1$ could also be investigated.
\item  Is the analogue of Theorem \ref{main}(iv) true for the full variation, in which the lacunarity hypothesis on $\D$ is omitted?  Equivalently, can the implied constant in \eqref{var-poly-main} be made uniform in $\lambda$?  The problem is likely to be significantly simpler if the sharp truncation $\ind{n \leq N}$ implicit in the definition of the averages $A^{\nn,P(\nn)}_N$ is replaced by a smoother weight.  Note that the linear analogue of this question was already resolved in Theorem \ref{lin-poly}(iv).
\item  To what extent can the results in Theorem \ref{main} extend to other bilinear averages $A^{P_1(\nn), P_2(\nn)}_N$, or more ambitiously to multilinear averages
$A^{P_1(\nn), \dots, P_k(\nn)}_N$? We refer to Bergelson's surveys  \cite[Question 9, pp. 52]{Ber1}, \cite[\S 6, pp. 838]{Ber2}.  It is not difficult to adapt Theorem \ref{main} to cover averages $A^{P_1(\nn), P_2(\nn)}_N$ in which one of the $P_1,P_2$ is linear (i.e., of degree $1$) and the other is non-linear, however when both $P_1,P_2$ are non-linear a refinement of the Peluse--Prendiville theory may be required.  We hope to investigate these averages in future work.
\item    Is there some analogue of these methods that can cover patterns of higher complexity?  A natural first step would be to recover some portion of Theorem \ref{twolin} (which has ``true complexity one'' in the Gowers--Wolf \cite{GW} sense) by these methods.
\item What are explicit ranges of exponents $p_1,p_2$ for which one can ``break duality'' with in Theorem \ref{main}? In the model case $p_1=p_2$ (so that $p=p_1/2=p_2/2$), Lemma \ref{single-below} suggests that one should be able to take $p$ in the range $p > 1 - \frac{1}{d^2+d-1}$, or even $p > 1 - \frac{1}{2d}$ in the $d=2$ case, with the latter range also expected if \cite[Conjecture 1.5]{HKLMY} holds. It should also be possible to recover the optimal range $r>2$ of the variational exponent $r$ below the line of duality (our current arguments incur a loss in this parameter that depends on how close $(1/p_1,1/p_2)$ is to $(1/2,1/2)$).
\item Theorem \ref{main}(iv) gives variational estimates in $\V^r$ norms for $r > 2$, and in Section \ref{quad-var} the $r=2$ endpoint is shown to be false.  However, there still remains the question of whether a jump inequality (analogous to Doob's inequality for martingales) is true at the $r=2$ endpoint.  Such endpoint jump inequalities were established in \cite{MSZ3} for linear polynomial averages on $\Z^k$.
\item Theorem \ref{main} was focused on unweighted averages $$A_N^{\nn,P(\nn)}(f,g)(x) = \E_{n \in [N]} T^n f(x) T^{P(n)} g(x),$$ but one can pose similar questions\footnote{One could also consider fractional integral type expressions $\E_{n \in [N]} (n/N)^{-\alpha} T^n f(x) T^{P(n)} g(x)$ for $0 < \alpha < 1$, but these can be easily expressed as linear combinations of the unweighted averages via summation by parts and so would be expected to obey nearly identical estimates to those averages.} for the truncated singular integral analogue $\sum_{0 < |n| < N} \frac{1}{n} T^n f(x) T^{P(n)} g(x)$; currently only single-scale super-H\"older estimates are known \cite{Dong}.  In the linear setting (resp. the bilinear setting for two linear polynomials), the theory for the averages and the truncated singular integrals are similar; see \cite{MSZ3} (resp. \cite{LAC}).  Bounds on the (untruncated) bilinear continuous singular integrals were obtained in \cite{Li}, \cite{Lie}, \cite{LiXi}, \cite{Lie2}. 
\item To what extent do the implied constants in Theorem \ref{main} depend on the coefficients of $P$?  The estimate in \cite[Theorem 1.6]{HKLMY} suggests that the dependence of constants is at worst polynomial; on the other hand, \cite[Corollary 1.15]{MSZ3} suggests that one may be able to obtain bounds uniform in the coefficients, by lifting the problem to $\Z^d$ and establishing an analogue of Theorem \ref{main} in that setting.  (However, this latter strategy would require a multidimensional version of the theory of Peluse and Prendiville, which may be highly nontrivial.) We also hope to investigate the latter multidimensional strategy in future work.
\item Can the results here on the (rational) integers $\Z$ be extended to rings of integers in more general number fields, such as the ring $\Z[i]$ of Gaussian integers?  Certainly the adelic formalism is exceptionally well adapted to this setting \cite{Tate}, but other components of the argument may require significantly more effort to generalize appropriately.
\item Assuming that $P\in\R[\nn]$, it also makes sense to ask whether Theorem \ref{main} holds with the averages $A_N^{\nn, \lfloor P(\nn)\rfloor}(f, g)$ in place of $A_N^{\nn, P(\nn)}(f, g)$. This kind of question for linear polynomial averages was considered by Bourgain in \cite{B3}. One could also replace the polynomial $P$ with elements of other Hardy fields, in the spirit of \cite{BKQRW, BW}, or by random functions of polynomial growth, in the spirit of \cite{FLW}.  In fact these variants may be simpler than the polynomial case, as the only major arc that is expected to be significant is the one centered at the origin.
\item As mentioned previously, there is a well-developed theory of characteristic factors for the limiting values of non-conventional polynomial averages $A_N(f_1,\dots,f_k)$ when the functions $f_1,\dots,f_k$ lie in $L^\infty(X)$ and $X$ has finite measure; see \cite{Ber1}, \cite{Ber2}, \cite{Fra}.  To what extent does this theory extend to other $L^p$ spaces and to the case when $X$ is merely $\sigma$-finite, for instance for the average $A_N^{\nn,P(\nn)}(f,g)$ studied in Theorem \ref{main}?
\end{enumerate}

\subsection*{Acknowledgments}
We thank Sarah Peluse and Sean Prendiville for several helpful discussions about their inverse theory and for sharing some unpublished notes. We also thank Jim Wright for sharing his unpublished notes on $L^q$-improving estimates in $p$-adic fields $\QQ_p$, and for helpful comments and corrections. We thank Vitaly Bergelson and Bryna Kra for the discussion about the history of Problem 11 from Frantzikinakis' open problems survey \cite{Fra}. We also thank Jaume de Dios and Dariusz Kosz and Wojciech S{\l}omian for further corrections.   Finally, we thank the referees for careful reading of the manuscript and useful remarks that led to the improvement of the presentation.

\section{Notation}\label{notation-sec}

In this section we set out some basic notation used throughout the paper.

\subsection{Elementary number theory}

We use $\Z_+ \coloneqq \{1,2,\dots\}$ to denote the positive integers and $\N \coloneqq \{0,1,2,\dots\}$ to denote the natural numbers.  For any $N > 0$, $[N]$ denotes the discrete interval $[N] \coloneqq \{ n \in \Z_+: n \leq N \}$.  The set $\{2,3,5,\dots\}$ of all prime numbers will be denoted by $\PP$. 
If $q_1,q_2 \in \Z_+$, we write $q_1|q_2$ if $q_1$ divides $q_2$. If $a,q \in \Z_+$, we let $(a,q)$ denote the greatest common divisor of $a$ and $q$. We let $[q]^\times \coloneqq \{ a \in [q]: (a,q) = 1 \}$ denote the elements of $[q]$ that are coprime to $q$.

\subsection{Magnitudes and asymptotic notation}

We use the Japanese bracket notation
$$ \langle x \rangle \coloneqq (1 + |x|^2)^{1/2}$$
for any real or complex $x$.  We use $\lfloor x \rfloor$ to denote the greatest integer less than or equal to $x$. All logarithms in this paper will be to base $2$, and for any $N \geq 1$ we define the \emph{logarithmic scale} $\Log N$ of $N$ by the formula
\begin{equation}\label{log-scale}
 \Log N \coloneqq \lfloor \log N \rfloor
 \end{equation}
thus $\Log N$ is the unique natural number such that $2^{\Log N} \leq N < 2^{\Log N+1}$.

For any two quantities $A, B$ we will write
$A \lesssim B$, $B \gtrsim A$, or $A = O(B)$ to denote the bound
$|A| \leq CB$ for some absolute constant $C$.  If we need the implied constant $C$ to depend on additional parameters we will denote this by subscripts, thus for instance $A \lesssim_\rho B$ denotes the bound $|A| \leq C_\rho B$ for some $C_\rho$ depending on $\rho$.  We write $A \sim B$ for $A \lesssim B \lesssim A$.  To abbreviate the notation we will sometimes explicitly permit the implied constant to depend on certain fixed parameters (such as the polynomial $P$) when the issue of uniformity with respect to such parameters is not of relevance.

\subsection{Averages, indicators, and cutoffs}\label{cutoff-sec}

We use the averaging notation
\begin{align}
\label{eq:12}
 \E_{n \in A} f(n) \coloneqq \frac{1}{\# A} \sum_{n \in A} f(n)
\end{align}
for any finite non-empty set $A$, where $\# A$ denotes the cardinality of $A$; in other words, $\E_{n \in A} f(n)$ is the integral of $f$ against normalized counting measure on $A$.  Note in particular that $\E_{n \in [N]} f(n) = \frac{1}{N} \sum_{n=1}^N f(n)$ when $N \in \Z_+$.  We use $\ind{E}$ to denote the indicator function of a set $E$. Similarly, if $S$ is a statement, we use $\ind{S}$ to denote its indicator, equal to $1$ if $S$ is true and $0$ if $S$ is false. Thus for instance $\ind{E}(x) = \ind{x \in E}$.

Throughout this paper we fix a cutoff function $\eta \colon \R \to [0,1]$ that is a smooth even function supported on $[-1,1]$ that equals one on $[-1/2,1/2]$.  All constants are permitted to depend on $\eta$.  For any $k \in \Z$, we let $\eta_{\leq k} \colon \R \to [0,1]$ denote the rescaled version
\begin{equation}\label{eta-resc}
\eta_{\leq k}(\xi) \coloneqq \eta(\xi/2^k)
\end{equation}
of $\eta$.

\subsection{Function spaces}

All vector spaces in this paper will be over the complex numbers $\C$.

If $T \colon V \to W$ is a continuous linear map between normed vector spaces $V, W$, we use $\|T\|_{V \to W}$ to denote its operator norm. If $B \colon V_1 \times V_2 \to W$ is a continuous bilinear map between normed vector spaces $V_1,V_2,W$, we similarly use $\|B\|_{V_1 \times V_2 \to W}$ to denote its operator norm.

If $(X,\mu)$ is a measure space, we let $L^0(X)$ be the space of all $\mu$-measurable
complex-valued functions defined on $X$, with the usual convention of identifying functions that agree $\mu$-almost everywhere. The space of all functions in $L^0(X)$ whose modulus is
integrable with $p$-th power is denoted by $L^p(X)$ for
$p\in(0, \infty)$, whereas $L^{\infty}(X)$ denotes the space of all
essentially bounded functions in $L^0(X)$. If $1 \leq p \leq \infty$ is an exponent, the dual exponent $1 \leq p' \leq \infty$ is defined by the usual relation $1/p + 1/p' = 1$.  When $X$ is endowed with counting measure, we will abbreviate $L^p(X)$ to $\ell^p(X)$ or even $\ell^p$.

We can extend these notions to functions taking values in a finite dimensional normed vector space $V = (V, \|\cdot\|_V)$, for instance $L^0(X;V)$ is the space of measurable functions from $X$ to $V$ (up to almost everywhere equivalence), and
\begin{align}
\label{eq:11}
L^{p}(X;V)
\coloneqq \left\{F\in L^0(X;V):\|F\|_{L^{p}(X;V)} \coloneqq \left\|\|F\|_V\right\|_{L^{p}(X)}<\infty\right\}.
\end{align}
One can extend these notions to infinite-dimensional $V$, at least if $V$ is separable, but we will almost always be able to work in finite-dimensional settings (or can quickly reduce to such a setting by a standard approximation argument).

For any finite dimensional normed vector space $(B,\|\cdot\|_B)$ and any sequence
 $(\mathfrak a_t)_{t\in\I}$ of elements of $B$ indexed by a totally
 ordered set $\I$, and any exponent $1 \leq r < \infty$, the
 $r$-variation seminorm is defined by the formula
 \begin{equation}\label{var-seminorm}
  \| (\mathfrak a_t)_{t \in \I} \|_{V^r(\I; B)} \coloneqq
 \sup_{J\in\Z_+} \sup_{\substack{t_{0} \leq \dotsb \leq t_{J}\\ t_{j}\in\I}}
\Big(\sum_{j=0}^{J-1}  \|\mathfrak a(t_{j+1})-\mathfrak a(t_{j})\|_B^{r} \Big)^{1/r},
 \end{equation}
where the  supremum is taken over all finite increasing sequences in $\mathbb I$, and is set by convention to equal zero if $\I$ is empty.  Taking limits as $r \to \infty$ we also adopt the convention
 \begin{align*}
 \| (\mathfrak a_t)_{t \in \I} \|_{V^\infty(\I;B)} \coloneqq \sup_{t \leq t' \in \I} \|\mathfrak a(t') - \mathfrak a(t)\|_B.
 \end{align*}

The $r$-variation norm for $1 \leq r \leq \infty$ is defined by
\begin{equation}\label{vardef}
  \| (\mathfrak a_t)_{t \in \I} \|_{\V^r(\I; B)} 
\coloneqq \sup_{t\in\I}\|\mathfrak a_t\|_B+
\| (\mathfrak a_t)_{t \in \I} \|_{V^r(\I;B)}.
\end{equation}
This clearly defines a norm on the space of functions from $\I$ to $B$.
If $B=\C$, then we will abbreviate $V^r(\I;X)$ to $V^r(\I)$ or $V^r$, and $\V^r(\I;X)$ to $ \V^r(\I)$ or $\V^r$. If $(X,\mu)$ is a measure space, then  using \eqref{vardef} and  \eqref{eq:11}, one can explicitly write
\[
L^p(X;\V^r)=\left\{F\in L^0(X;\V^r):\|F\|_{L^{p}(X;\V^r)} \coloneqq \left\|\|F\|_{\V^r}\right\|_{L^{p}(X)}<\infty\right\}.
\]

Note that the $\V^r$ norm is non-decreasing in $r$, and comparable to the $\ell^\infty$ norm when $r=\infty$. We also observe the simple triangle inequality
\begin{equation}\label{simple}
\| (\mathfrak a_t)_{t \in \I} \|_{\V^r(\I;X)} \lesssim \| (\mathfrak a_t)_{t \in \I_1} \|_{\V^r(\I_1;X)} + \| (\mathfrak a_t)_{t \in \I_2} \|_{\V^r(\I_2;X)} 
\end{equation}
whenever $\I = \I_1 \uplus \I_2$ is an ordered partition of $\I$, thus $t_1<t_2$ for all $t_1 \in \I_1, t_2 \in \I_2$.  In a similar spirit we have the bound
\begin{equation}\label{varsum}
 \| (\mathfrak a_t)_{t \in \I} \|_{\V^r(\I;X)} \lesssim \| (\mathfrak a_t)_{t \in \I} \|_{\ell^r(\I;X)} 
 \leq \| (\mathfrak a_t)_{t \in \I} \|_{\ell^1(\I;X)}. 
 \end{equation}
From H\"older's inequality one easily establishes the algebra property
\begin{equation}\label{var-alg}
\| (\mathfrak a_t \mathfrak b_t)_{t \in \I} \|_{\V^r} \lesssim \| (\mathfrak a_t)_{t \in \I} \|_{\V^r} \| (\mathfrak b_t)_{t \in \I} \|_{\V^r}
\end{equation}
for any scalar sequences $(\mathfrak a_t)_{t \in \I}$, $(\mathfrak b_t)_{t \in \I}$.

\subsection{Tensor products}

Given two functions $f \colon X \to \C$, $g \colon Y \to \C$, we define their tensor product $f \otimes g \colon X \to Y \to \C$ by the formula
$$ f \otimes g(x,y) \coloneqq f(x) g(y).$$
One can also define the formal tensor product $f \otimes g$ of elements $f \in V$, $g \in W$ of abstract vector spaces $V,W$, which takes values in the algebraic tensor product $V \otimes W$.  By abuse of notation, we identify these two notions of tensor product.

If $T_1 \colon V_1 \to W_1$, $T_2 \colon V_2 \to W_2$ are linear maps, we define the tensor product $T_1 \otimes T_2 \colon V_1 \otimes V_2 \to W_1 \otimes W_2$ as the unique linear map such that
\begin{equation}\label{tensor-1}
T_1 \otimes T_2 (f_1 \otimes f_2) = (T_1 f_1) \otimes (T_2 f_2)
\end{equation}
whenever $f_1 \in V_1, f_2 \in V_2$.
Similarly, if $B_1 \colon U_1 \times V_1 \to W_1$ and $B_2 \colon U_2 \times V_2 \to W_2$ are bilinear maps, we define $B_1 \otimes B_2 \colon (U_1 \otimes U_2) \times (V_1 \otimes V_2) \to W_1 \otimes W_2$ to be the unique bilinear map such that
\begin{equation}\label{bil-tensor}
B_1 \otimes B_2(f_1 \otimes f_2, g_1 \otimes g_2) = B_1(f_1,g_1) \otimes B_2(f_2,g_2)
\end{equation}
whenever $f_1 \in U_1, g_1 \in V_1, f_2 \in U_2, g_2 \in V_2$.  This algebraic tensor product can often be extended to analytic settings.  For instance, if $T_1 \colon L^p(X_1) \to L^q(Y_1)$ and $T_2 \colon L^p(X_2) \to L^q(Y_2)$ are integral operators of the form
$$ T_1 f_1(y_1) = \int_{X_1} K_1(x_1,y_1) f_1(x_1)\ d\mu_{X_1}(x_1)$$
and
$$ T_2 f_2(y_2) = \int_{X_2} K_2(x_2,y_2) f_2(x_2)\ d\mu_{X_2}(x_2)$$
one can define $T_1 \otimes T_2 \colon L^p(X_1 \times X_2) \to L^q(Y_1 \times Y_2)$ (formally, at least) by
$$ (T_1 \otimes T_2) f(y_1,y_2) = \int_{X_1 \times X_2} K_1(x_1,y_1) K_2(x_2,y_2) f(x_1,x_2)\ d\mu_{X_1}(x_1) d\mu_{X_2}(x_2).$$
We claim the multiplicativity property
\begin{equation}\label{ttensor}
\| T_1 \otimes T_2 \|_{L^p(X_1 \times X_2) \to L^q(Y_1 \times Y_2)} = \|T_1 \|_{L^p(X_1) \to L^q(Y_1)} \|T_2 \|_{L^p(X_2) \to L^q(Y_2)},
\end{equation}
in the case\footnote{There is another case where \eqref{ttensor} holds, namely when $q \geq p$ and no non-negativity hypothesis is assumed, by factoring $T_1 \otimes T_2 = (T_1 \otimes \mathrm{id}) \circ \mathrm{id} \circ (\mathrm{id} \otimes T_2)$ and establishing the inequalities $\| \mathrm{id} \otimes T_2 \|_{L^p(X_1 \times X_2) \to L^p(X_1; L^q(Y_2))} \leq \|T_2 \|_{L^p(X_2) \to L^q(Y_2)}$, $\| \mathrm{id} \|_{L^p(X_1; L^q(Y_2)) \to L^q(Y_2; L^p(X_1))} \leq 1$, and $\| T_1 \otimes \mathrm{id} \|_{L^q(Y_2; L^p(X_1)) \to L^q(Y_1 \times Y_2)} \leq \|T_1 \|_{L^p(X_1) \to L^q(Y_1)}$. However, this argument does not easily extend to the bilinear case, which is the case of most interest to us.}
where one of the kernels (say $K_1$) is non-negative, and assuming  $X_1,X_2,Y_1,Y_2$ are $\sigma$-finite with positive measure to avoid degeneracies, by the following argument.   The lower bound is clear by testing $T_1 \otimes T_2$ on tensor products $f_1 \otimes f_2$, so we focus on the upper bound (which is what is needed in our applications).  If $f \in L^p(X_1 \times X_2)$, we have
$$ (T_1 \otimes T_2) f(y_1,y_2) = \int_{X_1} K_1(x_1,y_1) T_2(f_{x_1})(y_2)\ d\mu_{X_1}(x_1)$$
where $f_{x_1} \colon x_2 \mapsto f(x_1,x_2)$ denotes the slice of $f$, hence for any $y_1 \in Y_1$ and by the non-negativity of $K_1$ we have
$$ \| (T_1 \otimes T_2) f(y_1,\cdot)\|_{L^q(Y_2)} \leq  \|T_2 \|_{L^p(X_2) \to L^q(Y_2)} \int_{X_1} K_1(x_1,y_1) \|f_{y_1}\|_{L^p(X_1)}\ d\mu_{X_1}(x_1).$$
Taking $L^q(Y_1)$ norms of both sides and using the Fubini--Tonelli theorem, we conclude that
$$ \| (T_1 \otimes T_2) f\|_{L^q(Y_1 \times Y_2)} \leq \|T_1 \|_{L^p(X_1) \to L^q(Y_1)} \|T_2 \|_{L^p(X_2) \to L^q(Y_2)} \|f\|_{L^p(X_1 \times X_2)},$$
giving the claim. An analogous argument gives the identity
\begin{multline}\label{ttensor-bil}
\| B_1 \otimes B_2 \|_{L^p(X_1 \times X_2) \times L^q(Y_1 \times Y_2) \to L^r(Z_1 \times Z_2)} \\
= \|B_1 \|_{L^p(X_1) \times L^q(Y_1) \to L^r(Z_1)} \|B_2 \|_{L^p(X_2) \times L^q(Y_2) \to L^r(Z_2)}
\end{multline}
for tensor products of bilinear operators, with (say) $B_1$ arising from a non-negative kernel, again assuming all spaces $\sigma$-finite with positive measure to avoid degeneracies.

\section{Transferring to the integer shift}\label{transfer-sec}

In this section we perform three standard and general reductions for our problem:

\begin{itemize}
    \item[(i)] By standard limiting arguments, we show that long variational estimates, such as the one in Theorem \ref{main}(iv), are sufficient to establish maximal inequalities, norm convergence, and pointwise almost everywhere convergence.  Thus we can focus exclusively on variational estimates in the sequel.
    \item[(ii)]  We apply the Calder\'on transference principle (see e.g., \cite[Appendix A]{DTT}) to transfer the long variational estimates to the integer shift system $\Z = (\Z, \mu_\Z, T_\Z)$.  As mentioned in the introduction, this allows us to exploit the Fourier-analytic structure of $\Z$ (and eventually, $\A_\Z$ as well).
    \item[(iii)]  We use a telescoping argument to replace the averaging operator 
    $$ A^{P_1,\dots,P_k}_N(f_1,\dots,f_k)(x) = \E_{n \in [N]} f_1(T^{P_1(n)} x) \dots f_k(T^{P_k(n)} x)$$
    with the upper half\footnote{One could also work with the normalized upper half $\frac{\lfloor N\rfloor}{\lfloor N/2\rfloor} \tilde A_N$ here if desired, though it makes little difference to the subsequent arguments other than adjusting a few constants by a factor of two.}
    \begin{equation}\label{tan-def}
    \tilde A_N^{P_1,\dots,P_k}(f_1,\dots,f_k)(x) = \E_{n \in [N]} f_1(T^{P_1(n)} x) \dots f_k(T^{P_k(n)} x) \ind{n>N/2}.
    \end{equation}
    This technical reduction is convenient as it allows one to avoid the stationary points of the polynomials $P_1,\dots,P_k$ (in particular, we get good lower bounds on the first derivatives of these polynomials).
\end{itemize}

These reductions are available for arbitrary non-conventional averages, not just for the bilinear averages $A^{\nn,P(\nn)}_N$ treated in this paper, so we give these reductions in the general setting.

\begin{proposition}[General reductions]\label{transf}
Let $(X,\mu,T)$ be a measure-preserving system, let $P_1(\nn),\dots,P_k(\nn) \in \Z[\nn]$, let $0 < p_1,\dots,p_k,p < \infty$, and let $1 \leq r<\infty$.
\begin{itemize}
    \item [(i)]  (Reduction to variational estimate) Suppose one has the variational estimate
    \begin{equation}\label{Var-est}
        \| (A^{P_1,\dots,P_k}_N(f_1,\dots,f_k))_{N \in \D} \|_{L^p(X; \V^r)} \lesssim_{p_1,\dots,p_k,p,P_1,\dots,P_k,r,\lambda} \|f_1\|_{L^{p_1}(X)} \dots \|f_k\|_{L^{p_k}(X)}
    \end{equation}
    for all $\lambda > 1$ and $f_i \in L^{p_i}(X)$, $i=1,\dots,k$, and all finite $\lambda$-lacunary subsets $\D$ of $[1,+\infty)$.  Then one has the maximal inequality
    \begin{equation}\label{max-est}
        \| (A^{P_1,\dots,P_k}_N(f_1,\dots,f_k))_{N \in \Z_+} \|_{L^p(X;\ell^\infty)} \lesssim_{p_1,\dots,p_k,p,P_1,\dots,P_k,r} \|f_1\|_{L^{p_1}(X)} \dots \|f_k\|_{L^{p_k}(X)}
    \end{equation}
    and for any $f_i \in L^{p_i}(X)$, $i=1,\dots,k$, the averages $A^{P_1,\dots,P_k}_N(f_1,\dots,f_k)$ converge pointwise almost everywhere and in $L^p(X)$ norm.
    \item[(ii)]  (Calder\'on transference principle)  Suppose that we are in the H\"older exponent case $\frac{1}{p_1}+\dots+\frac{1}{p_k} = \frac{1}{p}$.  Then in order to establish \eqref{Var-est} for arbitrary measure-preserving systems $X = (X,\mu,T)$, it suffices to show \eqref{Var-est} for the integer shift model $\Z = (\Z,\mu_\Z,T_\Z)$.
    \item[(iii)]  (Telescoping argument)  In order to establish \eqref{Var-est} under the assumptions of (i), it suffices to establish the bound
    \begin{equation}\label{var-half}
    \| (\tilde A^{P_1,\dots,P_k}_N(f_1,\dots,f_k))_{N \in \D} \|_{L^p(X; \V^r)} \lesssim_{p_1,\dots,p_k,p,P_1,\dots,P_k,r,\lambda} \|f_1\|_{L^{p_1}(X)} \dots \|f_k\|_{L^{p_k}(X)}
    \end{equation}
    under the same assumptions, where $\tilde A^{P_1,\dots,P_k}_n$ is defined in \eqref{tan-def}.
\end{itemize}
\end{proposition}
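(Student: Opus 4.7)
The plan is to dispatch the three reductions in the order (iii), (ii), (i); each is essentially standard, so the main task is to record the right identities cleanly. For part (iii), I would partition the range $\{1,\dots,\lfloor N\rfloor\}$ into dyadic blocks $(\lfloor N/2^{j+1}\rfloor,\lfloor N/2^j\rfloor]$ and match each block with a rescaled upper-half average, obtaining
\begin{align*}
A_N(f_1,\dots,f_k) = \sum_{j\ge 0} c_j(N)\,\tilde A_{N/2^j}(f_1,\dots,f_k), \qquad c_j(N):=\frac{\lfloor N/2^j\rfloor}{\lfloor N\rfloor}\lesssim 2^{-j},
\end{align*}
modulo a handful of boundary indices that can be absorbed. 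For any $\lambda$-lacunary $\D$, each dilate $2^{-j}\D$ is again $\lambda$-lacunary, so the $\V^r$-triangle inequality combined with the $L^p$ (quasi-)triangle inequality (using subadditivity of $\|\cdot\|_{L^p}^{\min(p,1)}$ when $p<1$) reduces \eqref{Var-est} for $A_N$ to a geometric sum in $j$ of the hypothesis \eqref{var-half} applied to $2^{-j}\D$.

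For part (ii), I would invoke the standard Calder\'on transference principle. Fix a finite $\lambda$-lacunary $\D$ with largest scale $N_*$, let $d=\max_i\deg P_i$, and take $R\gg N_*^d$. For each $y\in X$, set $F_i^y(n):=f_i(T^{-n}y)$; then $A_{N;X}(f_1,\dots,f_k)(T^{-x}y)=A_{N;\Z}(F_1^y,\dots,F_k^y)(x)$ for all $|x|\le R-O(N_*^d)$ and $N\in\D$. Raising the assumed $\Z$-estimate to the $p$-th power, integrating in $y$ against $\mu$, and using the measure-preservation identity $\int_X |f_i(T^{-n}y)|^{p_i}\,d\mu(y)=\|f_i\|_{L^{p_i}(X)}^{p_i}$ for each $n$, the right-hand side produces $(2R+1)^{p\sum 1/p_i}=(2R+1)$ copies of $\prod_i\|f_i\|_{L^{p_i}(X)}^p$ (using the H\"older identity $\sum 1/p_i=1/p$), while Fubini on the left produces $\sim 2R$ copies of $\|(A_{N;X})_{N\in\D}\|_{L^p(X;\V^r)}^p$. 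Dividing by $2R$ and sending $R\to\infty$ removes the $R$-dependence and yields the estimate on $X$.

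For part (i), the maximal inequality is immediate from the pointwise bound $\sup_{N\in\D}|A_N|\le\|(A_N)_{N\in\D}\|_{\V^r}$ applied on the dyadic set $\{2^j\}_{j\ge 0}$, combined with the crude comparison $|A_N(f_1,\dots,f_k)|\lesssim A_{2^{j+1}}(|f_1|,\dots,|f_k|)$ for $N\in(2^j,2^{j+1}]$ (which follows from $\lfloor 2^{j+1}\rfloor/\lfloor N\rfloor\le 4$) to pass from the dyadic scales to all $N\in\Z_+$. For almost-everywhere convergence, applying the hypothesis to $\{2^j:0\le j\le J\}$ with constants independent of $J$ yields, via monotone convergence, $\|(A_{2^j})_{j\ge 0}\|_{\V^r}<\infty$ almost everywhere, so $(A_{2^j})_j$ is Cauchy and converges a.e.\ to some limit $L$. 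Convergence along the full $\Z_+$ then follows by the standard Banach principle: the maximal inequality reduces matters to verifying convergence on a dense subclass (for instance, after a further Calder\'on-type reduction to $\Z$, compactly supported sequences, on which $A_N\to 0$ pointwise trivially as $N\to\infty$ since the numerator has only $O(1)$ nonzero terms while the denominator is $N$), and $L^p(X)$-norm convergence for $p<\infty$ then follows from pointwise convergence and the maximal function by dominated convergence. The step most likely to require honest care will be (iii), where for $p<1$ the quasi-triangle inequality must be tracked carefully so the geometric series still converges; the rest is essentially bookkeeping.
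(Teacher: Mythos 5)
Your plan for part (ii) is essentially the paper's: form the dual functions $n\mapsto f_i(T^{-n}y)$ on a large box, raise to the $p$-th power, integrate over $X$, and let the box grow, using $\sum 1/p_i=1/p$ to match the $R$-powers. The exposition is loose about restricting $F_i^y$ to a finite box (needed to put them in $\ell^{p_i}(\Z)$), but the argument is the same in substance as the paper's and is correct.

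Part (iii) has a gap, though a small one. Your telescoping identity is right, but the coefficients $c_j(N)=\lfloor N/2^j\rfloor/\lfloor N\rfloor$ are $N$-dependent, so they cannot be pulled out of the $\V^r$ norm as scalars $\lesssim 2^{-j}$. If one tries to use the algebra property \eqref{var-alg} instead, one finds $\|(c_j(N))_{N\in\D}\|_{\V^r}$ is only $O(1)$ (not $O(2^{-j})$), because the total variation of $c_j(N)$ over a lacunary $\D$ is $O(\sum_{N\in\D}1/N)=O(1)$; then the geometric sum in $j$ no longer closes. The fix is the one the paper uses: split $c_j(N)=2^{-j}+O(1/N)$, treat the $2^{-j}$ main term via the $\V^r$ triangle inequality (the coefficient is now constant in $N$), and absorb the $O(1/N)$ error crudely via $\|(a_N)_{N\in\D}\|_{\V^r}\lesssim\sum_{N\in\D}|a_N|$ together with the single-scale bound from \eqref{var-half}.

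Part (i) has the more serious gap, in the passage from almost-everywhere convergence along dyadic scales to convergence along all of $\Z_+$. You propose the Banach principle with the dense subclass being ``compactly supported sequences, after a further Calder\'on-type reduction to $\Z$.'' This does not work: Calder\'on transference transfers norm inequalities, not pointwise convergence statements, and the Banach principle on $X$ requires a dense subclass of $L^{p_1}(X)\times\cdots\times L^{p_k}(X)$ on which $A_N\to$ limit a.e.\ on $X$. There is no such trivial subclass in general (on a probability space $A_N$ certainly does not tend to $0$, so the behavior on $\Z$ is simply different). Indeed, avoiding the need for a dense subclass is precisely the reason variational estimates are used instead of maximal ones in this subject. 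The paper's remedy is to run the hypothesis on the finer lacunary families $2^{\N/s}$ for every $s\in\Z_+$: these all give convergence a.e.\ with the same limit (since $2^\N\subset 2^{\N/s}$), and for $N$ large the nearest element $N'\in 2^{\N/s}$ satisfies $a_N(x)=a_{N'}(x)+O(M(x)/s)$ where $M$ is the maximal function (using non-negativity of the $f_i$ after a reduction). Sending $s\to\infty$ then closes the argument without any dense subclass. You should replace your Banach-principle step with this comparison; the maximal inequality and $L^p$ convergence portions of your part (i) are fine once this is done.
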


Note that all of the reductions in this proposition apply in both the Banach exponent case $p \geq 1$ and the non-Banach exponent case $0 < p < 1$. However, we emphasize that the Calder\'on transference principle (ii) is only available in the H\"older exponent case $\frac{1}{p_1}+\dots+\frac{1}{p_k}=\frac{1}{p}$.

\begin{proof} To simplify the notation we allow all implied constants to depend on $p_1,\dots,p_k,P_1,\dots,P_k,r$.

We begin with (i). Fix $f_1,\dots,f_k$, and abbreviate $A^{P_1,\dots,P_k}_N(f_1,\dots,f_k)(x)$ as $a_N(x)$ for any $N \geq 1$.
For any $s \in \Z_+$, introduce the $2^{1/s}$-lacunary set 
$$ 2^{\N/s} \coloneqq \{ 2^{n/s} \colon n \in \N \}$$
(note here we exploit the freedom to choose scales $N$ that are real-valued rather than integer-valued). From \eqref{Var-est} and monotone convergence we have
\begin{equation}\label{var2}
 \| (a_N)_{N \in 2^{\N/s} } \|_{L^p(X; \V^r)} \lesssim_s \|f_1\|_{L^{p_1}(X)} \dots \|f_k\|_{L^{p_k}(X)}.
 \end{equation}

To prove \eqref{max-est}, we may assume without loss of generality that $f_1,\dots,f_k$ are non-negative, thanks to the pointwise triangle inequality
$$ |A^{P_1,\dots,P_k}_N(f_1,\dots,f_k)| \leq A^{P_1,\dots,P_k}_N(|f_1|,\dots,|f_k|).$$
In the non-negative case we have the additional pointwise bound
$$ \sup_{N \in \Z_+} a_N(x) \leq 2 \sup_{N \in 2^\N} a_N(x)$$
and the claim \eqref{max-est} now follows from \eqref{var2}.

Now we establish pointwise convergence.  By linearity we may assume that the $f_1,\dots,f_k$ are all non-negative. From \eqref{var2}, \eqref{max-est}, we see that for almost all $x \in X$, the quantity
$$ M(x) \coloneqq \sup_{N \in \Z_+} a_N(x)$$
is finite, as are the variational norms $\| (a_N)_{N \in 2^{\N/s}} \|_{\V^r}$ for every $s \in \Z_+$. From the latter we conclude that the limits $\lim_{N \to \infty; N \in 2^{\N/s}} a_N(x)$ exist almost everywhere for all $s \geq 1$; since $2^{\N} \subset 2^{\N/s}$, this limit is independent of $s$, thus
$$ \lim_{N \to \infty; N \in 2^{\N/s}} a_N(x) = a_\infty(x)$$
for some $a_\infty(x)$.  For any sufficiently large $N$, if we let $N'$ be the first element of $2^{\N/s}$ greater than or equal to $N$ we see from the triangle inequality that
$$ a_N(x) = a_{N'}(x) + O( M(x)/s )$$
hence on taking limits
$$ \liminf_{N \to \infty} a_N(x), \limsup_{N \to \infty} a_N(x) = a_\infty(x) + O(M(x)/s);$$
sending $s \to \infty$, we conclude that $a_N(x)$ converges to $a_\infty(x)$ as $N \to \infty$ as claimed.
Finally, norm convergence follows from pointwise convergence, the maximal inequality, and the dominated convergence theorem.  This proves (i).

Now we prove (ii).  This follows from the general Calder\'on transference principle \cite{C1}, but for the convenience of the reader we supply a proof here.  We first observe from the Fubini--Tonelli theorem and H\"older's inequality (and the H\"older exponent hypothesis $\frac{1}{p_1}+\dots+\frac{1}{p_k} = \frac{1}{p}$) that if \eqref{Var-est} is established for the integer shift model $(\Z,\mu_\Z,T_\Z)$ then it automatically holds for any product system $(X \times \Z, \mu \times \mu_\Z, \mathrm{id} \times T_\Z)$, where $(X,\mu)$ is an arbitrary $\sigma$-finite measure space and $\mathrm{id} \times T_\Z$ is the shift $(x,n) \mapsto (x,n-1)$, since there is no interaction between the individual fibers $\{x\} \times \Z, x \in X$ of this system.  

Now let $(X,\mu,T)$ be an arbitrary measure-preserving system.  
To prove \eqref{Var-est}, it suffices by multilinearity to do so when the $f_i$ are non-negative.  We may assume that each of the $f_i$ are bounded and supported on a set of finite measure.  We may normalize $\|f_i\|_{L^{p_i}(X)}=1$ for $i=1,\dots,k$, thus our task is now to show that
$$\| (A^{P_1,\dots,P_k}_{N,X}(f_1,\dots,f_k))_{N \in \D} \|_{L^p(X; \V^r)} \lesssim_\lambda 1.$$
Now let $M$ be a large natural number, let $D:=\max_{i\in[k]}\deg P_i$, and let $C>0$ be a quantity to be specified later that can depend on $\D,P_1,\dots,P_k$ but is independent of $M$.
On the product system $X \times \Z = (X \times \Z, \mu \times \mu_\Z, \mathrm{id} \times T_\Z)$ define the functions
$$ f_{i,M}(x,n) \coloneqq \ind{[3CM^D]}(n) f_i(T^{-n} x)$$
for $i=1,\dots,k$.  From the Fubini--Tonelli theorem and the measure-preserving nature of $T$ one has
$$ \| f_{i,M} \|_{L^{p_i}(X \times \Z)} = (3CM)^{D/p_i}.$$
Also, we observe the identity
$$ \| ( A^{P_1,\dots,P_k}_{N,X \times \Z}(f_{1,M},\dots,f_{k,M})(x,n) )_{N \in \D\cap[M]} \|_{\V^r}
= \| ( A^{P_1,\dots,P_k}_{N,X}(f_1,\dots,f_k)(T^{-n} x))_{N \in \D\cap[M]} \|_{\V^r}$$
whenever $CM^D\leq n \leq 2CM^D$.  From the Fubini--Tonelli theorem again, we conclude that
\begin{multline*}
\| ( A^{P_1,\dots,P_k}_{N,X \times \Z}(f_{1,M},\dots,f_{k,M}))_{N \in \D} \|_{L^p(X \times \Z; \V^r)}\\
\geq (CM^D+1)^{1/p} \| ( A^{P_1,\dots,P_k}_{N,X}(f_1,\dots,f_k))_{N \in \D\cap[M]} \|_{L^p(X;\V^r)}.    
\end{multline*}
Applying \eqref{Var-est} to the product system $X \times \Z$, we conclude that
$$ \| ( A^{P_1,\dots,P_k}_{N,X}(f_1,\dots,f_k))_{N \in \D\cap[M]} \|_{L^p(X;\V^r)} \lesssim_\lambda M^{-D/p} M^{D/p_1} \dots M^{D/p_k};$$
using the H\"older exponent hypothesis $1/p_1+\dots+1/p_k=1/p$ and sending $M \to \infty$, we obtain the claim.

Finally, we prove (iii).  By linearity we may take $f_1,\dots,f_k$ to be nonnegative.  Fix $\lambda > 1$, and set
$$ \tilde a_N(x) \coloneqq \tilde A^{P_1,\dots,P_k}_N(f_1,\dots,f_k)(x).$$
We observe the telescoping identity
$$ a_N = \sum_{k=0}^\infty \frac{\lfloor N/2^k \rfloor}{\lfloor N \rfloor} \tilde a_{N/2^k} \ind{2^k \leq N}.$$
We have $\frac{\lfloor N/2^k \rfloor}{\lfloor N \rfloor} = 2^{-k} + O(1/N)$, and hence by the triangle inequality we have the pointwise estimate
$$
\| (a_N)_{N \in \D} \|_{\V^r} \leq \sum_{k=0}^\infty 2^{-k} \| (\tilde a_{N/2^k} \ind{2^k \leq N})_{N \in \D} \|_{\V^r}
+ O\Big( \sum_{k=0}^\infty \sum_{N \in \D} \frac{1}{N} \ind{2^k \leq N} |\tilde a_{N/2^k}| \Big)$$
for all $x \in X$.  Since the rescaling $\{ N/2^k: N \in \D, 2^k \leq N\}$ of a $\lambda$-lacunary set $\D$ is still $\lambda$-lacunary, we have from \eqref{var-half} that
$$ \| (\tilde a_{N/2^k} \ind{2^k \leq N})_{N \in \D} \|_{L^p(X;\V^r)} \lesssim_\lambda \|f_1\|_{L^{p_1}(X)} \dots \|f_r\|_{L^{p_r}(X)}.$$
From \eqref{var-half} applied to singleton $\lambda$-lacunary sets we have
$$ \| \tilde a_{N/2^k} \|_{L^p(X)} \lesssim_\lambda \|f_1\|_{L^{p_1}(X)} \dots \|f_r\|_{L^{p_r}(X)}.$$
Summing in $N,k$, using the triangle inequality $\| \sum_i f_i \|_{L^p(X)} \leq \sum_i \|f_i\|_{L^p(X)}$ (when $p \geq 1$) or the quasi-triangle inequality 
\begin{equation}\label{quasi}
\Big\| \sum_i f_i \Big\|_{L^p(X)}^p \leq \sum_i \|f_i\|_{L^p(X)}^p
\end{equation}
(when $0 < p < 1$), we obtain the claim.
\end{proof}

\begin{remark} A modification of the Calder\'on transference principle also allows us to handle measure-preserving systems in which the shift map $T$ is not assumed to be invertible, as long as we also require the polynomials $P_1,\dots,P_k$ to be non-negative on $\Z_+$ so that the averaging operators $A^{P_1,\dots,P_k}_N$ remain well-defined.  We leave the details to the interested reader.  
\end{remark}

In view of this general proposition, Theorem \ref{main} will now follow from

\begin{theorem}[Variational ergodic theorem on the integers]\label{end-var} Let
$P(\nn) \in \Z[\nn]$ have degree $d \geq 2$, let $1 \leq p_1,p_2,p < \infty$ be such that $\frac{1}{p_1}+\frac{1}{p_2}=\frac{1}{p}$, and let $f \in \ell^{p_1}(\Z), g \in \ell^{p_2}(\Z)$. If $r>2$ and $\lambda>1$, then
    \begin{equation}\label{var-poly-main-int}
    \| ( \tilde A^{\nn,P(\nn)}_N(f,g) )_{N \in \D} \|_{\ell^p(\Z; \V^r)} \lesssim_{p_1,p_2,r,P,\lambda} \|f\|_{\ell^{p_1}(\Z)} \|g\|_{\ell^{p_2}(\Z)}
    \end{equation}
    for all finite $\lambda$-lacunary subsets $\D$ of $[1,+\infty)$.
\end{theorem}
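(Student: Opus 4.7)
The plan is to establish the variational bound \eqref{var-poly-main-int} by the circle method adapted to the bilinear setting. By multilinear interpolation it suffices to focus on the diagonal case $p_1=p_2=2$, $p=1$, where one can aim for exponential decay in a height parameter $l$ which then can be summed. The remaining Hölder exponents $(p_1,p_2)$ follow by interpolating this diagonal estimate with the easier off-diagonal bounds $\ell^\infty\times\ell^2\to\ell^2$ and $\ell^2\times\ell^\infty\to\ell^2$, obtained by dominating one function pointwise and invoking the linear Theorem \ref{lin-poly}. Fix a finite $\lambda$-lacunary set $\D\subset[1,+\infty)$ and, for each dyadic height $2^l$, decompose the inputs $f,g$ using Ionescu--Wainger projections into ``major arc'' pieces $\Pi_{\leq l}f,\Pi_{\leq l}g$ (Fourier supported near rationals of height at most $2^l$) and complementary ``minor arc'' pieces.

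The minor arc contribution is handled at the single-scale level by Theorem \ref{improv}, i.e.\ the bilinear Weyl-type inequality \eqref{eq:15}. Plancherel plus classical Weyl sums no longer suffice in the bilinear setting, so one has to feed the Peluse--Prendiville inverse theorem into the dual functions of $\tilde A_N^{\nn,P(\nn)}$, convert the qualitative $\ell^\infty$-level structural statement into a quantitative $\ell^2$ multiplier statement via Hahn--Banach, keep it compatible with the Ionescu--Wainger major/minor-arc decomposition, and finally invoke the $\ell^p$-improving estimates of Han--Kovač--Lacey--Madrid--Yang to weaken the $\ell^\infty$ hypothesis to the required $\ell^2$ hypothesis. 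Once \eqref{eq:15} is in hand at each $N$, the exponential gain $2^{-cl}$ comfortably absorbs the logarithmic loss from converting a single-scale estimate to a $\V^r$-estimate across $\D$ via a square-function/Rademacher--Menshov argument, and summation over $N\in\D$ against either minor-arc input yields the desired bound.

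For the major arc piece, restrict $f$ to frequencies $\alpha+\theta\bmod 1$ with $\alpha\in\Q/\Z$ of Ionescu--Wainger height $\leq 2^{l_1}$ and $|\theta|\leq 2^{k_1}$, and similarly restrict $g$ using $(l_2,k_2)$; set $l=\max(l_1,l_2)$. Lift to the adelic integers $\A_\Z=\R\times\hat\Z$ via $\iota$, where the symbol of $\tilde A_N^{\nn,P(\nn)}$ approximately factorizes into an arithmetic symbol on $\hat\Z$ and a continuous symbol on $\R$. Execute the paraproduct decomposition \eqref{FN-def}/\eqref{GN-def} centered at the arithmetic frequencies, and treat the four resulting pieces separately:
\begin{itemize}
\item In the ``low-low'' regime at small scales $2^u<N<2^{2^u}$ (with $u\sim 2^{\rho l}$) approximate $A_N$ by an $N$-independent bilinear operator $B(f_N,g_N)$ (cf.\ the model estimate \eqref{other-small} of Theorem \ref{model-est}), then close the variational sum via a two-parameter Rademacher--Menshov argument together with Khinchine's inequality, absorbing the polylogarithmic $u^{O(1)}$ loss using the exponential $2^{-cl}$ gain from \eqref{eq:15}.
\item At large scales $N>2^{2^u}$, the major arcs become so narrow that Shannon sampling (Theorem \ref{Sampling}) transfers the problem isometrically to $\A_\Z$; the $\R$-variation is handled by a vector-valued Lépingle inequality from \cite{MSZ1}, while the profinite average $A_{N,\hat\Z}$ receives a nontrivial $L^p$-improving estimate from $p$-adic methods combined with one more application of \eqref{eq:15} (giving exponential gain in $l$ for the $L^2$ theory, interpolated to general exponents).
\item For the ``high-high'', ``low-high'', ``high-low'' paraproducts we need additional decay in the continuous frequency; in the high-high case this follows from almost orthogonality and Theorem \ref{improv}, while in the mixed cases it comes from integrating by parts in the bilinear symbol (Lemma \ref{integ-ident}) and controlling the resulting oscillations by shifted Calderón--Zygmund theory, which costs only logarithmic factors.
\end{itemize}

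Summing the four paraproduct contributions over the heights $l_1,l_2$ via the exponential gain $2^{-c\max(l_1,l_2)}$ gives the major-arc variational bound; combined with the minor-arc bound this yields \eqref{var-poly-main-int} in the diagonal case, and multilinear interpolation then delivers the full range of Hölder exponents. The main obstacle is the bilinear single-scale inequality \eqref{eq:15}: the arithmetic symbol \eqref{symbol-bil-def} does not split as a tensor product, so Plancherel cannot separate it from the continuous symbol, and promoting the Peluse--Prendiville $\ell^\infty$-inverse theorem to an $\ell^2$-structural statement compatible with the Ionescu--Wainger framework is delicate; a secondary obstacle is the coupling of arithmetic and continuous variables in the major-arc analysis, which is precisely what the adelic lift is designed to untangle.
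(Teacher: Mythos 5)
Your proposal tracks the paper's roadmap very closely in its essential technical content — the bilinear Weyl inequality proved by feeding the Peluse--Prendiville inverse theorem through Hahn--Banach, Ionescu--Wainger projections and Han--Kova\v{c}--Lacey--Madrid--Yang improving; the adelic lift and paraproduct decomposition; the small-scale two-parameter Rademacher--Menshov argument; the large-scale Shannon sampling plus vector-valued L\'epingle plus $p$-adic improving estimates; and the shifted Calder\'on--Zygmund treatment of the mixed-frequency cases. These are exactly the ingredients the paper uses, and your identification of the two main obstacles (the non-tensor nature of the arithmetic symbol, and the coupling of arithmetic and continuous frequency variables) is correct.

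However, there is a genuine gap in your opening reduction. You claim that it ``suffices to focus on the diagonal case $p_1=p_2=2$, $p=1$'' and that ``the remaining H\"older exponents $(p_1,p_2)$ follow by interpolating this diagonal estimate with the easier off-diagonal bounds $\ell^\infty\times\ell^2\to\ell^2$ and $\ell^2\times\ell^\infty\to\ell^2$, obtained by dominating one function pointwise and invoking the linear Theorem \ref{lin-poly}.'' This does not work for the $\V^r$ norm. Pointwise domination $|\tilde A_N^{\nn,P(\nn)}(f,g)|\leq \|f\|_{\ell^\infty} A_N^{P(\nn)-\nn}(|g|)$ gives the maximal ($\ell^\infty$-in-$N$) bound, but it does not transfer variational control: knowing that the dominating sequence has bounded $r$-variation says nothing about the $r$-variation of the dominated sequence, since pointwise domination is insensitive to the difference structure that defines $V^r$. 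The paper makes exactly this point in Remark (4) after Theorem \ref{main}: ``the simple argument given above does not seem to easily adapt to give the $p>1$ cases of the other components (i), (ii), (iv).'' What the paper actually does is prove the paraproduct variational estimate (Theorem \ref{varp}) directly for \emph{all} H\"older tuples $(p_1,p_2,p)$, with polynomial-in-$l$ losses for general exponents and exponential-in-$l$ gain only when $p_1=p_2=2$, and then interpolates \emph{within} that family to propagate the decay. Establishing the general-exponent case is not a black-box citation of the linear theory; it requires running the same small-scale/large-scale machinery (\S\S 8--9) with arguments that work for all Banach H\"older exponents (e.g.\ the algebra property \eqref{var-alg} combined with separate linear variational bounds \eqref{lin-1}--\eqref{lin-2} at large scales, and Lemma \ref{ssu} at small scales). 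You would need to flesh out this piece rather than appeal to domination by the linear theorem.
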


It remains to establish Theorem \ref{end-var}.  This is the objective of much of the remainder of the paper. 

\begin{remark}\label{need-hold} It is essential in Theorem \ref{end-var} for ergodic theory applications that one has the H\"older condition $\frac{1}{p_1}+\frac{1}{p_2}=\frac{1}{p}$.  In the super-H\"older regime $\frac{1}{p_1} + \frac{1}{p_2} > \frac{1}{p}$ it is easy to establish \eqref{var-poly-main-int}; for instance when $(p_1,p_2,p)=(2,2,\infty)$ it follows from Cauchy-Schwarz that 
\begin{equation}\label{apn-decay}
\| \tilde A^{\nn,P(\nn)}_N(f,g)\|_{\ell^\infty(\Z)} \lesssim_P N^{-1} \|f\|_{\ell^2(\Z)} \|g\|_{\ell^2(\Z)},
\end{equation}
and by interpolating this with \eqref{anf} it is not difficult to establish \eqref{var-poly-main-int} for any $1 < p_1,p_2,p \leq \infty$ with $\frac{1}{p_1}+\frac{1}{p_2} > \frac{1}{p}$.  However, in this regime the Calder\'on transference principle no longer applies and so no consequences to general measure preserving systems (in particular those of finite measure) can be concluded.  Indeed, the decay in $N$ exhibited by \eqref{apn-decay} is not possible in the finite measure setting since $A^{\nn,P(\nn)}(1,1)=1$.  In the opposite sub-H\"older regime $\frac{1}{p_1} + \frac{1}{p_2} < \frac{1}{p}$ even single-scale boundedness $\| \tilde A^{\nn, P(\nn)}_N \|_{\ell^{p_1}(\Z) \times \ell^{p_2}(\Z) \to \ell^p(\Z)} < \infty$ fails on the integer shift model, as can be seen by testing the operator on indicator functions of large intervals.  (However, on finite measure systems one can of course deduce sub-H\"older exponent estimates from H\"older exponent estimates by applying H\"older's inequality.)
\end{remark}

\section{Abstract harmonic analysis: relating the integers to the adelic integers}\label{abstract-sec}

We will be performing Fourier analysis on many different groups in this paper, and in particular exploiting the close relationship between major arc Fourier analysis on the integers $\Z$ on the one hand, and low frequency Fourier analysis on the adelic integers $\A_\Z$ on the other hand (see Figure \ref{fig:major}).  It will be convenient to set out some abstract harmonic analysis notation to perform this analysis in a unified fashion. We let $\TT \coloneqq \R/\Z$ denote the unit circle, and $e \colon \TT \to \C$ denote the standard character $e(\theta) \coloneqq e^{2\pi i \theta}$.

\begin{definition}[Pontryagin duality]  An \emph{LCA group} is a locally compact abelian group $\G = (\G,+)$ equipped with a Haar measure $\mu_{\G}$.  A \emph{Pontryagin dual} of an LCA group $\G$ is an LCA group $\G^* = (\G^*,+)$ with a Haar measure $\mu_{\G^*}$ and a continuous bihomomorphism $(x,\xi) \mapsto x \cdot \xi$ (which we call a \emph{pairing}) from $\G \times \G^*$ to the unit circle $\TT = \R/\Z$, such that the Fourier transform $\F_{\G} \colon L^1(\G) \to C(\G^*)$ defined by
$$ \F_{\G} f(\xi) \coloneqq \int_{\G} f(x) e(x \cdot \xi)\ d\mu_{\G}(x)$$
extends to a unitary map from $L^2(\G)$ to $L^2(\G^*)$; in particular we have the Plancherel identity
$$ \int_\G |f(x)|^2\ d\mu_\G(x) = \int_{\G^*} |\F_\G f(\xi)|^2\ d\mu_{\G^*}(\xi)$$
for all $f \in L^2(\G)$.

If $\Omega \subset \G^*$ is measurable, we say that $f \in L^2(\G)$ is \emph{Fourier supported} in $\Omega$ if $\F_{\G} f$ vanishes outside of $\Omega$ (modulo null sets).  The space of such functions will be denoted $L^2(\G)^\Omega$.
\end{definition}

As is well known (see e.g., \cite{Rudin}), every LCA group $\G$ has a Pontryagin dual $\G^*$, and the inverse Fourier transform $\F_\G^{-1} \colon L^2(\G^*) \to L^2(\G)$ is then given for  $F\in L^1(\G^*) \cap L^2(\G^*)$ by the formula
$$ \F_\G^{-1} F(x) = \int_{\G^*} F(\xi) e(-x \cdot \xi)\ d\mu_{\G^*}(\xi).$$

We will work with the following concrete pairs $(\G,\G^*)$ of Pontryagin dual LCA groups:

\begin{itemize}
    \item [(i)]  If $\G = \R$ with Lebesgue measure $\mu_\R = dx$, then $\G^* = \R^* = \R$ with Lebesgue measure $\mu_{\R^*} = d\xi$ is a Pontryagin dual, with pairing $x \cdot \xi \coloneqq x \xi \mod 1$.
    \item[(ii)]  If $\G = \Z$ with counting measure $\mu_\Z$, then $\G^* = \TT$ with Lebesgue measure $\mu_{\TT} = d\xi$ is a Pontryagin dual, with pairing $x \cdot \xi \coloneqq x \xi$.
    \item[(iii)]  If $\G = \Z/Q\Z$ is a cyclic group for some $Q \in \Z_+$ with normalized counting measure $\int_{\Z/Q\Z} f(x)\ d\mu_{\Z/Q\Z}(x) \coloneqq \E_{x \in \Z/Q\Z} f(x)$, then the \emph{dual cyclic group} $\G^* = \frac{1}{Q} \Z/\Z$ with counting measure $\mu_{\frac{1}{Q}\Z/\Z}$ is a Pontryagin dual, with pairing $x \cdot \xi \coloneqq x \xi$.
    \item[(iv)]  If $\G = \Z_p \coloneqq \varprojlim_j \Z/p^j\Z$ is the compact group of $p$-adic integers with Haar probability measure $\mu_{\Z_p}$ (the inverse limit of normalized counting measures on $\Z/p^j\Z$) for some prime $p \in \PP$, then the discrete group $\G^* = \Z_p^* = \varinjlim_j \frac{1}{p^j}\Z/\Z = \Z[\frac{1}{p}]/\Z$ with counting measure $\mu_{\Z_p^*}$ is a Pontragin dual, with pairing $x \cdot (\frac{a}{p^j} \mod 1) \coloneqq \frac{xa \mod p^j}{p^j}$.
    \item[(v)]  If $\G = \hat \Z \coloneqq \prod_{p \in \PP} \Z_p$ is the compact group of profinite integers with Haar probability measure, then the discrete group $\G^* = \hat \Z^* = \coprod_{p \in \PP} \Z_p^* = \Q/\Z$ of ``arithmetic frequencies'' with counting measure $\mu_{\Q/\Z}$ is a Pontragin dual, with pairing $x \cdot (\frac{a}{q} \mod 1) \coloneqq \frac{xa \mod q}{q}$.
    \item[(vi)]  If $\G_1,\G_2$ are LCA groups with Pontryagin duals $\G_1^*, \G_2^*$, then the product $\G_1 \times \G_2$ (with product Haar measure) is an LCA group with Pontryagin dual $\G_1^* \times \G_2^*$ and pairing $(x_1,x_2) \cdot(\xi_1,\xi_2) \coloneqq x_1 \cdot \xi_1 + x_2 \cdot \xi_2$.  In particular, if $\G = \A_\Z \coloneqq \R \times \hat \Z$ is the adelic integers\footnote{The adelic integers $\A_\Z$ should not be confused with the larger ring $\A_\Q = \A_\Z \otimes_\Z \Q$ of adelic numbers, which we will not use in this paper.} (with the product Haar measure $\mu_{\A_\Z} \coloneqq \mu_\R \times \mu_{\hat \Z}$), then \emph{adelic frequency space} $\G^* = \A_\Z^* = \R \times \Q/\Z$ is a Pontryagin dual (with product measure $\mu_{\R \times \Q/\Z} \coloneqq \mu_\R \times \mu_{\Q/\Z}$ and the indicated pairing).  Similarly, for any $Q \in \Z_+$, $\R \times \Z/Q\Z$ has $\R \times \frac{1}{Q}\Z/\Z$ as its Pontryagin dual.
\end{itemize}

\begin{remark}  Heuristically, one can think of analysis on the adelic integers $\A_\Z$ (resp. the profinite integers $\hat \Z$, or the $p$-adic integers $\Z_p$) as an abstraction of analysis on the product groups $\R \times \Z/Q\Z$ (resp. the cyclic groups $\Z/Q\Z$, $\Z/p^j \Z$) in which all estimates are required to be uniform in the parameter $Q$ or $p^j$.  These abstractions are convenient to use in settings in which one does not wish to fix an ambient modulus $Q$ or $p^j$ in advance.
\end{remark}

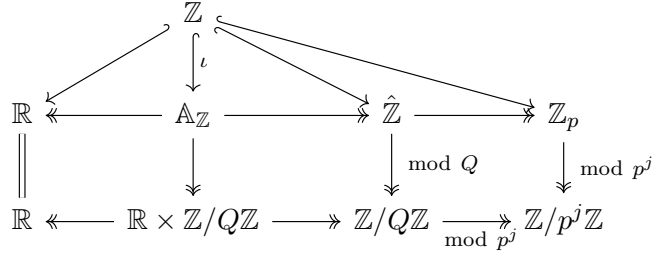
\begin{figure}
    \centering
    \begin{tikzcd}
    & \Z \arrow[d, hook,"\iota"] \arrow[dl,hook'] \arrow[dr,hook'] \arrow[drr,hook] \\ \R \arrow[d, equal] & \A_\Z \arrow[l,two heads] \arrow[d,two heads] \arrow[r,two heads] & \hat \Z \arrow[d,two heads, "\mod Q"] \arrow[two heads,r] & \Z_p \arrow[d, two heads,"\mod p^j"] \\
    \R & \R \times \Z/Q\Z \arrow[l,two heads] \arrow[r,two heads] & \Z/Q\Z  \arrow[r, "\mod p^j"',two heads] & \Z/p^j \Z
    \end{tikzcd}
    \caption{A commutative diagram of the various physical space LCA groups used in this paper, with the arrows indicating continuous homomorphisms.  Here $Q$ is a positive integer, and $p^j$ is a prime power dividing $Q$. Double-headed arrows are surjective; arrows with hooks are injective. The left column contains ``continuous'' groups, the right two columns contain ``arithmetic'' groups (and are compact), and the second column from the left contain groups exhibiting both continuous and arithmetic aspects. The second row is the inverse limit of the third. Note the central role played by the adelic integers $\A_\Z$.}
    \label{fig:phys}
\end{figure}

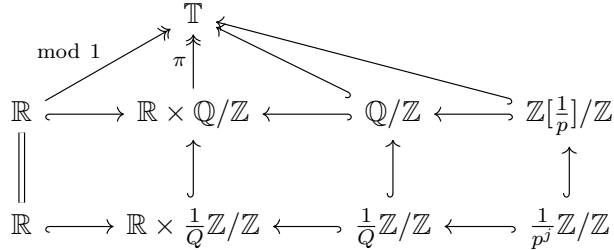
\begin{figure}
    \centering
    \begin{tikzcd}
    & \TT \\ \R \arrow[ur,two heads, "\mod 1"] \arrow[d,equal] \arrow[r,hook'] & \R \times \Q/\Z \arrow[u, two heads,"\pi"]  & \Q/\Z \arrow[ul,hook] \arrow[l,hook] & \Z[\frac{1}{p}]/\Z \arrow[l,hook] \arrow[ull,hook] \\
    \R  \arrow[r,hook'] & \R \times \frac{1}{Q} \Z/\Z \arrow[u,hook]  & \frac{1}{Q}\Z/\Z \arrow[u,hook] \arrow[l,hook] & \frac{1}{p^j}\Z/\Z \arrow[l,hook] \arrow[u,hook]
    \end{tikzcd}
    \caption{A commutative diagram of the various frequency space LCA groups used in this paper.  The groups in the right two columns are discrete. The second row is the direct limit of the third. Note the duality with Figure \ref{fig:phys} (this can be made precise using Fourier adjoint relationships such as \eqref{adjoint}).}
    \label{fig:freq}
\end{figure}

Observe that we have quotient homomorphisms $x \mapsto x \mod Q$ from $\Z$ to $\Z/Q\Z$ or from $\hat \Z$ to $\Z/Q\Z$, $x \mapsto x \mod p^j$ from $\Z_p$ to $\Z/p^j\Z$, and $x \mapsto x \mod 1$ from $\R$ to $\TT$.  The adelic integers $\A_\Z$ capture two important limiting behaviours of the integers $\Z$; the continuous behaviour (as described by the $\R$ factor), and the arithmetic behaviour (as described by the $\hat \Z$ factor).  We also have the inclusion homomorphism $\iota \colon \Z \to \A_\Z$ defined by
$$ \iota(x) \coloneqq \left(x, ( (x \mod p^j)_{j \in \N} )_{p \in \PP} \right)$$
and the addition homomorphism $\pi \colon \R \times \Q/\Z \to \TT$ defined by
$$ \pi(\theta,\alpha) \coloneqq \alpha + \theta;$$
these two maps are Fourier adjoint to each other in the sense that
\begin{equation}\label{adjoint} \iota(x) \cdot \xi = x \cdot \pi(\xi)
\end{equation}
for all $x \in \Z$ and $\xi \in \R \times \Q/\Z$.
In  ``major arc'' regimes we will be able to use these homomorphisms to ``approximate'' $\Z$ by $\A_\Z$, which in principle decouples the discrete harmonic analysis of $\Z$ from the continuous harmonic analysis of $\R$ and the arithmetic harmonic analysis of $\hat \Z$.  We summarized the relations between the various LCA groups in Figures \ref{fig:phys}, \ref{fig:freq}.

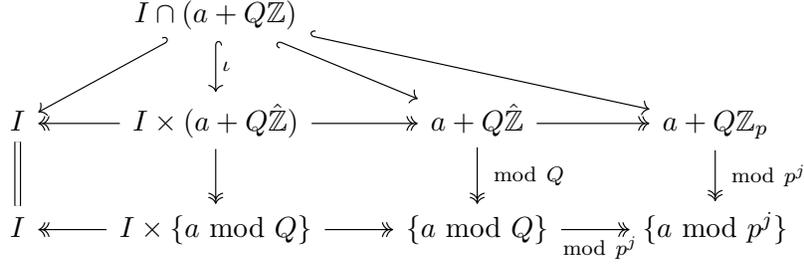
\begin{figure}
    \centering
    \begin{tikzcd}
    & I \cap (a+Q\Z) \arrow[d, hook,"\iota"] \arrow[dl,hook'] \arrow[dr,hook'] \arrow[drr,hook] \\ I \arrow[d,equal] & I \times (a+Q\hat \Z) \arrow[l,two heads] \arrow[d,two heads] \arrow[r,two heads] & a+Q\hat \Z \arrow[d,two heads, "\mod Q"] \arrow[two heads,r] & a+Q \Z_p \arrow[d, two heads,"\mod p^j"] \\
    I & I \times \{ a \mod Q \} \arrow[l,two heads] \arrow[r,two heads] & \{ a \mod Q\}  \arrow[r, "\mod p^j"',two heads] & \{a \mod p^j\}
    \end{tikzcd}
    \caption{A restriction of the physical space diagram in Figure \ref{fig:phys} to an arithmetic progression $I \cap (a+Q\Z)$ formed by intersecting an interval $I \subset \R$ with an infinite arithmetic progression $a+Q\Z$.  The sets here are no longer groups in general (except in an ``approximate'' sense) and so the arrows no longer denote homomorphisms.  As in previous figures, $p^j$ is understood to be a prime power dividing $Q$.  Note how this diagram separates an arithmetic progression into its continuous and arithmetic components.}
    \label{fig:unc}
\end{figure}

\begin{remark}  As is well known, the embedding $\iota$ identifies $\Z$ with a cocompact lattice $\iota(\Z)$ in $\A_\Z$ (thus $\iota(\Z)$ is a discrete subgroup of $\A_\Z$ and the quotient $\A_\Z/\iota(\Z)$ is compact).  Thus $\A_\Z$ is in some sense only ``slightly'' larger than $\Z$ itself, but has the advantage of splitting completely into a continuous component $\R$ and an arithmetic component $\hat \Z$, whereas $\Z$ does not directly have such a splitting.  However, the point is that after restricting attention to major arcs, one can partially move back and forth between the integers and adelic integers, and thus have some chance of exploiting the product structure of $\A_\Z = \R \times \hat \Z$ to decouple the continuous and arithmetic aspects of the analysis.
\end{remark}

For various LCA groups $\G$ we shall work with a space $\Schwartz(\G) \subset L^1(\G) \cap L^\infty(\G)$ of Schwartz--Bruhat functions $f \colon G \to \C$, generalizing the classical class of Schwartz functions on $\R$ that serve as a useful class of ``nice'' functions that are dense in $L^p(\G)$ for every $1 \leq p < \infty$ and behave well with respect to Fourier-analytic operations.  A definition of this space for arbitrary LCA groups can be found for instance in \cite{bruhat}, \cite{osb}, but for the purpose of this paper we shall only need the following special cases:
\begin{itemize}
    \item[(i)]  $\Schwartz(\R)$ is the space of Schwartz functions on $\R$.
    \item[(ii)]  $\Schwartz(\Z)$ is the space of rapidly decreasing functions on $\Z$, and $\Schwartz(\TT)$ is the space of smooth functions on $\TT$.
    \item[(iii)]  $\Schwartz(\Z/Q\Z)$ is the space of arbitrary functions on $\Z/Q\Z$, and similarly for $\Schwartz(\frac{1}{Q}\Z/\Z)$.
    \item[(iv)]  $\Schwartz(\Z_p)$ is the space of locally constant functions $f$ on $\Z_p$, or equivalently those functions of the form $f(x) = f_j(x \mod p^j)$ for some $j \in \N$ and some function $f_j \colon \Z/p^j\Z \to \C$.  $\Schwartz(\Z_p^*)$ is the space of finitely supported functions on $\Z_p^*$.
    \item[(v)]  $\Schwartz(\hat \Z)$ is the space of locally constant functions $f$ on $\hat \Z$, or equivalently those functions of the form $f(x) = f_Q(x \mod Q)$ for some $Q \in \Z_+$ and $f_Q \colon \Z/Q\Z \to \C$.  $\Schwartz(\hat \Z^*)$ is the space of finitely supported functions on $\hat \Z^*$.
    \item[(vi)] $\Schwartz(\R \times \Z/Q\Z)$ is the space of functions that is Schwartz in the $\R$ variable, and similarly for $\Schwartz(\R \times \frac{1}{Q} \Z/\Z)$.
    \item[(vii)]  $\Schwartz(\A_\Z)$ is the space of functions of the form $f(x,y) = f_Q(x,y \mod Q)$ for some $Q \in \Z_+$ and $f_Q \colon \R \times \Z/Q\Z \to \C$ that is Schwartz in the $\R$ variable.  $\Schwartz(\R \times \Q/\Z)$ is the space of functions supported on $\R \times \Sigma$ for some finite set $\Sigma \subset \Q/\Z$ and Schwartz in the $\R$ variable.
    \item[(viii)]  If $\G_1,\G_2$ are any two of the groups listed above, we define the Schwartz--Bruhat space $\Schwartz(\G_1 \times \G_2)$ on the product LCA group $\G_1 \times \G_2$ in the obvious fashion, and note that if $f_1 \in \Schwartz(\G_1)$ and $f_2 \in \Schwartz(\G_2)$ then $f_1 \otimes f_2$ can be identified with an element of $\Schwartz(\G_1 \times \G_2)$.
\end{itemize}

One could place a topology on the Schwartz--Bruhat spaces $\Schwartz(\G)$, but we will not need to do so here.
As is well known, the Fourier transform $\F_{\G}$ is a bijection from $\Schwartz(\G)$ to $\Schwartz(\G^*)$ for any of the groups $\G$ in Figure \ref{fig:phys}.  The Fourier transform can also be extended to vector-valued functions taking values in a finite-dimensional vector space $V$ in the obvious fashion.

If $\Omega \subset \G^*$, we let $\Schwartz(\G)^\Omega$ denote the subspace of $\Schwartz(\G)$ consisting of functions that are Fourier supported on $\Omega$, and $\Schwartz(\Omega)$ the subspace of $\Schwartz(\G^*)$ consisting of functions that are supported on $\Omega$.  Thus $\F_{\G}$ is also a bijection between $\Schwartz(\G)^\Omega$ and $\Schwartz(\Omega)$.

The inclusion homomorphism $\iota \colon \Z \to \A_\Z$ gives rise to a \emph{sampling map} $\Sample \colon \Schwartz(\A_\Z) \to \Schwartz(\Z)$ defined by
$$ \Sample f(x) \coloneqq f( \iota(x) )$$
for $x \in \Z$ and $f \in \Schwartz(\A_\Z)$.  Dually, the addition homomorphism $\pi \colon \R \times \Q/\Z \to \TT$ gives rise to a \emph{projection map} $\Proj \colon  \Schwartz(\R \times \Q/\Z) \to \Schwartz(\TT)$, defined by the formula
$$ \Proj F(\xi) \coloneqq \sum_{(\theta, \alpha) \in \pi^{-1}(\xi)} F(\theta, \alpha)$$
for $\theta \in \R$, $\alpha \in \Q/\Z$, and $F \in \Schwartz(\R \times \Q/\Z)$ (note that the definition of $\Schwartz(\R \times \Q/\Z)$ ensures that this sum contains at most countably many non-zero terms).  From \eqref{adjoint} one has the identity
$$ \F_\Z^{-1} \circ \Proj = \Sample \circ \F_{\A_\Z}^{-1}$$
or equivalently the adelic Poisson summation formula
$$ \F_\Z \circ \Sample = \Proj \circ \F_{\A_\Z}$$
and so we have the commutative diagram
\begin{center}
    \begin{tikzcd}
    \Schwartz(\Z) \arrow[d, "\F_\Z"] & \Schwartz(\A_\Z) \arrow[l, "\Sample"] \arrow[d, "\F_{\A_\Z}"] \\
    \Schwartz(\TT) & \Schwartz(\R \times \Q/\Z) \arrow[l,"\Proj"]
\end{tikzcd}.
\end{center}
See also Figures \ref{fig:schwartz-phys}, \ref{fig:schwartz-freq}.

\begin{figure}
    \centering
    \begin{tikzcd}
    & \Schwartz(\Z) \\
 \Schwartz(\R) \arrow[d, equal] \arrow[ur] \arrow[r,dotted] & \Schwartz(\A_\Z) \arrow[u,"\Sample"] & \Schwartz(\hat \Z)  \arrow[l,dotted] & \Schwartz(\Z_p) \arrow[l]  \\
    \Schwartz(\R) \arrow[r,dotted] & \Schwartz(\R \times \Z/Q\Z) \arrow[u] & \Schwartz(\Z/Q\Z) \arrow[u] \arrow[l,dotted] & \Schwartz(\Z/p^j \Z) \arrow[l] \arrow[u]
    \end{tikzcd}
    \caption{Schwartz--Bruhat spaces on physical space LCA groups.  Solid arrows indicate canonical linear maps of a ``sampling'' or ``pullback'' nature; dotted arrows from two spaces $V_1,V_2$ to a third $V$ indicate the existence of a tensor product operation $\otimes \colon V_1 \times V_2 \to V$.  The second row is the direct limit of the third. Compare with Figure \ref{fig:phys}. (Some arrows in that figure do not have an analogue here, basically because $\Schwartz(\R)$ does not contain a multiplicative unit $1$, and the inclusions of $\Z$ into $\hat \Z$ and $\Z_p$ are not proper.)}
    \label{fig:schwartz-phys}
\end{figure}
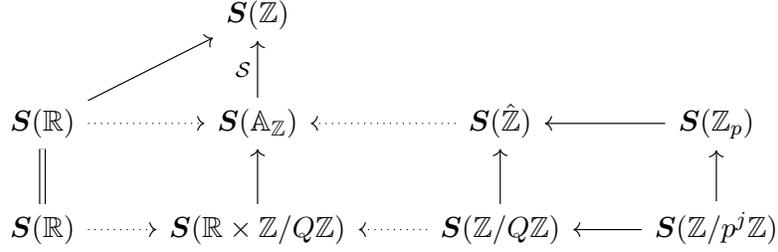

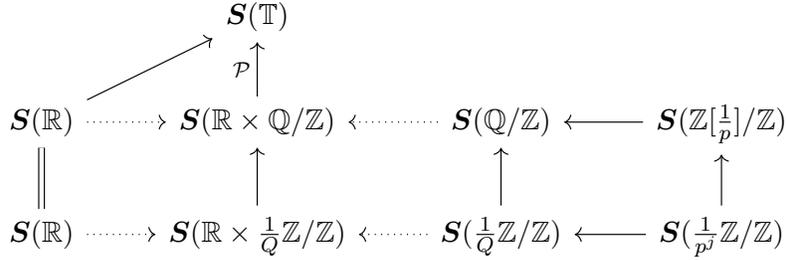
\begin{figure}
    \centering
    \begin{tikzcd}
    & \Schwartz(\TT) \\
 \Schwartz(\R) \arrow[d, equal] \arrow[ur] \arrow[r,dotted] & \Schwartz(\R \times \Q/\Z) \arrow[u,"\Proj"] & \Schwartz(\Q/\Z) \arrow[l,dotted] & \Schwartz(\Z[\frac{1}{p}]/\Z) \arrow[l] \\
    \Schwartz(\R) \arrow[r,dotted] & \Schwartz(\R \times \frac{1}{Q}\Z/\Z) \arrow[u] & \Schwartz(\frac{1}{Q}\Z/\Z) \arrow[u] \arrow[l,dotted] & \Schwartz(\frac{1}{p^j}\Z/\Z) \arrow[l] \arrow[u]
    \end{tikzcd}
    \caption{Schwartz--Bruhat spaces on frequency space LCA groups.  Solid arrows indicate canonical linear maps of a ``projection'' or ``pushforward'' nature; dotted arrows indicate a tensor product as in Figure \ref{fig:schwartz-phys}. The second row is the direct limit of the third.  This figure and the preceding one are intertwined by the Fourier transform via various forms of the Poisson summation formula.  Compare also with Figure \ref{fig:freq}. (Some arrows in that figure do not have an analogue here, basically because $\Schwartz(\R)$ does not contain a convolution unit $\delta$, and the embeddings of $\Q/\Z$ and $\Z[\frac{1}{p}]/\Z$ into $\TT$ are not open.)}
    \label{fig:schwartz-freq}
\end{figure}

A key difficulty here is that of \emph{aliasing}: the non-injectivity of $\pi \colon \R \times \Q/\Z \to \R/\Z$ causes the sampling map $\Sample \colon \Schwartz(\A_\Z) \to \Schwartz(\Z)$ to also be non-injective. Indeed, if $(\xi_1,\alpha_1), (\xi_2,\alpha_2)$ are distinct elements of $\R \times \Q/\Z$ are such that $\pi(\xi_1,\alpha_1) = \pi(\xi_2,\alpha_2)$, then for any non-zero $F \in \Schwartz(\A_\Z)$, the functions $F_1(x,y) \coloneqq e(x \xi_1 + y \cdot \alpha_1) F(x,y)$ and $F_2(x,y) \coloneqq e(x \xi_2 + y \cdot \alpha_2) F(x,y)$ are distinct elements of $\Schwartz(\A_\Z)$ which are ``aliased'' in the sense that $\Sample F_1 = \Sample F_2$.  However, we can avoid this problem by restricting attention to a compact subset $\Omega$ of adelic frequency space $\R \times \Q/\Z$ which is \emph{non-aliasing} in the sense that the addition homomorphism $\pi$ is injective on $\Omega$, so that $\Proj$ becomes an algebra homomorphism from $\Schwartz(\Omega)$ to $\Schwartz(\pi(\Omega))$, thus
\begin{equation}\label{proj-hom} \Proj(FG) = \Proj(F) \Proj(G)
\end{equation}
for all $F,G \in \Schwartz(\Omega)$, and one has the commutative diagram
\begin{equation}\label{comm-diag}
    \begin{tikzcd}
    \Schwartz(\Z)^{\pi(\Omega)} \arrow[d, "\F_\Z"] & \Schwartz(\A_\Z)^\Omega \arrow[l, "\Sample"] \arrow[d, "\F_{\A_\Z}"] \\
    \Schwartz(\pi(\Omega)) & \Schwartz(\Omega) \arrow[l,"\Proj"]
\end{tikzcd}.
\end{equation}
In this case one verifies that the lower three maps $\F_\Z, \Proj, \F_{\A_\Z}$ are invertible, hence the upper map $\Sample$ is also. 
In particular to any non-aliasing compact set of adelic frequencies $\Omega$ we can associate an \emph{interpolation operator} $\Sample_\Omega^{-1} \colon \Schwartz(\Z)^{\pi(\Omega)} \to \Schwartz(\A_\Z)^\Omega$ that extends any Schwartz--Bruhat function on the integers with Fourier support in $\pi(\Omega)$ to the unique Schwartz--Bruhat extension on the adelic integers with Fourier support in $\Omega$.  Note from \eqref{comm-diag} and Plancherel's theorem that the sampling operator $\Sample$ and the interpolation operator $\Sample^{-1}_\Omega$ extend to unitary maps between $\ell^2(\Z)^{\pi(\Omega)}$ and $L^2(\A_\Z)^\Omega$ which invert each other.

The diagram \eqref{comm-diag} allows us to equate certain portions of Fourier analysis on the integers $\Z$ with corresponding portions of Fourier analysis of the adelic integers $\A_\Z$; this will be useful for clarifying Fourier analysis on major arcs ${\mathcal M}_{\leq l,\leq k}$, which in this perspective are interpreted as projections of a certain non-aliasing Cartesian product $\R_{\leq l} \times (\Q/\Z)_{\leq k}$ of adelic frequency space; see Figure \ref{fig:major} and Section \ref{iw-decomp-sec} for definitions.

\begin{example}\label{sampling-ex}  If $Q \in \Z_+$ and $r>0$, then $[-r,r] \times \frac{1}{Q} \Z/\Z$ is non-aliasing if and only if $r < \frac{1}{2Q}$. The injectivity of $\Sample$ in this case is a variant of the classical Shannon sampling theorem.  See also Theorem \ref{Sampling} below.
\end{example}

Now we define Fourier multiplier operators.  A continuous function $\varphi \colon \G^* \to \C$ is said to be \emph{smooth tempered} if $\varphi F \in \Schwartz(\G^*)$ whenever $F \in \Schwartz(\G^*)$.  For instance, $\varphi \colon \R \to \C$ is smooth tempered if and only if all derivatives exist and are of at most polynomial growth.

\begin{definition}[Fourier multiplier operators]\label{fourier-mult}  Let $\G$ be one of the LCA groups in Figure \ref{fig:phys}.
\begin{itemize}
    \item [(i)]  If $\varphi \colon \G^* \to \C$ is a smooth tempered function, we define the Fourier multiplier operator $\T_\varphi \colon \Schwartz(\G)  \to \Schwartz(\G)$ by the formula
    $$ \F_{\G} \T_\varphi = \varphi \F_{\G}$$
    or equivalently
    $$ \T_\varphi f(x) = \int_{\G^*} \varphi(\xi) \F_{\G} f(\xi) e(-x \cdot \xi)\ d\mu_{\G^*}(\xi)$$
    for $f \in \Schwartz(\G)$ and $x \in \G$. We refer to $\varphi$ as the \emph{symbol} of $\T_\varphi$.
    \item[(ii)]  If $m \colon \G^* \times \G^* \to \C$ is a smooth tempered function, we define the bilinear Fourier multiplier operator $\B_m \colon \Schwartz(\G) \times \Schwartz(\G) \to \Schwartz(\G)$ by the formula
    $$ \B_m(f,g)(x) = \int_{\G^*} \int_{\G^*} m(\xi_1,\xi_2) \F_{\G} f(\xi_1) \F_{\G} g(\xi_2) e(-x \cdot (\xi_1+\xi_2))\ d\mu_{\G^*}(\xi_1) d\mu_{\G^*}(\xi_2).$$
    We refer to $m$ as the \emph{symbol} of $\B_m$.
\end{itemize}
\end{definition}

Clearly $\T_\varphi$ depends linearly on $\varphi$, and $\B_m$ depends linearly on $m$.  We also observe the functional calculus identities
\begin{equation}\label{func-calc}
\begin{split}
    \T_1 f &= f, \\
    \B_1(f,g) &= fg, \\
    \T_{\varphi_1 \varphi_2} f &= \T_{\varphi_1} \T_{\varphi_2} f, \\
    \B_{m (\varphi_1 \otimes \varphi_2)}(f,g) &= \B_m( \T_{\varphi_1} f, \T_{\varphi_2} g)
\end{split}
\end{equation}
whenever $f,g \in \Schwartz(\G)$ and $\varphi_1,\varphi_2,m$ are smooth tempered functions on $\G^*, \G^*, \G^* \times \G^*$ respectively.  Finally we observe that $\T_\varphi$ is self-adjoint on $L^2(\G)$ when $\varphi$ is real-valued.  We can also extend the linear Fourier multipliers $\T_\varphi$ to Schwartz--Bruhat functions $\Schwartz(\G;V)$ taking values in a finite-dimensional vector space $V$ in the obvious fashion.

\begin{example}[Averaging operators as Fourier multipliers]\label{avg-mult}  We work on the integer shift system.  If $P \in \Z[\nn]$, the averaging operator $A_N^{P(\nn)}$ is a linear Fourier multiplier operator on $\Schwartz(\Z)$ with symbol
$$ \varphi_{N,\Z}(\xi) \coloneqq \E_{n \in [N]} e( P(n) \xi )$$
for $\xi \in \TT$.
Similarly, if $P_1,P_2 \in \Z[\nn]$, then the averaging operator $A_N^{P_1(\nn), P_2(\nn)}$ is a bilinear Fourier multiplier operator on $\Schwartz(\Z)$ with symbol
$$ m_{N,\Z}(\xi_1,\xi_2) \coloneqq \E_{n \in [N]} e( P_1(n) \xi_1 + P_2(n) \xi_2 )$$
and $\tilde A_N^{P_1(\nn), P_2(\nn)}$ similarly has symbol
$$ \tilde m_{N,\Z}(\xi_1,\xi_2) \coloneqq \E_{n \in [N]} e( P_1(n) \xi_1 + P_2(n) \xi_2 ) \ind{n>N/2}$$
for $\xi_1,\xi_2 \in \TT$. If $\G$ is one of the compact rings $\Z/p^j\Z$, $\Z/Q\Z$, $\Z_p$, or $\hat \Z$, then $P_1,P_2$ can be thought of as continuous maps from $\G$ to itself, and we can define the averaging operator $A_{\G} = A_{\G}^{P_1(\nn),P_2(\nn)} \colon \Schwartz(\G) \times \Schwartz(\G) \to \Schwartz(\G)$ by the formula
$$ A_{\G}(f,g)(x) \coloneqq \int_{\G} f(x-P_1(y)) g(x-P_2(y))\ d\mu_{\G}(y).$$
From the Fourier inversion formula and the Fubini--Tonelli theorem we see that $A_{\G}$ is a bilinear Fourier multiplier operator with symbol
$$ m_\G(\xi_1,\xi_2) \coloneqq \int_{\G} e( P_1(y) \xi_1 + P_2(y) \xi_2)\ d\mu_{\G}(y)$$
for $\xi_1,\xi_2 \in \G^*$.
\end{example}

\begin{example}[Tensor products of multipliers]\label{tensor-mult} Let $\G_1,\G_2$ be LCA groups from Figure \ref{fig:phys}.  If $\T_{\varphi_1}$ is a linear Fourier multiplier operator on $\Schwartz(\G_1)$ and $\T_{\varphi_2}$ is a linear Fourier multiplier operator on $\Schwartz(\G_2)$, then $\T_{\varphi_1 \otimes \varphi_2}$ is a linear Fourier multiplier operator on $\Schwartz(\G_1 \times \G_2)$ which is the tensor product of $\T_{\varphi_1}$ and $\T_{\varphi_2}$ in the sense that \eqref{tensor-1} holds for all $f_1 \in \Schwartz(\G_1), f_2 \in \Schwartz(\G_2)$.  Similarly, if $\B_{m_1}, \B_{m_2}$ are bilinear Fourier multiplier operators on $\Schwartz(\G_1), \Schwartz(\G_2)$ respectively then the bilinear Fourier multiplier operator $\B_{m_1 \otimes m_2}$ is the tensor product of $\B_{m_1}$ and $\B_{m_2}$ in the sense that \eqref{bil-tensor} holds for all $f_1,g_1 \in \Schwartz(\G_1)$, $f_2,g_2 \in \Schwartz(\G_2)$.
\end{example}

As previously mentioned, if $\Omega$ is a non-aliasing subset of $\R \times \Q/\Z$, then the sampling operator $\Sample$ restricts to a unitary map from $L^2(\A_\Z)^\Omega$ to $\ell^2(\Z)^{\pi(\Omega)}$, or equivalently the interpolation operator $\Sample_\Omega^{-1}$ is a unitary map from $\ell^2(\Z)^{\pi(\Omega)}$ to $L^2(\A_\Z)^\Omega$.  This suggests that Fourier multiplier operators on $L^2(\A_\Z)^\Omega$ can be identified with Fourier multiplier operators on $\ell^2(\Z)^{\pi(\Omega)}$.  This is indeed the case:

\begin{lemma}[Adelic and integer Fourier multipliers]\label{adel}  Let $\Omega \subset \R \times \Q/\Z$ be a non-aliasing compact subset of adelic frequency space.  Then for any $\varphi \in \Schwartz(\Omega)$, the diagram
\begin{center}
    \begin{tikzcd}
    \Schwartz(\Z)^{\pi(\Omega)} \arrow[ddd,"\F_\Z"] &&& \Schwartz(\A_\Z)^\Omega \arrow[lll,"\Sample"] \arrow[ddd,"\F_{\A_\Z}"] \\
    & \Schwartz(\Z)^{\pi(\Omega)} \arrow[ul, "\T_{\Proj \varphi}"] \arrow[d, "\F_\Z"] & \Schwartz(\A_\Z)^\Omega \arrow[l,"\Sample"] \arrow[d,"\F_{\A_\Z}"] \arrow[ur, "\T_\varphi"'] \\
    & \Schwartz(\pi(\Omega)) \arrow[dl, "\Proj\varphi"'] & \Schwartz(\Omega) \arrow[l,"\Proj"] \arrow[dr,"\varphi"] \\
    \Schwartz(\pi(\Omega)) &&& S(\Omega) \arrow[lll,"\Proj"]
\end{tikzcd}
\end{center}
commutes, where $\varphi$ denotes the operation of pointwise multiplication by $\varphi$, and similarly for $\Proj \varphi$. In particular, one has
\begin{equation}\label{pj}
\T_{\Proj \varphi} \Sample f = \Sample \T_\varphi f
\end{equation}
for all $f \in \Schwartz(\A_\Z)^\Omega$.
\end{lemma}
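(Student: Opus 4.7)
The plan is to reduce the commutativity of the outer square of the diagram to the already-established inner commutative square \eqref{comm-diag} together with the algebra homomorphism property \eqref{proj-hom}. All other sub-squares in the diagram are either instances of \eqref{comm-diag} applied at different layers, or are tautological (for example, the triangle involving $\T_\varphi$ at the top right simply records the definition of $\T_\varphi$ via $\F_{\A_\Z}$, and similarly for $\T_{\Proj\varphi}$ at the top left).

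First I would observe that since $\varphi \in \Schwartz(\Omega)$ is supported in $\Omega$, the operator $\T_\varphi$ preserves the subspace $\Schwartz(\A_\Z)^\Omega$: if $f \in \Schwartz(\A_\Z)^\Omega$, then $\F_{\A_\Z}(\T_\varphi f) = \varphi\cdot \F_{\A_\Z} f \in \Schwartz(\Omega)$, so $\T_\varphi f$ is still Fourier supported in $\Omega$. Similarly $\Proj\varphi\in \Schwartz(\pi(\Omega))$, and $\T_{\Proj\varphi}$ preserves $\Schwartz(\Z)^{\pi(\Omega)}$. Hence both compositions $\Sample\circ \T_\varphi$ and $\T_{\Proj\varphi}\circ \Sample$ are well-defined maps from $\Schwartz(\A_\Z)^\Omega$ to $\Schwartz(\Z)^{\pi(\Omega)}$, so \eqref{pj} makes sense.

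Next I would verify \eqref{pj} on the Fourier side. Set $G \coloneqq \F_{\A_\Z} f \in \Schwartz(\Omega)$. Applying $\F_\Z$ to $\Sample\, \T_\varphi f$ and using the inner commutative square \eqref{comm-diag} (applied to $\T_\varphi f$, which lies in $\Schwartz(\A_\Z)^\Omega$) gives
\begin{equation*}
\F_\Z(\Sample\, \T_\varphi f) \;=\; \Proj\bigl(\F_{\A_\Z} \T_\varphi f\bigr) \;=\; \Proj(\varphi\, G).
\end{equation*}
On the other side, using the definition of the Fourier multiplier $\T_{\Proj\varphi}$ and then \eqref{comm-diag} applied to $f$,
\begin{equation*}
\F_\Z\bigl(\T_{\Proj\varphi} \Sample f\bigr) \;=\; (\Proj\varphi)\cdot \F_\Z(\Sample f) \;=\; (\Proj\varphi)\cdot \Proj(G).
\end{equation*}
The identity \eqref{pj} would then follow by injectivity of $\F_\Z$ once we know $\Proj(\varphi G) = (\Proj\varphi)(\Proj G)$. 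But this is precisely the homomorphism property \eqref{proj-hom}, which holds because $\Omega$ is non-aliasing, so that $\pi$ restricted to $\Omega$ is injective and $\Proj$ is an algebra homomorphism from $\Schwartz(\Omega)$ to $\Schwartz(\pi(\Omega))$ (since $\varphi, G \in \Schwartz(\Omega)$, so is $\varphi G$).

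Finally I would note that the full diagram commutes by chasing: the two triangles on the top containing the multipliers $\T_\varphi$ and $\T_{\Proj\varphi}$ are the definitions of these operators via their symbols; the two triangles on the bottom containing pointwise multiplication by $\varphi$ and $\Proj\varphi$ are tautological; and the inner square involving $\Sample$, $\Proj$, and the two Fourier transforms is \eqref{comm-diag}. There is no serious obstacle: the only nontrivial ingredient is the non-aliasing hypothesis, whose sole role is to make \eqref{proj-hom} (and hence \eqref{comm-diag}) available, so that the interpolation $\Sample_\Omega^{-1}$ exists and the diagram composes unambiguously.
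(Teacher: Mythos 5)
Your proof is correct and follows essentially the same route the paper indicates (a routine diagram chase from \eqref{comm-diag}, \eqref{proj-hom}, and Definition \ref{fourier-mult}). The only substantive step — reducing \eqref{pj} to $\Proj(\varphi G) = (\Proj\varphi)(\Proj G)$ via \eqref{comm-diag} applied twice, then invoking the homomorphism property \eqref{proj-hom} enabled by non-aliasing — is exactly what the paper has in mind.
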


\begin{proof} This is immediate from \eqref{comm-diag}, \eqref{proj-hom}, Definition \ref{fourier-mult}, and a routine diagram chase using the invertibility of the Fourier transform.
\end{proof}

Another way of writing \eqref{pj} is as
\begin{equation}\label{pj-alt}
\T_{\Proj \varphi} f = \Sample \T_\varphi \Sample_\Omega^{-1} f
\end{equation}
for all $f \in \Schwartz(\Z)^{\pi(\Omega)}$.

There is a bilinear version of the formula \eqref{pj}.  Define the tensor square $\Proj^{\otimes 2} \colon \Schwartz( (\R \times \Q/\Z)^2 ) \to \Schwartz(\TT^2)$ of the projection operator $\Proj$ by the formula
$$ \Proj^{\otimes 2} m(\xi_1,\xi_2) \coloneqq \sum_{(\theta_1,\alpha_1) \in \pi^{-1}(\xi_1)}
\sum_{(\theta_2,\alpha_2) \in \pi^{-1}(\xi_2)} m((\theta_1,\alpha_1),(\theta_2,\alpha_2))
$$
for all $m \in \Schwartz( (\R \times \Q/\Z)^2 )$.
If $\Omega_1, \Omega_2 \subset \R \times \Q/\Z$ are non-aliasing compact subsets of adelic frequency space, then $\Proj^{\otimes 2}$ is an algebra homomorphism from $\Schwartz(\Omega_1 \times \Omega_2)$ to $\Schwartz(\pi(\Omega_1) \times \pi(\Omega_2))$, and is the tensor product of the algebra homomorphisms $\Proj \colon \Schwartz(\Omega_1) \to \Schwartz(\pi(\Omega_1))$ and $\Proj \colon \Schwartz(\Omega_2) \to \Schwartz(\pi(\Omega_2))$ in the sense of \eqref{tensor-1}.  A routine calculation (or a chase of a more complicated version of the commutative diagram in Lemma \ref{adel}) then gives the bilinear variant
\begin{equation}\label{pj-bil}
\B_{\Proj^{\otimes 2} m}( \Sample f, \Sample g ) = \Sample \B_m(f,g)
\end{equation}
of \eqref{pj} whenever $f \in\Schwartz(\A_\Z)^{\Omega_1}$, $g \in \Schwartz(\A_\Z)^{\Omega_2}$, and $m \in \Schwartz(\Omega_1 \times \Omega_2)$; equivalently, one has
\begin{equation}\label{pj-bil-alt}
\B_{\Proj^{\otimes 2} m}( f, g ) = \Sample \B_m(\Sample_{\Omega_1}^{-1} f,\Sample_{\Omega_2}^{-1} g)
\end{equation}
whenever $f \in \Schwartz(\Z)^{\pi(\Omega_1)}$, $g \in \Schwartz(\Z)^{\pi(\Omega_2)}$.  From \eqref{func-calc} we also observe the projected functional calculus
\begin{equation}\label{func-calc-2}
\B_{\Proj^{\otimes 2} m}( \T_{\Proj \varphi_1} f, \T_{\Proj \varphi_2} g ) = \B_{\Proj^{\otimes 2} (m (\varphi_1 \otimes \varphi_2))}(f,g)
\end{equation}
whenever $f \in \Schwartz(\Z)^{\pi(\Omega_1)}$, $g \in \Schwartz(\Z)^{\pi(\Omega_2)}$, $\varphi_1 \in \Schwartz(\Omega_1)$, $\varphi_2 \in \Schwartz(\Omega_2)$, and $m \in \Schwartz(\Omega_1 \times \Omega_2)$.

The point of the identities \eqref{pj-alt}, \eqref{pj-bil-alt} is that complicated linear and bilinear Fourier multiplier operators $\T_{\Proj \varphi}, \B_{\Proj^{\otimes 2} m}$ on the integers $\Z$ can be expressed (in non-aliasing regions of adelic frequency space) by simpler linear and bilinear Fourier multiplier operators $\T_\varphi, \B_m$ on the adelic integers $\A_\Z$.  For the multiplier operators of interest in this paper, the adelic symbols $\varphi, m$ often have a tensor product structure (or at least can be decomposed or approximated by symbols with such a structure), allowing us to decouple the Fourier analysis into the continuous Fourier analysis of $\R$ and the arithmetic Fourier analysis of $\hat \Z$.  In many cases the arithmetic symbol factors further, allowing one to work on smaller factor groups such as $\Z/Q\Z$, $\Z_p$, or $\Z/p^j \Z$.

As already observed, whenever $\Omega$ is a non-aliasing compact subset of $\R \times \Q/\Z$, the sampling operator $\Sample \colon \Schwartz(\A_\Z)^\Omega \to \Schwartz(\Z)^{\pi(\Omega)}$ and the interpolation operator $\Sample_\Omega^{-1} \colon \Schwartz(\Z)^{\pi(\Omega)} \to \Schwartz(\A_\Z)^\Omega$ both preserve the $L^2$ norm.  The situation for other function space norms is less clear.  However the situation is particularly favorable in the case of Example \ref{sampling-ex}, in that the sampling and interpolation operators essentially preserve all $L^p$ norms, even for non-Banach exponents $0 < p < 1$ or for vector-valued functions (or both):

\begin{theorem}[Quantitative Shannon sampling theorem]\label{Sampling}  Let $0 < p \leq \infty$, and let $B$ be a finite-dimensional normed vector space.  If $F \in \Schwartz(\A_\Z; B)$ has Fourier support in $[-\frac{c_0}{Q},\frac{c_0}{Q}] \times \frac{1}{Q}\Z/\Z$ for some $Q \in \Z_+$ and some $0 < c_0 < \frac{1}{2}$, then
\begin{equation}\label{e:SAMP} \| \Sample F \|_{\ell^p(\Z;B)} \sim_{c_0,p} \|F\|_{L^p(\A_\Z; B)}
\end{equation}
where we extend the sampling operator $\Sample$ to vector-valued functions in the obvious fashion.
\end{theorem}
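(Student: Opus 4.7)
The plan is to use the tensor structure $\A_\Z=\R\times\hat\Z$ to reduce the adelic comparison to a scalar bandlimited estimate on $\R$, then carry out a Plancherel--Polya type argument that treats simultaneously the vector-valued nature of $F$ and the non-Banach range $0<p<1$.

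First, because the $\hat\Z$-Fourier support of $F$ lies in $\tfrac{1}{Q}\Z/\Z$, Fourier inversion shows that $F$ descends to $\R\times\Z/Q\Z$: one has $F(x,y)=f(x,y\bmod Q)$ for some $f\in\Schwartz(\R\times\Z/Q\Z;B)$. Unwinding both norms, using that $\mu_{\hat\Z}$ pushes forward to normalized counting measure on $\Z/Q\Z$ and that $\iota(n)\equiv(n,n\bmod Q)$ in $\A_\Z$, I get
\begin{align*}
\|F\|_{L^p(\A_\Z;B)}^p&=\frac1Q\sum_{y\in\Z/Q\Z}\int_\R\|f(x,y)\|_B^p\,dx,\\
\|\Sample F\|_{\ell^p(\Z;B)}^p&=\sum_{y\in\Z/Q\Z}\sum_{\substack{n\in\Z\\ n\equiv y\,(Q)}}\|f(n,y)\|_B^p.
\end{align*}
For each residue $y$, set $g_y(t):=f(Qt+y,y)$; this is a Schwartz $B$-valued function on $\R$ with Fourier support in $[-c_0,c_0]$, and a change of variables in the first display together with the substitution $n=Qm+y$ in the second reduces \eqref{e:SAMP} to the uniform Plancherel--Polya comparison
\[
\sum_{m\in\Z}\|g(m)\|_B^p\ \sim_{c_0,p}\ \int_\R\|g(t)\|_B^p\,dt
\]
for every $g\in\Schwartz(\R;B)$ with Fourier support in $[-c_0,c_0]$. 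Note that the factor $Q$ and the vector space $B$ have disappeared from the reduced problem.

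Because $c_0<1/2$, I can fix once and for all a scalar Schwartz kernel $K\colon\R\to\R$ with $\widehat K\equiv 1$ on $[-c_0,c_0]$ and $\operatorname{supp}\widehat K\subset(-\tfrac12,\tfrac12)$. Poisson summation gives the two reproducing identities $g(x)=\sum_{n\in\Z}g(n)K(x-n)$ and $g(x)=\int_\R g(y)K(x-y)\,dy$. Applying $\|\cdot\|_B$ to the first and using $\|g(x)\|_B\le\sum_n\|g(n)\|_B|K(x-n)|$, the upper bound $\|g\|_{L^p(\R;B)}\lesssim_{c_0,p}\|g\|_{\ell^p(\Z;B)}$ follows by Minkowski when $p\ge 1$ and by the elementary quasi-triangle inequality $(\sum a_n)^p\le\sum a_n^p$ (valid for $a_n\ge 0$, $0<p\le 1$) when $p<1$, combined with the fact that $K\in L^{\max(1,p)}(\R)$.

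For the reverse inequality, $p\ge 1$ is immediate from the integral reproducing identity by H\"older together with the uniform tail bound $\sum_n|K(n-y)|\lesssim 1$. The sub-Banach case $0<p<1$ is the main obstacle of the argument, because $|\int gK|^p$ cannot be controlled by $\int|g|^p$ in general. The way around it is a local Plancherel--Polya estimate: for every unit interval $I$ centered at an integer,
\[
\sup_{x\in I}\|g(x)\|_B^p\ \lesssim_{c_0,p}\ \int_{I^\ast}\|g(y)\|_B^p\,dy,
\]
where $I^\ast$ is a fixed bounded enlargement of $I$. This is proved by oversampling: the strict inequality $c_0<1/2$ allows one to choose a sampling grid of spacing $\delta\in(2c_0,1)$, on which a reproducing formula analogous to $g(x)=\sum_k g(k\delta)K_\delta(x-k\delta)$ still holds with a rescaled Schwartz kernel $K_\delta$, and then one applies $(\sum a_k)^p\le\sum a_k^p$ together with the rapid decay of $K_\delta$ to bound $\|g(n)\|_B^p$ by an average of $\|g(k\delta)\|_B^p$ at nearby finer grid points, which in turn is bounded by an integral through the same trick applied in the scalar direction. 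Summing this local estimate over the unit intervals centered at integers produces $\sum_n\|g(n)\|_B^p\lesssim_{c_0,p}\int_\R\|g\|_B^p$, completing the proof, with all constants depending only on $c_0$ and $p$.
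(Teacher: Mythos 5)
Your reductions are correct and parallel the paper's: descending to $\R\times\Z/Q\Z$, splitting into residue classes, rescaling each residue class to a single Plancherel--Polya comparison on $\R$ for a $B$-valued Schwartz function with Fourier support in $[-c_0,c_0]$. The ``easy'' direction $\|g\|_{L^p(\R;B)}\lesssim_{c_0,p}\|g\|_{\ell^p(\Z;B)}$ is also handled correctly, via the lattice reproducing formula and, for $p<1$, the quasi-triangle inequality $(\sum a_n)^p\le\sum a_n^p$ together with the integrability of $|K|^p$.

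The gap is in the reverse direction $\|g\|_{\ell^p(\Z;B)}\lesssim\|g\|_{L^p(\R;B)}$ for $p<1$. Two issues. First, the asserted local estimate $\sup_{x\in I}\|g(x)\|_B^p\lesssim_{c_0,p}\int_{I^*}\|g\|_B^p$ with a \emph{fixed bounded} enlargement $I^*$ is not the standard local Plancherel--Polya (Peetre maximal function) bound, which produces a polynomially decaying weight such as $|g(n)|^p\lesssim\int|g(y)|^p\langle y-n\rangle^{-M}\,dy$; the compactly supported variant requires separate justification. Second, and more seriously, the ``oversampling'' argument does not close: applying the reproducing formula on a finer grid $\delta\Z$ together with the quasi-triangle inequality bounds $\|g(n)\|_B^p$ by a weighted sum of $\|g(k\delta)\|_B^p$, but passing from that finer lattice sum to the integral $\int\|g\|_B^p$ is exactly the inequality being proved, at a smaller scale. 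As written the argument is circular.

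The paper's proof avoids the hard direction entirely with a shift-averaging trick you have not used. Since $\|g\|_{L^p(\R;B)}^p=\int_0^1\|g\|_{\ell^p(\Z+\theta;B)}^p\,d\theta$, the Plancherel--Polya comparison follows from the uniform (in $\theta\in[0,1]$) estimate $\|g\|_{\ell^p(\Z+\theta;B)}\sim_{c_0,p}\|g\|_{\ell^p(\Z;B)}$, and by applying this to the translate $g(\cdot+\theta)$ one sees only the \emph{one-sided} inequality $\|g\|_{\ell^p(\Z+\theta;B)}\lesssim\|g\|_{\ell^p(\Z;B)}$ is needed --- which is exactly your ``easy'' direction (Poisson summation, Schur for $p\ge1$, quasi-triangle inequality for $p<1$). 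You should either adopt this symmetrization, or replace the bounded-enlargement local bound with the genuine Peetre maximal function estimate with polynomially decaying weight, which is still summable over $n\in\Z$ and yields the same conclusion.
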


See also the sampling principle of Magyar--Stein--Wainger \cite[Corollary 2.1, pp. 196]{MSW} as well as \cite[Proposition 4.4, pp. 816]{MSZ1} for closely related statements.  Theorem \ref{Sampling} implies that if $\Omega$ is a compact subset of $[-\frac{c_0}{Q},\frac{c_0}{Q}] \times \frac{1}{Q}\Z/\Z$, then $\Sample \colon \Schwartz(\A_\Z)^\Omega \to \Schwartz(\Z)^{\pi(\Omega)}$ and $\Sample_\Omega^{-1} \colon \Schwartz(\Z)^{\pi(\Omega)} \to \Schwartz(\A_\Z)^\Omega$ are both bounded on $L^p$ with norm $O_{c_0}(1)$.

\begin{proof}  As $F$ has Fourier support on the Pontryagin dual $\R \times \frac{1}{Q}\Z/\Z$ of $\R \times \Z/Q\Z$, we can descend to the quotient group $\R \times \Z/Q\Z$ and establish the bound
$$ \| \Sample_{Q} F \|_{\ell^p(\Z;B)} \sim_{c_0} \|F\|_{L^p(\R \times \Z/Q\Z; B)}$$
whenever $F \in \Schwartz(\R \times \Z/Q\Z;B)$ has Fourier support in $[-\frac{c_0}{Q},\frac{c_0}{Q}] \times \frac{1}{Q}\Z/\Z$ and
$$ \Sample_{Q} F(x) \coloneqq F(x, x \mod Q ).$$
By splitting $\Z$ into residue classes $a+Q\Z$ for $a \in [Q]$, and similarly splitting $\R \times \Z/Q\Z$ into copies $\R \times \{a \mod Q\}$ of $Q$, it suffices by the Fubini--Tonelli theorem to establish the bound
$$ \| f \|_{\ell^p(a+Q\Z;B)} \sim_{c_0,p} Q^{-1/p} \|f\|_{L^p(\R;B)}$$
whenever $a \in [Q]$ and $f \in \Schwartz(\R;B)$ has Fourier support in $[-\frac{c_0}{Q}, \frac{c_0}{Q}]$.  After applying translation and rescaling, it suffices to show that
$$ \| f \|_{\ell^p(\Z;B)} \sim_{c_0,p} \|f\|_{L^p(\R;B)}$$
whenever $f \in \Schwartz(\R;B)$ has Fourier support in $[-c_0,c_0]$.  It will suffice to establish the bound
$$ \| f \|_{\ell^p(\Z+\theta;B)} \sim_{c_0,p} \|f\|_{\ell^p(\Z;B)}$$
uniformly for all $0 \leq \theta \leq 1$, as the claim then follows by taking $L^p$ norms in $\theta$ and applying the Fubini--Tonelli theorem.  By translation and reflection symmetry it suffices to establish the upper bound
\begin{equation}\label{upper-lp}
 \| f \|_{\ell^p(\Z+\theta;B)} \lesssim_{c_0,p} \|f\|_{L^p(\Z;B)}.
 \end{equation}

Let $\psi = \psi_{c_0} \in \Schwartz(\R)$ be a function chosen so that $\F_\R \psi$ is supported on $[-1/2,1/2]$ and equals one on $[-c_0,c_0]$, so that the upper bound now follows from Schur's test.  From the Poisson summation formula we have
$$ f(y) = \sum_{x \in \Z} \psi(y-x) f(x)$$
for all $y \in \R$, hence by the triangle inequality
$$ \|f(y)\|_B \leq \sum_{x \in \Z} |\psi(y-x)| \|f(x)\|_B.$$
For $p \geq 1$ this gives \eqref{upper-lp} from Schur's test and the rapid decrease of $\psi$.  For $p<1$ we use the previous inequality to obtain
$$ \|f(y)\|_B^p \leq \sum_{x \in \Z} |\psi(y-x)|^p \|f(x)\|_B^p$$
and the claim follows from the triangle inequality and the rapid decrease of $\psi$.
\end{proof}

Because of this theorem and \eqref{pj}, \eqref{pj-bil}, the $L^p$ multiplier theory for both linear and bilinear Fourier multiplier operators $\T_{\Proj \varphi}$, $\B_{\Proj^{\otimes 2} m}$ on $\Schwartz(\Z)^{\pi(\Omega)}$ can be easily transferred to the corresponding $L^p$ multiplier theory of $\T_\varphi, \B_m$ on $\Schwartz(\A_\Z)^{\Omega}$ when $\Omega$ is of the form in Example \ref{sampling-ex} (or a compact subset of that example).
Unfortunately this situation only occurs for us in certain ``large-scale'' settings, in which the widths of the major arcs are extremely narrow compared to the height.  In the opposite ``small-scale'' regime we will be able to use the Ionescu--Wainger multiplier theorem (see Lemma \ref{mag-lem}(iv) and Remark \ref{iw-samp} below) as a partial replacement\footnote{Another partial replacement of Theorem \ref{Sampling} in this setting was recently established in \cite[Theorem 1.6]{T}.} of this transference, at least at the level of linear Fourier multiplier operators.  The Ionescu--Wainger theory does not directly treat the ``twisted'' bilinear multipliers $\B^{l_1,l_2,m_{\hat \Z}}_m$ that we will eventually need to handle (see \eqref{bil-twist}), so we will need to first apply a two-parameter Rademacher--Menshov argument in order to reduce the bilinear analysis to linear estimates that can be treated by that theory; see Section \ref{small-sec}.

We close this section with some crude multiplier estimates on $\Z$ and on $\R$.

\begin{lemma}[Crude multiplier bound]\label{crude-mult} Let $\G = \Z$ or $\G=\R$.
\begin{itemize}
    \item[(i)] Let $\varphi \in \Schwartz(\G^*)$ and $r>0$.  When $\G=\Z$ we also require $r \leq 1$.  Then for any $1 \leq p \leq \infty$, $\T_\varphi$ extends continuously to a linear map from $L^p(\G)$ to $L^p(\G)$ with
    \begin{equation}\label{mult-1}
    \|\T_\varphi \|_{L^p(\G) \to L^p(\G)}  \lesssim \sup_{0 \leq j \leq 2} \int_{\G^*} r^{j-1} \left|\frac{d^j}{d \xi^j} \varphi(\xi)\right|\ d\xi.
    \end{equation}
    \item[(ii)] Let $m \in \Schwartz(\G^* \times \G^*)$, $r_1,r_2 > 0$, and $1 \leq p,p_1,p_2 \leq \infty$ with $\frac{1}{p_1}+\frac{1}{p_2} = \frac{1}{p}$.  When $\G=\Z$ we also require $r_1,r_2 \leq 1$.  Then $\B_m$ extends continuously to a bilinear map from $L^{p_1}(\G) \times L^{p_2}(\G)$ to $L^p(\G)$ with
    \begin{equation}\label{mult-2}
     \|\B_m\|_{L^{p_1}(\G) \times L^{p_2}(\G) \to L^p(\G)} \lesssim \sup_{0 \leq j_1,j_2 \leq 2} \int_{\G^*} \int_{\G^*} r_1^{j_1-1} r_2^{j_2-1} \left|\frac{\partial^{j_1}}{\partial \xi_1^{j_1}}\frac{\partial^{j_2}}{\partial \xi_1^{j_2}} m(\xi_1,\xi_2)\right|\ d\xi_1 d\xi_2.
    \end{equation}
    The same bound also holds when the hypothesis $1 \leq p,p_1,p_2 \leq \infty$ is replaced by $1 <p_1,p_2 \leq \infty$, except now the implied constant in \eqref{mult-2} is permitted to depend on $p_1,p_2$.
\end{itemize}
\end{lemma}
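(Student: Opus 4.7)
The plan is to reduce both parts to bounding the $L^1$-norm of the convolution kernel associated to each multiplier, with the bilinear kernel estimate further factorised into a product of one-variable bounds.

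For part (i), writing $K \coloneqq \F_\G^{-1}\varphi$, one has $\T_\varphi f = K \ast f$, so Young's convolution inequality reduces the matter to proving
\[
\|K\|_{L^1(\G)} \lesssim \sup_{0 \leq j \leq 2} \int_{\G^*} r^{j-1} |\partial_\xi^j \varphi|.
\]
Two pointwise estimates on $K$ do the work: the trivial $|K(x)| \leq \int_{\G^*} |\varphi|$, and, via two integrations by parts in $\xi$ (with no boundary contributions since $\varphi$ is Schwartz on $\R$ and smooth periodic on $\TT$), the bound $|K(x)| \leq (2\pi|x|)^{-2} \int_{\G^*} |\partial_\xi^2 \varphi|$ for $x \neq 0$. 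Using the first bound for $|x| \leq r^{-1}$ and the second for $|x| > r^{-1}$, and integrating or summing, one obtains
\[
\|K\|_{L^1(\G)} \lesssim r^{-1} \int_{\G^*} |\varphi| + r \int_{\G^*} |\partial_\xi^2 \varphi|,
\]
which is majorised by the supremum above. On $\Z$ the hypothesis $r \leq 1$ is needed precisely so that the cutoff $r^{-1} \geq 1$ is compatible with the discrete sum; no such constraint arises on $\R$.

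For part (ii), introducing $K \coloneqq \F_{\G^* \times \G^*}^{-1} m$, one has
\[
\B_m(f,g)(x) = \int_\G \int_\G K(x - y_1, x - y_2)\, f(y_1)\, g(y_2)\, d\mu_\G(y_1)\, d\mu_\G(y_2).
\]
Integrating by parts $j_i$ times in $\xi_i$ for $(j_1, j_2) \in \{0, 2\}^2$ yields $|u^{j_1} v^{j_2} K(u,v)| \lesssim \int_{\G^*}\int_{\G^*} |\partial_{\xi_1}^{j_1} \partial_{\xi_2}^{j_2} m|$. Combining these four estimates produces a pointwise kernel bound of the form
\[
|K(u,v)| \leq \mathcal{A}\, \rho_1(u) \rho_2(v), \qquad \rho_i(u) \coloneqq (1 + (r_i u)^2)^{-1},
\]
where $\mathcal{A}$ is controlled by the supremum in \eqref{mult-2} divided by $\|\rho_1\|_{L^1} \|\rho_2\|_{L^1}$. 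Because $|f(x-u)|$ does not depend on $v$, nor $|g(x-v)|$ on $u$, the integrand separates, yielding the crucial factorisation
\[
|\B_m(f,g)(x)| \leq \mathcal{A} \cdot (\rho_1 \ast |f|)(x) \cdot (\rho_2 \ast |g|)(x).
\]
Applying H\"older's inequality (valid for any exponent triple with $\frac{1}{p} = \frac{1}{p_1} + \frac{1}{p_2}$, irrespective of whether $p \geq 1$), followed by Young's convolution inequality on each factor (valid for $1 \leq p_i \leq \infty$), produces the desired $L^{p_1} \times L^{p_2} \to L^p$ estimate. The extension to the range $1 < p_1, p_2 \leq \infty$ (with $p$ possibly less than $1$) is immediate from the same factored inequality, with the implied constant potentially depending on $p_1, p_2$ through the Young estimates on each factor. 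The constraint $r_i \leq 1$ on $\Z$ arises for the same discrete reason as in part (i).

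The main technical step is the kernel decay bound, which rests on combining the integration-by-parts estimates at the four corners $(j_1, j_2) \in \{0, 2\}^2$ with the cutoff at scale $r_i^{-1}$ to recover the $r^{j-1}$ scaling appearing in the stated supremum. The factorisation of the bilinear kernel as a product of one-variable convolutions sidesteps any need for bilinear maximal theory and enables a clean transfer of the linear machinery to the bilinear setting; this factorisation, available because $|f(x-u)||g(x-v)|$ separates in $u$ and $v$, is what allows one to handle the case $p < 1$ without additional difficulty.
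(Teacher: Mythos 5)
Your proof is correct, and for part (ii) it takes a cleaner and somewhat stronger route than the paper. The paper handles the two exponent regimes by two separate mechanisms: for $1 \leq p,p_1,p_2 \leq \infty$ it bounds $\|\B_m\|_{\ell^{p_1}\times\ell^{p_2}\to\ell^p}$ by $\|K\|_{\ell^1(\Z^2)}$ via Minkowski's integral inequality, while for $1 < p_1,p_2 \leq \infty$ (with $p$ possibly below $1$) it dominates $\B_m(f,g)$ pointwise by a product of Hardy--Littlewood maximal functions and invokes the HL maximal inequality, which is where the dependence of the constant on $p_1,p_2$ enters. You instead observe that the corner integration-by-parts estimates $(j_1,j_2)\in\{0,2\}^2$ give the factored kernel bound $|K(u,v)|\lesssim \mathcal{A}\,\rho_1(u)\rho_2(v)$, so that $|\B_m(f,g)|\leq \mathcal{A}\,(\rho_1*|f|)(\rho_2*|g|)$ pointwise; generalized H\"older (valid for all $p>0$) followed by Young's convolution inequality on each factor then closes the argument in one stroke. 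This unifies the two cases, produces constants independent of $p_1,p_2$, and even reaches the endpoint $p_1=1$ or $p_2=1$ with $p<1$, which the maximal-function route cannot. Your identification of where $r\leq 1$ (resp.\ $r_i\leq 1$) is needed on $\Z$ --- to make $\|\rho_i\|_{\ell^1(\Z)}\lesssim r_i^{-1}$ rather than $\lesssim 1+r_i^{-1}$ --- is also correct.
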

\begin{proof}  We just prove (ii) in the case $\G=\Z$, as all the other cases are similar. It suffices to prove the claim for Schwartz functions.  We may normalize the right-hand side of \eqref{mult-2} to be $1$. We can express $\B_m$ in physical space as 
$$ \B_m(f,g)(x) = \sum_{y_1,y_2 \in \Z} K(y_1,y_2) f(x-y_1) g(x-y_2), $$
where
$$ K(y_1,y_2) \coloneqq \int_{\TT^2} m(\xi_1,\xi_2) e(-y_1\xi_1 - y_2 \xi_2)\ d\xi_1 \xi_2.$$
Suppose first that we are in the case $1 \leq p,p_1,p_2 \leq \infty$.
By Minkowski's inequality we have
$$ \|\B_m \|_{\ell^{p_1}(\Z) \times \ell^{p_2}(\Z) \to \ell^p(\Z)} \leq \| K \|_{\ell^1(\Z^2)}.$$
On the other hand, from the normalization of \eqref{mult-2} and integration by parts we have
$$ K(y_1,y_2) \lesssim r_1^{1-j_1} r_2^{1-j_2} |y_1|^{-j_1} |y_2|^{-j_2}$$
for any $y_1,y_2 \in \Z$ and $0 \leq j_1,j_2 \leq 2$ (with the claim being vacuously true if the right-hand side is infinite), thus
\begin{equation}\label{kbound}
K(y_1,y_2) \lesssim r_1 \langle r_1 y_1 \rangle^{-2} r_2 \langle r_2 y_2 \rangle^{-2}
\end{equation}
and the claim follows.  In the case $1 < p_1,p_2 \leq \infty$, we can instead use \eqref{kbound} to bound $\B_m(f,g)$ pointwise by the product of the Hardy--Littlewood maximal functions of $f,g$, and the claim now follows from H\"older's inequality and the Hardy--Littlewood maximal inequality.
\end{proof}

\section{Ionescu--Wainger decomposition: reducing to major arcs}\label{iw-decomp-sec}

We now begin the proof of Theorem \ref{end-var}.  Henceforth the parameters $P, d, p_1, p_2, p, r, \lambda$ are fixed to obey the hypotheses of this theorem, and all implied constants in the asymptotic notation are allowed to depend on these parameters.  We also fix the finite $\lambda$-lacunary subset $\D$ of $\Z_+$, although we require all our estimates to be uniform in the choice of $\D$. We abbreviate $\tilde A^{\nn,P(\nn)}_{N}$ as $\tilde A_N$.  

We will also need four large constants:

\begin{itemize}
    \item[(i)] We choose a constant $C_0 \in \Z_+$ that is sufficiently large depending on the fixed parameters $P,d,p_1,p_2,p,r,\lambda$. (This constant is used to define a maximum height scale $l_{(N)}$ associated to each physical scale $N$; see \eqref{kl-def}.)
    \item[(ii)] We choose a constant $C_1 \in \Z_+$ that is sufficiently large depending on the fixed parameters $P,d,p_1,p_2,p,r,\lambda$ and on $C_0$. (This constant is used to define the Ionescu--Wainger parameter $\rho$; see \eqref{rho-def}.)
    \item[(iii)] We choose a constant $C_2 \in \Z_+$ that is sufficiently large depending on the fixed parameters $P,d,p_1,p_2,p,r,\lambda$ and on $C_0, C_1$.  (This quantity is used to define an auxiliary scale $u$ associated to a given height scale $l$; see \eqref{u-def}.)
    \item[(iv)] We choose a constant $C_3 \in \Z_+$ that is sufficiently large depending on the fixed parameters $P,d,p_1,p_2,p,r,\lambda$ and on $C_0, C_1, C_2$. (This quantity will be used to lower bound the physical scale $N$, as well as to bound implied constants in estimates.)
\end{itemize}

We also use $c>0$ to denote various small exponents that depend only on $d,p_1,p_2,p,r$, and which will vary from line to line.  Occasionally we will also need $c$ to depend on some other parameters and we will indicate this by additional subscripts, for instance $c_q$ will be a positive constant depending on $d,p_1,p_2,p,r,q$.  Importantly, these constants $c$ will \emph{not} depend on the large constants $C_0,C_1,C_2,C_3$ just introduced. Specifically, $c$ will be independent on the Ionescu--Wainger parameter $\rho$, see \eqref{rho-def}.

Define the \emph{naive height} $\Height_\naive(\alpha) \in 2^\N$ of an arithmetic frequency $\alpha = \frac{a}{q} \mod 1 \in \Q/\Z$ by the formula
$$
\Height_\naive\left( \frac{a}{q} \mod 1 \right) \coloneqq \inf \{ 2^l: l \in \N, q \leq 2^l \} = 2^{\lceil \log q \rceil} \sim q
$$
whenever $q \in \Z_+$ and $a \in [q]^\times$.  For any $l \in \N$, $k \in \Z$, we can then define the naive arithmetic frequency sets
$$ (\Q/\Z)_{\leq l,\naive} \coloneqq \Height_\naive^{-1}([2^l]) = \{\alpha \in \Q/\Z \colon \Height_\naive(\alpha) \leq 2^l\}$$
and the continuous frequency sets
$$ \R_{\leq k} \coloneqq [-2^k,2^k]$$
and then define the naive major arcs
$$ {\mathcal M}_{\leq l, \leq k, \naive} \coloneqq \pi( \R_{\leq k} \times (\Q/\Z)_{\leq l, \naive} ),$$
thus ${\mathcal M}_{\leq l, \leq k, \naive}$ consists of all elements of $\TT$ of the form $\frac{a}{q} + \theta \mod 1$ for some $q \in [2^l]$, $a \in [q]^\times$, and $\theta \in [-2^{-k},2^k]$.  These would be the obvious choice of major arcs to restrict attention to in our Fourier-analytic manipulations.  Unfortunately, the $L^p$ multiplier theory on such arcs is unfavorable.  To obtain a better theory, we follow Ionescu and Wainger \cite{IW} and replace the naive height $\Height_\naive(\alpha)$ of an arithmetic frequency by a smaller quantity, which we call the \emph{Ionescu--Wainger height} $\Height(\alpha) = \Height_\rho(\alpha) \in 2^\N$.  This height depends on an additional small parameter $0 < \rho < 1$, which we now fix in our hierarchy of constants as
\begin{equation}\label{rho-def}
\rho \coloneqq 1/C_1.
\end{equation}
The precise definition of this height is technical and is postponed to Appendix \ref{iw-app}.  However, for our purposes we can summarize the main properties of this height as follows.  Using this height, we define the Ionescu--Wainger arithmetic frequency sets
$$ (\Q/\Z)_{\leq l} \coloneqq \Height^{-1}([2^l]) = \{\alpha \in \Q/\Z \colon \Height(\alpha) \leq 2^l\}$$
and the Ionescu--Wainger major arcs or simply major arcs
$$  {\mathcal M}_{\leq l, \leq k} \coloneqq \pi( \R_{\leq k} \times (\Q/\Z)_{\leq l} );$$
see Figure \ref{fig:major}.  These arcs will be somewhat larger than their naive counterparts, but this is more than compensated for by their superior Fourier multiplier theory. We also use the variants
$$ (\Q/\Z)_l \coloneqq (\Q/\Z)_{\leq l} \backslash (\Q/\Z)_{\leq l-1} =  \Height^{-1}(2^l) = \{ \alpha \in \Q/\Z \colon \Height(\alpha)=2^l\}$$
and
$$  {\mathcal M}_{l, \leq k} \coloneqq \pi( \R_{\leq k} \times (\Q/\Z)_{l} )$$
with the convention that $(\Q/\Z)_{\leq -1}$ is empty.

\begin{figure}
    \centering
    \begin{tikzcd}
    & {\mathcal M}_{\leq l, \leq k} \\ 
    \R_{\leq k} \arrow[hook,r] \arrow[ur] \arrow[d,equal] & \R_{\leq k} \times (\Q/\Z)_{\leq l} \arrow[u, two heads,"\pi"]  \arrow[d,hook] & (\Q/\Z)_{\leq l} \arrow[d,hook] \arrow[l,hook] \arrow[ul,hook] & (\Z[\frac{1}{p}]/\Z)_{\leq l} \arrow[l,hook] \arrow[d,hook] \arrow[ull, hook] \\
    \R_{\leq k} \arrow[hook,r] & \R_{\leq k} \times \frac{1}{Q_{\leq l}}\Z/\Z  & \frac{1}{Q_{\leq l}}\Z/\Z \arrow[l,hook] & \arrow[l,hook] \frac{1}{p^j}\Z/\Z
    \end{tikzcd}
    \caption{The commutative diagram in Figure \ref{fig:freq}, restricted to major arcs, where $p^j$ is the largest power of $p$ dividing $Q_{\leq l}$ and $(\Z[\frac{1}{p}]/\Z)_{\leq l} \coloneqq (\Q/\Z)_{\leq l} \cap \Z[\frac{1}{p}]/\Z$.  When $(l,k)$ has good major arcs, the product set $\R_{\leq k} \times (\Q/\Z)_{\leq l}$ is non-aliasing, and the indicated map $\pi$ can be upgraded from a surjection to a bijection.  Most of the spaces in this diagram are no longer groups and so the arrows are now downgraded from continuous homomorphisms to continuous maps.  Note the approximate duality with Figure \ref{fig:unc}.}
    \label{fig:major}
\end{figure}
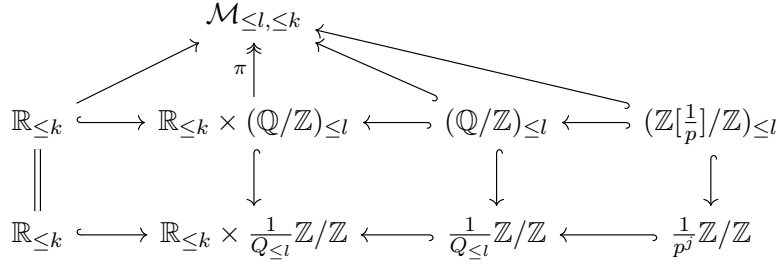

\begin{figure}
    \centering
    \begin{tikzcd}
    & \ell^2(\Z)^{{\mathcal M}_{\leq l, \leq k}} \arrow[d,"\Sample^{-1}_{\R_{\leq k} \times (\Q/\Z)_{\leq l}}",hook', two heads, shift left = 1.5ex] \\
    L^2(\R)^{\R_{\leq k}} \arrow[dotted,r] \arrow[ur,hook] \arrow[d,equal] & L^2(\A_\Z)^{\R_{\leq k} \times (\Q/\Z)_{\leq l}} \arrow[u, hook',two heads,"\Sample"]  \arrow[d,hook] & L^2(\hat \Z)^{(\Q/\Z)_{\leq l}} \arrow[d,hook] \arrow[l,dotted]  & L^2(\Z_p)^{(\Z[\frac{1}{p}]/\Z)_{\leq l}} \arrow[l,hook] \arrow[d,hook] \\
    L^2(\R)^{\R_{\leq k}} \arrow[dotted,r] & L^2(\R \times \Z/Q_{\leq l}\Z)^{\R_{\leq k} \times \frac{1}{Q_{\leq l}}\Z/\Z}  & L^2(\Z/Q_{\leq l}\Z) \arrow[l,dotted] & \arrow[l,hook] L^2(\Z/p^j\Z)
    \end{tikzcd}
    \caption{The $L^2$ version of Figure \ref{fig:major}, under the hypothesis of good major arcs.  Solid (hooked) arrows are Hilbert space isometries, double-headed arrows are unitary maps, and dotted arrows indicate a (Hilbert space) tensor product. We thus see that the major arc component $\ell^2(\Z)^{{\mathcal M}_{\leq l,\leq k}}$ of $\ell^2(\Z)$ can be identified with the tensor product of the low (continuous) frequency component $L^2(\R)^{\R_{\leq k}}$ of $L^2(\R)$ and the low (arithmetic) frequency component $L^2(\hat \Z)^{(\Q/\Z)_{\leq l}}$ of $L^2(\hat \Z)$, with the latter component identifiable in turn with a subspace of $L^2(\Z/Q_{\leq l}\Z)$. As with Figure \ref{fig:schwartz-phys}, some arrows are missing due to the failure of $L^2(\R)^{\R_{\leq k}}$ to contain a unit $1$.}
    \label{fig:major-l2}
\end{figure}
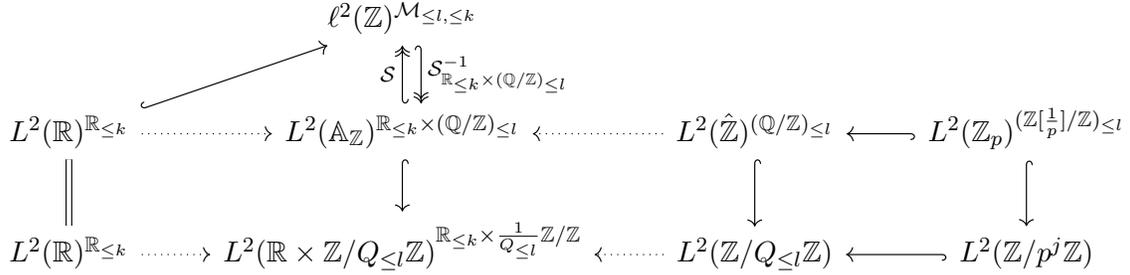

\begin{lemma}[Properties of height]\label{mag-lem} \ 
\begin{itemize}
    \item [(i)] (Naive height controls height) For any $\alpha \in \Q/\Z$, one has
    \begin{equation}\label{height-naive}
    \Height(\alpha) \leq \Height_\naive(\alpha).
    \end{equation}
    In particular, $(\Q/\Z)_{\leq l,\naive} \subset (\Q/\Z)_{\leq l}$ and ${\mathcal M}_{\leq l,\leq k, \naive} \subset {\mathcal M}_{\leq l, \leq k}$ for any $(l,k) \in \N \times \Z$.  If $\alpha \in \frac{1}{p}\Z/\Z$ for a prime $p$, then equality holds in \eqref{height-naive}.
    \item[(ii)]  (Cyclic structure)  For any $l \in \N$, $(\Q/\Z)_{\leq l}$ is the union of finitely many dual cyclic groups $\frac{1}{q}\Z/\Z$ with
    $$ q \lesssim_\rho 2^{2^{\rho l}}$$
    and is contained in a single dual cyclic group $\frac{1}{Q_{\leq l}}\Z/\Z$ with
    $$ Q_{\leq l} \lesssim 2^{O_\rho(2^l)}.$$
    In fact, the integer $Q_{\leq l}\in\Z_+$ can be defined explicitly as in \eqref{def:Q_l}.
    \item[(iii)]  (Cardinality bound)  
For any $l \in \N$, one has
    $$ \# (\Q/\Z)_{\leq l} \lesssim_\rho 2^{2^{\rho l}}.$$
\end{itemize}
\end{lemma}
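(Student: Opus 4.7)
The plan is to unpack the precise definition of the Ionescu--Wainger height $\Height_\rho$ from Appendix \ref{iw-app}, which assigns to each arithmetic frequency $\alpha \in \Q/\Z$ a dyadic magnitude based on the smallest ``admissible'' denominator representing $\alpha$. In the standard Ionescu--Wainger setup, admissible denominators at height scale $2^l$ form a finite set $\Sigma_{\leq l}$ consisting of integers built multiplicatively from primes $p \leq 2^{c_\rho l}$ with bounded exponents (with $c_\rho \to 0$ as $\rho \to 0$), together with the extra provision that every integer $q \leq 2^l$ is automatically admitted into $\Sigma_{\leq l}$ so as to make the $\ell^p$-multiplier theory behave well.

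For part (i), the latter provision immediately gives $\Height(a/q) \leq 2^{\lceil \log q \rceil} = \Height_\naive(a/q)$, which yields \eqref{height-naive} and the corresponding inclusions of frequency sets and major arcs. For the equality clause, I would observe that if $\alpha \in \frac{1}{p}\Z/\Z$ for a prime $p$, then the only way to write $\alpha = a'/q'$ in lowest terms has $q' \in \{1, p\}$, and the Ionescu--Wainger framework (which discounts composite denominators built from small primes but not prime denominators themselves) assigns no smaller height to $\alpha$ than $\Height_\naive(\alpha)$.

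For part (ii), I would write $(\Q/\Z)_{\leq l} = \bigcup_{q \in \Sigma_{\leq l}} \frac{1}{q}\Z/\Z$, which is a finite union since $\Sigma_{\leq l}$ is finite. The bound $q \lesssim_\rho 2^{2^{\rho l}}$ follows from the structural description of $\Sigma_{\leq l}$: any $q \in \Sigma_{\leq l}$ factors into prime powers with bases bounded by $2^{c_\rho l}$ and exponents bounded by $O_\rho(1)$, with at most $O_\rho(2^l)$ such primes, giving the claimed double-exponential bound. Setting $Q_{\leq l} \coloneqq \lcm_{q \in \Sigma_{\leq l}} q$ as in \eqref{def:Q_l}, and applying Chebyshev-type estimates to bound $\log Q_{\leq l} = \sum_{p \leq 2^{c_\rho l}} O_\rho(\log p) \lesssim_\rho 2^{c_\rho l} \cdot 2^{c_\rho l}$, we obtain $Q_{\leq l} \lesssim 2^{O_\rho(2^l)}$ after adjusting constants; containment of $(\Q/\Z)_{\leq l}$ in $\frac{1}{Q_{\leq l}}\Z/\Z$ is automatic from divisibility.

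For part (iii), I would estimate
\[
\#(\Q/\Z)_{\leq l} = \# \bigcup_{q \in \Sigma_{\leq l}} \{ a/q \mod 1 : a \in [q]^\times \} \leq \sum_{q \in \Sigma_{\leq l}} \varphi(q) \leq (\# \Sigma_{\leq l}) \cdot \max_{q \in \Sigma_{\leq l}} q,
\]
and both factors are bounded by $2^{2^{\rho l}}$: the maximum admissible denominator from part (ii), and the count $\# \Sigma_{\leq l}$ from the same prime-power counting argument (the number of ways to choose exponents in $\{0,1,\dots,O_\rho(1)\}$ for each of the $O_\rho(2^{c_\rho l})$ relevant primes, with $c_\rho$ chosen small enough). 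The main obstacle is purely bookkeeping: verifying that the constants $c_\rho$, the allowable exponent bound, and the ``automatic admission'' threshold are chosen consistently between Appendix \ref{iw-app} and the three quantitative bounds stated here, so that all implied constants absorb uniformly into the $O_\rho, \lesssim_\rho$ notation.
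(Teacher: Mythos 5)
Your proposal rests on an inaccurate recollection of the Ionescu--Wainger construction, and several of the bounds don't close as a result. A few concrete gaps:

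\textbf{Structure of $P_{\leq l}$.} You describe the admissible denominators as ``integers built multiplicatively from primes $p \leq 2^{c_\rho l}$ with bounded exponents, together with the extra provision that every integer $q \leq 2^l$ is automatically admitted.'' This is not the actual definition. In the paper, for $l > C_\rho^0$, one takes $P_{\leq l} = \{ Qw : Q \mid Q_0,\ w \in W_{\leq l}\cup\{1\}\}$, where $Q_0 = (N_0!)^D$ with $N_0 \sim 2^{\rho l/2}$ and $D \sim \rho^{-1}$, and $W_{\leq l}$ consists of products of at most $D$ distinct primes drawn from $(N_0, 2^l]$ with exponents at most $D$. Two consequences: the primes involved go all the way up to $2^l$ (not $2^{c_\rho l}$), and the small-prime part $Q \mid Q_0$ can carry exponents as large as $D\lfloor N_0/p\rfloor$, which is unbounded. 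Moreover, $[2^l] \subset P_{\leq l}$ is not a definitional provision; it is the content of \eqref{2l-include}, proved by a dedicated number-theoretic argument comparing $\lfloor \log q / \log p\rfloor$ with the multiplicity of $p$ in $Q_0$. Treating it as given hides real work.

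\textbf{The bound on $Q_{\leq l}$.} Your computation $\log Q_{\leq l} \lesssim_\rho 2^{c_\rho l}\cdot 2^{c_\rho l}$ only accounts for primes up to $2^{c_\rho l}$ and so misses the dominant contribution: the large primes in $(N_0, 2^l]$ enter $Q_{\leq l}$ with exponent $D$, and by Mertens/Chebyshev one has $\log \prod_{p \leq 2^l} p^D \sim D\, 2^l$. That is exactly why the stated bound is $Q_{\leq l} \lesssim 2^{O_\rho(2^l)}$; your estimate would (incorrectly) give something much smaller and does not reflect the structure of $P_{\leq l}$.

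\textbf{The cardinality bound in (iii).} The chain $\#(\Q/\Z)_{\leq l} \leq (\#\Sigma_{\leq l})\cdot\max_q q$ with both factors bounded by $2^{2^{\rho l}}$ yields $2^{2\cdot 2^{\rho l}}$, which is \emph{not} $\lesssim_\rho 2^{2^{\rho l}}$ --- an extra additive $2^{\rho l}$ in the exponent cannot be absorbed into implied constants. The paper avoids this by establishing the strictly sharper single-denominator bound $q \lesssim_\rho 2^{O_\rho(2^{\rho l/2})}$ (see \eqref{slightly-sharper}): since $q \leq Q_0 \cdot (2^l)^{D^2}$ with $Q_0 \leq N_0^{DN_0} \lesssim 2^{O_\rho(2^{\rho l/2})}$, one gets exponent $\sim 2^{\rho l/2}$, and squaring this (or multiplying by $\# P_{\leq l}$, which obeys a similar bound) still leaves exponent $\ll 2^{\rho l}$. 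That slack is what makes (iii) close. Your argument, built on both factors saturating $2^{2^{\rho l}}$, has no such slack.

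The overall strategy --- unpack the definition, write $(\Q/\Z)_{\leq l} = \bigcup_{q\in P_{\leq l}} \frac{1}{q}\Z/\Z$, and count --- is the right one, and your treatment of the equality case in (i) is essentially correct. But the quantitative steps all depend on getting the construction right, and as written they don't.
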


\begin{proof} See Appendix \ref{iw-app}.
\end{proof}

The linear Fourier multiplier operators $\T^{\leq l}_\varphi$ and $\T^{l}_\varphi$ defined by
\begin{align}
\label{tlphi-def}
\T^{\leq l}_\varphi &\coloneqq \T_{\Proj(\varphi \otimes \ind{(\Q/\Z)_{\leq l}})},\\
\label{tlphi-def-2}
\T^{l}_\varphi &\coloneqq \T_{\Proj(\varphi \otimes \ind{(\Q/\Z)_{l}})},
\end{align}
will play a key role in our analysis.  They can be written more explicitly as
\begin{align*}
\T^{\leq l}_\varphi f(x) &= \sum_{\alpha \in (\Q/\Z)_{\leq l}} \int_\R \varphi(\theta) \F_\Z f(\alpha+\theta) e(-x(\alpha+\theta))\ d\theta,\\
\T^{l}_\varphi f(x) &= \sum_{\alpha \in (\Q/\Z)_{l}} \int_\R \varphi(\theta) \F_\Z f(\alpha+\theta) e(-x(\alpha+\theta))\ d\theta.
\end{align*}
From \eqref{proj-hom}, \eqref{func-calc} one has the functional calculus
\begin{equation}\label{func-calc-l}
\T^{\leq l}_{\varphi_1 \varphi_2} = \T^{\leq l}_{\varphi_1} \circ T^{\leq l}_{\varphi_2}
\end{equation}
whenever $(l,k)$ has good major arcs and $\varphi_1, \varphi_2 \in \Schwartz(\R_{\leq k})$.  Similarly with $\leq l$ replaced by $l$ in \eqref{func-calc-l}.
The principal tool in bounding operators \eqref{tlphi-def} and \eqref{tlphi-def-2} is the Ionescu--Wainger multiplier theorem \cite{IW}, which for our purposes can be formulated as follows: 

\begin{theorem}[Vector-valued Ionescu--Wainger multiplier theorem]
\label{thm:iw}
 If $(l,k) \in \N \times \Z$ has \emph{good major arcs} in the sense that
\begin{equation}\label{m-small}
k \leq - C_\rho 2^{\rho l}
\end{equation}
for a sufficiently large constant $C_\rho$ depending only on $\rho$, then the compact set $\R_{\leq k} \times (\Q/\Z)_{\leq l} \subset \R \times \Q/\Z$ is non-aliasing.  Furthermore, if $q \in 2\N \cup (2\N)'$ is either an even integer or the dual of an even integer, then the linear Fourier multiplier operator $\T^{\leq l}_\varphi$ from \eqref{tlphi-def} obeys the multiplier bound
\begin{equation}\label{iw-mult}
\| \T^{\leq l}_\varphi \|_{\ell^q(\Z; H) \to \ell^q(\Z;H)} \lesssim_{\rho,q} \langle l \rangle \| \T_\varphi \|_{L^q(\R) \to L^q(\R)}
\end{equation}
for any $\varphi \in \Schwartz(\R_{\leq k})$, and any finite-dimensional Hilbert space $H$.  Similarly for the multiplier operator $\T^{l}_\varphi$
from \eqref{tlphi-def-2}.
\end{theorem}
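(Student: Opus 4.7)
The plan is to treat the two assertions separately. For the non-aliasing claim, suppose $(\theta_1,\alpha_1),(\theta_2,\alpha_2) \in \R_{\leq k} \times (\Q/\Z)_{\leq l}$ satisfy $\pi(\theta_1,\alpha_1) = \pi(\theta_2,\alpha_2)$, equivalently $\alpha_1 - \alpha_2 \equiv \theta_2 - \theta_1 \mod 1$. By Lemma \ref{mag-lem}(ii), each $\alpha_i$ lies in some subgroup $\frac{1}{q_i}\Z/\Z$ with $q_i \lesssim_\rho 2^{2^{\rho l}}$, so either $\alpha_1=\alpha_2$ or the distance between them in $\TT$ is at least $1/(q_1 q_2) \gtrsim_\rho 2^{-2 \cdot 2^{\rho l}}$. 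On the other hand $|\theta_1 - \theta_2| \leq 2^{k+1}$. Taking the constant $C_\rho$ in \eqref{m-small} sufficiently large forces the second option to be impossible, so $\alpha_1 = \alpha_2$, and consequently $\theta_1 = \theta_2$, proving non-aliasing.

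For the multiplier bound \eqref{iw-mult}, I would invoke the Ionescu--Wainger multiplier theorem from \cite{IW}, as refined in \cite{MST}, \cite{MSZ1}, \cite{MSZ3} so as to yield the polylogarithmic loss $\langle l \rangle$ in place of the original exponential loss $2^{O(\rho l)}$; this refinement is available precisely for the class of exponents $q \in 2\N \cup (2\N)'$ in the hypothesis, because the sharp square-function estimates it relies on are exact only at even integer exponents (and propagate to dual exponents by duality). The strategy is: (i) combinatorially decompose $(\Q/\Z)_{\leq l}$ into an $O(\langle l \rangle)$-bounded family of ``canonical pieces'', each a coset translate of a suitable dual cyclic group $\frac{1}{Q}\Z/\Z$ with well-controlled prime factorization; (ii) on each such piece the operator $\T^{\leq l}_\varphi$ factors, via the tensor-product structure developed in Appendix \ref{iw-app}, into a modulated lift of the continuous multiplier $\T_\varphi$ to $\Z/Q\Z$; (iii) apply Theorem \ref{Sampling}, which is lossless on each piece thanks to the non-aliasing just established, to reduce matters to the continuous multiplier norm $\|\T_\varphi\|_{L^q(\R) \to L^q(\R)}$; (iv) reassemble the pieces through a Rademacher--Menshov / Khinchine square-function argument, paying only a $\langle l \rangle$ factor. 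The corresponding estimate for $\T^{l}_\varphi$ follows by subtracting the $\leq l$ and $\leq l-1$ operators.

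The Hilbert-space-valued extension to a finite-dimensional $H$ is then automatic: every ingredient is either a positive operator, a sampling identification, a continuous scalar multiplier (which, by Marcinkiewicz--Zygmund for Hilbert-valued functions, is automatically $L^q(\R;H)$-bounded with the same norm), or a square function estimate, all of which transfer to $H$-valued functions without change. The main obstacle is securing the polylogarithmic rather than exponential loss in $l$: a direct application of the Ionescu--Wainger theorem from \cite{IW} would deliver a bound of the form $2^{O(\rho l)} \|\T_\varphi\|_{L^q(\R) \to L^q(\R)}$, which is inadequate for the summation over $l$ performed in later sections. Getting down to $\langle l \rangle$ requires the refined combinatorial decomposition together with the sharper square-function scheme, which is the content of Appendix \ref{iw-app}.
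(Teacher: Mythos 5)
Your top-level structure matches the paper's. For the non-aliasing claim, the paper also argues by separation of arithmetic frequencies via Lemma \ref{mag-lem}(ii) (it invokes the slightly sharper bound \eqref{slightly-sharper}, $q \lesssim_\rho 2^{O_\rho(2^{\rho l/2})}$, but your cruder $q \lesssim_\rho 2^{2^{\rho l}}$ also suffices once $C_\rho$ is taken large enough). For \eqref{iw-mult}, the paper simply cites \cite[Theorem 2.1]{MSZ3} (built on \cite{IW}, \cite{M1}) as a black box, transfers from scalar to Hilbert-valued by Marcinkiewicz--Zygmund, and obtains the $\T^l_\varphi$ case by the triangle inequality from $\T^{\leq l}_\varphi$ --- all steps that you also take.

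Where your write-up goes wrong is in the internal sketch (i)--(iv) of the Ionescu--Wainger mechanism. Step (i) claims a decomposition of $(\Q/\Z)_{\leq l}$ into an $O(\langle l\rangle)$-bounded family of coset translates; this is not possible, since by Lemma \ref{mag-lem}(iii) the set $(\Q/\Z)_{\leq l}$ has cardinality as large as $\sim 2^{2^{\rho l}}$ while each admissible cyclic group $\frac{1}{q}\Z/\Z$ has only $q \lesssim_\rho 2^{O_\rho(2^{\rho l/2})}$ elements, so the number of pieces in any such cover is itself super-exponential in $l$. Consequently step (iv) is also wrong: a Rademacher--Menshov or Khintchine reassembly over that many pieces would pay a factor on the order of the logarithm of the piece count, i.e.\ $\sim 2^{\rho l}$, not $\langle l \rangle$; with that loss \eqref{iw-mult} becomes useless for the purposes of this paper. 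The actual source of the $\langle l \rangle$ loss is the ``super-orthogonality'' recursion at the heart of \cite{IW}, \cite{M1}, \cite{MSZ3}, which exploits the arithmetic independence of distinct large prime factors and has no simple Rademacher--Menshov description (see Remark \ref{rem:1}(i) and the survey \cite{Pierce}). Your point about Theorem \ref{Sampling} being applicable at the level of a single coset is correct, but this local sampling step is a small part of the story; the real difficulty, and the reason the theorem is non-trivial, is precisely that one cannot simply triangulate or square-function over the pieces and still land on a polylogarithmic loss. Since both you and the paper ultimately cite \cite[Theorem 2.1]{MSZ3} as a black box, the overall proposal is still a valid proof of the theorem as stated --- but the internal sketch should not be taken as a faithful account of what makes that theorem true.
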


\begin{proof}
 See Appendix \ref{iw-app}.
\end{proof}

\begin{remark}
\label{rem:1}
Some remarks about Theorem \ref{thm:iw} are in order.
\begin{itemize}
\item[(i)] Theorem \ref{thm:iw} in the scalar-valued setting was first established by Ionescu and Wainger
\cite{IW}  with the factor $\langle l \rangle^{\lfloor2/\rho\rfloor+1}$ in
place of $\langle l \rangle$ in \eqref{iw-mult}. Their proof is based on an
intricate inductive argument that exploits super-orthogonality
phenomena. A slightly different proof (giving the factor $\langle l \rangle$ in
\eqref{iw-mult}) using certain recursive arguments, which
clarified the role of the underlying square functions and
orthogonalities, was presented in \cite{M1}. A vector-valued Ionescu--Wainger multiplier theorem (in the spirit of
\cite{M1}) can be found in \cite[Section 2]{MSZ3}. A uniform  vector-valued Ionescu--Wainger multiplier theorem, where the factor $\langle l \rangle$ 
is removed from \eqref{iw-mult}, was recently proved by the third author \cite{T}. The latter proof provides also explicit constants in \eqref{iw-mult} and allows us to handle adelic Fourier multipliers.  The super-orthogonality phenomena are discussed in the survey of Pierce \cite{Pierce} in a much broader
context.

\item[(ii)] The fact that the losses in \eqref{iw-mult} are only polynomial in the logarithmic height scale $l$ instead of exponential will be essential to our arguments, and form the main reason why we cannot work with the naive notion of heights, as the analogous multiplier theorem is not available for such heights.

\item[(iii)]  As we are focused on variational estimates even the factors like $2^{O(\rho l)}$ will have to be handled, see the constants produced by the Rademacher--Menshov inequality in Section \ref{small-sec}. From this point of view, even though the uniform  vector-valued Ionescu--Wainger multiplier theorem \cite{T} is now available, and the factor $\langle l \rangle$ can be deleted, this does not significantly improve the main result or simplify the proof. Hence, we will use the vector-valued Ionescu--Wainger multiplier theorem from \cite[Section 2]{MSZ3}.

\item[(iv)] The restriction in Theorem \ref{thm:iw} to the case when $q$ is an even integer or the dual of an even integer can be ignored in practice because in all the applications of Theorem \ref{thm:iw} we will have good $L^q(\R)$ operator norm bounds on $\T_\varphi$ for \emph{all} $1 < q < \infty$, and then by applying \eqref{iw-mult} for $q \in 2\N \cup (2\N)'$ and then interpolating we can recover good bounds for all $1 < q < \infty$.  See also the discussion after \cite[Theorem 2.1]{MSZ3}.
\end{itemize}
\end{remark}

\begin{remark}\label{iw-samp}  When $(l,k)$ has good major arcs, the corresponding sampling operator $\Sample \colon L^2(\A_\Z)^{\R_{\leq k} \times (\Q/\Z)_{\leq l}} \to \ell^2(\Z)^{\mathcal{M}_{\leq l,\leq k}}$ is unitary thanks to \eqref{comm-diag}, and is inverted by the interpolation operator $\Sample_{\R_{\leq k} \times (\Q/\Z)_{\leq l}}^{-1}$; see Figure \ref{fig:major-l2}.  For $L^p$ norms one no longer expects to have the isometry property even at an approximate level (except in the large scale case when Theorem \ref{Sampling} applies), but \eqref{iw-mult} shows that at least the linear Fourier multiplier theory on $\ell^q(\Z)^{\mathcal{M}_{\leq l,\leq k}}$ is basically controlled (up to small losses) by that of $L^q(\A_\Z)^{\R_{\leq k} \times (\Q/\Z)_{\leq l}}$ (at least when $q \in (2\N) \cup (2\N)'$), which serves as a partial substitute for an isometry property for the sampling operator.
\end{remark}

A crucial component of our arguments is the assertion that the bilinear averaging operator $\tilde A^{\nn,P(\nn)}_N(f,g)$ is negligible when the Fourier transform of $f$ or $g$ vanishes on major arcs. More precisely, we have the following improvement of \eqref{anf} in this case.  

\begin{theorem}[Single scale minor arc estimate]\label{improv} Let $N \geq 1$, let $l \in \N$, and suppose that $f,g \in \ell^2(\Z)$ obeys one of the following assumptions:
\begin{itemize}
    \item[(i)]  $\F_\Z f$ vanishes on ${\mathcal M}_{\leq l, \leq - \Log N  + l}$;
    \item[(ii)]  $\F_\Z g$ vanishes on ${\mathcal M}_{\leq l, \leq -d\Log N  + dl}$,
\end{itemize}
where the logarithmic scale $\Log N$ of $N$ was defined in \eqref{log-scale}.
Then one has
\begin{equation}\label{single-decay}
\| \tilde A_N(f,g) \|_{\ell^1(\Z)} \lesssim_{C_1} (2^{-cl} + \langle \Log N \rangle^{-cC_1}) \|f\|_{\ell^2(\Z)} \|g\|_{\ell^2(\Z)}.
\end{equation}
\end{theorem}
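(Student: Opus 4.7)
The plan is to dualize and invoke the Peluse--Prendiville inverse theorem through a structural decomposition of the associated bilinear dual functions. By the $(\ell^1(\Z))^* = \ell^\infty(\Z)$ duality, estimate \eqref{single-decay} is equivalent to the trilinear bound
\begin{equation}\label{trilin-plan}
|\Lambda_N(f,g,h)| \lesssim_{C_1} \bigl(2^{-cl} + \langle \Log N\rangle^{-cC_1}\bigr) \|f\|_{\ell^2(\Z)} \|g\|_{\ell^2(\Z)} \|h\|_{\ell^\infty(\Z)},
\end{equation}
where $\Lambda_N(f,g,h) \coloneqq \sum_{x \in \Z} \tilde A_N(f,g)(x)\, h(x)$. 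Writing $\Lambda_N(f,g,h) = \langle f, D^{(1)}_N(g,h)\rangle = \langle g, D^{(2)}_N(f,h)\rangle$ for the two bilinear dual functions associated with $\tilde A_N$, hypotheses (i) and (ii) are then handled by the same argument applied to $D^{(1)}_N$ or $D^{(2)}_N$ respectively; I focus on (i).

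The core ingredient is a structure theorem asserting that the dual function $D^{(1)}_N(g,h)$ is essentially Fourier-supported in the major arc ${\mathcal M}_{\leq l, \leq -\Log N + l}$, in the sense that its minor-arc component has $\ell^2$ norm of size $O_{C_1}\bigl((2^{-cl} + \langle \Log N\rangle^{-cC_1}) \|g\|_{\ell^2(\Z)} \|h\|_{\ell^\infty(\Z)}\bigr)$. This statement is a quantitative contrapositive of the Peluse--Prendiville inverse theorem (Theorem \ref{pp}), converted into a structural decomposition via the Hahn--Banach theorem as in Corollary \ref{struct}, with the $\ell^\infty$ normalization on the inputs relaxed to $\ell^2$ via the linear $L^p$-improving estimates of Han--Kova\v{c}--Lacey--Madrid--Yang as in Corollary \ref{struct-iii}. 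The Ionescu--Wainger multiplier theorem (Theorem \ref{thm:iw}, applied in the form of Proposition \ref{propdual}) guarantees that the relevant major-arc frequency projection is bounded on $\ell^2$ with loss at most polynomial in $l$, which can be absorbed into the factor $2^{-cl}$ by shrinking $c$ slightly. Since hypothesis (i) forces $\F_\Z f$ to vanish on the specified major arc, the pairing $\langle f, D^{(1)}_N(g,h)\rangle$ equals its pairing with the minor-arc component of $D^{(1)}_N(g,h)$, and Cauchy--Schwarz yields \eqref{trilin-plan}.

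The two error terms in \eqref{single-decay} reflect the fact that Peluse--Prendiville requires $N \gtrsim \delta^{-O(1)}$ in order to detect a structured progression at density level $\delta$: when $l$ is too large relative to $\log\langle \Log N\rangle$ this hypothesis is unavailable, and the corresponding range must be treated instead by combining Plancherel's theorem with the classical Weyl sum estimate \cite[Lemma 20.3, p. 462]{IK} applied to the symbol $\tilde m_N$, which directly produces the $\langle \Log N\rangle^{-cC_1}$ gain. The principal obstacle is the calibration of the Peluse--Prendiville parameters with the Ionescu--Wainger height thresholds: one must ensure that the two error terms combine additively rather than multiplicatively across the full range of $l$. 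This is precisely why $C_1$ is chosen much larger than $\rho^{-1}$ and all other structural constants, so that the polynomial-in-$l$ losses from Ionescu--Wainger and the $\ell^\infty \to \ell^2$ transfer are always dominated by one of the two decay factors on the right of \eqref{trilin-plan}.
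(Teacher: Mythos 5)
Your proposal follows the paper's proof architecture quite closely: dualize to a trilinear form, establish a structural decomposition of the dual functions via Peluse--Prendiville combined with Hahn--Banach (Corollary \ref{struct}), transfer from $\ell^\infty$ to $\ell^2$ control on the input via the $\ell^p$-improving estimates (Corollary \ref{struct-iii}), use Ionescu--Wainger to control the major-arc projection (Proposition \ref{propdual}), and conclude by pairing with the minor-arc-supported function and applying Cauchy--Schwarz. That is indeed how the paper proves Theorem \ref{improv}(i).

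However, two points deserve flagging. First, the assertion that hypothesis (ii) is handled ``by the same argument applied to $D^{(2)}_N$'' conceals a genuine missing ingredient. Theorem \ref{pp} produces structural information only about the function paired with the linear shift $T^n$ (that is, about $f$), so extracting the analogous structural statement for $g$ is not automatic: the paper must first prove an alternate inverse theorem for $g$ (Proposition \ref{lip-2}), with the correct arc widths $O(\delta^{-O(1)}/N^d)$ rather than $O(\delta^{-O(1)}/N)$, and the derivation of that proposition from Theorem \ref{pp} involves a self-pairing of the dual function, a reproducing-kernel identity, a change of variables and pigeonholing, Plancherel, and the inverse form of Weyl's inequality. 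This is the content that makes the two hypotheses asymmetric (note the exponents $\Log N$ versus $d\Log N$), and a proof that omits it is incomplete. Second, your explanation of the $\langle \Log N\rangle^{-cC_1}$ term is not how the paper handles the regime where \eqref{2l} fails; the paper simply observes that since the major arcs ${\mathcal M}_{\leq l', \leq -\Log N + l'}$ are increasing in $l'$, hypothesis (i) with a large $l$ implies the hypothesis with the largest $l'$ satisfying \eqref{2l}, and then $2^{-cl'} \sim \langle \Log N\rangle^{-cC_1}$. Your alternative of resorting to Plancherel plus the classical Weyl sum estimate applied to the bilinear symbol would not give the claimed bound, since --- as the paper emphasizes in its overview --- Plancherel combined with Weyl's inequality is insufficient to control the bilinear minor arc contribution (it only suffices in the degenerate case $f=1$). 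A more minor omission is Corollary \ref{siv}, which removes the interval support restriction on $g$ and $h$ via an off-diagonal decay argument built on \eqref{off-decay}; your plan jumps from Corollary \ref{struct-iii} to the final Cauchy--Schwarz step without this technical but necessary intermediate.
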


This theorem will be used repeatedly in our arguments. The parameter $c>0$ from \eqref{single-decay} will be independent on the Ionescu--Wainger parameter $\rho$, see \eqref{rho-def}.  The secondary term $\langle \Log N \rangle^{-cC_1}$ is negligible in practice; the key point is the primary term $2^{-cl}$ that exhibits exponential decay on the height scale $l$.  It is important to note that only one of the hypotheses (i), (ii), as opposed to both, are required to hold in order to obtain this decay.  The asymmetry between (i) and (ii) is entirely caused by the different degrees in the two polynomials $\nn, P(\nn)$ used to form the averaging operator $\tilde A_N$.  This theorem only gives exponential decay directly for $\ell^2(\Z) \times \ell^2(\Z) \to \ell^1(\Z)$ operator norms, but in practice one can use interpolation to then obtain similar decay for other $\ell^{p_1}(\Z) \times \ell^{p_2}(\Z) \to \ell^p(\Z)$ operator norms.  We remark that it is essential in Theorem \ref{improv} that we are in the nonlinear regime $d \geq 2$, as there are easy counterexamples to this theorem in the linear case $d=1$ (as can be seen by testing \eqref{single-decay} against plane waves multiplied by suitable cutoff functions).

The proof of Theorem \ref{improv} will be somewhat lengthy, and relies on several deep results in the literature, including the inverse theory of Peluse and Prendiville \cite{PP1} and Peluse \cite{P2}, (see also \cite{PP2} and the survey of Prendiville \cite{Pr})  and $L^p$-improving estimates of Han--Kova{\v c}--Lacey--Madrid--Yang \cite{HKLMY} (see also Dasu--Demeter--Langowski
\cite{DDL}); we also use the properties of the Ionescu--Wainger projections that we shall define later in this section.  A key difficulty in the proof of Theorem \ref{improv} will be that the functions $f,g$ are only controlled in $\ell^2(\Z)$ rather than $\ell^\infty(\Z)$.
We will establish this bound in Section \ref{inverse-sec}.  We remark that a continuous analogue of Theorem \ref{improv}, with the domain $\Z$ replaced by $\R$, and with the major arc set replaced by an interval centered at the frequency origin, was established in \cite[Lemma 5]{BRoth} for monomial $P$ and in \cite[Lemma 1.4]{DGR} in the general case.

\begin{example}  Let $l \in \N$, and let $N$ be a sufficiently large integer depending on $l,P$.  Let $q$ be a prime number with $2^l < q \leq 2^{l+1}$ (which implies in particular $\frac{1}{q} \mod 1$ has height $2^{l+1}$), and consider the functions
\begin{align*}
f(n) &\coloneqq e(-n/q) \sum_{j \in [N^{d-1}]} \epsilon_j (\F_\R^{-1} \eta)\left( \frac{n-jN}{N} \right), \\
g(n) &\coloneqq e(-n/q) (\F_\R^{-1} \eta)\left( \frac{n}{N^d} \right),
\end{align*}
where $\epsilon_1,\dots, \epsilon_{N^{d-1}} \in \{-1,+1\}$ are arbitrary signs, and $\eta$ is defined in Section \ref{cutoff-sec}.  Then ${\mathcal F}_\Z f$ and ${\mathcal F}_\Z g$ vanish on ${\mathcal M}_{\leq l, \leq - \Log N + l}$ and ${\mathcal M}_{\leq l, \leq - d\Log N + dl}$ respectively, and routine calculations show that
$$ \| {\mathcal F}_\Z f\|_{\ell^2(\Z)}, \| {\mathcal F}_\Z g\|_{\ell^2(\Z)} \lesssim N^{d/2}$$
and also
$$ \| \tilde A_N(f,g) \|_{\ell^1(\Z)} \lesssim N^d \left( \left|\E_{n \in \Z/q\Z} e\left(\frac{n+P(n)}{q}\right)\right| + N^{-c} \right).$$
Standard exponential sum estimates (see e.g., \cite{IK}) reveal that
$$ \E_{n \in \Z/q\Z} e\left(\frac{n+P(n)}{q}\right) \lesssim q^{-c} \lesssim 2^{-cl}$$
(indeed, the Weil bounds allow one to take $c=1/2$ here), and so this example is consistent with Theorem \ref{improv}.  Variations of this example can also be used to explain the appearance of the scales $-\Log N$ and $-d\Log N$ in Theorem \ref{improv}(i), (ii), which are the frequency dual scales to the spatial scales $\Log N$, $\Log N^d$ associated to the shifts $n, P(n)$ for $n \in [N]$ arising in the definition of $A^{\nn,P(\nn)}_N$; we leave the details to the interested reader.
\end{example}

For the remainder of this section, let us assume Theorem \ref{improv} and see how we can use it to attack Theorem \ref{end-var}.  We will need an adelic version of Littlewood--Paley projection operators.  Let $\eta_{\leq k}$ be the cutoff functions from Section \ref{cutoff-sec}.  The Fourier multipliers $\T_{\eta_{\leq k}}$ are then standard Littlewood--Paley Fourier projections on $\Schwartz(\R)$ to the frequency interval $\R_{\leq k}$.  Motivated by this, we define the \emph{Ionescu--Wainger Fourier projection operator} $\Pi_{\leq l, \leq k}$ for any $(l,k) \in \N \times \Z$ using the construction \eqref{tlphi-def} by the formula
\begin{equation}\label{iw}
\Pi_{\leq l, \leq k} \coloneqq \T^{\leq l}_{\eta_{\leq k}}.
\end{equation}
More explicitly, one has
$$ \Pi_{\leq l, \leq k} f(x) = \sum_{\alpha \in (\Q/\Z)_{\leq l}} \int_\R \eta(\theta/2^k) \F_\Z f(\alpha+\theta) e(-x(\alpha+\theta))\ d\theta.$$
Note that $\Pi_{\leq l,\leq k}$ is self-adjoint on $\ell^2(\Z)$, and its symbol is supported on ${\mathcal M}_{\leq l,\leq k}$.  We similarly define
\begin{equation}\label{iw-2}
\Pi_{l, \leq k} \coloneqq \T^{l}_{\eta_{\leq k}} = \Pi_{\leq l,\leq k} - \Pi_{\leq l-1,\leq k}
\end{equation}
with the convention $\Pi_{\leq -1,k}=0$.

When $(l,k)$ have good major arcs, these operators have good properties:

\begin{lemma}[Properties of Ionescu--Wainger projections]\label{iw-prop} Let $(l,k) \in \N \times \Z$ be such that $(l,k)$ has good major arcs. \begin{itemize}
    \item [(i)] (Boundedness) The operator $\Pi_{\leq l, \leq k}$ is a contraction on $\ell^2(\Z)$. Furthermore, for any $1 < q < \infty$, one has
\begin{equation}\label{pilk}
\| \Pi_{\leq l, \leq k} f \|_{\ell^q(\Z)} \lesssim_{C_1,q} \langle l \rangle \|f\|_{\ell^q(\Z)}.
\end{equation}
In particular, $\Pi_{\leq l, \leq k}$ extends to a bounded linear operator on $\ell^q(\Z)$.  If $f$ is furthermore supported on an interval $I$, we have the off-diagonal decay bound
\begin{equation}\label{off-decay}
\| \Pi_{\leq l, \leq k} f \|_{\ell^q(J)} \lesssim_{C_1,q,M} \langle l \rangle \langle 2^k \mathrm{dist}(I,J) \rangle^{-M} \|f\|_{\ell^q(I)}
\end{equation}
for any interval $J$, and any $M \in \N$.
    \item[(ii)] (Fourier support) If $f \in \ell^2(\Z)$, then $\Pi_{\leq l, \leq k} f$ is Fourier supported in ${\mathcal M}_{\leq l, \leq k}$, and $\Pi_{\leq l, \leq k} f=f$ when $\F_\Z f$ is Fourier supported in ${\mathcal M}_{\leq l, \leq k-1}$.
\end{itemize}
All these claims also hold when all occurrences of $\leq l$ are replaced by $l$.
\end{lemma}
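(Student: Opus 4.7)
The plan is to verify the four claims in order, since they all reduce to the definition $\Pi_{\leq l,\leq k}=\T_{\Proj(\eta_{\leq k}\otimes \ind{(\Q/\Z)_{\leq l}})}$ combined with the vector-valued Ionescu--Wainger multiplier theorem (Theorem \ref{thm:iw}). For part (ii), the symbol $\Proj(\eta_{\leq k}\otimes \ind{(\Q/\Z)_{\leq l}})$ on $\TT$ is by construction supported on $\pi(\R_{\leq k}\times (\Q/\Z)_{\leq l})={\mathcal M}_{\leq l,\leq k}$, establishing the Fourier-support claim. When $(l,k)$ has good major arcs, the set $\R_{\leq k}\times (\Q/\Z)_{\leq l}$ is non-aliasing, so on the sub-region ${\mathcal M}_{\leq l,\leq k-1}$ the projection $\pi$ is bijective with unique preimage $(\theta,\alpha)\in\R_{\leq k-1}\times(\Q/\Z)_{\leq l}$; since $\eta_{\leq k}\equiv 1$ on $\R_{\leq k-1}$ and $\ind{(\Q/\Z)_{\leq l}}\equiv 1$ on $(\Q/\Z)_{\leq l}$, the symbol equals $1$ there, so $\Pi_{\leq l,\leq k}f=f$ whenever $\F_\Z f$ is supported in ${\mathcal M}_{\leq l,\leq k-1}$. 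The same non-aliasing shows $|\Proj(\eta_{\leq k}\otimes \ind{(\Q/\Z)_{\leq l}})|\leq 1$ everywhere on $\TT$, giving the $\ell^2$ contraction via Plancherel. The $\ell^q$ bound \eqref{pilk} then follows from Theorem \ref{thm:iw} applied with $\varphi=\eta_{\leq k}$: the continuous multiplier $\T_{\eta_{\leq k}}$ is the standard smooth Littlewood--Paley projection on $\R$, uniformly bounded on $L^q(\R)$ for every $1<q<\infty$ (by Mihlin--H\"ormander, or directly from Lemma \ref{crude-mult}). This establishes \eqref{pilk} for $q\in(2\N)\cup (2\N)'$, and the remaining cases follow by interpolation against the $\ell^2$ contraction.

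For the off-diagonal estimate \eqref{off-decay}, the strategy is to exploit the rapid Schwartz decay of the continuous kernel $2^k(\F_\R^{-1}\eta)(2^k\cdot)$ through a physical-space splitting, then transfer the resulting bound to $\ell^q(\Z)$ via Ionescu--Wainger. Setting $D\coloneqq \mathrm{dist}(I,J)$, fix a smooth cutoff $\chi\colon\R\to[0,1]$ with $\chi=0$ on $|y|\leq D/2$ and $\chi=1$ on $|y|\geq D$, satisfying $|\chi^{(j)}|\lesssim_j D^{-j}$. Decompose $\eta_{\leq k}=\varphi_{\mathrm{loc}}+\varphi_{\mathrm{tail}}$ where the physical-space kernels are $\F_\R^{-1}\varphi_{\mathrm{loc}}\coloneqq (1-\chi)\,\F_\R^{-1}\eta_{\leq k}$ and $\F_\R^{-1}\varphi_{\mathrm{tail}}\coloneqq \chi\, \F_\R^{-1}\eta_{\leq k}$. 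The kernel of $\T^{\leq l}_{\varphi_{\mathrm{loc}}}$ inherits support in $|y|\leq D$, so for $f$ supported on $I$ and $x\in J$ the relation $|x-n|\geq D$ for $n\in I$ forces $\T^{\leq l}_{\varphi_{\mathrm{loc}}}f(x)=0$, whence $\Pi_{\leq l,\leq k}f=\T^{\leq l}_{\varphi_{\mathrm{tail}}}f$ on $J$. Applying Theorem \ref{thm:iw} to $\varphi_{\mathrm{tail}}$ together with the Young-type estimate
\[
\|\T_{\varphi_{\mathrm{tail}}}\|_{L^q(\R)\to L^q(\R)}\leq \|\F_\R^{-1}\varphi_{\mathrm{tail}}\|_{L^1(\R)}\lesssim_M \langle 2^k D\rangle^{-M},
\]
which follows from the Schwartz decay of $\F_\R^{-1}\eta$, yields the desired off-diagonal bound. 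The assertions for $\Pi_{l,\leq k}$ follow by the identical argument with $\ind{(\Q/\Z)_{\leq l}}$ replaced by $\ind{(\Q/\Z)_l}$.

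The main obstacle is the technical point that $\varphi_{\mathrm{tail}}$ is Schwartz but no longer compactly supported, whereas Theorem \ref{thm:iw} is stated for $\varphi\in\Schwartz(\R_{\leq k})$. This can be resolved by decomposing $\varphi_{\mathrm{tail}}$ into a Littlewood--Paley sum of pieces compactly supported in dyadic frequency annuli and applying the theorem to each piece; the rapid decay of $\|\F_\R^{-1}\varphi_{\mathrm{tail}}\|_{L^1(\R)}$ in $\langle 2^k D\rangle$ easily absorbs any polynomial losses from summing over those scales that still satisfy the good major arcs hypothesis. Alternatively, one may invoke a standard density/approximation argument extending Theorem \ref{thm:iw} to arbitrary Schwartz symbols whose continuous multiplier is $L^q(\R)$-bounded. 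Either route is routine given the machinery already developed, and is naturally recorded together with the proof of Theorem \ref{thm:iw} in the appendix.
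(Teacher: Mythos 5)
Your argument for the Fourier-support claims, the $\ell^2$ contraction, and the $\ell^q$ bound \eqref{pilk} matches the paper's proof essentially line for line: symbol bounded by one plus Plancherel for $\ell^2$, then Theorem \ref{thm:iw} combined with the $L^q(\R)$-boundedness of $\T_{\eta_{\leq k}}$ (via Lemma \ref{crude-mult}) and interpolation for the remaining exponents.

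For the off-diagonal estimate \eqref{off-decay}, your physical-space splitting into $\varphi_{\mathrm{loc}}$ and $\varphi_{\mathrm{tail}}$ is in the same spirit as the paper, which splits $\eta_{\leq k}$ into \emph{three} pieces $\eta_{\leq k}^{(1)}+\eta_{\leq k}^{(2)}+\eta_{\leq k}^{(3)}$ in Appendix \ref{iw-app}. But there is a genuine gap in how you dispatch the tail. You correctly flag that $\varphi_{\mathrm{tail}}$ is no longer in $\Schwartz(\R_{\leq k})$, so Theorem \ref{thm:iw} does not apply directly; however, neither of your two proposed fixes closes the gap. Regarding the Littlewood--Paley route: once you decompose $\varphi_{\mathrm{tail}}$ into dyadic frequency annuli, the pieces at scales $2^{k'}$ with $k' > -C_\rho 2^{\rho l}$ lie \emph{outside} the range where $(l,k')$ has good major arcs, and Theorem \ref{thm:iw} simply cannot be invoked for them. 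Your sentence about ``absorbing any polynomial losses from summing over those scales that still satisfy the good major arcs hypothesis'' does not say what to do with the scales that fail the hypothesis, and those are exactly the problematic ones. The paper handles this by explicitly splitting the tail further via $\eta_{\leq -v}$ (with $v=\lfloor C_\rho 2^{\rho l}\rfloor$, after strengthening the good-major-arcs hypothesis to $k\leq -2v$): the low-frequency part of the tail, $\eta_{\leq k}^{(2)}$, lands in the regime where Theorem \ref{thm:iw} applies, and the high-frequency part $\eta_{\leq k}^{(3)}$ is controlled by the crude multiplier bound Lemma \ref{crude-mult}(i) together with the cardinality bound $\#(\Q/\Z)_{\leq l}\lesssim 2^{2^{\rho l}}$ from Lemma \ref{mag-lem}(iii), the resulting loss of $2^{O(2^{\rho l})}$ being absorbed by the super-polynomial Schwartz decay $2^{-M(m-k+v)}$. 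Your sketch omits this crude-multiplier step entirely.

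Your alternative route is also flawed: a ``density/approximation argument extending Theorem \ref{thm:iw} to arbitrary Schwartz symbols'' cannot work, because the good-major-arcs hypothesis is not a technical convenience but the substantive non-aliasing input. If $\varphi$ has Fourier support wide enough that $\R_{\supp \varphi}\times(\Q/\Z)_{\leq l}$ is no longer non-aliasing, the translated major arcs $\{\alpha+\theta:\theta\in\supp\varphi\}$ begin to overlap as $\alpha$ ranges over $(\Q/\Z)_{\leq l}$; the symbol $\Proj(\varphi\otimes\ind{(\Q/\Z)_{\leq l}})$ can then be much larger than $\|\varphi\|_{L^\infty}$ and the $\ell^2$ contraction --- let alone the $\ell^q$ multiplier bound --- genuinely fails. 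This is not recoverable by density. You need the explicit crude bound for the high-frequency piece, as in the paper's Appendix \ref{iw-app}.
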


\begin{proof} See Appendix \ref{iw-app}.
\end{proof}

\begin{remark}[Physical space interpretation of major arcs]\label{uncertainty} By uncertainty principle heuristics, functions $f \in \Schwartz(\Z)$ which have Fourier support in ${\mathcal M}_{\leq l, \leq k}$, where $(l,k) \in \N \times \Z$ satisfy \eqref{m-small}, can be viewed as behaving like linear combinations of indicator functions $\ind{P}$ of arithmetic progressions $P$ of spacing $O(2^l)$ and diameter $O(2^{-k})$, and behave like constants on arithmetic progressions of spacing $Q_{\leq l}$ and diameter $O(2^{-k})$; the latter is only non-vacuous in the ``large-scale'' regime in which $2^{-k}$ is larger than $Q_{\leq l}$.  
Dually, functions $f \in \Schwartz(\Z)$ whose Fourier transform vanishes on ${\mathcal M}_{\leq l, \leq k}$ morally have negligible mean on the two types of arithmetic progressions just mentioned.  The reader is invited to compare Figure \ref{fig:unc} with Figure \ref{fig:major} through the lens of this uncertainty principle.
\end{remark}

Now we can use Theorem \ref{improv} and Lemma \ref{iw-prop} to achieve some reductions to prove Theorem \ref{end-var}.  It will suffice to establish the estimate
\begin{equation}\label{vrb}
\| ( \tilde A_N(f,g))_{N \in \D} \|_{\ell^p(\Z; \V^r)} \lesssim_{C_3} \|f\|_{\ell^{p_1}(\Z)} \|g\|_{\ell^{p_2}(\Z)}.
\end{equation}

For each individual $N < C_3$ this claim is immediate from \eqref{anf}, so we may assume without loss of generality that $N \geq C_3$ for all $N \in \D$.
If $N \geq C_3$, define the quantities
\begin{equation}\label{kl-def}
 l_{(N)} \coloneqq C_0 \Log \Log N.
 \end{equation}
Then by \eqref{m-small} the pairs $(l_{(N)},-\Log N + l_{(N)})$, $(l_{(N)}, -d\Log N + dl_{(N)})$ have good major arcs, and hence by Lemma \ref{iw-prop}(i), (ii) and Theorem \ref{improv}, if $C_1\ge C_0$ one has the estimate
$$ \| \tilde A_N( (1 - \Pi_{\leq l_{(N)}, -\Log N + l_{(N)}}) f,g) \|_{\ell^1(\Z)} \lesssim_{C_1} (\Log N)^{-cC_0} \| f\|_{\ell^2(\Z)} \|g\|_{\ell^2(\Z)}.$$
On the other hand, from Lemma \ref{iw-prop}(i) and \eqref{anf} one also has
$$ \| \tilde A_N( (1 - \Pi_{\leq l_{(N)}, -\Log N + l_{(N)}}) f,g) \|_{\ell^q(\Z)} \lesssim_{C_1,q,q_1,q_2} (\Log\Log N) \| f\|_{\ell^{q_1}(\Z)} \|g\|_{\ell^{q_2}(\Z)}$$
for any $1 < q_1,q_2 < \infty$ with $1/q_1+1/q_2=1/q \leq 1$.  Interpolating, we conclude that
$$ \| \tilde A_N( (1 - \Pi_{\leq l_{(N)}, -\Log N + l_{(N)}}) f,g) \|_{\ell^p(\Z)} \lesssim_{C_1} (\Log\Log N)^{O(1)} (\Log N)^{-cC_0} \| f\|_{\ell^{p_1}(\Z)} \|g\|_{\ell^{p_2}(\Z)}$$
(recall that $c$ varies from line to line and is allowed to depend on $p_1,p_2,p$). In particular, for $C_0$ large enough one has
$$ \| \tilde A_N( (1 - \Pi_{\leq l_{(N)}, -\Log N + l_{(N)}}) f,g) \|_{\ell^p(\Z)} \lesssim_{C_1} (\Log N)^{-10} \| f\|_{\ell^{p_1}(\Z)} \|g\|_{\ell^{p_2}(\Z)}$$
(say).  A similar argument gives
\begin{multline*}
   \| \tilde A_N( \Pi_{\leq l_{(N)}, -\Log N + l_{(N)}} f, (1 - \Pi_{\leq l_{(N)}, \leq -d\Log N + dl_{(N)}}) g) \|_{\ell^p(\Z)} \\
   \lesssim_{C_1} (\Log N)^{-10} \| f\|_{\ell^{p_1}(\Z)} \|g\|_{\ell^{p_2}(\Z)}; 
\end{multline*}
by the triangle inequality and bilinearity of $\tilde A_N$, we conclude that
\begin{multline*}
    \| \tilde A_N( f,g) - \tilde A_N( \Pi_{\leq l_{(N)}, \leq -\Log N + l_{(N)}} f, \Pi_{\leq l_{(N)}, \leq -d\Log N + dl_{(N)}} g) \|_{\ell^p(\Z)} \\
    \lesssim_{C_1} (\Log N)^{-10} \| f\|_{\ell^{p_1}(\Z)} \|g\|_{\ell^{p_2}(\Z)}.
\end{multline*}
From the $\lambda$-lacunary nature of $\D$ we have
$$ \sum_{N \in \D: N \geq C_3} (\Log N)^{-10} \lesssim 1$$
and hence by \eqref{varsum} we have that
\begin{multline*}
\| ( \tilde A_N(f,g) - \tilde A_N( \Pi_{\leq l_{(N)}, \leq -\Log N + l_{(N)}} f, \Pi_{\leq l_{(N)}, \leq -d\Log N + dl_{(N)}} g))_{N \in \D} \|_{\ell^p(\Z;\V^r)} \\
\lesssim_{C_1} \|f\|_{\ell^{p_1}(\Z)} \|g\|_{\ell^{p_2}(\Z)}.    
\end{multline*}
By a further application of the triangle inequality, we conclude that to establish \eqref{vrb}, it suffices to prove the major arc bound
$$
\| ( \tilde A_N( \Pi_{\leq l_{(N)}, \leq -\Log N + l_{(N)}} f, \Pi_{\leq l_{(N)}, \leq -d\Log N + dl_{(N)}} g))_{N \in \D} \|_{\ell^p(\Z;\V^r)} \lesssim_{C_3} \|f\|_{\ell^{p_1}(\Z)} \|g\|_{\ell^{p_2}(\Z)}.
$$
We now perform an ``arithmetic'' dyadic decomposition
$$ \Pi_{\leq l, \leq m} = \sum_{0 \leq l' \leq l} \Pi_{l',\leq m}.$$
By the triangle inequality, it now suffices to show the bound
\begin{multline}\label{l1l2}
\| ( \tilde A_N( \Pi_{l_1, \leq -\Log N + l_{(N)}} f, \Pi_{l_2, \leq -d\Log N + dl_{(N)}} g) \ind{l_1,l_2 \leq l_{(N)}})_{N \in \D} \|_{\ell^p(\Z; \V^r)}\\
\lesssim_{C_3} 2^{-\rho l} \|f\|_{\ell^{p_1}(\Z)} \|g\|_{\ell^{p_2}(\Z)}
\end{multline}
for all $l_1,l_2 \in \N$, where 
\begin{equation}\label{l-def}
l \coloneqq \max(l_1,l_2).
\end{equation}
Note that the constraint $l_1,l_2 \leq l_{(N)}$ serves as an additional lower bound on $N$ (and in particular the left-hand side of \eqref{l1l2} vanishes for all but finitely many $l_1,l_2$, thanks to the finite nature of $\D$), so we may also write this bound as
\begin{multline}\label{l1l2-alt}
\| ( \tilde A_N( \Pi_{l_1, \leq -\Log N + l_{(N)}} f, \Pi_{l_2, \leq -d\Log N + dl_{(N)}} g))_{N \in \D; l_1,l_2 \leq l_{(N)}}  \|_{\ell^p(\Z;\V^r)} \\
\lesssim_{C_3} 2^{-\rho l} \|f\|_{\ell^{p_1}(\Z)} \|g\|_{\ell^{p_2}(\Z)}.
\end{multline}
Fix $l_1,l_2$ (and hence $l$), and then introduce the quantity
\begin{equation}\label{u-def}
u \coloneqq \lfloor C_2 2^{2\rho l} \rfloor.
\end{equation}
We now combine the previous ``arithmetic'' dyadic decomposition with a ``continuous'' dyadic decomposition
\begin{align*}
\Pi_{l_1, \leq -\Log N + l_{(N)}} f &= \sum_{-u \leq s_1 \leq l_{(N)}} F^{u,l_1,s_1}_N, \\
\Pi_{l_2, \leq -d\Log N + dl_{(N)}} g &= \sum_{-u \leq s_2 \leq l_{(N)}} G^{u,l_2,s_2}_N,
\end{align*}
where
\begin{equation}\label{FN-def}
 F^{u,l_1,s_1}_N \coloneqq \begin{cases} \Pi_{l_1, \leq -\Log N + s_1} f - \Pi_{l_1, \leq -\Log N + s_1-1} f & s_1 > -u \\
\Pi_{l_1, \leq -\Log N - u} f & s_1 = -u
\end{cases}
\end{equation}
and
\begin{equation}\label{GN-def}
G^{u,l_2,s_2}_N \coloneqq \begin{cases} \Pi_{l_2, \leq d(-\Log N+s_2)} g - \Pi_{l_2, \leq d(-\Log N+s_2-1)} g  & s_2 > -u \\
\Pi_{l_2, \leq d(-\Log N-u)} g & s_2 = -u.
\end{cases}
\end{equation}
Informally, $F^{u,l_1,-u}_N$, $G^{u,l_2,-u}_N$ represent the ``low (continuous) frequency'' components of $f,g$ respectively, whereas $F^{u,l_1,s_1}_N, s_1>-u$ and $G^{u,l_2,s_2}_N, s_2>-u$ represent the ``high (continuous) frequency'' components.

By the triangle inequality we can bound the left-hand side of \eqref{l1l2-alt} by
$$ \sum_{s_1,s_2 \geq -u} \| ( \tilde A_N( F^{u,l_1,s_1}_N, G^{u,l_2,s_2}_N))_{N \in \I^{l,s_1,s_2}} \|_{\ell^p(\Z; \V^r)}, $$
where $\I^{l,s_1,s_2}$ denotes the index set
\begin{equation}\label{index-def}
 \I^{l,s_1,s_2} \coloneqq \{ N \in \D:  l,s_1,s_2 \leq l_{(N)}\}.
\end{equation}
The expression $\tilde A_N( F^{u,l_1,s_1}_N, G^{u,l_2,s_2}_N)$ can be viewed as (the scale $N$ component of) a paraproduct of $F$ and $G$, but centered around a finite number of (arithmetic) frequencies, in contrast to the classical paraproducts that are centered at the frequency origin; also, the paraproduct symbol exhibits some additional oscillation compared to classical paraproducts when $s_1,s_2$ become large.  We shall sometimes distinguish between the ``high-high'' case $s_1,s_2 > -u$, the ``low-high'' case $s_2 > s_1=-u$, the ``high-low'' case $s_1 > s_2 =-u$, and the ``low-low'' case $s_1=s_2=-u$ of these paraproducts. But for now we can treat all choices of $s_1,s_2$ in a unified fashion.

By several applications of the triangle inequality, the bound \eqref{l1l2-alt}, and hence Theorem \ref{end-var}, now follows from the following variational paraproduct estimates, in which we request an exponential gain in the $p_1=p_2=2$ case and relatively small losses in all other cases:

\begin{theorem}[Variational paraproduct estimates]\label{varp}  Let the hypotheses be as in Theorem \ref{end-var}, and the notational conventions be as in this section.  Let $l_1,l_2 \in\N$, and define $l,u$ by \eqref{l-def}, \eqref{u-def} respectively.  Let $s_1,s_2 \geq -u$, and then let $F_N := F^{u,l_1,s_1}_N$, $G_N := G^{u,l_2,s_2}_N$, $\I := \I^{l,s_1,s_2}$ be defined respectively by \eqref{FN-def}, \eqref{GN-def}, \eqref{index-def}.  Then
\begin{multline}\label{all-all-weak}
\| ( \tilde A_N( F_N, G_N ))_{N \in \I} \|_{\ell^p(\Z;\V^r)} \\
\lesssim_{C_3} \langle \max(l,s_1,s_2) \rangle^{O(1)} 2^{O(\rho l)-c \max(l,s_1,s_2) \ind{p_1=p_2=2}}
\|f\|_{\ell^{p_1}(\Z)} \|g\|_{\ell^{p_2}(\Z)}.
\end{multline}
Here the constant $c$ does not depend on $\rho$, see the discussion below Theorem \ref{improv}.
\end{theorem}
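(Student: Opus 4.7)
The plan is to decompose the variational norm over $\I$ into a ``small-scale'' regime $\I^{\mathrm{small}} \coloneqq \I \cap [1, 2^{2^u}]$ and a ``large-scale'' regime $\I^{\mathrm{large}} \coloneqq \I \cap (2^{2^u}, \infty)$, using the triangle inequality \eqref{simple}, and treat each regime separately. The key point is that the threshold $2^{2^u}$ with $u \sim 2^{2\rho l}$ is chosen so that on large scales the major arcs of widths $2^{s_1-\Log N}$ and $2^{d s_2 -d\Log N}$ are so narrow (compared to the common denominator $Q_{\leq l}$) that the Shannon sampling theorem (Theorem \ref{Sampling}) applies, whereas small scales are in number at most $\lambda$-logarithmically proportional to $u$, so that losses polynomial in $u$ (hence polynomial in $2^{\rho l}$) are absorbable against any exponential gain $2^{-cl}$ obtained from Theorem \ref{improv}.

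For the small-scale regime, I would first approximate $\tilde A_N(F_N, G_N)$ by a scale-independent bilinear operator $B(F_N, G_N)$ obtained by freezing the arithmetic symbol $m_{\hat\Z}$ from \eqref{symbol-bil-def} and linearizing the continuous symbol, mimicking the adelic factorization \eqref{approx-factor}; the error incurred in this approximation is controlled via Theorem \ref{improv} (applied to the difference), yielding the required exponential decay in $\max(l, s_1, s_2)$ when $p_1=p_2=2$. Once $A_N$ is replaced by $B$ (independent of $N$), the $\V^r$-norm is reduced to a two-parameter (over the continuous Littlewood--Paley scales of $f, g$) Rademacher--Menshov square function, losing only $u^{O(1)} \lesssim 2^{O(\rho l)}$ factors; Khinchine's inequality then reduces the square function to a single-scale bilinear estimate, which is handled by combining the Ionescu--Wainger multiplier theorem \ref{thm:iw} (to dispose of the frequency projections) with Theorem \ref{improv} in the $\ell^2 \times \ell^2 \to \ell^1$ case, and with the trivial Hölder bound \eqref{anf} in the general case; interpolation between these two endpoints delivers \eqref{all-all-weak} for all admissible $(p_1, p_2)$.

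For the large-scale regime, since $(l_{(N)}, -\Log N + s_1)$ and $(l_{(N)}, -d\Log N + ds_2)$ have very good major arcs, I would use Theorem \ref{Sampling} to transfer $F_N, G_N$ losslessly (up to constants) to adelic functions $\tilde F_N, \tilde G_N \in L^2(\A_\Z)^{\R \times (\Q/\Z)_{\leq l}}$, and replace $\tilde A_N$ by a bilinear adelic averaging operator $A_{N, \A_\Z}$ via \eqref{pj-bil-alt}, which factorizes as a tensor product of a continuous averaging operator on $\R$ and an arithmetic averaging operator on $\hat\Z$. The continuous factor is then controlled by a vector-valued form of Lépingle's inequality (yielding the $\V^r$ gain for $r>2$), while the arithmetic factor is controlled by combining the $p$-adic $L^q$-improving estimates with one more application of Theorem \ref{improv} to get the exponential decay in $l$ in the $\ell^2\times \ell^2 \to \ell^1$ case, with subsequent interpolation for other exponents.

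The hardest cases in both regimes will be the ``low-high,'' ``high-low,'' and ``high-high'' paraproducts (i.e. $s_1$ or $s_2 > -u$), where the additional continuous-frequency oscillation of the paraproduct symbol threatens to overwhelm the decay; here I would obtain the required gain $2^{-c\max(s_1,s_2)}$ in the ``high-high'' case from the almost orthogonality given by Theorem \ref{improv}, and in the mixed cases by an integration-by-parts identity (along the lines of Lemma \ref{integ-ident}) applied to the bilinear symbol $\tilde m_{N,\Z}$, followed by shifted Calderón--Zygmund theory (Appendix \ref{shift-app}) to absorb the resulting oscillation with only logarithmic loss. The main technical obstacle will be ensuring the scale-independent approximation used in the small-scale Rademacher--Menshov argument is compatible with these oscillatory paraproduct decompositions, so that the polynomial losses $\langle \max(l,s_1,s_2)\rangle^{O(1)}$ from the Ionescu--Wainger theorem, the shifted Calderón--Zygmund estimates, and the Rademacher--Menshov argument all combine cleanly against the exponential gain from Theorem \ref{improv} (in the $p_1=p_2=2$ case), with interpolation against \eqref{anf} closing the bound for general $(p_1, p_2)$.
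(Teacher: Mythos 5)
Your proposal follows essentially the same approach as the paper: the small/large scale split at $2^{2^u}$, the scale-independent approximation via the adelic factorization and the $t$-integral representation \eqref{bn-ident}, two-parameter Rademacher--Menshov plus Theorem \ref{improv} for small scales, Shannon sampling plus L\'epingle plus the arithmetic estimate on $\hat\Z$ for large scales, almost-orthogonality for the high-high $\ell^2$ case, and integration by parts (Lemma \ref{integ-ident}) plus shifted Calder\'on--Zygmund theory for the mixed cases. The only minor imprecision is in the bookkeeping of where Theorem \ref{improv} is invoked: the error in the adelic approximation (Proposition \ref{mod-approx}) is actually bounded directly by crude kernel estimates with a gain of $N^{-1}$ and needs no inverse theorem, while Theorem \ref{improv} enters only in the single-scale estimate (Lemma \ref{ssu}) after running that approximation in reverse, and in the arithmetic bilinear bound (Theorem \ref{bile}) on the large-scale side.
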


Indeed, by interpolating \eqref{all-all-weak} between the case $(p_1,p_2,p)=(2,2,1)$ and the case where $(p_1,p_2,p)$ are close to $(1,\infty,1)$, $(\infty, 1,1)$, or $(\infty,\infty,\infty)$, we see that
\begin{equation}\label{all-all}
\| ( \tilde A_N( F_N, G_N ))_{N \in \I} \|_{\ell^p(\Z;\V^r)} \lesssim_{C_3} \langle  \max(l,s_1,s_2) \rangle^{O(1)} 2^{-10\rho \max(l,s_1,s_2)} \|f\|_{\ell^{p_1}(\Z)} \|g\|_{\ell^{p_2}(\Z)}
\end{equation}
(say), and then using $\langle a\rangle^{O(1)} 2^{-10\rho a} \lesssim_{C_3} 2^{-8\rho a}$ for $a \geq 0$ and summing the bound in \eqref{all-all} over $s_1,s_2$ we see that to obtain the \eqref{l1l2-alt} from Theorem \ref{varp}, it suffices to establish the bound
$$  \sum_{s_1,s_2 \geq -u} 2^{-8\rho \max(l,s_1,s_2)}\lesssim_{C_3} 2^{-\rho l};$$
bounding
$$ 2^{-8\rho \max(l,s_1,s_2)} \leq 2^{-4\rho \max(l,s_1)} 2^{-4\rho \max(l,s_2)}$$
it suffices to show that
$$ \sum_{s_0 \geq -u} 2^{-4\rho \max(l,s_0)} \lesssim_{C_3} 2^{-\rho l/2}.$$
But this is clear from the geometric series formula since there are only $O_{C_2}( 2^{2\rho l})$ scales $s_0$ with $-u \leq s_0 \leq l$.

It remains to establish Theorem \ref{improv} and Theorem \ref{varp}.  Theorem \ref{improv} will be established in the next section; the rest of the paper is then devoted to the proof of Theorem \ref{varp}.  For now, we use Theorem \ref{improv} to deal with one case of Theorem \ref{varp}:

\begin{proposition}[High-high $\ell^2(\Z)$ case]\label{high-high-l2}  Theorem \ref{varp} holds when $s_1,s_2 > -u$ and $p_1=p_2=2$.
\end{proposition}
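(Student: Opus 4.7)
The plan is to exploit the exponential single-scale decay from Theorem \ref{improv}, combined with almost orthogonality across lacunary scales, by controlling the variational norm via an $\ell^2_N$ estimate. Denote $M \coloneqq \max(l, s_1, s_2)$. In the high-high regime $s_1, s_2 > -u$, both $F_N = F^{u,l_1,s_1}_N$ and $G_N = G^{u,l_2,s_2}_N$ are genuine annular differences of Ionescu--Wainger projections: $F_N$ has Fourier support on arithmetic frequencies of exact height $2^{l_1}$ combined with continuous frequencies of magnitude in $(2^{-\Log N+s_1-2}, 2^{-\Log N+s_1}]$, and similarly $G_N$ is supported at arithmetic height $2^{l_2}$ with continuous frequencies of magnitude $\sim 2^{d(-\Log N+s_2)}$.

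Since $r>2$, the elementary pointwise estimate $\|(a_N)_{N\in\I}\|_{\V^r} \lesssim \|(a_N)_{N\in\I}\|_{\ell^r_N} \leq \|(a_N)_{N\in\I}\|_{\ell^2_N}$ (using the sequence embedding $\ell^2 \subset \ell^r$) together with Minkowski's inequality (valid since $1\leq 2$) reduces the target to
\[ \Big(\sum_{N\in\I} \|\tilde A_N(F_N,G_N)\|_{\ell^1(\Z)}^2\Big)^{1/2}. \]
For each fixed $N$, I plan to apply Theorem \ref{improv}. If the maximum $M$ is attained at one of $l_1, s_1$, I invoke hypothesis (i) with parameter $l'=M-O(1)$: the Fourier support of $F_N$ is disjoint from $\M_{\leq l', \leq -\Log N+l'}$ because either the arithmetic factor is disjoint (when $l'\leq l_1-1$) or the continuous factor is (when $l'\leq s_1-2$). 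If instead $M$ is attained at one of $l_2, s_2$, the analogous argument applies hypothesis (ii) to $G_N$, using the $d$-fold scaling in the continuous direction. In either case this yields
\[ \|\tilde A_N(F_N,G_N)\|_{\ell^1(\Z)} \lesssim \big(2^{-cM} + \langle\Log N\rangle^{-cC_1}\big)\|F_N\|_{\ell^2(\Z)} \|G_N\|_{\ell^2(\Z)}. \]
The secondary term is absorbed into $2^{-cM}$ via $M \leq l_{(N)} = C_0\Log\Log N$ (so $\Log N \geq 2^{M/C_0}$) together with the hierarchy $C_1 \gg C_0$.

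For the $\ell^2_N$ summation I will use almost orthogonality: the continuous Fourier supports of $F_N$ lie in annuli at scales $2^{-\Log N+s_1}$, pairwise disjoint whenever $|\Log N-\Log N'|>2$, and hence overlapping only $O_\lambda(1)$ times on the $\lambda$-lacunary set $\D$. Plancherel's theorem then gives $\sum_{N\in\I}\|F_N\|_{\ell^2}^2 \lesssim_\lambda \|f\|_{\ell^2}^2$, while $\|G_N\|_{\ell^2} \lesssim \|g\|_{\ell^2}$ uniformly from Lemma \ref{iw-prop}(i). Combining yields the bound $\lesssim 2^{-cM}\|f\|_{\ell^2}\|g\|_{\ell^2}$, comfortably within the target $\langle M\rangle^{O(1)} 2^{O(\rho l)-cM}$ after adjusting $c$. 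The main technical obstacle is verifying the disjointness of Fourier supports used in the application of Theorem \ref{improv}: one must check carefully, within the good major arcs hypothesis, that the projection map $\pi\colon\R\times\Q/\Z\to\TT$ is injective on the union of the relevant rectangles, so that disjointness of the Cartesian product frequency sets in $\R\times\Q/\Z$ descends to disjointness of their $\pi$-images in $\TT$; this is precisely the content of the good major arcs condition \eqref{m-small}.
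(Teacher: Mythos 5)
Your proof is correct and follows essentially the same strategy as the paper: apply the single-scale minor-arc decay of Theorem \ref{improv} to each $\tilde A_N(F_N,G_N)$ (exploiting the exact-height and annular frequency supports of $F_N,G_N$, with the $\langle\Log N\rangle^{-cC_1}$ term absorbed via $M\leq l_{(N)}$), then sum over $N\in\I$ using a Bessel-type almost orthogonality coming from Plancherel and the disjointness of the continuous annuli across lacunary scales. The only bookkeeping differences — you dominate $\V^r$ by $\ell^2_N$ with Minkowski where the paper uses $\ell^1_N$, you select whichever of hypotheses (i), (ii) realizes $M=\max(l,s_1,s_2)$ where the paper phrases this as a geometric mean of the two bounds, and you use a one-sided Bessel inequality with a uniform bound on $\|G_N\|_{\ell^2}$ where the paper applies Cauchy--Schwarz with Bessel on both factors — are all equivalent and immaterial.
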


In view of this proposition, for the purposes of proving Theorem \ref{varp} we may assume that at least one of $s_1=-u$, $s_2=-u$, or $(p_1,p_2) \neq (2,2)$ holds.

\begin{proof}  From \eqref{varsum} we have
$$ \| (\tilde  A_N( F_N, G_N ))_{N \in \I} \|_{\ell^1(\Z;\V^r)} \lesssim \sum_{N \in \I} \| \tilde A_N(F_N,G_N) \|_{\ell^1(\Z)}.$$
Observe (using Lemma \ref{mag-lem}, \eqref{FN-def}) that for $N \in \I$, $\F_\Z F_N$ vanishes on the major arcs  ${\mathcal M}_{\leq \max(l_1,s_1)-1, \leq -\Log N+\max(l_1,s_1)-1}$, and hence by Theorem \ref{improv} we have
$$ \| \tilde A_N(F_N,G_N) \|_{\ell^1(\Z)} \lesssim_{C_1} (2^{-c\max(l_1,s_1)} + \langle \Log N \rangle^{-cC_1}) \|F_N\|_{\ell^2(\Z)} \|G_N\|_{\ell^2(\Z)}.
$$
A similar argument gives
$$ \| \tilde A_N(F_N,G_N) \|_{\ell^1(\Z)} \lesssim_{C_1} (2^{-c\max(l_2,s_2)} + \langle \Log N \rangle^{-cC_1}) \|F_N\|_{\ell^2(\Z)} \|G_N\|_{\ell^2(\Z)}
$$
and hence on taking geometric means
$$ \| \tilde A_N(F_N,G_N) \|_{\ell^1(\Z)} \lesssim_{C_1} (2^{-c\max(l,s_1,s_2)} + \langle \Log N \rangle^{-cC_1}) \|F_N\|_{\ell^2(\Z)} \|G_N\|_{\ell^2(\Z)}.
$$
From \eqref{index-def} we have $\langle \Log N \rangle^{-cC_1} \lesssim_{C_3} 2^{-c\max(l,s_1,s_2)}$, hence
$$ \| \tilde A_N(F_N,G_N) \|_{\ell^1(\Z)} \lesssim_{C_3} 2^{-c\max(l,s_1,s_2)} \|F_N\|_{\ell^2(\Z)} \|G_N\|_{\ell^2(\Z)}.
$$
By the Cauchy--Schwarz inequality, it thus suffices to establish the Bessel-type inequalities
$$ \sum_{N \in \I} \| F_N \|_{\ell^2(\Z)}^2 \lesssim \|f\|_{\ell^2(\Z)}^2$$
and
$$ \sum_{N \in \I} \| G_N \|_{\ell^2(\Z)}^2 \lesssim \|g\|_{\ell^2(\Z)}^2.$$
But this follows from the easily verified pointwise bounds
\begin{align*}
\sum_{N \in \I} |\F_\Z F_N(\xi)|^2 &\lesssim |\F_\Z f(\xi)|^2, \\
\sum_{N \in \I} |\F_\Z G_N(\xi)|^2 &\lesssim |\F_\Z g(\xi)|^2 
\end{align*}
and Plancherel's theorem.
\end{proof}

\section{Minor arc single scale estimate: applying Peluse--Prendiville theory}\label{inverse-sec}

In this section we establish Theorem \ref{improv}.  The arguments here will be lengthy, but are not needed elsewhere in this paper.

It will be convenient to exploit duality and work with trilinear forms $\langle \tilde A_N(f,g), h \rangle$ instead of bilinear operators $\tilde A_N(f,g)$.  We use the inner product
$$ \langle f, g \rangle \coloneqq \sum_{x \in \Z} f(x) g(x)$$
on $\Schwartz(\Z)$ (there will be no advantage for us in this bilinear analysis in inserting a complex conjugation into the inner product), and observe the identities
\begin{equation}\label{transpose}
 \langle \tilde A_N(f,g), h \rangle = \langle \tilde A_N^*(h,g), f \rangle = \langle \tilde A_N^{**}(f,h), g \rangle
\end{equation}
for $f,g,h \in \Schwartz(\Z)$, where the
transpose operators $\tilde A_N^*, \tilde A_N^{**}$ are the averaging operators
\begin{equation}\label{bkhg}
\tilde A_N^*(h,g)(x) \coloneqq \tilde A^{-\nn,P(\nn)-\nn}_N(h,g)(x) =  \E_{n \in [N]}  h(x+n) g(x+n-P(n)) \ind{n>N/2}
\end{equation}
and
\begin{align}
\label{eq:bkhf}
\tilde A_N^{**}(f,h)(x) \coloneqq \tilde A^{\nn-P(\nn), -P(\nn)}_N(f,h)(x) = \E_{n \in [N]}  f(x+P(n)-n) h(x+P(n)) \ind{n>N/2}.
\end{align}
In the language of additive combinatorics, the functions
$\tilde A_N^*(h,g), \tilde A_N^{**}(f,h)$ are referred to as \emph{dual functions}.

\subsection{Proof of Theorem \ref{improv}(i)}

Our starting point is the following deep inverse theorem of
Peluse--Prendiville \cite{PP1} in the quadratic case $P(\nn)=\nn^2$ (see also \cite{PP2} and \cite{Pr}), and Peluse \cite{P2} for general polynomials $P(\nn)$ of degree $d \geq 2$.

\begin{theorem}[Peluse  inverse theorem]\label{pp}  Let $N \geq 1$ and $0 < \delta \leq 1$, and let $N_0$ be a quantity with $N_0 \sim N^d$.  Let $f,g,h \in \Schwartz(\Z)$ be supported on $[-N_0,N_0]$ with $\|f\|_{\ell^\infty(\Z)},$$ \|g\|_{\ell^\infty(\Z)},$ $ \|h\|_{\ell^\infty(\Z)} \leq 1$, obeying the lower bound
\begin{equation}\label{hyp}
 |\langle \tilde A_N(f,g), h \rangle| \geq \delta N^d.
 \end{equation}
Then one of the following holds:
\begin{itemize}
    \item[(i)]  ($N$ not too large) One has $N \lesssim \delta^{-O(1)}$.
    \item[(ii)]  ($f$ has major arc structure at scale $N$) There exists a positive integer $q \lesssim \delta^{-O(1)}$ and a positive integer $\delta^{O(1)} N \lesssim N' \leq N$ such that
    $$ \frac{1}{N^d} \Big|\sum_{x \in \Z} \E_{m \in [N']} f(x+qm)\Big| \gtrsim \delta^{O(1)}.$$
\end{itemize}
\end{theorem}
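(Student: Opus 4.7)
The plan is to prove this by induction on the degree $d$ of $P$, following the Peluse--Prendiville strategy \cite{PP1}, which was extended to general polynomials by Peluse \cite{P2}. After expanding the hypothesis via \eqref{transpose}--\eqref{bkhg}, we may freely pass to the regime where $N$ is much larger than any fixed polynomial in $\delta^{-1}$ (since otherwise conclusion (i) applies). The goal is then to convert the trilinear correlation $|\langle \tilde A_N(f,g), h\rangle| \gtrsim \delta N^d$ into a statement about the arithmetic structure of $f$ alone.

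For the base case $d = 1$, the pattern $(n, P(n))$ is linear, and the claim follows from Plancherel applied to the symbol $m_{N,\Z}$ from Example \ref{avg-mult}, together with Dirichlet's simultaneous approximation theorem applied to the frequency at which $\F_\Z f$ is large. For the inductive step, one combines two tools. The first is \emph{PET induction}: apply the Cauchy--Schwarz inequality in $x$ followed by a van der Corput style shift in $n$ (replacing $n$ by $n + h$ and differencing). This trades the original correlation for one involving the polynomial $P(n+h) - P(n)$, whose degree in $n$ has dropped to $d-1$. Iterating this process $d-1$ times (controlling the auxiliary variables $g, h$ trivially using their $\ell^\infty$ bounds) eventually reduces the original trilinear form to a multilinear expression in $f$ that is dominated by some Gowers $U^s$-norm of $f$, with only polynomial losses in $\delta$ accumulated along the way.

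The second key tool is the \emph{degree-lowering lemma} of Peluse--Prendiville: large $U^s$-norm control on $f$ with respect to a structured weight coming from the pattern $(n, P(n))$ can be iteratively downgraded to $U^{s-1}, U^{s-2}, \ldots$, all the way down to $U^2$ control. This step is the central technical contribution of \cite{PP1} and crucially exploits the polynomial nature of the nonlinear shift $P$ together with the $\ell^\infty$ normalizations of $f, g, h$; naively PET alone would only give high-order Gowers control, which is insufficient for the conclusion.

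Once we arrive at $\|f\|_{U^2(\Z)} \gtrsim \delta^{O(1)} N^{d/2}$ for a function supported in $[-N_0, N_0]$, the standard $U^2$ inverse theorem (Plancherel plus pigeonholing the largest Fourier coefficient of $f$) produces a frequency $\xi \in \TT$ and an interval of length $\gtrsim \delta^{O(1)} N$ on which $f$ correlates with the linear character $e(\xi \cdot)$ to level $\gtrsim \delta^{O(1)}$. Applying Dirichlet's approximation theorem yields integers $q \lesssim \delta^{-O(1)}$ and $a$ with $|\xi - a/q| \lesssim 1/(qN)$, and combining the small denominator with the interval correlation produces a progression $\{qm : m \in [N']\}$ with $N' \gtrsim \delta^{O(1)} N$ along which the desired conclusion (ii) holds. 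The hard part of the argument is the PET-plus-degree-lowering induction, which demands careful bookkeeping of polynomial losses in $\delta$ at every stage; this is precisely the content of \cite{PP1, P2}, whose arguments we would follow.
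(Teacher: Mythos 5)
Your proposal and the paper's proof differ in what they take as primitive. The paper's proof is essentially a three-line reduction: it expands \eqref{hyp}, strips the $\ind{n>N/2}$ cutoff via the triangle inequality (pigeonholing between $N'=N$ and $N'=\lfloor N/2\rfloor$), and then invokes Peluse's \cite[Theorem 3.3]{P2} as a black box with explicit parameters $(m,q,N,M,P_1,P_2)=(2,1,N_0,N',\nn,P(\nn))$, noting that the slightly different support hypothesis ($[-N_0,N_0]$ vs.\ $[1,(N')^d]$) is a routine modification. You instead sketch the internal mechanism of that black box---PET via iterated Cauchy--Schwarz and van der Corput differencing to obtain $U^s$ control, the Peluse--Prendiville degree-lowering lemma to descend from $U^s$ down to $U^2$, and then the $U^2$ inverse theorem plus Dirichlet approximation to extract the arithmetic progression. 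Your outline is an accurate high-level description of the Peluse--Prendiville/Peluse argument, so the overall approach is sound.

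A few caveats worth noting. First, you omit the one manipulation the paper actually carries out before appealing to \cite{P2}: the correlation in \eqref{hyp} involves $\tilde A_N$, which has a sharp lower cutoff $\ind{n>N/2}$, whereas Peluse's theorem is stated for full averages over $[N']$; this requires the small triangle-inequality/pigeonhole step. Second, your ``base case $d=1$'' framing does not quite match: the theorem assumes $\deg P = d \geq 2$, and the PET hierarchy is indexed by a complexity invariant of the system $(\nn, P(\nn))$ rather than a clean induction on $d$ with a $d=1$ base; the linear case is handled by Fourier analysis but as a degenerate ``zero complexity'' endpoint, not as a base case for a degree recursion. Third, ``iterating this process $d-1$ times'' is a simplification: the number of van der Corput differencings in PET is governed by the type of the polynomial system and the cross terms introduced by shifts, not merely $d-1$. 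None of these affect the validity of your sketch as an overview, but they are the places where a fully worked proof would require care that the paper avoids entirely by citing \cite[Theorem 3.3]{P2}.
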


Note from the uncertainty principle (cf. Remark \ref{uncertainty}) that conclusion (ii) of Theorem \ref{pp} is morally equivalent to asserting that the Fourier transform $\F_\Z f$ has a large presence on a major arc set ${\mathcal M}_{\leq l, \leq k, \naive}$ with $2^l \lesssim \delta^{-O(1)}$ and $2^{-k} \lesssim \delta^{-O(1)}/N$. This intuition will be formalized in Proposition \ref{lip} below.

\begin{proof} We expand out \eqref{hyp} as
\begin{align*}
\frac{1}{N^{d+1}} \Big|\sum_{n\in [N]}\sum_{x\in\Z}h(x)f(x-n)g(x-P(n)) \ind{n>N/2}\Big|\ge \delta.
\end{align*}
By the triangle inequality, we thus have
$$ \frac{1}{(N')^{d+1}} \Big|\sum_{n\in [N']}\sum_{x\in\Z}h(x)f(x-n)g(x-P(n)) \Big|\gtrsim \delta.
$$
for either $N'=N$ or $N'=\lfloor N/2\rfloor$. The claim now follows from \cite[Theorem 3.3]{P2} (after some minor
changes of notation) with parameters $(m,q,N,M,P_1,P_2) = (2,1,N_0,N',\nn, P(\nn))$.  In that theorem, the functions $f,g,h$ were assumed to be supported on $[1,(N')^d]$ rather than $[-N_0,N_0]$, but it is a
routine matter to see that the arguments continue to hold with this
slightly more general support hypothesis.
\end{proof}

We will now gradually manipulate Theorem \ref{pp} in a sequence of steps to make it more closely resemble (the contrapositive of) Theorem \ref{improv}(i), until we are able to actually establish that part of the theorem; we will then adapt the argument (focusing on $g$ instead of $f$) to also establish Theorem \ref{improv}(ii).

The first step is to make the conclusion of Theorem \ref{pp} more Fourier-analytic in nature.  We need a technical calculation:

\begin{lemma}[Smooth approximation to $\ind{[a,b]}$] \label{calc}  Let $\psi \in \Schwartz(\RR)$ with $\int_\RR \psi(x)\ dx = 1$.  Then for any interval $[a,b] \subset \R$ and any $0 < \eps \leq 1$ one has the pointwise bound
$$ \sum_{y \in [a,b] \cap \Z} \eps \psi(\eps(x-y)) - \ind{[a,b]}(x) \lesssim_\psi \eps^{10} + \langle \eps(x-a) \rangle^{-10} + \langle \eps(x-b) \rangle^{-10}.$$
for all $x \in \Z$.
\end{lemma}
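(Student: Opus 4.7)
The plan is to insert $I(x) \coloneqq \int_a^b \eps\psi(\eps(x-y))\,dy$ and split the quantity via the triangle inequality into the two pieces
\[ S(x) - I(x), \qquad I(x) - \ind{[a,b]}(x), \]
where $S(x) \coloneqq \sum_{y \in [a,b] \cap \Z} \eps\psi(\eps(x-y))$ denotes the given sum. The first difference measures the accuracy of a Riemann-type discretization at step size $1$ (which is $\eps$ times the natural length scale $1/\eps$ of the rescaled bump), while the second measures how well convolution against a Schwartz mean-one bump approximates an indicator.

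The continuous comparison is immediate. The substitution $u = \eps(x-y)$ gives
\[ I(x) = \phi(\eps(x-a)) - \phi(\eps(x-b)), \qquad \phi(u) \coloneqq \int_{-\infty}^u \psi(v)\,dv. \]
Since $\psi$ is Schwartz with $\int_\R \psi = 1$, estimating the tails $\int_u^\infty \psi$ or $\int_{-\infty}^u \psi$ yields $|\phi(u) - \ind{u \geq 0}| \lesssim_\psi \langle u\rangle^{-10}$. Combined with the elementary identity $\ind{[a,b]}(x) = \ind{x \geq a} - \ind{x > b}$, this produces
\[ |I(x) - \ind{[a,b]}(x)| \lesssim_\psi \langle \eps(x-a)\rangle^{-10} + \langle \eps(x-b)\rangle^{-10}. \]

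For the discrete comparison, set $A \coloneqq \lceil a\rceil$ and $B \coloneqq \lfloor b\rfloor$, so that $[a,b] \cap \Z = \{A,\ldots,B\}$. If $A > B$ then $b - a < 1$, $S(x) = 0$, and $|I(x)| \lesssim \eps \sup_{y \in [a,b]} \langle \eps(x-y)\rangle^{-10}$ is already absorbed by the right-hand side. Otherwise apply the Euler--Maclaurin formula of order $K = 5$ to the smooth function $g(y) \coloneqq \eps\psi(\eps(x-y))$ on $[A,B]$:
\[ \sum_{y=A}^B g(y) - \int_A^B g(y)\,dy = \frac{g(A)+g(B)}{2} + \sum_{k=1}^{5} \frac{B_{2k}}{(2k)!}\bigl(g^{(2k-1)}(B)-g^{(2k-1)}(A)\bigr) + R, \]
with $|R| \lesssim \int_A^B |g^{(10)}(y)|\,dy \lesssim_\psi \eps^{10}$, since $g^{(j)}(y) = (-1)^j \eps^{j+1} \psi^{(j)}(\eps(x-y))$ and $\psi^{(10)} \in L^1(\R)$. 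Each boundary term is of size $\eps^{2k}|\psi^{(2k-1)}(\eps(x-A))| \lesssim_\psi \eps \langle \eps(x-A)\rangle^{-10}$ by the Schwartz decay of $\psi$; and because $|A-a|,|B-b| \leq 1$ with $\eps \leq 1$, one has $\langle \eps(x-A)\rangle \sim \langle \eps(x-a)\rangle$ (and similarly at $B$), so the total boundary contribution is $\lesssim_\psi \langle \eps(x-a)\rangle^{-10} + \langle \eps(x-b)\rangle^{-10}$. The mismatch $I(x) - \int_A^B g\,dy = \int_a^A g\,dy + \int_B^b g\,dy$ is handled in exactly the same way. Adding the two comparisons yields the asserted pointwise bound. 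The main obstacle is purely bookkeeping: tracking the shift from $(a,b)$ to $(A,B)$ and verifying that every Euler--Maclaurin boundary contribution is absorbed by the appropriate $\langle \eps(x-\cdot)\rangle^{-10}$ factor.
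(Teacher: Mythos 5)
Your proof is correct, but it proceeds by a genuinely different route from the paper's. The paper reduces to a one-sided half-line estimate, translates so that $a=0$, and then invokes the Poisson summation formula to write $\sum_{y \in \Z} \eps\psi(\eps(x-y)) = 1 + O_\psi(\eps^{10})$; the truncation error $\sum_{y<0}\eps\psi(\eps(x-y))$ for $x\geq 0$ (and its reflection for $x<0$) is then controlled by the rapid physical-space decay of $\psi$. You instead insert the continuous convolution $I(x)=\int_a^b\eps\psi(\eps(x-y))\,dy$ as an intermediary: the comparison of $I$ with $\ind_{[a,b]}$ is a tail estimate for the Schwartz antiderivative $\phi$, and the comparison of the Riemann sum $S$ with $I$ is done via the Euler--Maclaurin formula to order $K=5$, with the boundary terms and the interval-endpoint mismatches $\int_a^A$, $\int_B^b$ absorbed into the $\langle\eps(x-a)\rangle^{-10}$ and $\langle\eps(x-b)\rangle^{-10}$ factors using $|A-a|,|B-b|\leq 1$ and $\eps\leq 1$. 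Both arguments obtain the $\eps^{10}$ from a tenth-order Schwartz estimate --- the paper from $\F_\R\psi$ in frequency space via Poisson summation, you from $\psi^{(10)}$ in physical space via the Euler--Maclaurin remainder. The paper's version is a bit slicker (no bookkeeping of lattice endpoints), while yours avoids Fourier analysis entirely and directly yields the two-sided estimate that is in fact used in the application (Proposition \ref{lip}). A minor point: your $A>B$ edge case really only needs that $S(x)=\ind_{[a,b]}(x)=0$ for integer $x$, so the LHS vanishes outright, but the bound on $I(x)$ you give is also fine. The details you flag as ``bookkeeping'' (that $\langle\eps(x-A)\rangle\sim\langle\eps(x-a)\rangle$, that $\eps^{2k}\leq\eps$, etc.) all check out.
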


\begin{proof}  By the triangle inequality it suffices to show that
$$ \sum_{y \in \Z: y \geq a} \eps \psi(\eps(x-y)) - \ind{x \geq a} \lesssim_\psi \eps^{10} + \langle \eps(x-a) \rangle^{-10}$$
since the claim then follows by subtracting this estimate from the analogous estimate for $b$ (adjusting $b$ by an infinitesimal amount if necessary).  By translation invariance we may set $a=0$.  From the Poisson summation formula and the rapid decrease of $\F \psi$ one has
$$ \sum_{y \in \Z} \eps \psi(\eps(x-y)) = 1 + O_\psi(\eps^{10})$$
so by reflection symmetry and the triangle inequality it suffices to show that
$$ \sum_{y \in \Z: y \geq 0} \eps \psi(\eps(x-y))  \lesssim_\psi \langle \eps x \rangle^{-10}$$
when $x < 0$.  But this follows from the rapid decrease of $\psi$.
\end{proof}

\begin{proposition}[Alternate inverse theorem for $f$]\label{lip}  Under the hypotheses and notation of Theorem \ref{pp}, there exists a function $F \in \ell^2(\Z)$ with 
    \begin{equation}\label{F-bounds}
    \|F\|_{\ell^\infty(\Z)} \lesssim 1; \quad \|F\|_{\ell^1(\Z)} \lesssim N^d
    \end{equation}
    and with $\F_{\Z}F$  supported in the $O(\delta^{-O(1)}/N)$-neighborhood of some $\alpha \in \Q/\Z$ of naive height $O(\delta^{-O(1)})$ such that
    \begin{equation}\label{F-corr}
    |\langle f, F \rangle| \gtrsim \delta^{O(1)} N^d.
    \end{equation}
\end{proposition}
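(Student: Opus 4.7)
The plan is to apply Theorem \ref{pp} and then use a duality argument combined with a Fourier-analytic minor arc estimate to convert its output into the form required by the Proposition. Case (i) of Theorem \ref{pp} (where $N \lesssim \delta^{-O(1)}$) is handled directly: here the permitted Fourier width $O(\delta^{-O(1)}/N)$ can be arranged to exceed $1$, so the Fourier support constraint on $F$ becomes vacuous (one takes $\alpha = 0$, of naive height $1$). The hypothesis $|\langle \tilde A_N(f,g),h\rangle| \geq \delta N^d$ combined with $\|g\|_{\ell^\infty},\|h\|_{\ell^\infty} \leq 1$ and the trivial bound $\|\tilde A_N(f,g)\|_{\ell^1} \leq \|f\|_{\ell^1}$ forces $\|f\|_{\ell^1} \geq \delta N^d$, so one simply takes $F := \sgn(f)$.

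For the main case (ii), I would work with the equivalent $\ell^1$ formulation $\|f * \ind{-P}\|_{\ell^1} \gtrsim \delta^{O(1)} N' N^d$, where $P = \{qm : m \in [N']\}$, $q \lesssim \delta^{-O(1)}$, and $\delta^{O(1)} N \lesssim N' \leq N$. Applying $\ell^\infty$-duality (and truncating outside an interval of length $O(N^d)$ containing the support of the convolution) produces $\eps \colon \Z \to [-1,1]$ of support size $\lesssim N^d$ such that, setting $\phi_0(x) := \tfrac{1}{N'}\ind{P}(x)$ and $F_0 := \phi_0 * \eps$, one obtains $\langle f, F_0\rangle \gtrsim \delta^{O(1)} N^d$, $\|F_0\|_{\ell^\infty} \leq \|\phi_0\|_{\ell^1}\|\eps\|_{\ell^\infty} = 1$, and $\|F_0\|_{\ell^1} \leq \|\phi_0\|_{\ell^1}\|\eps\|_{\ell^1} \lesssim N^d$. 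However $\F_\Z F_0 = \F_\Z\phi_0 \cdot \F_\Z\eps$ is not yet supported on a major arc: $\F_\Z\phi_0(\xi) = \tfrac{1}{N'}\sum_{m\in[N']} e(qm\xi)$ is a Dirichlet-type kernel, $\tfrac{1}{q}$-periodic on $\TT$, with pointwise decay $|\F_\Z\phi_0(\xi)| \lesssim 1/(N'\|q\xi\|)$, where $\|\cdot\|$ denotes distance to the nearest integer.

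The key Fourier-analytic step is a quantitative minor arc estimate. Using Plancherel and the interpolation bound $\|f\|_{\ell^2}^2 \leq \|f\|_{\ell^\infty}\|f\|_{\ell^1} \lesssim N^d$ (and likewise for $\eps$), Cauchy--Schwarz gives $\int_\TT |\F_\Z f \cdot \F_\Z\eps|\,d\xi \leq \|f\|_{\ell^2}\|\eps\|_{\ell^2} \lesssim N^d$. Combined with the decay of $\F_\Z\phi_0$, the contribution to $\langle f, F_0\rangle = \int_\TT \F_\Z f \cdot \overline{\F_\Z\phi_0 \cdot \F_\Z\eps}\,d\xi$ from frequencies with $\|q\xi\| \geq \delta^{-C}/N'$ is at most $\delta^C N^d$, which is negligible compared to $\delta^{O(1)} N^d$ for $C$ taken sufficiently large. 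Hence the bulk of the pairing comes from the union of $q \lesssim \delta^{-O(1)}$ subarcs around the points of $\tfrac{1}{q}\Z/\Z$; pigeonholing over these subarcs yields a single $\alpha = a/q$, of naive height at most $q \lesssim \delta^{-O(1)}$, whose subarc $I_\alpha$ of width $2\delta^{-C}/(qN')$ still contributes at least $\delta^{O(1)} N^d$ to the pairing.

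To conclude, I would define $F := F_0 * \Xi$, where $\F_\Z\Xi$ is a \emph{single} smooth bump on $\TT$ supported in a $2\delta^{-C}/(qN')$-neighborhood of $\alpha$ and identically $1$ on $I_\alpha$. Being a single bump (rather than a sum over all of $\tfrac{1}{q}\Z/\Z$), $\Xi$ satisfies $\|\Xi\|_{\ell^1} \lesssim 1$. Hence Young's convolution inequality yields $\|F\|_{\ell^\infty} \leq \|F_0\|_{\ell^\infty}\|\Xi\|_{\ell^1} \lesssim 1$ and $\|F\|_{\ell^1} \leq \|F_0\|_{\ell^1}\|\Xi\|_{\ell^1} \lesssim N^d$, $\F_\Z F$ is supported in the required $O(\delta^{-O(1)}/N)$-neighborhood of $\alpha$ (using $qN' \gtrsim \delta^{O(1)} N$), and the correlation is preserved because $\F_\Z\Xi \equiv 1$ on $I_\alpha$, giving $|\langle f, F\rangle| \geq |\int_{I_\alpha} \F_\Z f \cdot \overline{\F_\Z F_0}\,d\xi| \gtrsim \delta^{O(1)} N^d$. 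The main obstacle is the quantitative minor arc Fourier estimate, which bridges the purely $\ell^1$-structural output of Peluse--Prendiville and the frequency-localized conclusion demanded by the Proposition; the requirement that $\alpha$ be a \emph{single} frequency is precisely what forces the pigeonhole step and allows the use of a single-bump cutoff with bounded $\ell^1$ norm (a naive simultaneous cutoff to all of $\tfrac{1}{q}\Z/\Z$ would cost a factor of $q \lesssim \delta^{-O(1)}$ in the $\ell^\infty$ bound, which cannot be rescaled away while preserving the conclusion).
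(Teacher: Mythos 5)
Your argument is correct, and it reaches the same destination as the paper's proof by a slightly different route. Both proofs dualize Theorem~\ref{pp}(ii), produce a function whose Fourier transform concentrates near $\frac{1}{q}\Z/\Z$, and pigeonhole over the $q$ arcs. Where you differ is in how the frequency localization is achieved. The paper first mollifies the discrete interval $[N']$ in \emph{physical space} using Lemma~\ref{calc} (convolving with $\eps\,\F_\R^{-1}\eta(\eps\cdot)$, with $\eps \sim \delta^{-C}/N$), then dualizes, and then uses Poisson summation: since $\F_\R \F_\R^{-1}\eta$ is compactly supported, the resulting $F$ automatically has Fourier support \emph{exactly} inside $\pi([-\eps/q,\eps/q]\times\frac{1}{q}\Z/\Z)$, and the decomposition into $q$ pieces then happens by Fourier multipliers. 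You instead dualize immediately with the sharp cutoff $\phi_0 = \frac{1}{N'}\ind{P}$, use the explicit Dirichlet kernel bound $|\F_\Z\phi_0(\xi)| \lesssim 1/(N'\|q\xi\|)$ together with Plancherel and Cauchy--Schwarz ($\int_\TT |\F_\Z f\cdot\F_\Z\eps| \leq \|f\|_{\ell^2}\|\eps\|_{\ell^2} \lesssim N^d$) to estimate the minor-arc tail, pigeonhole in frequency, and only then apply a single smooth Fourier cutoff. Your version requires an explicit minor-arc estimate that the paper's Poisson-summation route sidesteps, but it avoids Lemma~\ref{calc} entirely; the two are comparable in length.

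Two small points of care. First, the paper's pairing $\langle f,g\rangle = \sum_x f(x)g(x)$ is bilinear with no conjugation, so in your Case (i) you should take $F = \overline{\sgn f}$ rather than $\sgn f$ to get $\langle f, F\rangle = \|f\|_{\ell^1}$ for complex-valued $f$. Second, your concluding claim $|\langle f, F\rangle| \geq |\int_{I_\alpha}\F_\Z f\cdot\overline{\F_\Z F_0}\,d\xi|$ is not literally an inequality: one must also account for the contribution of the annulus $\operatorname{supp}\F_\Z\Xi \setminus I_\alpha$. But that annulus lies inside the minor arcs (where $\|q\xi\| \geq \delta^{-C}/N'$), so its contribution is $O(\delta^C N^d)$ by the same Dirichlet-kernel-plus-Cauchy--Schwarz estimate, and hence negligible after choosing $C$ large. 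With these adjustments the argument is complete.
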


\begin{proof}  If $N \lesssim \delta^{-O(1)}$ then we can simply take $F =\tilde A^*_N(h,g)$ and $a/q=1/1$ and use \eqref{transpose} and \eqref{anf} to conclude.  Thus we may assume that $N \geq C_* \delta^{-C_*}$ for a sufficiently large $C_*$.
In particular, by Theorem \ref{pp}, we can find $N', q \in \Z_+$ with $q \lesssim \delta^{-O(1)}$ and $\delta^{O(1)} N \lesssim N' \leq N$ such that
$$ \sum_{x \in \Z} |\E_{m \in [N']} f(x+qm)| \gtrsim \delta^{O(1)} N^d.$$
Observe that the summand vanishes unless $|x| \leq N_0 + O( qN' ) \lesssim N^d$, thus
$$ \sum_{x = O(N^d)} |\E_{m \in [N']} f(x+qm)| \gtrsim \delta^{O(1)} N^d.$$

Now we smooth out the inner average $\E_{m \in [N']}$.
Let $0 < \eps \leq 1$ be a parameter to be chosen later.  From Lemma \ref{calc} one has
$$ \ind{[N']}(m) = \eps \sum_{m' \in [N']} \F_\R^{-1} \eta(\eps(m-m')) + O(\eps^{10} + \langle \eps m \rangle^{-10} + \langle \eps (m-N') \rangle^{-10})$$
for any $m \in \Z$, where $\eta$ is the cutoff from Section \ref{cutoff-sec}. Hence from the boundedness of $f$
$$\E_{m \in [N']} f(x+qm) = \eps \sum_{m \in \Z} \E_{m' \in [N']} \F_\R^{-1} \eta(\eps(m-m')) f(x+qm) + O\left( \eps^{10} + \frac{1}{\eps N'} \right).$$
If we choose $\eps \coloneqq C \delta^{-C} / N$ for some large $C$ (depending only on $\eta,P$), and take $C_*$ large enough depending on $C$, we conclude that
$$ \sum_{x = O(N^d)} \Big|\eps \sum_{m \in \Z} \E_{m' \in [N']} \F_\R^{-1} \eta(\eps(m-m')) f(x+qm)\Big| \gtrsim \delta^{O(1)} N^d.$$
In the latter case, there exists $G \in \ell^\infty(\Z)$ supported on $[-O(N^d),O(N^d)]$ with $\|G\|_{\ell^\infty(\Z)} \leq 1$ with
$$ \Big|\sum_{x \in \Z} G(x) \eps \sum_{m \in \Z} \E_{m' \in [N']} \F_\R^{-1} \eta(\eps(m-m')) f(x+qm)\Big| \gtrsim \delta^{O(1)}N^d.$$
We thus have the claim \eqref{F-corr} with
$$ F(x) \coloneqq \eps \sum_{m \in \Z} \E_{m' \in [N']} \F_\R^{-1} \eta(\eps(m-m')) G(x-qm).$$
From the hypotheses on $\eta,G$ we easily verify the bounds \eqref{F-bounds}.  A routine calculation using the Poisson summation formula reveals the identity
$$ {\mathcal F}_\Z F(\xi \mod 1) = {\mathcal F}_\Z G(\xi \mod 1) \E_{m' \in [N']}e(qm'\xi) \sum_{n \in \Z} {\mathcal F}_\RR \F_\R^{-1} \eta\left(\frac{q\xi-n}{\eps}\right)$$
for any $\xi \in \R$, which in particular implies from the support of $\eta$ that ${\mathcal F}_\Z F$ is supported in the set
$$ \pi\left( \left[-\frac{\eps}{q},\frac{\eps}{q}\right] \times \bigg(\frac{1}{q}\Z / \Z\bigg) \right).$$
By applying suitable Fourier multiplier operators, one can then decompose $F = \sum_{a \in [q]} F_a$, where each $F_a$ obeys essentially the same bounds \eqref{F-bounds} as $F$ and is supported in the $\frac{\eps}{q}$-neighborhood of $\frac{a}{q} \mod 1$.  The claim now follows from the pigeonhole principle and the bounds on $\eps,q$. \end{proof}

We now dualize the above proposition using the Hahn--Banach theorem to obtain control on dual functions $\tilde A^*(h,g)$. Specifically, we shall use the following lemma.

\begin{lemma}[Application of Hahn--Banach]\label{hahn} Let $A,B > 0$,
 and let $G$ be an element of
$\ell^2(\Z)$.  Let $\Phi$ be a family of vectors in $\ell^2(\Z)$, and assume
the following inverse theorem: whenever $f \in \ell^2(\Z)$ is such that
$\|f\|_{\ell^\infty(\Z)} \leq 1$ and $|\langle f, G \rangle| > A$, then
$|\langle f, \phi \rangle| > B$ for some $\phi \in \Phi$. Then $G$
lies in the closed convex hull of
\begin{equation}\label{dip}
V= \{ \lambda\phi\in \ell^2(\Z): \phi \in \Phi, \ |\lambda| \leq A/B\}
\cup \{ h \in \ell^2(\Z): \|h\|_{\ell^1(\Z)} \leq A \}.
\end{equation}
\end{lemma}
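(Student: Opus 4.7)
\medskip

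\noindent\textbf{Proof plan.} The strategy is the standard geometric Hahn--Banach separation argument, adapted to the complex Hilbert space $\ell^2(\Z)$ with the bilinear pairing $\langle f, g \rangle = \sum_x f(x) g(x)$ used throughout the paper. I would proceed by contradiction: assume that $G$ does not lie in $K \coloneqq \overline{\mathrm{conv}}(V)$, and produce a test function $f$ violating the inverse hypothesis.

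First, since $K$ is a closed convex subset of $\ell^2(\Z)$ not containing $G$, viewing $\ell^2(\Z)$ as a real Hilbert space and applying the Hahn--Banach separation theorem yields an $f \in \ell^2(\Z)$ and a real number $c$ such that
\[
\mathrm{Re}\langle G, f \rangle > c \geq \sup_{v \in K} \mathrm{Re}\langle v, f \rangle.
\]
By rescaling $f$ we may normalize $c = 1$. The key point now is that $V$ is \emph{balanced} in the complex sense: both the set $\{\lambda \phi : \phi \in \Phi, |\lambda| \leq A/B\}$ and the $\ell^1$-ball $\{h : \|h\|_{\ell^1(\Z)} \leq A\}$ are stable under multiplication by any scalar of modulus at most one. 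Hence for every $v \in V$ and every $\theta \in \R$ we have $e^{i\theta} v \in V$, so
\[
\sup_{v \in V} |\langle v, f \rangle| = \sup_{v \in V} \mathrm{Re}\langle v, f \rangle \leq 1.
\]

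Next, I would extract from this sup-bound the two pieces of information encoded by the two components of $V$. Specializing to $v = \lambda \phi$ with $|\lambda| = A/B$ gives $|\langle \phi, f \rangle| \leq B/A$ for every $\phi \in \Phi$. Specializing to $v = A \,\mathrm{sgn}(\overline{f(x_0)}) \delta_{x_0}$, where $x_0$ nearly attains $\|f\|_{\ell^\infty(\Z)}$, yields $\|f\|_{\ell^\infty(\Z)} \leq 1/A$ by the usual $\ell^1$--$\ell^\infty$ duality. Now rescale: replace $f$ by $Af$. Then $\|f\|_{\ell^\infty(\Z)} \leq 1$, the separating inequality becomes $|\langle G, f \rangle| \geq \mathrm{Re}\langle G, f \rangle > A$, and at the same time $|\langle \phi, f \rangle| \leq B$ for all $\phi \in \Phi$.

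The hypothesis of the lemma, applied to this $f$, asserts that there must exist some $\phi \in \Phi$ with $|\langle f, \phi \rangle| > B$, in direct contradiction with the bound just derived. Hence no such separating $f$ can exist and $G \in K$, as claimed. The argument is essentially an exercise in Hahn--Banach and the main (minor) bookkeeping obstacle is simply to ensure that the separation inequality is \emph{strict} on the $G$ side and \emph{non-strict} on the $V$ side, so that after normalization the resulting $f$ violates precisely the strict inequality $|\langle f, \phi \rangle| > B$ demanded by the inverse assumption; once the balanced structure of $V$ is noted, the rest is routine.
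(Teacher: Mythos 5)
Your proof is correct and follows essentially the same route as the paper: argue by contradiction, apply Hahn--Banach separation in the (real) Hilbert space $\ell^2(\Z)$, exploit the balanced (circled) structure of $V$ to upgrade the real-part separation to a modulus bound $\sup_{v\in V}|\langle v,f\rangle|$, and then read off the two consequences $|\langle\phi,f\rangle|\leq B$ and $\|f\|_{\ell^\infty(\Z)}\leq 1$ to contradict the inverse hypothesis. The only cosmetic difference is that you normalize the separating constant to $1$ and rescale $f$ by $A$ at the end, whereas the paper normalizes the constant directly to $A$; the substance is identical.
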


\begin{proof}
Observe that the set $\overline{{\rm conv} V}^{\|\cdot\|_{\ell^2(\Z)}}$ is
balanced.  Therefore, if the claim of Lemma \ref{hahn} failed, then
from the Hahn--Banach theorem and the Riesz representation theorem
there exists $f \in \ell^2(\Z)$ such that
$\mathrm{Re} \langle f, G \rangle > A$, but
$\mathrm{Re} \langle f, h \rangle \leq A$ for all $h\in V$.  In
particular, this gives $|\langle f, h\rangle|\le A$ for all $h\in V$,
which implies that
\[
|\langle f, \phi\rangle| \leq B
\]
for all $\phi \in \Phi$, and that
\[
\|f\|_{\ell^\infty(\Z)}=\sup_{\|h\|_{\ell^1(\Z)}\le1} |\langle f, h\rangle|\le 1,
\]
contradicting the hypothesis.  This completes the
proof of the lemma.
\end{proof}

\begin{corollary}[Structure of dual function, I]\label{struct}  Let $N \geq 1$, let $N_0 \sim N^d$, and let $g,h \in \Schwartz(\Z)$ be supported on $[-N_0,N_0]$ with
$\|g\|_{\ell^\infty(\Z)}, \|h\|_{\ell^\infty(\Z)} \leq 1$, and let $0 < \delta \leq 1$.  Then there exists a decomposition
\begin{equation}\label{decomp}
\tilde A_N^*(h,g) = \sum_{\alpha \in \Q/\Z: \Height_\naive(\alpha) \lesssim \delta^{-O(1)}} F_{\alpha} + E_1 + E_2,
\end{equation} 
where each $F_{\alpha} \in \ell^2(\Z)$ has Fourier transform supported in the $O(\delta^{-O(1)}/N)$-neighborhood of $\alpha$
and obeys the bounds
\begin{equation}\label{faq-bound}
\| F_{\alpha}\|_{\ell^\infty(\Z)} \lesssim \delta^{-O(1)};
\quad\text{ and }\quad
\|F_{\alpha}\|_{\ell^1(\Z)} \lesssim \delta^{-O(1)}N^d,
\end{equation}
and the error terms $E_1 \in \ell^1(\Z)$ and $E_2 \in \ell^2(\Z)$ obey the bounds
\begin{equation}\label{e-bound}
\|E_1\|_{\ell^1(\Z)} \leq \delta N^d;
\quad \text{ and } \quad
\|E_2\|_{\ell^2(\Z)} \leq \delta. 
\end{equation}
\end{corollary}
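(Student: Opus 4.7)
The plan is to derive this corollary from the inverse theorem Proposition \ref{lip} by duality, using the Hahn--Banach separation encoded in Lemma \ref{hahn}, applied with $G \coloneqq \tilde A_N^*(h,g)$. Since $h, g$ are bounded by $1$ and supported on $[-N_0,N_0]$ with $N_0 \sim N^d$, one checks that $G$ is bounded and supported in an interval of length $O(N^d)$, hence lies in $\ell^2(\Z)$. By the transpose identity \eqref{transpose}, for any $f \in \ell^2(\Z)$ with $\|f\|_{\ell^\infty(\Z)} \le 1$ the inner product $\langle f, G \rangle$ coincides with $\langle \tilde A_N(f,g), h\rangle$, so the hypothesis $|\langle f, G\rangle| \ge \delta N^d$ triggers Proposition \ref{lip} and produces a witness $F$ with Fourier support in an $O(\delta^{-O(1)}/N)$-neighborhood of some $\alpha \in \Q/\Z$ of naive height $\lesssim \delta^{-O(1)}$, obeying \eqref{F-bounds}, and satisfying \eqref{F-corr}.

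Let $\Phi$ denote the family of all such admissible witnesses $F$ (parametrized by $\alpha$ and by the precise choice of Fourier-localized neighborhood). With $A$ a fixed constant multiple of $\delta N^d$ and $B$ the corresponding constant multiple of $\delta^{O(1)} N^d$, Lemma \ref{hahn} places $G$ in the $\ell^2$-closed convex hull of
\[
V = \{\lambda \phi \in \ell^2(\Z) : \phi \in \Phi,\ |\lambda| \le A/B = \delta^{-O(1)}\} \cup \{h \in \ell^2(\Z) : \|h\|_{\ell^1(\Z)} \le A\}.
\]
Consequently, given the target $\ell^2$ tolerance $\delta$ in \eqref{e-bound}, one can select a finite convex combination $G' = \sum_i \mu_i v_i$ with $v_i \in V$, $\mu_i \ge 0$, $\sum_i \mu_i \le 1$, and $\|G - G'\|_{\ell^2(\Z)} \le \delta$. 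Define $E_2 \coloneqq G - G'$, which satisfies the second bound in \eqref{e-bound}.

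Now partition the index set into $S_\Phi$, where $v_i = \lambda_i \phi_i$ with $\phi_i \in \Phi$ and $|\lambda_i| \lesssim \delta^{-O(1)}$, and $S_1$, where $\|v_i\|_{\ell^1(\Z)} \le A$. Set $E_1 \coloneqq \sum_{i \in S_1} \mu_i v_i$; the triangle inequality gives $\|E_1\|_{\ell^1(\Z)} \le A \le \delta N^d$ after absorbing absolute constants into the choice of $A$. For $i \in S_\Phi$, the witness $\phi_i$ is Fourier-localized near some $\alpha_i$ with $\Height_\naive(\alpha_i) \lesssim \delta^{-O(1)}$; since there are only $O(\delta^{-O(1)})$ admissible $\alpha$, we may group by the value of $\alpha$ and set $F_\alpha \coloneqq \sum_{i \in S_\Phi : \alpha_i = \alpha} \mu_i v_i$, yielding the finite sum decomposition \eqref{decomp}. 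Each $F_\alpha$ is Fourier supported in the union of the individual $O(\delta^{-O(1)}/N)$-neighborhoods of $\alpha$, still contained in a single such neighborhood, and the pointwise bounds $\|v_i\|_{\ell^\infty(\Z)} \lesssim \delta^{-O(1)}$, $\|v_i\|_{\ell^1(\Z)} \lesssim \delta^{-O(1)} N^d$ (obtained by scaling the Proposition \ref{lip} bounds by $|\lambda_i|$) together with $\sum_i \mu_i \le 1$ give \eqref{faq-bound}.

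The conceptual heart is the Hahn--Banach step, which converts the Peluse--Prendiville inverse theorem, a statement about a single bounded test function, into a structural decomposition of the dual function with uniform control on Fourier localization. The only nontrivial analytic ingredient beyond this is the observation that the family of admissible arithmetic frequencies is \emph{finite}, so that grouping by $\alpha$ really does produce the finite sum in \eqref{decomp}; once this is in place, the verification of the $\ell^1$ and $\ell^\infty$ bounds under convex combinations is routine and presents no obstacle.
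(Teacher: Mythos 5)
Your proposal is correct and follows essentially the same route as the paper: you apply the Hahn--Banach lemma (Lemma~\ref{hahn}) to the dual function $G = \tilde A_N^*(h,g)$, verify its inverse-theorem hypothesis via Proposition~\ref{lip} together with the transpose identity~\eqref{transpose}, and then split the resulting approximating convex combination into a small-$\ell^1$ part $E_1$, an $\ell^2$ tail $E_2$, and the structured pieces grouped by arithmetic frequency~$\alpha$. The only difference from the paper is minor bookkeeping (the paper first writes the convex combination as $\sum_j c_j\phi_j + E_1 + E_2$ before regrouping by~$\alpha$, whereas you separate out $E_2$ first), which does not affect the substance.
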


For similar applications of the Hahn--Banach theorem to analyze the structure of dual functions in additive combinatorics, see \cite[pp. 221]{HK3}, \cite[Theorem 3.8]{Gowers}.

\begin{proof} If there exists $f\in \ell^{\infty}(\Z)$ with $\|f\|_{\ell^{\infty}(\Z)}\le 1$ such that
\begin{align}
\label{eq:16}
|\langle f, \tilde A_N^*(h,g)  \rangle|> \delta N^d.
\end{align}
Applying Proposition \ref{lip} we obtain
\begin{align}
\label{eq:4}
|\langle f, F  \rangle|\gtrsim \delta^{O(1)} N^d
\end{align}
for some function $F\in \ell^2(\Z)$ obeying the properties of Proposition \ref{lip}.
Invoking  Lemma \ref{hahn} with $A=\delta N^d/2$ and $B \sim \delta^{O(1)}N^d$ and the set
\[
\Phi=\{\phi_{\alpha}\in \ell^2(\Z): \alpha \in \Q/\Z; \Height_\naive(\alpha) \lesssim \delta^{-O(1)}\},
\]
we obtain a decomposition
\begin{align}
\label{eq:17}
\tilde A_N^*(h,g) = \sum_{j=1}^{\infty}c_j\phi_j+E_1+E_2,
\end{align}
with the following properties:
\begin{itemize}
\item[(i)]  for each $j\in\Z_+$ we have that $\phi_j=\lambda_j\phi_{\alpha_j}$ for some $\phi_{\alpha_j}\in\Phi$ and $\lambda_j\in\C$ such that  $|\lambda_j|\lesssim \delta^{-O(1)}$;
\item[(ii)] the coefficients $c_j$ are non-negative with
$\sum_{j=1}^{\infty}c_j\le 1$, and all but finitely $c_j$ vanish;
\item[(iii)] the error term $E_1\in \ell^1(\Z)$  satisfies  $\|E_1\|_{\ell^1(\Z)} \leq \delta N^d$;
\item[(iv)] the error term $E_2\in \ell^2(\Z)$  satisfies  $\|E_2\|_{\ell^2(\Z)} \leq \delta$.
\end{itemize}
The latter error term arises as a consequence of the fact that
one is working with the closed convex hull instead of the convex hull. In fact, its $\ell^2(\Z)$ norm can be made arbitrarily small, but
$\delta$ will suffice for our purposes.
Grouping together terms associated to each arithmetic frequency $\alpha$ in \eqref{eq:17} and using the triangle inequality, we obtain the desired decomposition from
\eqref{decomp} that satisfies \eqref{faq-bound} and \eqref{e-bound}.
\end{proof}

Corollary \ref{struct} is not directly suitable for our applications
for three reasons: firstly, $E_1$ is  controlled in $\ell^1(\Z)$ rather than 
in $\ell^2(\Z)$; secondly, $g$ is required to be controlled in $\ell^\infty(\Z)$ rather
than in $\ell^2(\Z)$; and thirdly the support of $g$ is restricted to an
interval.  Using the Ionescu--Wainger projections, we now address the
first issue, at the cost of worsening the control of the structured
component of the decomposition \eqref{decomp}, and also requiring
$\delta$ to not be too small.

\begin{proposition}[Structure of dual function, II]\label{propdual}
If $N,N_0 \in \Z_+$ with $N_0 \sim N^d$ and $l \in \N$, with 
\begin{equation}\label{2l}
\Log N \geq C_\rho 2^{\rho l}
\end{equation}
for a sufficiently large constant $C_\rho$ depending on $\rho$, one has the estimate
\[
\|  (1 - \Pi_{\leq l, \leq -\Log N+l}) \tilde A_N^*(h,g) \|_{\ell^2(\Z)} \lesssim_{C_1} 2^{-cl}
 N^{d/2}  \|h\|_{\ell^\infty(\Z)} \|g\|_{\ell^\infty(\Z)},
\]
whenever $g,h \in \Schwartz(\Z)$ are supported on $[-N_0,N_0]$.
\end{proposition}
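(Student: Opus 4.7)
The plan is to combine the structural decomposition from Corollary \ref{struct} with the Fourier-support properties of the Ionescu--Wainger projection. Normalize so that $\|h\|_{\ell^\infty(\Z)} = \|g\|_{\ell^\infty(\Z)} = 1$ and write $\Pi \coloneqq \Pi_{\leq l, \leq -\Log N+l}$ and $\Psi \coloneqq \tilde A_N^*(h,g)$; then $\|\Psi\|_{\ell^\infty(\Z)} \leq 1$ and $\Psi$ is supported in an interval of length $O(N^d)$. The assumption $\Log N \geq C_\rho 2^{\rho l}$ guarantees (possibly after enlarging $C_\rho$) that both $(l, -\Log N+l)$ and $(l, -\Log N+l-1)$ have good major arcs, so Lemma \ref{iw-prop} and Theorem \ref{thm:iw} are available for $\Pi$.

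First I would apply Corollary \ref{struct} with a parameter $\delta = 2^{-c'l}$, for a sufficiently small constant $c' > 0$ to be chosen at the end, obtaining a decomposition $\Psi = \sum_\alpha F_\alpha + E_1 + E_2$. For $c'$ small enough, Lemma \ref{mag-lem}(i) places every surviving arithmetic frequency $\alpha$ in $(\Q/\Z)_{\leq l}$ (since $\Height(\alpha) \leq \Height_\naive(\alpha) \lesssim \delta^{-O(1)} \leq 2^{l}$), and the $O(\delta^{-O(1)}/N)$ Fourier-support width of each $F_\alpha$ is dominated by the major-arc width $2^{-\Log N+l-1}$. Consequently each $F_\alpha$ is Fourier supported in ${\mathcal M}_{\leq l, \leq -\Log N+l-1}$, where Lemma \ref{iw-prop}(ii) (which relies crucially on the Ionescu--Wainger multiplier theory) gives $\Pi F_\alpha = F_\alpha$. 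Thus $(1-\Pi)\sum_\alpha F_\alpha = 0$ and the entire structured contribution drops out, leaving $(1-\Pi)\Psi = (1-\Pi)E_1 + (1-\Pi)E_2$.

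Since $1-\Pi$ is a contraction on $\ell^2(\Z)$ by Lemma \ref{iw-prop}(i), the $E_2$ piece is immediate: $\|(1-\Pi)E_2\|_{\ell^2(\Z)} \leq \|E_2\|_{\ell^2(\Z)} \leq \delta = 2^{-c'l}$, which is dominated by $2^{-cl} N^{d/2}$ for any $0 < c < c'$ and $N \geq 1$. For the $\ell^1$-small error $E_1$, the plan is to use the log-convexity interpolation
\[
\|E_1\|_{\ell^2(\Z)}^2 \leq \|E_1\|_{\ell^\infty(\Z)}\,\|E_1\|_{\ell^1(\Z)}
\]
together with the $\ell^2$ contractivity of $1-\Pi$. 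Writing $E_1 = \Psi - \sum_\alpha F_\alpha - E_2$, the triangle inequality, the cardinality bound of Lemma \ref{mag-lem}(iii), and the individual $\ell^\infty$ bounds on each $F_\alpha$ from Corollary \ref{struct} yield $\|E_1\|_{\ell^\infty(\Z)} \lesssim \delta^{-K}$ for an explicit exponent $K$ traceable back to the polynomial dependence on $\delta$ in Peluse's inverse theorem. Combined with $\|E_1\|_{\ell^1} \leq \delta N^d$, this produces $\|(1-\Pi)E_1\|_{\ell^2(\Z)} \lesssim \delta^{(1-K)/2} N^{d/2}$.

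The main obstacle is the parameter balance: $\delta$ must be small enough to force every $F_\alpha$ inside the chosen major-arc region, yet not so small that the $\ell^\infty$ loss $\delta^{-K}$ overwhelms the $\ell^1$ gain $\delta$. Provided the Peluse--Prendiville implicit constants yield $K < 1$, the choice $c' = 2c/(1-K)$ closes the argument; if instead $K \geq 1$, one must bootstrap, either by iterating Corollary \ref{struct} on the residual $E_1$ (via a quantitative refinement of Proposition \ref{lip}) or by invoking the vector-valued Ionescu--Wainger multiplier theorem to interpolate $1-\Pi$ between $\ell^2$ (where it is contractive) and $\ell^q$ with $q \in (2\N)'$ close to $1$, paying only the logarithmic factor $\langle l \rangle$ in \eqref{iw-mult}. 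Either route yields the required single-scale exponential decay $\lesssim_{C_1} 2^{-cl} N^{d/2}\,\|h\|_{\ell^\infty}\|g\|_{\ell^\infty}$.
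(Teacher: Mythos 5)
Your blueprint through the treatment of $E_2$ is the paper's: apply Corollary \ref{struct} with $\delta = 2^{-c'l}$, use Lemma \ref{mag-lem}(i) and Lemma \ref{iw-prop}(ii) (under good major arcs from \eqref{2l}) to kill every $F_\alpha$ under $1-\Pi_{\leq l,\leq -\Log N+l}$, and dispose of $E_2$ by the $\ell^2$ contraction. The gap is in the $E_1$ term, and you have diagnosed it yourself but neither proposed repair closes it. The direct log-convexity bound $\|E_1\|_{\ell^2}^2 \leq \|E_1\|_{\ell^\infty}\|E_1\|_{\ell^1}$ yields $\delta^{(1-K)/2}N^{d/2}$, and there is no reason for the implicit exponent $K$ (inherited from the unspecified polynomial losses in Theorem \ref{pp} and Corollary \ref{struct}) to be less than $1$. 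Your fallback of ``interpolating $1-\Pi$ between $\ell^2$ and $\ell^q$ with $q$ near $1$'' does not produce an $\ell^2$ estimate: interpolating an $\ell^q$ bound ($q<2$) against an $\ell^2$ bound lands strictly between $q$ and $2$, while passing from $\ell^q$ to $\ell^2$ by monotonicity keeps the $\delta$-gain but costs $N^{d/q}$, which is larger than the allowed $N^{d/2}$. Iterating Corollary \ref{struct} on $E_1$ is also not available, since $E_1$ is not itself a dual function of bounded inputs.

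The correct repair, and the one the paper uses, is to make $\ell^{p'}$, not $\ell^2$, the second interpolation endpoint. Fix an absolute $p>1$ close to $1$. Interpolating $\|E_1\|_{\ell^1}\leq\delta N^d$ and $\|E_1\|_{\ell^\infty}\lesssim\delta^{-O(1)}$ at the exponent $p$ gives $\|E_1\|_{\ell^p}\lesssim\delta^{1/2}N^{d/p}$, because near $p=1$ the $\ell^1$ endpoint dominates regardless of how large $K$ is; applying Lemma \ref{iw-prop}(i) at $\ell^p$ then yields $\|(1-\Pi)E_1\|_{\ell^p}\lesssim_{C_1}\langle l\rangle\delta^{1/2}N^{d/p}$. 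Separately, the pointwise bound $|\tilde A_N^*(h,g)|\lesssim 1$ plus compact support in $[-O(N^d),O(N^d)]$ gives $\|\tilde A_N^*(h,g)\|_{\ell^{p'}}\lesssim N^{d/p'}$; combining this with $\|E_2\|_{\ell^{p'}}\lesssim\delta$, the identity $(1-\Pi)E_1=(1-\Pi)\tilde A_N^*(h,g)-(1-\Pi)E_2$, and Lemma \ref{iw-prop}(i) again gives $\|(1-\Pi)E_1\|_{\ell^{p'}}\lesssim_{C_1}\langle l\rangle N^{d/p'}$ with no $\delta$-gain. Since $\tfrac12=\tfrac12\cdot\tfrac1p+\tfrac12\cdot\tfrac1{p'}$, interpolating these two estimates on $(1-\Pi)E_1$ produces $\|(1-\Pi)E_1\|_{\ell^2}\lesssim_{C_1}\langle l\rangle\delta^{1/4}N^{d/2}$, which gives the claim after choosing $c'$ and then $c$. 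This is the step your proposal is missing.
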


\begin{proof} We can assume $N$ is sufficiently large depending on $C_1$, as the claim follows from \eqref{anf} otherwise.
We may also normalize $\|g\|_{\ell^\infty(\Z)} = \|h\|_{\ell^\infty(\Z)} = 1$, so our task is now to show that
$$ \| (1 - \Pi_{\leq l, \leq -\Log N+l}) \tilde A_N^*(h,g) \|_{\ell^2(\Z)} \lesssim_{C_1} 2^{-c l}  N^{d/2}$$
for some $c>0$ depending on $P$.

We apply Corollary \ref{struct} with $\delta = 2^{-c'l}$ for a sufficiently small $c'>0$ depending only on $P$.  
Because of \eqref{2l} and the hypothesis that $N$ is large, we see from \eqref{m-small} that $(l, -\Log N+l)$ has good major arcs. By choice of $\delta$ and the Fourier support of $F_{\alpha}$, we have from Lemma \ref{iw-prop} that
$$ (1 - \Pi_{\leq l, \leq -\Log N+l}) F_{\alpha} = 0$$
for all $F_{\alpha}$ in the decomposition \eqref{decomp}, and hence
\begin{align}
\label{eq:2}
(1 - \Pi_{\leq l, \leq -\Log N+l}) \tilde A_N^*(h,g) = (1 - \Pi_{\leq l, \leq -\Log N+l}) E_1+(1 - \Pi_{\leq l, \leq -\Log N+l}) E_2. \qquad
\end{align}
Since 
\[
\|(1 - \Pi_{\leq l, \leq -\Log N+l}) E_2\|_{\ell^2(\Z)}\lesssim \delta
\]
it suffices to show that
\begin{align}
\label{eq:1}
\|(1 - \Pi_{\leq l, \leq -\Log N+l}) E_1\|_{\ell^2(\Z)}\lesssim \langle l \rangle  \delta^{1/4} N^{d/2},
\end{align}
which will give the claim by the choice of $\delta$. We now establish \eqref{eq:1}.

The function $\tilde A_N^*(h,g)$ is bounded in $\ell^\infty(\Z)$ norm by $O(1)$.  From
\eqref{decomp} and the triangle inequality, we thus have
$$  \|E_1\|_{\ell^\infty(\Z)} \lesssim \delta^{-O(1)},$$
since $E_2\in \ell^{q}(\Z)$ for any $2\le q\le  \infty$ and $\|E_2\|_{\ell^q(\Z)}\le \|E_2\|_{\ell^2(\Z)}\le \delta$,
so by interpolation with \eqref{e-bound} we have
\[
\|E_1\|_{\ell^p(\Z)}\lesssim \delta^{1/2} N^{d/p}
\]
for some absolute constant $1 < p < 2$ that is sufficiently close to $1$.  
By the latter bound and Lemma \ref{iw-prop}, we conclude that
\begin{align}
\label{eq:3}
\| (1 - \Pi_{\leq l, \leq -\Log N+l}) E_1 \|_{\ell^p(\Z)} \lesssim_{p} \langle l \rangle \delta^{1/2}  N^{d/p}.
\end{align}
Also, as $\tilde A_N^*(h,g)$ is bounded by $O(1)$ and supported on
$[-N_0,N_0]$ with $N_0\simeq N^d$ we have
$$ \| A_N^*(h,g) \|_{\ell^{p'}(\Z)} \lesssim_{p} N^{d/p'},$$
and thus by Lemma \ref{iw-prop} again
$$ \| (1 - \Pi_{\leq l, \leq -\Log N+l}) \tilde A_N^*(h,g) \|_{\ell^{p'}(\Z)} \lesssim_{p,C_1} \langle l \rangle N^{d/p'},$$
and since $\|E_2\|_{\ell^{p'}(\Z)}\lesssim \delta$ we also have
\[
\| (1 - \Pi_{\leq l, \leq -\Log N+l}) E_2 \|_{\ell^{p'}(\Z)} \lesssim_{p,C_1} \langle l \rangle \delta.
\]
Using these two bounds, the triangle inequality and \eqref{eq:2} we may write
\begin{align}
\label{eq:5}
 \| (1 - \Pi_{\leq l, \leq -\Log N+l}) E_1 \|_{\ell^{p'}(\Z)} \lesssim_{p,C_1} \langle l \rangle N^{d/p'}.
\end{align}
Interpolating, \eqref{eq:3} and \eqref{eq:5} we obtain \eqref{eq:1}, and the proof is completed.
\end{proof}

We now address the second issue, namely that of relaxing the $\ell^\infty(\Z)$ control on $g$
to $\ell^2(\Z)$ control. 
The main tool for this is the following recent
$\ell^p(\Z)$ improving estimate for linear polynomial averages.

\begin{proposition}[$\ell^p(\Z)$-improving]\label{lp-improv}
Let $Q(\nn) \in \Z[\nn]$ be of degree $d \geq 2$.  Then for
every 
\[ 2 \geq p > 
\begin{cases} 2 - \frac{4}{d^2+d+3} &\mbox{if } d \geq 3 \\
2 - \frac{2}{3} & \mbox{if } d=2, \end{cases}\]
one has the bound
\[
\|A_N^{Q(\nn)} f\|_{\ell^2(\Z)} \lesssim_{p,Q} N^{d(\frac{1}{2} - \frac{1}{p})} \|f\|_{\ell^p(\Z)}
\]
for all $N \geq 1$ and $f \in \ell^p(\Z)$.
\end{proposition}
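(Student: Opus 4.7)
The plan is to follow the circle-method strategy of Han--Kova\v{c}--Lacey--Madrid--Yang \cite{HKLMY} and Dasu--Demeter--Langowski \cite{DDL}, so the sketch below is really an interpretation of their approach. Write $A_N^Q f = K_N * f$ where $K_N \coloneqq \frac{1}{N}\sum_{n=1}^N \delta_{Q(n)}$ is a probability measure whose Fourier symbol is the Weyl sum $\varphi_{N,\Z}(\xi) = \E_{n\in[N]} e(Q(n)\xi)$. The two trivial endpoints are the contraction bound $\|A_N^Q\|_{\ell^p \to \ell^p}\leq 1$ for $1\le p\le\infty$ and the crude kernel bound $\|A_N^Q\|_{\ell^1 \to \ell^\infty}\le N^{-1}$; direct interpolation of these with $\|A_N^Q\|_{\ell^2\to\ell^2}\le 1$ already yields $\ell^p \to \ell^2$ estimates with the correct gain $N^{d(1/2-1/p)}$ for $p$ close to $1$, so the task is to push the range up to the claimed threshold.

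First I would partition $\varphi_{N,\Z} = \varphi_{\mathrm{maj}} + \varphi_{\mathrm{min}}$ at a threshold $q\le N^\eps$, where the major arcs are arcs of width $\sim N^{\eps-d}$ centered at rationals $a/q$, and $\eps>0$ is a small parameter to be optimized. On the minor arc piece, Weyl's inequality \cite[Lemma 20.3]{IK} yields the pointwise decay $|\varphi_{\mathrm{min}}(\xi)| \lesssim N^{-\delta_d + O(\eps)}$ with $\delta_d \sim 2^{1-d}/d$, and hence an $\ell^2(\Z)\to \ell^2(\Z)$ bound with gain $N^{-\delta_d}$. Naive interpolation of this $\ell^2$ gain with the $\ell^1\to \ell^\infty$ bound fails to reach the sharp exponent, so one must strengthen it by invoking the discrete restriction estimate for the moment curve $\{(n,Q(n)) : n\in [N]\}$, which in turn follows from Bourgain--Demeter $\ell^2$-decoupling (for $d=2$) and its extensions to moment curves of higher degree. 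This restricted strong-type estimate, $\ell^2$-decoupling packaged via $TT^*$ duality, is the core mechanism used in \cite{HKLMY, DDL} to reach the sharp range.

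The major arc contribution is handled by the approximate factorization $\varphi_{N,\Z}(a/q + \theta) \approx S(a,q)\, \varphi_{N,\R}(\theta)$ in the spirit of \eqref{approx-factor}, combined with the Weil-type Gauss sum bound $|S(a,q)| \lesssim_\eps q^{-1/d+\eps}$, which decouples the arithmetic and continuous components. The continuous factor is essentially the real-variable averaging $f \mapsto \frac{1}{N}\int_0^N f(x-Q(t))\,dt$ on $\R$, whose sharp $L^p(\R)\to L^2(\R)$-improving range in the claimed domain is a classical consequence of stationary phase / van der Corput estimates applied to the oscillatory kernel. Summing in $q$ with the Gauss-sum gain and transferring back to $\Z$ via a major-arc multiplier theorem (in the narrow regime $q\le N^\eps$, much simpler than Theorem \ref{thm:iw}) yields an $\ell^p(\Z)\to \ell^2(\Z)$ bound for the major arc piece with the required gain $N^{d(1/2-1/p)}$.

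The hard part is calibrating $\eps$ to achieve the sharp numerology $p>2-\frac{4}{d^2+d+3}$ (respectively $p>4/3$ for $d=2$): both the number of major arcs and the Weyl exponent $\delta_d$ degrade as $\eps$ grows, while the decoupling exponent on the minor arcs improves, and the optimal choice of $\eps$ balances all three. The $d=2$ case is cleaner because the sharp $\ell^4$ decoupling for the parabola is essentially elementary and slots directly into this optimization; for $d\ge 3$ the slightly weaker numerology $\frac{4}{d^2+d+3}$ reflects the critical exponent for $\ell^2$-decoupling of the degree-$d$ moment curve. Once the optimal $\eps$ is fixed, real interpolation between the minor-arc restricted strong type estimate, the major-arc bound, and the trivial $\ell^2\to\ell^2$ endpoint gives the proposition.
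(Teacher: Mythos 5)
The paper does not actually prove this proposition: its proof consists of a citation to \cite[Theorems 1.6 and 1.9]{HKLMY}, specialized to $p=2$, with a remark on \cite[Conjecture 1.5]{HKLMY}. Your proposal is a sketch of the underlying circle-method strategy from \cite{HKLMY,DDL}, which is the right family of ideas, but there is a genuine error in your framing that needs to be fixed.

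Your opening claim, that interpolating $\|A_N^Q\|_{\ell^1\to\ell^\infty}\lesssim N^{-1}$ with $\|A_N^Q\|_{\ell^2\to\ell^2}\le 1$ ``already yields $\ell^p\to\ell^2$ estimates with the correct gain $N^{d(1/2-1/p)}$ for $p$ close to $1$, so the task is to push the range up,'' is wrong. First, interpolating those two endpoints gives $\ell^p\to\ell^{p'}$ bounds, not $\ell^p\to\ell^2$ bounds; the relevant trivial endpoint is $\ell^1\to\ell^2$ with gain $\|K_N\|_{\ell^2}\sim N^{-1/2}$, and interpolating that against $\ell^2\to\ell^2$ yields only $N^{1/2-1/p}$, not $N^{d(1/2-1/p)}$. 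Since $d(1/2-1/p) < 1/2-1/p$ when $p<2$ and $d\ge 2$, the target bound is \emph{strictly stronger} than what trivial interpolation gives, for every $p<2$; there is no initial range in which it is free. Second, and more importantly, the proposition is \emph{false} for $p$ close to $1$: testing on $f=\delta_0$ gives $\|A_N^Q\delta_0\|_{\ell^2}\sim N^{-1/2}$ while $\|\delta_0\|_{\ell^p}=1$, so the stated estimate forces $d(1/2-1/p)\ge -1/2$, i.e.\ $p\ge \frac{2d}{d+1}$, and the thresholds $p>4/3$ (for $d=2$) and $p>2-\frac{4}{d^2+d+3}$ (for $d\ge 3$) in the proposition are exactly calibrated to avoid this obstruction. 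Thus the direction of difficulty is the opposite of what you describe: one starts from the trivial case $p=2$ and pushes $p$ \emph{downward} toward the threshold, which marks where the delta-function counterexample becomes binding.

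Beyond this, the rest of your sketch (major/minor arc split, Weyl estimates, Gauss sums with $q^{-1/d+\eps}$ decay, van der Corput on the continuous piece, decoupling/restriction for the moment curve via $TT^*$) is a reasonable high-level account of the \cite{HKLMY,DDL} machinery, but it would need the numerology worked out carefully to hit $2-\frac{4}{d^2+d+3}$. Since the paper treats Proposition~\ref{lp-improv} as a black box, the cleanest fix here is simply to cite \cite[Theorem 1.6]{HKLMY} for $d=2$ and \cite[Theorem 1.9]{HKLMY} for $d\ge 3$, as the paper does, and to correct the framing of the trivial interpolation as above.
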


\begin{proof}  This follows from the work of Han--Kova{\v c}--Lacey--Madrid--Yang \cite{HKLMY}.  Indeed, the $d=2$ case is contained\footnote{Strictly speaking, this theorem requires all the coefficients of the quadratic polynomial $Q$ to be non-negative.  However, by applying a reflection $x \mapsto -x$ one can assume without loss of generality that the quadratic coefficient of $Q$ is positive, and then applying a translation $n \mapsto n+c$ for some large positive integer $c$ (noting the pointwise bound $A_N^{Q(\nn)} f \lesssim_c A_N^{Q(\nn+c)} |f|$) one can then deduce the case of general $Q$ from the non-negative coefficient case (perhaps at the risk of worsening the dependence of constants on $Q$).  See also \cite{DDL} for another treatment of the (monomial) quadratic case and an extension to higher dimensions.} in \cite[Theorem 1.6]{HKLMY}, and the $d \geq 3$ case is contained in \cite[Theorem 1.9]{HKLMY}, after specializing these theorems to the $p=2$ case and performing some routine algebra.  Note that \cite[Conjecture 1.5]{HKLMY} predicts that the range of $p$ can be lowered to $p > 2 - \frac{2}{d+1}$ for any value of $d$, but this is currently only known for $d=2$. For our purposes, any exponent $p$ less than $2$ would be sufficient for applications.
\end{proof}

We can now relax the $\ell^\infty(\Z)$ control on $g$ to $\ell^2(\Z)$ control:

\begin{corollary}[Structure of dual function, III]\label{struct-iii}  Under the notation and hypotheses of Proposition \ref{propdual}, one has
\begin{equation}\label{anhg-bound}
\| (1 - \Pi_{\leq l, \leq -\Log N+l}) \tilde A_N^*(h,g) \|_{\ell^2(\Z)} \lesssim_{C_1} 2^{-cl} \|h\|_{\ell^\infty(\Z)}  \|g\|_{\ell^2(\Z)},
\end{equation}
whenever $g,h \in\Schwartz(\Z)$ are supported on $[-N_0,N_0]$.
\end{corollary}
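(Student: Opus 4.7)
The plan is to prove Corollary \ref{struct-iii} by interpolation, upgrading the $\ell^\infty(\Z)$ control on $g$ in Proposition \ref{propdual} to $\ell^2(\Z)$ control, using Proposition \ref{lp-improv} to provide a complementary endpoint. By bilinearity we may normalize $\|h\|_{\ell^\infty(\Z)}=1$ and fix $h$, so the task reduces to showing that the linear operator $T g \coloneqq (1 - \Pi_{\leq l, \leq -\Log N + l}) \tilde A_N^*(h,g)$ satisfies
\[
\|Tg\|_{\ell^2(\Z)} \lesssim_{C_1} 2^{-cl}\|g\|_{\ell^2(\Z)}
\]
for every $g \in \Schwartz(\Z)$ supported on $[-N_0,N_0]$.

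First endpoint ($\ell^\infty \to \ell^2$): Proposition \ref{propdual} directly gives
\[
\|Tg\|_{\ell^2(\Z)} \lesssim_{C_1} 2^{-cl} N^{d/2}\|g\|_{\ell^\infty(\Z)}.
\]
Second endpoint ($\ell^{p_0} \to \ell^2$ for some $p_0<2$): From the explicit formula \eqref{bkhg} we obtain the pointwise majorization
\[
|\tilde A_N^*(h,g)(x)| \leq \E_{n\in[N]} |g(x + n - P(n))| = A_N^{P(\nn)-\nn}(|g|)(x),
\]
(absorbing the indicator $\ind{n>N/2}$ into an overall constant). Since $P(\nn)-\nn$ has degree $d\geq 2$, Proposition \ref{lp-improv} applies: picking $p_0$ with $p_0 < 2$ and $p_0 > 2-\frac{4}{d^2+d+3}$ (or $p_0 > 4/3$ when $d=2$), we get
\[
\|A_N^{P(\nn)-\nn}(|g|)\|_{\ell^2(\Z)} \lesssim N^{d(1/2-1/p_0)}\|g\|_{\ell^{p_0}(\Z)}.
\]
Using the $\ell^2(\Z)$-boundedness of $1-\Pi_{\leq l, \leq -\Log N+l}$ from Lemma \ref{iw-prop}(i), we conclude
\[
\|Tg\|_{\ell^2(\Z)} \lesssim N^{d(1/2-1/p_0)}\|g\|_{\ell^{p_0}(\Z)}.
\]

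Riesz--Thorin interpolation applied to $T$, restricted to functions supported on $[-N_0,N_0]$, now interpolates between these two endpoints. Choosing the parameter $\theta = p_0/2$ so that $\tfrac{1}{2} = \tfrac{1-\theta}{\infty} + \tfrac{\theta}{p_0}$, the operator norm bound becomes
\[
\|T\|_{\ell^2 \to \ell^2} \lesssim_{C_1} \bigl(2^{-cl} N^{d/2}\bigr)^{1-p_0/2}\bigl(N^{d(1/2-1/p_0)}\bigr)^{p_0/2}.
\]
A direct computation shows the two contributions to the exponent of $N$ are $\tfrac{d}{2}(1-\tfrac{p_0}{2})$ and $\tfrac{dp_0}{4} - \tfrac{d}{2}$, which sum to $0$. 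Therefore $\|T\|_{\ell^2 \to \ell^2} \lesssim_{C_1} 2^{-c(1-p_0/2)l}$, yielding the desired inequality with $c' \coloneqq c(1-p_0/2) > 0$ in place of $c$.

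No substantial obstacle is anticipated: the two endpoints are already established in Propositions \ref{propdual} and \ref{lp-improv}, and the interpolation is a standard Riesz--Thorin argument on the linear operator $T$ with $h$ fixed. The only point requiring care is verifying that the powers of $N$ cancel in the interpolation, which fixes the admissible range of $p_0$ and the resulting decay constant $c'$; this is pure arithmetic.
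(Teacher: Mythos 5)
Your proposal is correct and takes essentially the same route as the paper: fix $h$ with $\|h\|_{\ell^\infty}=1$, get the $\ell^\infty\to\ell^2$ endpoint from Proposition \ref{propdual}, get the $\ell^{p_0}\to\ell^2$ endpoint from the pointwise domination $|\tilde A_N^*(h,g)|\lesssim A_N^{P(\nn)-\nn}|g|$ together with Proposition \ref{lp-improv} and $\ell^2$-boundedness of the projection, and interpolate (the paper merely states "the claim now follows from interpolation," while you carry out the arithmetic verifying the powers of $N$ cancel).
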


\begin{proof}
From Proposition \ref{propdual} we already have the bound
\[
\| (1 - \Pi_{\leq l, \leq -\Log N+l}) \tilde A_N^*(h,g) \|_{\ell^2(\Z)} \lesssim_{C_1} N^{d/2} 2^{-cl} \|h\|_{\ell^\infty(\Z)}  \|g\|_{\ell^\infty(\Z)}.
\]
On the other hand from \eqref{bkhg} and the triangle inequality, we have the pointwise bound
$$ \tilde A_N^*(h,g)(x) \lesssim \|h\|_{\ell^\infty(\Z)} A^{\nn-P(\nn)}_N |g|(x)$$
and hence by Lemma \ref{iw-prop}(i) and Proposition \ref{lp-improv} applied with $Q(\nn)=\nn-P(\nn)$ we have
\[
\| (1 - \Pi_{\leq l, \leq -\Log N+l}) \tilde A_N^*(h,g) \|_{\ell^2(\Z)} \lesssim_{C_1,p} N^{d(\frac{1}{2}-\frac{1}{p})} \|h\|_{\ell^\infty(\Z)}  \|g\|_{\ell^p(\Z)}
\]
for any  $2-\frac{4}{d^2+d+3}<p\le 2$.  The claim now follows from interpolation.
\end{proof}

Now we use the off-diagonal decay estimate \eqref{off-decay} to remove the support condition:

\begin{corollary}[Structure of dual function, IV]\label{siv} Under the notation and hypotheses of Proposition \ref{propdual}, one has \eqref{anhg-bound}
whenever $g \in \ell^2(\Z)$ and $h \in \ell^\infty(\Z)$.
\end{corollary}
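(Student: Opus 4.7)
The plan is to reduce to the compact-support hypothesis of Corollary \ref{struct-iii} by exploiting the essentially local nature of $\tilde A_N^*$ at scale $N^d$, and then to reassemble the pieces by near-orthogonality. Since $\tilde A_N^*(h,g)(x) = \E_{n\in[N]} h(x+n) g(x+n-P(n))\ind{n>N/2}$ involves only shifts of magnitude $O(N^d)$, I will set $N_0 \coloneqq CN^d$ for a sufficiently large absolute constant $C$, partition $\Z = \bigsqcup_{j\in\Z} I_j$ into consecutive intervals $I_j \coloneqq [jN_0,(j+1)N_0)$, and decompose $g = \sum_j g_j$ with $g_j \coloneqq g\ind{I_j}$. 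Then $F_j \coloneqq \tilde A_N^*(h,g_j)$ is supported in a slight enlargement $I_j^\sharp$ of $I_j$ of length $O(N_0)$, and on $I_j^\sharp$ only the restriction of $h$ to a further enlargement $I_j^\flat$ contributes, so $F_j = \tilde A_N^*(h\ind{I_j^\flat}, g_j)$. By translation invariance, Corollary \ref{struct-iii} applied to the pair $(h\ind{I_j^\flat}, g_j)$ (both supported on an interval of length $\sim N^d$) will yield the single-piece bound
\[ \|(1-\Pi_{\leq l,\leq-\Log N+l}) F_j\|_{\ell^2(\Z)} \lesssim_{C_1} 2^{-cl} \|h\|_{\ell^\infty(\Z)} \|g_j\|_{\ell^2(\Z)}. \]

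Having the single-piece estimate in hand, I would next assemble the pieces by partitioning the ambient $\ell^2$ norm:
\[ \|(1-\Pi)\tilde A_N^*(h,g)\|_{\ell^2(\Z)}^2 = \sum_{j'}\Big\|\sum_j (1-\Pi) F_j\Big\|_{\ell^2(I_{j'})}^2, \]
where $\Pi \coloneqq \Pi_{\leq l,\leq -\Log N+l}$. Choosing $C$ large enough ensures that $F_j$ vanishes on $I_{j'}$ whenever $|j-j'|\geq 2$. For ``close'' indices $|j-j'|\leq 1$ I will invoke the single-piece bound directly. For ``far'' indices $|j-j'|\geq 2$, $(1-\Pi)F_j|_{I_{j'}} = -\Pi F_j|_{I_{j'}}$, and the off-diagonal decay \eqref{off-decay} of $\Pi$ at frequency scale $2^{-\Log N+l}\sim 2^l/N$ across a physical gap of size $\gtrsim |j-j'|N^d$ will produce a decay factor $\langle 2^l|j-j'|N^{d-1}\rangle^{-M}$. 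Since $d\geq 2$ and $N\gg 2^l$ (a consequence of the standing assumption \eqref{2l}), this is bounded by $2^{-Ml}\langle j-j'\rangle^{-M}$; combined with the elementary bilinear bound $\|F_j\|_{\ell^2(\Z)}\leq\|h\|_{\ell^\infty(\Z)}\|g_j\|_{\ell^2(\Z)}$, the far contribution is dominated by $2^{-cl}\langle j-j'\rangle^{-M}\|h\|_{\ell^\infty(\Z)}\|g_j\|_{\ell^2(\Z)}$. Summing in $j$ and applying the discrete Young convolution inequality
\[ \sum_{j'}\Big(\sum_j \langle j-j'\rangle^{-M}\|g_j\|_{\ell^2(\Z)}\Big)^2 \lesssim \sum_j \|g_j\|_{\ell^2(\Z)}^2 = \|g\|_{\ell^2(\Z)}^2 \]
will then deliver \eqref{anhg-bound}.

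The main obstacle will be the off-diagonal control step: one must verify that the frequency scale $2^l/N$ of the Ionescu--Wainger projection is genuinely narrower than the inverse length $1/N^d$ of the partition intervals, so that $\Pi$ does not ``smear'' each $F_j$ across many intervals. This is exactly where the standing assumption $d\geq 2$ becomes indispensable, since for $d=1$ the gap factor $N^{d-1}$ degenerates to $1$ and no cross-term decay is available. A routine density argument will first reduce the general $g \in \ell^2(\Z)$, $h \in \ell^\infty(\Z)$ to Schwartz--Bruhat functions, so that the hypotheses of Corollary \ref{struct-iii} are genuinely met after the truncation by $\ind{I_j}$ and $\ind{I_j^\flat}$.
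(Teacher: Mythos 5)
Your argument is correct, and it closely parallels the paper's proof in its first half but takes a genuinely different route in the assembly step. Like the paper, you exploit that $\tilde A_N^*$ only moves mass by $O(N^d)$, decompose $g$ over a partition of $\Z$ into intervals of length $\sim N^d$, truncate $h$ to a comparable neighborhood, and invoke Corollary \ref{struct-iii} on each piece after translation. The difference lies in how the pieces are recombined. The paper forms the Gram matrix $\langle (1-\Pi)D_I, (1-\Pi)D_J\rangle$ and applies Schur's test; to make the off-diagonal entries both exponentially small in $l$ and summable in $\mathrm{dist}(I,J)/N^d$, it takes a geometric mean of a diagonal estimate (with the $2^{-cl}$ gain but no spatial decay) and an off-diagonal estimate (with $\langle\mathrm{dist}(I,J)/N^d\rangle^{-10}$ decay but $\langle l\rangle^{O(1)}$ loss). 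You instead partition the \emph{output} $\ell^2(\Z)$ norm into the same intervals $I_{j'}$, observe that on far intervals $(1-\Pi)F_j$ equals $-\Pi F_j$, and read the exponential gain directly off the off-diagonal decay factor $\langle 2^l |j-j'| N^{d-1}\rangle^{-M}$ from \eqref{off-decay} (since $d\geq 2$ forces $N^{d-1}\geq 1$; the additional appeal to $N\gg 2^l$ is harmless but not needed), then sum via Young. This avoids the geometric-mean interpolation entirely, at the modest cost of keeping track of the output partition. Both arguments are correct almost-orthogonality proofs; yours is arguably slightly more direct, while the paper's Schur-test formulation is more modular and symmetric in the two indices.
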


\begin{proof}  If $g$ is supported on an interval $I$ of length $N^d$, then we may restrict $h$ to an $O(N^d)$-neighborhood of $I$ without affecting the average $\tilde A_N^*(h,g)$.  From Corollary \ref{struct-iii} and translation invariance we then conclude that \eqref{anhg-bound} holds in this case.

Now we handle the case when $g$ is not supported in such an interval.  We may normalize $\|h\|_{\ell^\infty(\Z)}=1$. We can split $g = \sum_{I\in\mathcal I} g \ind{I}$ where $I$ ranges over a partition $\mathcal I$ of $\R$ into intervals $I$ of length $N^d$.  Then by the preceding discussion the local dual function $D_I \coloneqq \tilde A_N^*(h,g \ind{I})$ obeys the bound
\begin{equation}\label{anghi}
\| (1 - \Pi_{\leq l, \leq -\Log N+l}) D_I \|_{\ell^2(\Z)} \lesssim_{C_1} 2^{-cl} \| g \|_{\ell^2(I)}
\end{equation}
for each interval $I$, and we wish to establish
$$ \Big\| \sum_{I\in\mathcal I} (1 - \Pi_{\leq l, \leq -\Log N+l}) D_I \Big\|_{\ell^2(\Z)} \lesssim_{C_1} 2^{-cl} \| g \|_{\ell^2(\Z)}$$
(recall $c$ is allowed to vary from line to line).  By squaring and applying Schur's test, it suffices to obtain the decay bound
$$ \langle (1 - \Pi_{\leq l, \leq -\Log N+l}) D_I, (1 - \Pi_{\leq l, \leq -\Log N+l}) D_J \rangle \lesssim_{C_1}
2^{-cl} \left\langle \frac{\mathrm{dist}(I,J)}{N^d} \right\rangle^{-2} \| g \|_{\ell^2(I)} \| g \|_{\ell^2(J)}$$
for all intervals $I,J$ of length $N^d$.  From Cauchy--Schwarz and \eqref{anghi} we already have
$$ \langle (1 - \Pi_{\leq l, \leq -\Log N+l}) D_I, (1 - \Pi_{\leq l, \leq -\Log N+l}) D_J \rangle \lesssim_{C_1}
2^{-cl} \| g \|_{\ell^2(I)} \| g \|_{\ell^2(J)}.$$
On the other hand, $\tilde A_N^*(h,g\ind{I})$ is supported in a $O(N^d)$-neighborhood of $I$, and similarly for $\tilde A_N^*(h,g\ind{J})$; also, $2^{\Log N-l} \lesssim N^d$.  From Lemma \ref{iw-prop}(i) and Cauchy--Schwarz followed by \eqref{anf} we thus have
\begin{multline*}
\langle (1 - \Pi_{\leq l, \leq -\Log N+l}) D_I, (1 - \Pi_{\leq l, \leq -\Log N+l}) D_J \rangle \\
\lesssim_{C_1}
    \langle l \rangle^{O(1)} \left\langle \frac{\mathrm{dist}(I,J)}{N^d} \right\rangle^{-10} \|D_I\|_{\ell^2(\Z)} \|D_J\|_{\ell^2(\Z)}\\
    \lesssim_{C_1} \langle l \rangle^{O(1)} \left\langle \frac{\mathrm{dist}(I,J)}{N^d} \right\rangle^{-10} \|g\|_{\ell^2(I)} \| g\|_{\ell^2(J)}.    
\end{multline*}
Taking geometric means of the two estimates, we obtain the claim.
\end{proof}

We may now prove Theorem \ref{improv}(i).  We may assume that $l,N$ are sufficiently large depending on $C_1$, since the claim follows from \eqref{anf} otherwise. It suffices to prove this claim under the additional hypothesis \eqref{2l} (which one can view as an upper bound on $l$ in terms of $N$), since for larger values of $l$ the hypothesis (i) becomes stronger and the conclusion \eqref{single-decay} is essentially unchanged.  By duality, it now suffices to establish the bound
$$ \langle \tilde A_N(f,g), h \rangle \lesssim_{C_1} 2^{-cl} \|f\|_{\ell^2(\Z)} \|g\|_{\ell^2(\Z)} \|h\|_{\ell^\infty(\Z)}$$
for any $f \in \ell^2(\Z), g \in \ell^2(\Z), h \in \ell^\infty(\Z)$ obeying the hypothesis in Theorem \ref{improv}(i).  From \eqref{transpose} and Lemma \ref{iw-prop} we can write the left-hand side as
$$ \langle (1 - \Pi_{\leq l, \leq -\Log N+l}) \tilde A^*_N(h,g), f \rangle$$
and the claim now follows from Corollary \ref{siv} and Cauchy--Schwarz.

\subsection{Proof of Theorem \ref{improv}(ii)}

Now we turn to the proof of Theorem \ref{improv}(ii).  This will
follow from a similar argument used to prove Theorem \ref{improv}(i), once we establish an analogue of
Proposition \ref{lip} for the function $g$ (with the denominator $N$
in the intervals replaced with $N^d$).  Such a result was obtained very recently in the quadratic case $P = \nn^2$ by Peluse and Prendiville \cite[Corollary 1.4]{PP2}, and the arguments there likely extend to cover all nonlinear polynomials $P$.  We give a derivation here that is self-contained (except for Theorem \ref{pp}, which is used as a ``black box''), inspired by some earlier unpublished notes in this direction by Peluse and Prendiville (private communication).

\begin{proposition}[Alternate inverse theorem for $g$]\label{lip-2}
 Under the hypotheses and notation of Theorem \ref{pp}, there exists a function $G \in \ell^2(\Z)$ with 
    \begin{equation}\label{ooh}
    \|G\|_{\ell^\infty(\Z)} \lesssim 1; \quad \|G\|_{\ell^1(\Z)} \lesssim N^d
    \end{equation}
    and with $\F_{\Z}G$  supported in the $O(\delta^{-O(1)}/N^d)$-neighborhood of some $\alpha \in \Q/\Z$ of naive height $O(\delta^{-O(1)})$ such that
    \begin{equation}\label{gG}
    |\langle g, G \rangle| \gtrsim \delta^{O(1)} N^d.
    \end{equation}
\end{proposition}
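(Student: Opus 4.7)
The plan is to derive Proposition \ref{lip-2} from the inverse theorem for $f$ (Theorem \ref{pp}) via a Cauchy--Schwarz / degree-lowering procedure in the spirit of \cite[Corollary 1.4]{PP2}, and then apply the smoothing argument from the proof of Proposition \ref{lip} at the coarser frequency scale $1/N^d$ rather than $1/N$.

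First, I would establish a $g$-analogue of Theorem \ref{pp}: under the hypothesis \eqref{hyp}, either $N \lesssim \delta^{-O(1)}$, or there exist positive integers $q \lesssim \delta^{-O(1)}$ and $N_*$ with $\delta^{O(1)} N^d \lesssim N_* \leq N^d$ such that
\[
\frac{1}{N^d} \Big| \sum_{x \in \Z} \E_{m \in [N_*/q]} g(x + qm) \Big| \gtrsim \delta^{O(1)}.
\]
To prove this, I dualise \eqref{hyp} using \eqref{transpose} and perform the substitution $y = x - P(n)$ to obtain $|\langle g, \Psi\rangle| \geq \delta N^{d+1}$, where $\Psi(y) := \sum_{n \in [N]} f(y + P(n) - n) h(y + P(n)) \ind{n > N/2}$. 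Cauchy--Schwarz in $y$ (using $\|g\|_{\ell^\infty} \leq 1$ and $\mathrm{supp}(g) \subset [-N_0, N_0]$) gives $\|\Psi\|_{\ell^2(\Z)}^2 \gtrsim \delta^2 N^{d+2}$; expanding the square and substituting $n_2 = n_1 + s$ rewrites this as a four-linear form in $f, f, h, h$ along the polynomial pattern $(y, y - s, y + P(n_1+s) - P(n_1) - s,\, y + P(n_1+s) - P(n_1))$, in which $P(n_1+s) - P(n_1)$ has degree at most $d-1$ in $n_1$ for fixed $s$. Iterating this van der Corput manoeuvre a bounded (depending on $d$) number of times reduces the pattern to the linear regime, where Plancherel's theorem and standard Weyl-sum estimates force $g$ to correlate with a Fourier character $e(\alpha \cdot)$ at a rational frequency $\alpha$ of height $\lesssim \delta^{-O(1)}$, and hence with a progression of spacing $q$ and length $\sim N^d/q$.

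With the $g$-inverse theorem in place, the construction of $G$ proceeds in exact parallel to the proof of Proposition \ref{lip}: I smooth the indicator of the arithmetic progression $\{qm : m \in [N_*/q]\}$ against $\eps \F_\R^{-1}\eta(\eps \cdot)$ with $\eps \sim \delta^{-O(1)}/N^d$, deduce via the Poisson summation formula that the smoothed function has Fourier support in $\pi([-\eps/q, \eps/q] \times \tfrac{1}{q}\Z/\Z)$, pigeonhole a single frequency $\alpha = \frac{a}{q} \mod 1$ of naive height $\lesssim \delta^{-O(1)}$ carrying a positive proportion of the mass, and define $G$ to be the corresponding Fourier-localised piece. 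The bounds \eqref{ooh} and the correlation \eqref{gG} then follow from the construction exactly as in Proposition \ref{lip}.

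The main obstacle is the quantitative van der Corput / degree-lowering step. Each Cauchy--Schwarz iteration produces multilinear forms of higher arity along polynomial configurations of intermediate complexity, and one must ensure both that the polynomial dependence on $\delta$ is preserved throughout and that at each intermediate stage the reduced configuration remains accessible to inversion by (a suitable variant of) Theorem \ref{pp}. An alternative and perhaps cleaner route -- likely the one executed in the paper -- would be to first invoke Proposition \ref{lip} to decompose $f$ into its major-arc approximation $F$ (essentially a Fourier character $e(-ax/q)$ times a smooth cutoff on an interval of length $\sim N/q$) plus a minor-arc remainder; plugging $F$ into the trilinear form reduces the dependence on $g$ to a linear polynomial average weighted by a character in $n$, and a single application of Plancherel then produces the required Fourier structure for $g$ at scale $N^d$ directly, with the minor-arc remainder handled by the $\ell^p$-improving estimate of Proposition \ref{lp-improv}.
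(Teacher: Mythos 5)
Your primary route via iterated Cauchy--Schwarz/degree lowering does not constitute a reduction to the black-box Theorem~\ref{pp}. After one application of Cauchy--Schwarz in $y$ and the shift $n_2 = n_1+s$, you obtain a four-linear form along a polynomial configuration involving $P(n_1+s)-P(n_1)$, which is not the configuration $(x, x-n, x-P(n))$ that Theorem~\ref{pp} handles. The inverse theory for these intermediate-complexity configurations, with polynomial dependence on $\delta$ maintained at every stage, is precisely the content of the Peluse--Prendiville degree-lowering argument in \cite{P2}; the whole point of invoking that result as a black box is to avoid having to reprove it. So what you call ``the main obstacle'' is in fact the entire content of the statement, and your proposed iteration is not a proof but a plan to reproduce \cite{P2} from scratch.

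Your alternative route is in the right spirit but misidentifies the key mechanism in two ways. First, Proposition~\ref{lip} does \emph{not} decompose $f$: it only produces a single structured $F$ correlating with $f$. To obtain a genuine decomposition one needs the Hahn--Banach dualization (Lemma~\ref{hahn}, Corollary~\ref{struct}), and --- crucially --- the paper applies this decomposition to the dual function $\tilde A_N^*(h,g)$, not to $f$. Concretely, the paper's proof runs as follows: from $|\langle f, \tilde A_N^*(h,g)\rangle|\ge\delta N^d$ and $\|f\|_{\ell^2}\lesssim N^{d/2}$, Cauchy--Schwarz gives $\|\tilde A_N^*(h,g)\|_{\ell^2}^2\gtrsim\delta^2 N^d$; Corollary~\ref{struct} then decomposes $\tilde A_N^*(h,g)$ into Fourier-localised pieces $F_\alpha$ (at frequency scale $1/N$) plus small $\ell^1$/$\ell^2$ errors; a pigeonhole isolates one $F_\alpha$; the reproducing formula for $F_\alpha$, the change of variables $s=m+n$, and another pigeonhole in $s$ convert the trilinear correlation into a statement about the weighted average $\E_{n\in[N]} e(\alpha n) g(x-P(n)) \F_\R^{-1}\eta(2(s-n)/M)$; a final Cauchy--Schwarz isolates $g$, and Plancherel together with the inverse form of Weyl's inequality (as in \cite[Lemma A.11]{GT}) localises the Fourier mass of $g$ at scale $1/N^d$ near a rational of bounded height. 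Second, Proposition~\ref{lp-improv} plays no role in the proof of Proposition~\ref{lip-2}; the $\ell^p$-improving estimate enters only later (Corollaries~\ref{struct-iii}, \ref{struct-iii-second}) to relax $\ell^\infty$ control of $g$ to $\ell^2$, which is a separate issue. Your sketch, as written, is missing the Hahn--Banach step, targets the wrong function, and invokes a lemma that is not used; filled out correctly, it would become the paper's proof.
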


\begin{proof} As in the proof of Proposition \ref{lip} we may assume that $N \geq C \delta^{-C}$ for some large constant $C$, as the claim is trivial otherwise.  From \eqref{hyp} and \eqref{transpose}, we have
\begin{align}
\label{eq:14}
 |\langle f, \tilde A^*_N(h,g) \rangle| \geq \delta N^d.
\end{align}
Since $\|f\|_{\ell^2(\Z)}\lesssim N^{d/2}$, we conclude using the Cauchy--Schwarz inequality that
\[
|\langle \tilde A^*_N(h,g), \tilde A^*_N(h,g) \rangle| \gtrsim \delta^2 N^{d}.
\]
We apply Corollary \ref{struct} to the second factor $\tilde A^*_N(h,g)$, with $\delta$ replaced by $c_0 \delta^2$ for some small constant $c_0>0$, to obtain a decomposition
\begin{equation*}
\tilde A_N^*(h,g) = \sum_{\alpha \in \Q/\Z: \Height_\naive(\alpha) \lesssim_{c_0} \delta^{-O(1)}} F_{\alpha} + E_1+E_2,
\end{equation*} 
where each $F_{\alpha} \in \ell^2(\Z)$ has Fourier support in the $1/M$-neighborhood of $\alpha$ with $M \sim_{c_0} \delta^{O(1)} N$ and obeys the bounds
\begin{align}
\label{faq}
\| F_{\alpha}\|_{\ell^\infty(\Z)} \lesssim_{c_0} \delta^{-O(1)};
\quad\text{ and }\quad
\|F_{\alpha}\|_{\ell^1(\Z)} \lesssim_{c_0} \delta^{-O(1)}N^d,
\end{align}
and the error terms $E_1 \in \ell^1(\Z)$ and $E_2 \in \ell^2(\Z)$ obey the bounds
\begin{equation}
\label{eq:23}
\|E_1\|_{\ell^1(\Z)} \leq c_0\delta^2 N^d; \quad\text{ and }\quad
\|E_2\|_{\ell^2(\Z)} \leq c_0\delta^2.
\end{equation}
From \eqref{anf} and H\"older's inequality one has
$$|\langle \tilde A^*_N(h,g), E_1 \rangle|+|\langle \tilde A^*_N(h,g), E_2 \rangle| \lesssim c_0 \delta^2 N^{d}$$
hence if $c_0$ is small enough we conclude from the triangle inequality and pigeonhole principle that
\[
|\langle \tilde A^*_N(h,g), F_{\alpha} \rangle| \gtrsim \delta^{O(1)} N^d
\]
for some $\alpha \in \Q/\Z$ of naive height $O_{c_0}(\delta^{-O(1)})$. Henceforth we suppress the dependence of constants on $c_0$.  By \eqref{transpose} again, we conclude that
\[
\Big|\sum_{x\in\Z} \E_{n\in [N]} h(x) F_{\alpha}(x-n) g(x-P(n))\Big|
\gtrsim \delta^{O(1)} N^d.
\]
From the Fourier support of $F_{\alpha}$, we have the reproducing formula
\[
F_{\alpha}(x) = \frac{2}{M} \sum_{m\in\Z} F_{\alpha}(x-m) e(-\alpha m) \F_\R^{-1} \eta( 2m/M)
\]
where $\eta$ was defined in Section \ref{cutoff-sec}.
Thus
\[
\Big|\sum_{x\in\Z} \E_{n\in [N]} \sum_{m\in\Z} h(x) F_{\alpha}(x-m-n) e(-\alpha m) g(x-P(n)) \F_\R^{-1} \eta(2m/M)\Big|
\gtrsim \delta^{O(1)} N^{d+1}.
\]
Making the change of variables $s=m+n$, the left-hand side can be rewritten as
\[
\Big|\sum_{x\in\Z} \sum_{s\in \Z} h(x) F_{\alpha}(x-s) e(-\alpha s)  \E_{n\in [N]} e(\alpha n) g(x-P(n)) \F_\R^{-1} \eta(2(s-n)/M)\Big|.
\]
By the rapid decay of $\F_\R^{-1} \eta$ the inner sum can be restricted to $s = O(N)$.  Thus by the pigeonhole principle there exists $s = O(N)$ such that
\[
\Big|\sum_{x\in\Z} h(x) F_{\alpha}(x-s) e(-\alpha s) \E_{n \in [N]} e(\alpha n) g(x-P(n)) \F_\R^{-1} \eta(2(s-n)/M)\Big| \gtrsim \delta^{O(1)} N^{d}.
\]
From \eqref{faq} and the boundedness of $h$ one has
$$ \sum_{x \in \Z} |h(x) F_{\alpha}(x-s) e(-\alpha s)|^2 \lesssim \delta^{-O(1)} N^d$$
hence by the
Cauchy--Schwarz inequality
\[
\sum_{x\in\Z}  \left|\E_{n \in [N]} e(\alpha n) g(x-P(n)) \F_\R^{-1} \eta(2(s-n)/M)\right|^2\gtrsim \delta^{O(1)} N^{d}.
\]
By Plancherel's theorem, we can write the left-hand side as
\begin{align}
\label{eq:24}
\int_{\TT} |\F_\Z g(\xi)|^2 |S_N(\xi)|^2d\xi,
\end{align}
where $S_N$ is the normalized exponential sum
\[
S_N(\xi) \coloneqq \E_{n \in [N]} e(\alpha n) e(\xi P(n)) \F_\R^{-1} \eta(2(s-n)/M).
\]
By another appeal to Plancherel's theorem, one has
$$ \int_{\TT} |\F_\Z g(\xi)|^2d\xi=\|g\|^2_{\ell^2(\Z)} \lesssim N^d,$$
thus one must have
\[
\int_{\Omega} |\F_\Z g(\xi)|^2 |S_N(\xi)|^2d\xi
\gtrsim \delta^{O(1)} N^{d}
\]
for a set $\Omega \subseteq \TT$ of the form
\[
\Omega \coloneqq \{\xi\in\TT\colon |S_N(\xi)|\gtrsim \delta^{O(1)}\}.
\]

By the inverse form of Weyl's exponential sum estimate, see the argument as in \cite[Lemma A.11, pp. 1922]{GT}, we obtain
\[
\Omega\subseteq \pi( [-1/M', 1/M'] \times \{ \alpha' \in \Q/\Z: \Height_\naive(\alpha') \lesssim \delta^{-O(1)} \} )
\]
for some $M' \sim \delta^{O(1)} N^d$.  By the pigeonhole principle, we may therefore find $\alpha' \in \Q/\Z$ of naive height $O(\delta^{-O(1)})$ such that
\[
 \int_{\alpha'-1/M'}^{\alpha'+1/M'} |\F_\Z g(\xi \mod 1)|^2d\xi \gtrsim \delta^{O(1)} N^{d}.
\]
By Plancherel's theorem this implies that
\[
 \sum_{x\in\Z} \bigg|\frac{1}{M'} \sum_{m\in\Z} g(x-m) e(-\alpha'm) \F_\R^{-1} \eta\left(\frac{2m}{M'}\right)\bigg|^2 \gtrsim \delta^{O(1)} N^{d}
\]
so that \eqref{gG} holds with
\[
 G(x) \coloneqq \frac{1}{(M')^2} \sum_{m\in\Z}\sum_{m'\in\Z} g(x-m+m') e(-\alpha'(m-m')) \F_\R^{-1} \eta\left(\frac{2m}{M'}\right) \F_\R^{-1} \eta\left(\frac{2m'}{M'}\right).
\]
A routine calculation reveals that $G$ has Fourier support in the $2/M'$-neighborhood of $\alpha'$ and obeys the bounds
\[
\| G\|_{\ell^\infty(\Z)} \lesssim 1;
\quad \text{ and } \quad
\|G\|_{\ell^1(\Z)} \lesssim N^d,
\]
and the claim follows.
\end{proof}

We can now repeat all of the previous arguments with the role of $f$
now played by $g$, and with the spatial scale $N$ replaced by
$N^d$.  For the convenience of the reader we state the analogous
key propositions. Repeating the Hahn--Banach proof of Corollary
\ref{struct}, but using Proposition \ref{lip-2} in place of Proposition 
\ref{lip}, we conclude:

\begin{corollary}[Structure of second dual function, I]\label{struct-dual}
Let the notation and hypotheses be as in Corollary \ref{struct}.  Then there exists a decomposition
\begin{equation}\label{decomp-dual}
\tilde A_N^{**}(f,h) = \sum_{\alpha \in \Q/\Z: \Height_\naive(\alpha) \lesssim \delta^{-O(1)}} F_{\alpha} + E_1+E_2,
\end{equation} 
where each $F_{\alpha} \in \ell^2(\Z)$ has Fourier transform supported in the $O(\delta^{-O(1)}/N^d)$-neighborhood of $\alpha$
and obeys the bounds from \eqref{faq-bound}, and the error terms $E_1 \in \ell^1(\Z)$ and $E_2 \in \ell^2(\Z)$ obey the bounds from  \eqref{e-bound}.
\end{corollary}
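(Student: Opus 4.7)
The plan is to imitate the Hahn--Banach argument used to prove Corollary \ref{struct}, but with the roles of the two dual functions swapped: instead of testing $\tilde A_N^*(h,g)$ against bounded functions $f$, we will test $\tilde A_N^{**}(f,h)$ against bounded functions $g$, using Proposition \ref{lip-2} as the structural input in place of Proposition \ref{lip}.

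Concretely, suppose $g \in \ell^\infty(\Z)$ satisfies $\|g\|_{\ell^\infty(\Z)} \leq 1$ and
\[
|\langle g, \tilde A_N^{**}(f,h)\rangle| > \delta N^d.
\]
By the identity \eqref{transpose}, this is the same as
\[
|\langle \tilde A_N(f,g), h\rangle| > \delta N^d,
\]
which is precisely the hypothesis \eqref{hyp} of Theorem \ref{pp}. Applying Proposition \ref{lip-2} (instead of Proposition \ref{lip}, which produced structure for $f$) then yields a function $G \in \ell^2(\Z)$ with $\|G\|_{\ell^\infty(\Z)} \lesssim 1$, $\|G\|_{\ell^1(\Z)} \lesssim N^d$, Fourier transform supported in the $O(\delta^{-O(1)}/N^d)$-neighborhood of some $\alpha \in \Q/\Z$ of naive height $O(\delta^{-O(1)})$, and
\[
|\langle g, G\rangle| \gtrsim \delta^{O(1)} N^d.
\]
Crucially, the continuous-scale factor is now $1/N^d$ rather than $1/N$, which is exactly the widened Fourier support appearing in the conclusion of Corollary \ref{struct-dual}.

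Now we apply Lemma \ref{hahn} with the choices $A = \delta N^d/2$, $B \sim \delta^{O(1)} N^d$, and the family
\[
\Phi = \{G \in \ell^2(\Z) : G \text{ satisfies \eqref{ooh} and has Fourier support in the $O(\delta^{-O(1)}/N^d)$-nbhd of some $\alpha \in \Q/\Z$ of naive height $\lesssim \delta^{-O(1)}$}\}.
\]
The inverse hypothesis of Lemma \ref{hahn} is precisely what we established above (upgrading the threshold from $A$ to $\delta N^d$ costs only a factor of $2$ which can be absorbed into the implied constants). The lemma then places $\tilde A_N^{**}(f,h)$ in the closed convex hull of the set $V$ from \eqref{dip}. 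Writing this convex hull representation as a sum $\sum_j c_j \lambda_j \phi_{\alpha_j} + E_1 + E_2$, where $c_j \geq 0$ with $\sum_j c_j \leq 1$, $|\lambda_j| \lesssim \delta^{-O(1)}$, $\phi_{\alpha_j} \in \Phi$, $\|E_1\|_{\ell^1(\Z)} \leq \delta N^d$ and $\|E_2\|_{\ell^2(\Z)} \leq \delta$ (the latter arising from working with the closed hull), and then grouping terms according to their associated arithmetic frequency $\alpha$ and applying the triangle inequality yields the decomposition \eqref{decomp-dual} with the required bounds \eqref{faq-bound} and \eqref{e-bound}. The only substantive difference from the proof of Corollary \ref{struct} is the Fourier-support scale, inherited directly from Proposition \ref{lip-2}; no genuinely new estimate is needed. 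Since the machinery is already in place, I do not anticipate any real obstacle here --- the step that required genuine work was establishing Proposition \ref{lip-2} itself, via the Weyl-type inversion and reproducing-formula argument involving the exponential sum $S_N$.
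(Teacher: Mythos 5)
Your proposal follows exactly the same route as the paper, which simply states that the corollary follows by repeating the Hahn--Banach argument from Corollary \ref{struct} with Proposition \ref{lip-2} in place of Proposition \ref{lip}; you have correctly spelled out the details of this substitution, including the use of \eqref{transpose} to identify the hypothesis and the widened $O(\delta^{-O(1)}/N^d)$ Fourier-support scale inherited from Proposition \ref{lip-2}. No discrepancies.
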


Repeating the proof of Proposition \ref{propdual}, we conclude:

\begin{proposition}[Structure of second dual function, II]\label{propdual-second}
Let the notation and hypotheses be as in Proposition \ref{propdual}. Then
\[
\|  (1 - \Pi_{\leq l, \leq -d\Log N+dl}) \tilde A_N^{**}(f,h) \|_{\ell^2(\Z)} \lesssim_{C_1} 2^{-cl}
 N^{d/2}  \|f\|_{\ell^\infty(\Z)} \|h\|_{\ell^\infty(\Z)},
\]
whenever $f,h \in\Schwartz(\Z)$ are supported on $[-N_0,N_0]$.
\end{proposition}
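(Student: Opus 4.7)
The plan is to repeat verbatim the argument used to prove Proposition \ref{propdual}, with the linear scale $-\Log N + l$ replaced throughout by the nonlinear scale $-d\Log N + dl$, and with Corollary \ref{struct} replaced by its variant Corollary \ref{struct-dual} for the second dual function $\tilde A_N^{**}(f,h)$. After normalizing $\|f\|_{\ell^\infty(\Z)} = \|h\|_{\ell^\infty(\Z)} = 1$ (and disposing of small $N$ using \eqref{anf}), I would apply Corollary \ref{struct-dual} with $\delta \coloneqq 2^{-c'l}$ for a sufficiently small absolute constant $c' > 0$ to decompose
\[
\tilde A_N^{**}(f,h) = \sum_{\alpha} F_{\alpha} + E_1 + E_2,
\]
where the $F_\alpha$ are localised in Fourier space to the $O(\delta^{-O(1)}/N^d)$-neighborhoods of arithmetic frequencies $\alpha$ of naive height $\lesssim \delta^{-O(1)}$, and $\|E_1\|_{\ell^1(\Z)} \leq \delta N^d$, $\|E_2\|_{\ell^2(\Z)} \leq \delta$.

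Next I would verify that each $F_\alpha$ is killed by $1 - \Pi_{\leq l, \leq -d\Log N + dl}$. This is the key place where the exponent $d$ enters: since $\Height(\alpha) \leq \Height_\naive(\alpha) \lesssim 2^{O(c'l)}$ by \eqref{height-naive}, and since the continuous width $\delta^{-O(1)}/N^d = 2^{O(c'l)}/N^d$ is bounded (for $c'$ small enough) by $2^{-d\Log N + dl - 1}$, the Fourier support of $F_\alpha$ lies inside ${\mathcal M}_{\leq l, \leq -d\Log N + dl - 1}$ (using \eqref{2l} to ensure that $(l, -d\Log N + dl)$ has good major arcs). By Lemma \ref{iw-prop}(ii) the projection therefore annihilates the structured part, and we are reduced to controlling
\[
(1 - \Pi_{\leq l, \leq -d\Log N + dl})(E_1 + E_2)
\]
in $\ell^2(\Z)$. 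The $E_2$ contribution is $\lesssim \delta$ directly from Lemma \ref{iw-prop}(i).

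For $E_1$ I would run the same two-sided interpolation argument as in the proof of Proposition \ref{propdual}. On one hand, $\|E_1\|_{\ell^\infty(\Z)} \lesssim \delta^{-O(1)}$ (since $\|\tilde A_N^{**}(f,h)\|_{\ell^\infty(\Z)} \leq 1$ and each $F_\alpha$ is bounded by $\delta^{-O(1)}$ in $\ell^\infty$), so interpolating with $\|E_1\|_{\ell^1(\Z)} \leq \delta N^d$ gives $\|E_1\|_{\ell^p(\Z)} \lesssim \delta^{1/2} N^{d/p}$ for some $1 < p < 2$ near $1$, and then \eqref{pilk} produces
\[
\|(1 - \Pi_{\leq l, \leq -d\Log N + dl}) E_1\|_{\ell^p(\Z)} \lesssim_{C_1} \langle l \rangle\, \delta^{1/2} N^{d/p}.
\]
On the other hand, from \eqref{eq:bkhf} the function $\tilde A_N^{**}(f,h)$ is bounded by $1$ and supported in an interval of length $O(N^d)$, giving $\|\tilde A_N^{**}(f,h)\|_{\ell^{p'}(\Z)} \lesssim N^{d/p'}$; combining with the $\ell^{p'}$ bound on $E_2$ and applying \eqref{pilk} yields
\[
\|(1 - \Pi_{\leq l, \leq -d\Log N + dl}) E_1\|_{\ell^{p'}(\Z)} \lesssim_{C_1} \langle l \rangle\, N^{d/p'}.
\]
Interpolating these two estimates at $\ell^2(\Z)$ and absorbing $\langle l \rangle \delta^{1/4} = \langle l\rangle 2^{-c'l/4}$ into $2^{-cl}$ (for an appropriately smaller $c$) yields the required bound.

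The main (and essentially only) obstacle is the verification in the middle step that the $O(\delta^{-O(1)}/N^d)$ width of the Fourier supports of the $F_\alpha$ from Corollary \ref{struct-dual} is compatible with the narrower continuous scale $2^{-d\Log N + dl}$ appearing in $\Pi_{\leq l, \leq -d\Log N + dl}$; this is precisely what forced the scale shift from $N$ to $N^d$ in the statement of Proposition \ref{lip-2}, and it goes through cleanly provided $c'$ is chosen small enough relative to the implicit constants in Corollary \ref{struct-dual}.
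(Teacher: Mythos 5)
Your proposal is correct and takes exactly the approach the paper intends: the paper's proof of Proposition \ref{propdual-second} is the single line ``Repeating the proof of Proposition \ref{propdual}, we conclude,'' with Corollary \ref{struct-dual} replacing Corollary \ref{struct} and the continuous scale $-\Log N+l$ replaced by $-d\Log N + dl$. Your fleshed-out verification — in particular the check that $\delta^{-O(1)}/N^d \lesssim 2^{dl}/N^d$ and $\Height_\naive(\alpha) \lesssim \delta^{-O(1)} \leq 2^l$ for $c'$ small, so that the structured part $F_\alpha$ is annihilated by the projection — is precisely the step the paper leaves implicit, and you have carried it out correctly.
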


Repeating the $L^p$-improving argument used to prove Corollary \ref{struct-iii}, we conclude:

\begin{corollary}[Structure of second dual function, III]\label{struct-iii-second}
Under the notation and hypotheses of Proposition \ref{propdual}, one has
\begin{equation}\label{anhg-bound-dual}
\| (1 - \Pi_{\leq l, \leq -d\Log N+dl}) \tilde A_N^{**}(f,h) \|_{\ell^2(\Z)} \lesssim_{C_1} 2^{-cl} \|f\|_{\ell^2(\Z)} \|h\|_{\ell^\infty(\Z)},
\end{equation}
whenever $f,h \in\Schwartz(\Z)$ are supported on $[-N_0,N_0]$.
\end{corollary}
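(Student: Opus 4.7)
The plan is to mirror the proof of Corollary \ref{struct-iii} verbatim, with the roles of $f$ and $g$ swapped and with Proposition \ref{propdual} replaced by its just-established counterpart Proposition \ref{propdual-second}. Concretely, with $h$ fixed in $\ell^\infty(\Z)$, I view the map $f \mapsto (1-\Pi_{\leq l,\leq -d\Log N + dl}) \tilde A_N^{**}(f,h)$ as a linear operator on suitable function spaces, prove two endpoint bounds, and then interpolate.

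First, Proposition \ref{propdual-second} supplies the $\ell^\infty$ endpoint
\[
\|(1-\Pi_{\leq l,\leq -d\Log N + dl})\tilde A_N^{**}(f,h)\|_{\ell^2(\Z)}\lesssim_{C_1} N^{d/2}\,2^{-cl}\,\|f\|_{\ell^\infty(\Z)}\|h\|_{\ell^\infty(\Z)},
\]
which carries the desired exponential gain in $l$. For the second endpoint I use \eqref{eq:bkhf} and the triangle inequality to obtain the pointwise domination
\[
|\tilde A_N^{**}(f,h)(x)| \leq \|h\|_{\ell^\infty(\Z)}\, A_N^{\nn - P(\nn)}|f|(x).
\]
Since $\deg(\nn - P(\nn)) = d \geq 2$, Proposition \ref{lp-improv} applied with $Q(\nn)=\nn-P(\nn)$ gives $\|A_N^{\nn-P(\nn)}|f|\|_{\ell^2(\Z)} \lesssim_p N^{d(1/2 - 1/p)}\|f\|_{\ell^p(\Z)}$ for some fixed $p$ slightly below $2$. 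Combining this with the $\ell^2$ boundedness (up to $\langle l\rangle$) of $\Pi_{\leq l,\leq -d\Log N + dl}$ supplied by Lemma \ref{iw-prop}(i) yields the $\ell^p$ endpoint
\[
\|(1-\Pi_{\leq l,\leq -d\Log N+dl})\tilde A_N^{**}(f,h)\|_{\ell^2(\Z)}\lesssim_{C_1,p} N^{d(1/2-1/p)}\,\|f\|_{\ell^p(\Z)}\|h\|_{\ell^\infty(\Z)},
\]
with no decay in $l$ but with an $N$-weight that is strictly smaller than $N^{d/2}$.

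Finally, I interpolate the two estimates via Riesz--Thorin with interpolation parameter $\theta\in(0,1)$ chosen so that $\tfrac{1}{2}=\tfrac{1-\theta}{p}$, i.e.\ $\theta = 1 - p/2 > 0$. A direct computation shows that the exponents of $N$ combine to $\tfrac{d\theta}{2} + d\bigl(\tfrac{1}{2}-\tfrac{1}{p}\bigr)(1-\theta) = 0$, so the $N$-factors cancel exactly and the surviving factor is $2^{-c\theta l}$, which (after relabeling $c$) is the claimed bound $\lesssim_{C_1} 2^{-cl}\|f\|_{\ell^2(\Z)}\|h\|_{\ell^\infty(\Z)}$; the logarithmic loss $\langle l\rangle$ coming from Lemma \ref{iw-prop}(i) is absorbed harmlessly into the exponential decay. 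I do not anticipate any real obstacle here: the argument is a close structural transcription of Corollary \ref{struct-iii}, the only substantive differences being the appearance of the scale $-d\Log N + dl$ in place of $-\Log N + l$ (forced by the fact that the Fourier support of the structured piece $G$ produced by Proposition \ref{lip-2} lives at the dual scale $N^d$ rather than $N$), and the trivial observation that $\nn - P(\nn)$ still has degree $d\geq 2$ so that Proposition \ref{lp-improv} remains applicable.
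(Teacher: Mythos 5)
Your proof is correct and reproduces exactly the argument the paper has in mind: the paper's own proof of this corollary is the single sentence ``Repeating the $L^p$-improving argument used to prove Corollary \ref{struct-iii}, we conclude,'' and your two endpoints (Proposition \ref{propdual-second} for $\ell^\infty$, and the pointwise domination $|\tilde A_N^{**}(f,h)|\leq \|h\|_{\ell^\infty}A_N^{\nn-P(\nn)}|f|$ together with Proposition \ref{lp-improv} for $\ell^p$) followed by Riesz--Thorin interpolation is precisely that argument transposed to $\tilde A_N^{**}$. One small inaccuracy that does not affect the result: by Lemma \ref{iw-prop}(i), $\Pi_{\leq l,\leq -d\Log N+dl}$ is a contraction on $\ell^2$, so no $\langle l\rangle$ loss actually enters the $\ell^p$-endpoint estimate; but, as you observe, such a logarithmic loss would in any case be absorbed by the exponential factor $2^{-c\theta l}$ produced by the interpolation.
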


Finally, we repeat the off-diagonal estimate argument used to prove Corollary \ref{siv} to conclude:

\begin{corollary}[Structure of second dual function, IV]\label{siv-second}
Under the notation and hypotheses of Proposition \ref{propdual}, one has \eqref{anhg-bound-dual}
whenever $f \in \ell^2(\Z)$ and $h \in \ell^\infty(\Z)$.
\end{corollary}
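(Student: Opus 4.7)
The plan is to mimic, essentially verbatim, the derivation of Corollary~\ref{siv} from Corollary~\ref{struct-iii}, now with $\tilde A_N^{**}$ in place of $\tilde A_N^{*}$ and with the role of $g$ played by $f$. First I would reduce to the case when $f$ is supported in an interval $I$ of length $N^d$. Indeed, from the definition \eqref{eq:bkhf} one sees that for such $f$ the average $\tilde A_N^{**}(f,h)$ is unchanged if we replace $h$ by its restriction to an $O(N^d)$-neighborhood of $I$; after translating we are in the situation of Corollary~\ref{struct-iii-second}, which gives the desired single-interval bound
\[
\|(1-\Pi_{\leq l,\leq -d\Log N+dl})\tilde A_N^{**}(f,h)\|_{\ell^2(\Z)}\lesssim_{C_1} 2^{-cl}\|f\|_{\ell^2(\Z)}\|h\|_{\ell^\infty(\Z)}.
\]

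For general $f\in\ell^2(\Z)$, I would partition $f=\sum_{I\in\mathcal I} f\ind{I}$ over a partition $\mathcal I$ of $\R$ into intervals of length $N^d$, normalize $\|h\|_{\ell^\infty(\Z)}=1$, and set
\[
D_I \coloneqq \tilde A_N^{**}(f\ind{I},h),
\]
which is supported in an $O(N^d)$-neighborhood of $I$. By the previous step,
\[
\|(1-\Pi_{\leq l,\leq -d\Log N+dl})D_I\|_{\ell^2(\Z)}\lesssim_{C_1} 2^{-cl}\|f\|_{\ell^2(I)}.
\]
To sum these contributions I would invoke Schur's test on the inner products $\langle (1-\Pi)D_I,(1-\Pi)D_J\rangle$, reducing matters to establishing
\[
\langle (1-\Pi)D_I,(1-\Pi)D_J\rangle \lesssim_{C_1} 2^{-cl}\bigl\langle \mathrm{dist}(I,J)/N^d\bigr\rangle^{-2}\|f\|_{\ell^2(I)}\|f\|_{\ell^2(J)}.
\]

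I would obtain this by taking a geometric mean of two estimates. The first is Cauchy--Schwarz combined with the single-interval bound above, which yields the factor $2^{-cl}$ but no off-diagonal decay. The second uses the off-diagonal decay estimate \eqref{off-decay} of Lemma~\ref{iw-prop}(i): since $D_I$ is supported in an $O(N^d)$-neighborhood of $I$ and the projection $\Pi_{\leq l,\leq -d\Log N+dl}$ has spatial scale $2^{d\Log N-dl}\lesssim N^d$, one gets a rapidly decaying factor of $\langle \mathrm{dist}(I,J)/N^d\rangle^{-M}$, paired with the crude $\ell^2$ bound $\|D_I\|_{\ell^2(\Z)}\lesssim\|f\|_{\ell^2(I)}$ coming from \eqref{anf} applied to $\tilde A_N^{**}(f\ind{I},h)$ (equivalently, from H\"older and an $L^p$-improving/trivial bound). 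Interpolating these two estimates gives the Schur-summable kernel.

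The main (minor) obstacle is bookkeeping the scales: the relevant projection has continuous frequency cutoff $2^{-d\Log N+dl}$, so its physical scale of decay is $N^d/2^{dl}$, which is finer than the partition scale $N^d$, exactly as needed to make the off-diagonal decay $\langle \mathrm{dist}(I,J)/N^d\rangle^{-M}$ available. Once this is verified, the argument is a line-for-line transcription of the proof of Corollary~\ref{siv}, with $\tilde A_N^{*}(h,g\ind{I})$ replaced throughout by $\tilde A_N^{**}(f\ind{I},h)$ and the continuous frequency parameter $-\Log N+l$ replaced by $-d\Log N+dl$.
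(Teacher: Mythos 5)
Your proof is correct and is exactly the approach the paper takes: the paper's "proof" of Corollary~\ref{siv-second} is the single sentence "we repeat the off-diagonal estimate argument used to prove Corollary~\ref{siv}," and your sketch carries that out line for line with the right substitutions ($\tilde A_N^{**}$ for $\tilde A_N^*$, the roles of $f$ and $g$ swapped, and the continuous cutoff $-d\Log N + dl$ in place of $-\Log N + l$). Your check that the projection's spatial scale $2^{d\Log N - dl} \lesssim N^d$ is compatible with the interval partition, so that the off-diagonal decay from \eqref{off-decay} is available, is precisely the bookkeeping point that matters.
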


Theorem \ref{improv}(ii) now follows by repeating the proof of Theorem \ref{improv}(i).

\section{Approximation by model operators}\label{approx-sec}

To conclude the proof of Theorem \ref{main}, we need to establish Theorem \ref{varp}.  
Let $l_1,l_2 \in\N$, and define $l,u$ by \eqref{l-def}, \eqref{u-def} respectively.  Fix $s_1,s_2 \geq -u$.  In view of Proposition \ref{high-high-l2} we may assume that at least one of $s_1=-u$, $s_2=-u$, $(p_1,p_2) \neq (2,2)$ holds.  It will be convenient to adopt the following definition.  If $\G=\Z$ or $\G = \A_\Z$, we declare a tuple $(H_N)_{N \in \I'}$ of functions $H_N \in L^p(\G)$ to be \emph{acceptable} if one has the estimate
$$ \| ( H_N )_{N \in \I'} \|_{L^p(\G;\V^r)} \lesssim_{C_3} 
\langle \max(l,s_1,s_2) \rangle^{O(1)} 2^{O(\rho l)-c \max(l,s_1,s_2) \ind{p_1=p_2=2}} \|f\|_{\ell^{p_1}(\Z)} \|g\|_{\ell^{p_2}(\Z)}.$$
Our task is thus to show that the tuple
$$ ( \tilde A_N(F_N,G_N) )_{N \in \I}$$
is acceptable.

The main difficulty here is that the scale parameter $N$ affects the average $\tilde A_N(F_N,G_N)$ in three different ways, as the functions $F_N,G_N$ both separately depend on $N$, and the averaging operator $\tilde A_N$ also depends on $N$.  The strategy will be to perform Fourier-analytic manipulations (on the adelic frequency space $\R \times \Q/\Z$) to approximate this expression $\tilde A_N(F_N,G_N)$ by linear combinations of simpler ``model expressions'' $A(\tilde F_N, \tilde G_N)$, where the functions $\tilde F_N, \tilde G_N$ still depend on $N$, but the bilinear averaging operator $A$ is independent of $N$.  In such a setting we will be able to use general arguments (e.g., Rademacher--Menshov type inequalities) to control the variational norms of the bilinear expressions $A(\tilde F_N, \tilde G_N)$ by variational norms of the two linear expressions $\tilde F_N, \tilde G_N$ separately.  These in turn can be controlled by a number of tools, such as the vector-valued Ionescu--Wainger multiplier theorem, Theorem \ref{thm:iw}.

We return to the rigorous arguments.  For any $N \in \I$, we have
\begin{equation}\label{nmax}
N \geq \max( 2^{2^{\max(l,s_1,s_2)/C_0}}, C_3 ),
\end{equation}
which implies in particular that
\begin{equation}\label{N-big}
N \geq 2^{10 du}.
\end{equation}

In contrast, by Lemma \ref{mag-lem}(ii), $(\Q/\Z)_{\leq l}$ is the union of dual cyclic groups $\frac{1}{q}\Z/\Z$ with
\begin{equation}\label{q-small}
 q \leq 2^{u/10}.
 \end{equation}
Thus $N$ is going to be far larger than any single denominator $q$ arising in the major arcs.  If one wishes to contain $(\Q/\Z)_{\leq l}$ in a single dual cyclic group
$\frac{1}{Q}\Z/\Z$, Lemma \ref{mag-lem}(ii) permits one to do this with
\begin{equation}\label{Q-weak}
 Q = Q_{\leq l} \leq 2^{2^{u/10}}.
\end{equation}
Thus $N$ may or may not be significantly larger than this $Q$.  We will later separate $N$ into large and small scales in order to exploit this containment in the large scale case.

From \eqref{N-big} we also have
$$ -\Log N + l_{(N)} < -10u.$$
From \eqref{m-small} we see that the pair $(l,-u)$ has good major arcs.
This lets us factor the expressions $F_N, G_N$ using the symbol calculus \eqref{func-calc-l}.  Indeed, if we set
$$ F \coloneqq \Pi_{l_1, \leq -u} f; \quad G \coloneqq \Pi_{l_2, \leq -u} g$$
then from \eqref{iw}, \eqref{FN-def}, \eqref{GN-def} we have the identities
$$ F_N = \T^{l_1}_{\varphi_N} F; \quad G_N = \T^{l_2}_{\tilde \varphi_N} G$$
where $\varphi_N, \tilde \varphi_N \in \Schwartz(\R)$ are the bump functions
\begin{equation}\label{varphi-def}
 \varphi_N(\xi) \coloneqq \begin{cases} \eta( 2^{\Log N-s_1} \xi) - \eta( 2^{\Log N-s_1+1} \xi)  & s_1 > -u \\
\eta( 2^{\Log N+u} \xi)  & s_1 = -u
\end{cases}
\end{equation}
and
\begin{equation}\label{varphip-def}
\tilde \varphi_N(\xi) \coloneqq \begin{cases} \eta( 2^{d(\Log N-s_2)} \xi) - \eta( 2^{d(\Log N-s_2+1)} \xi) & s_2 > -u \\
\eta( 2^{d(\Log N+u)} \xi)  & s_2 = -u.
\end{cases}
\end{equation}
From Lemma \ref{iw-prop} we have
\begin{equation}\label{FG-bound}
\| F\|_{\ell^p(\Z)} \lesssim \langle l \rangle \|f\|_{\ell^p(\Z)}; \quad \|G\|_{\ell^{p'}(\Z)} \lesssim \langle l \rangle \|g\|_{\ell^{p'}(\Z)}
\end{equation}
hence we may replace $f,g$ by $F,G$ respectively in the definition of acceptability.  It will now suffice to show that the tuple
\begin{equation}\label{all-all-2}
( \tilde A_N( \T^{l_1}_{\varphi_N} F, \T^{l_2}_{\tilde \varphi_N} G ))_{N \in \I}
\end{equation}
is acceptable.

The dependence on $N$ has not yet materially improved, as the quantity $\tilde A_N( \T^{l_1}_{\varphi_N} F, \T^{l_2}_{\tilde \varphi_N} G )$ still depends on $N$ in three different ways.  However, we can clarify the dependence on $N$ by (adelic) Fourier analysis.  From Example \ref{avg-mult} and \eqref{func-calc}, we see that
$$
\tilde A_N( \T^{l_1}_{\varphi_N} F, \T^{l_2}_{\tilde \varphi_N} G ) = \B_{\Proj^{\otimes 2} m^{l_1,l_2}_N}(F,G),$$
where the symbol $m^{l_1,l_2}_N \colon (\R \times \Q/\Z)^2 \to \C$ is defined by the formula
\begin{multline*}
    m^{l_1,l_2}_N( (\xi_1,\alpha_1), (\xi_2,\alpha_2) ) \\
    \coloneqq \ind{\Height(\alpha_1)=2^{l_1}} \ind{\Height(\alpha_2)=2^{l_2}} \varphi_N(\xi_1)\tilde \varphi_N(\xi_2)
\E_{n \in [N]} e((\alpha_1+\xi_1) n + (\alpha_2+\xi_2) P(n)) \ind{n>N/2}.
\end{multline*}
From \eqref{N-big}, \eqref{q-small} we see that $N$ is large compared to the naive heights of $\alpha_1,\alpha_2$, while $\xi'_1,\xi'_2 = O(2^{-u})$ are small on the support of $m^{l_1,l_2}_N$.  This suggests that in the regimes of interest the symbol
\begin{align*}
\E_{n \in [N]} e((\alpha_1+\xi_1) n + (\alpha_2+\xi_2) P(n)) \ind{n>N/2}
\end{align*}
has an approximate factorization
\begin{equation}\label{mnsn}
m_{\hat \Z}(\alpha_1,\alpha_2) \tilde m_{N,\R}(\xi_1,\xi_2),
\end{equation}
where $m_{\hat \Z} \colon (\Q/\Z)^2 \to \C$ is the normalized exponential sum
$$ m_{\hat \Z}(\alpha_1,\alpha_2) \coloneqq \int_{\hat \Z} e( \alpha_1 x + \alpha_2 P(x))\ d\mu_{\hat \Z}(x),$$
where $\mu_{\hat \Z}$ is the probability Haar measure on the profinite integers $\hat \Z$,
or equivalently
$$ m_{\hat \Z}\left( \frac{a_1}{q} \mod 1, \frac{a_2}{q} \mod 1 \right) = \E_{n \in\Z/q\Z} e\left( \frac{a_1 n + a_2 P(n)}{q} \right)$$
for any $q \in \Z_+$ and $a_1,a_2 \in \Z$,  and $\tilde m_{N,\R} \colon \R^2 \to \C$ is the oscillatory integral
\begin{equation}\label{mn-def}
\tilde m_{N,\R}(\xi_1,\xi_2) \coloneqq \frac{1}{N} \int_{N/2}^N e(\xi_1 t + \xi_2 P(t))\ dt = \int_{1/2}^1 e(\xi_1 N t + \xi_2 P(Nt))\ dt.
\end{equation}
Note how the use of the upper averaging operators $\tilde A_N$ instead of $A_N$ allows us to keep $t$ bounded away from zero, which will be technically convenient later in the argument when we integrate by parts in $t$ (as we now avoid the stationary points of $P$).  The approximation \eqref{mnsn} can be compared with \eqref{approx-factor}.

The heuristic \eqref{mnsn} then suggests the adelic bilinear symbol $m^{l_1,l_2}_N \in \Schwartz( (\R \times \Q/\Z)^2 )$ approximately factors into the tensor product of a continuous bilinear symbol 
$$(\varphi_N \otimes \tilde \varphi_N) \tilde m_{N,\R} \in \Schwartz(\R^2)$$ 
and the arithmetic bilinear symbol
$$ m_{l_1,l_2,\hat \Z} \coloneqq ( \ind{(\Q/\Z)_{l_1}} \otimes \ind{(\Q/\Z)_{l_2}} ) m_{\hat \Z} \in \Schwartz((\Q/\Z)^2).$$
At the level of bilinear Fourier multipliers, this factorization suggests the approximation
$$ \tilde A_N( \T^{l_1}_{\varphi_N} F, \T^{l_2}_{\tilde \varphi_N} G ) \approx \B^{l_1,l_2,m_{\hat \Z}}_{(\varphi_N \otimes \tilde \varphi_N)\tilde m_{N,\R} }(F,G)$$
where we introduce the twisted bilinear Fourier multiplier operators
\begin{equation}\label{bil-twist}
\B^{l_1,l_2,m_{\hat \Z}}_{m} \coloneqq \B_{\Proj^{\otimes 2}( m \otimes m_{l_1,l_2,\hat \Z} )}
\end{equation}
for any $m \in \Schwartz( \R^2 )$.
More explicitly, one has    
\begin{multline*}
    \B^{l_1,l_2,m_{\hat \Z}}_{m}(f,g)(x) = \sum_{\alpha_1 \in (\Q/\Z)_{l_1}, \alpha_2 \in (\Q/\Z)_{l_2}} m_{\hat \Z}(\alpha_1,\alpha_2) \\
\quad \times \int_{\R^2} m(\xi_1,\xi_2) \F_\Z f(\alpha_1+\xi_1) \F_\Z g(\alpha_2+\xi_2) e(-x(\alpha_1+\alpha_2+\xi_1+\xi_2))\ d\xi_1 d\xi_2.
\end{multline*}

\begin{remark} Another way to think about the approximation \eqref{mnsn} is that it is approximating the discrete averaging operator $\tilde A_N \colon \Schwartz(\Z) \times \Schwartz(\Z) \to \Schwartz(\Z)$ by the adelic averaging operator $\tilde A_{N,\A_\Z} \colon \Schwartz(\A_\Z) \times \Schwartz(\A_\Z) \to \Schwartz(\A_\Z)$ defined by
\begin{equation}\label{adelic-average}
\tilde A_{N,\A_\Z}(f,g)(x) \coloneqq \frac{1}{N} \int_{[N/2,N] \times \hat \Z} f(x-y) g(x-P(y))\ d\mu_{\A_\Z}(y),
\end{equation}
which is in turn the tensor product of the continuous averaging operator $\tilde A_{N,\R} \colon \Schwartz(\R) \times \Schwartz(\R) \to \Schwartz(\R)$ defined by
$$ \tilde A_{N,\R}(f,g)(x) \coloneqq \frac{1}{N} \int_{N/2}^N f(x-t) g(x-P(t))\ dt,$$
and the arithmetic averaging operator $A_{\hat \Z} \colon \Schwartz(\hat \Z) \times \Schwartz(\hat \Z) \to \Schwartz(\hat \Z)$ defined in Example \ref{avg-mult}.  As we shall see, this approximation is particularly accurate in the large-scale regime when $N$ is large compared to the quantity $Q_{\leq l}$, see \eqref{def:Q_l}.  In fact the main estimate \eqref{var-poly-main-int} on the integers $\Z$ has a natural analogue on the adelic integers $\A_{\Z}$ which can be proven by the same methods (with several simplifications), and our proof of the integer estimate was discovered by first working with the adelic operator (or more precisely, a projection of this operator to $\R \times \Z/Q\Z$) as a model case.  This suggests that a natural route to prove other harmonic analysis estimates on the integers $\Z$ is to first study the analogous estimates on $\A_\Z$ or $\R \times \Z/Q\Z$ as model cases, in order to exploit the tensor product structure.
\end{remark}

We now make the above heuristic precise.  For future applications we make the approximation slightly more general than what is needed in the current step.

\begin{proposition}[Major arc approximation of $\tilde A_N$]\label{mod-approx}  For any $N \geq 1$ and $s \in \N$ with $-\Log N+s \leq -u$, we have
\begin{multline}\label{norma}
\left\| \tilde A_{N}\left( \Pi_{l_1, \leq -\Log N+s} \tilde F, \Pi_{l_2, \leq -d\Log N+ds} \tilde G \right) - \B^{l_1, l_2, m_{\hat \Z}}_{(\eta_{\leq -\Log N+s} \otimes \eta_{\leq -d\Log N+ds})\tilde m_{N,\R} }(\tilde F,\tilde G) \right\|_{\ell^p(\Z)} \\
\quad \lesssim_{C_3} 2^{O(\max(2^{\rho l},s))} N^{-1}
\| \tilde F \|_{\ell^{p_1}(\Z)} \|\tilde G\|_{\ell^{p_2}(\Z)}
\end{multline}
for all $\tilde F \in \ell^{p_1}(\Z), \tilde G \in \ell^{p_2}(\Z)$.
\end{proposition}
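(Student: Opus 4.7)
The plan is to expand both operators in \eqref{norma} as bilinear Fourier multipliers on $\Schwartz(\Z)$, compare their symbols pointwise via a Riemann sum argument, and convert the comparison into an $\ell^1(\Z^2)$ bound on the physical-space kernel of the difference. Setting $k_1 := -\Log N + s$ and $k_2 := -d\Log N + ds$, the composition $\tilde A_N \circ (\Pi_{l_1, \leq k_1} \otimes \Pi_{l_2, \leq k_2})$ is a bilinear Fourier multiplier with symbol $\tilde m_{N,\Z}(\xi_1,\xi_2) \sum_{\alpha_1, \alpha_2} \eta_{\leq k_1}(\xi_1 - \alpha_1) \eta_{\leq k_2}(\xi_2 - \alpha_2)$ (with $\alpha_i$ ranging over $(\Q/\Z)_{l_i}$), while the model operator has symbol $\sum_{\alpha_1,\alpha_2} m_{\hat \Z}(\alpha_1,\alpha_2) \eta_{\leq k_1}(\xi_1-\alpha_1) \eta_{\leq k_2}(\xi_2-\alpha_2) \tilde m_{N,\R}(\xi_1-\alpha_1, \xi_2-\alpha_2)$, thanks to \eqref{bil-twist}. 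Since $k_1, k_2 \leq -u$ and $(l, -u)$ has good major arcs by \eqref{N-big}, the various localizations are pairwise non-aliased, so the difference symbol decomposes cleanly as $\sum_{\alpha_1,\alpha_2} \eta_{\leq k_1}(\xi_1-\alpha_1)\eta_{\leq k_2}(\xi_2-\alpha_2) E_{\alpha_1,\alpha_2}(\xi_1-\alpha_1, \xi_2-\alpha_2)$ with
\[
E_{\alpha_1,\alpha_2}(\theta_1,\theta_2) := \tilde m_{N,\Z}(\alpha_1+\theta_1, \alpha_2+\theta_2) - m_{\hat \Z}(\alpha_1,\alpha_2) \tilde m_{N,\R}(\theta_1,\theta_2).
\]

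To estimate $E_{\alpha_1,\alpha_2}$, I choose a common denominator $q = q(\alpha_1,\alpha_2)$ with $\alpha_i \in \frac{1}{q}\Z/\Z$ and $q \lesssim 2^{O(2^{\rho l})}$ (from Lemma \ref{mag-lem}(ii) applied to each $l_i$, followed by an $\lcm$). Writing $\alpha_i = a_i/q$, the factor $e((a_1 n + a_2 P(n))/q)$ depends only on $n \bmod q$, so partitioning the range $N/2 < n \leq \lfloor N \rfloor$ into $q$ residue classes reduces $E_{\alpha_1,\alpha_2}(\theta_1,\theta_2)$ to $\frac{1}{\lfloor N\rfloor} \sum_{r \in [q]} e((a_1 r + a_2 P(r))/q) \bigl[\sum_{n \in S_r} \phi(n) - \frac{1}{q}\int_{N/2}^N \phi(u)\,du\bigr]$ for $\phi(u) = e(\theta_1 u + \theta_2 P(u))$. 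Because $|\theta_1| \leq 2^{k_1}$ and $|\theta_2| \leq 2^{k_2}$ yield $|\phi'(u)| \lesssim |\theta_1| + |\theta_2||P'(u)| \lesssim 2^{-\Log N + O(s)}$ on $[N/2, N]$, the standard Riemann-sum error for spacing $q$ is at most $O(N\sup|\phi'| + 1) \lesssim 2^{O(s)}$ per class, so summing over $r \in [q]$ and dividing by $N$ gives $|E_{\alpha_1,\alpha_2}(\theta_1,\theta_2)| \lesssim q \cdot 2^{O(s)}/N$. Repeating the argument after applying $\partial_{\theta_1}^{j_1}\partial_{\theta_2}^{j_2}$ (which merely replaces $\phi$ by $(2\pi i u)^{j_1}(2\pi i P(u))^{j_2} \phi$, of sup size $\lesssim N^{j_1+dj_2}$ and first $u$-derivative $\lesssim N^{j_1+dj_2-1}\cdot 2^{O(s)}$) yields the scaled bound $|\partial_{\theta_1}^{j_1}\partial_{\theta_2}^{j_2} E_{\alpha_1,\alpha_2}| \lesssim 2^{-k_1 j_1 - k_2 j_2}\, q\, 2^{O(s)}/N$ for $0 \leq j_1, j_2 \leq 2$.

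Next I convert these symbol bounds to a physical-space kernel estimate. Since $2^{k_1}, 2^{k_2} \leq 2^{-u} < 1/2$, the support of each arc piece embeds into a fundamental domain of $\TT$, so the localized kernel is $K_{\alpha_1,\alpha_2}(y_1,y_2) = e(-y_1\alpha_1 - y_2\alpha_2) \int_{\R^2} \eta_{\leq k_1}(\theta_1)\eta_{\leq k_2}(\theta_2) E_{\alpha_1,\alpha_2}(\theta_1,\theta_2) e(-y_1\theta_1 - y_2\theta_2)\,d\theta_1 d\theta_2$. Two integrations by parts in each variable, using Leibniz and the derivative bounds on both $\eta$ and $E$, give $|K_{\alpha_1,\alpha_2}(y_1,y_2)| \lesssim q\cdot 2^{O(s)}/N \cdot 2^{k_1+k_2} \langle 2^{k_1} y_1\rangle^{-2} \langle 2^{k_2} y_2\rangle^{-2}$, whence $\|K_{\alpha_1,\alpha_2}\|_{\ell^1(\Z^2)} \lesssim q\cdot 2^{O(s)}/N$. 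Summing over $(\alpha_1,\alpha_2) \in (\Q/\Z)_{l_1} \times (\Q/\Z)_{l_2}$ and invoking the cardinality bound $\#(\Q/\Z)_{\leq l_i} \lesssim 2^{O(2^{\rho l})}$ of Lemma \ref{mag-lem}(iii) together with $q \lesssim 2^{O(2^{\rho l})}$ gives the full kernel bound $\|K\|_{\ell^1(\Z^2)} \lesssim 2^{O(\max(2^{\rho l}, s))}/N$. Estimate \eqref{norma} is then a direct consequence of the Young-type bound $\|\B_K(\tilde F,\tilde G)\|_{\ell^p(\Z)} \leq \|K\|_{\ell^1(\Z^2)} \|\tilde F\|_{\ell^{p_1}(\Z)} \|\tilde G\|_{\ell^{p_2}(\Z)}$, which follows from Minkowski's inequality and H\"older's inequality in the translation variables (valid since $p \geq 1$ in the H\"older regime of Theorem \ref{end-var}).

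The main technical obstacle is the derivative bookkeeping in the Riemann-sum step: each $\partial_{\theta_i}$ introduces a polynomial factor of size up to $N^d$, and one must verify that when combined with the narrow frequency scales $2^{k_1}, 2^{k_2}$ the resulting gains cancel exactly, so that no extra factor of $N$ survives to spoil the $1/N$ smallness demanded by the right-hand side of \eqref{norma}. The key algebraic check is that $2^{k_i j} \cdot N^j$ stays bounded by $2^{O(js)}$, which is precisely the scale relation $2^{k_1} \sim 2^s/N$, $2^{k_2} \sim 2^{ds}/N^d$.
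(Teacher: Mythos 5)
Your proof is correct and follows essentially the same route as the paper: you reduce to a pointwise symbol comparison on each major arc, exploit periodicity of $e(\alpha_1 n + \alpha_2 P(n))$ with period $q \lesssim 2^{O(2^{\rho l})}$ to set up a Riemann-sum error with the matching integral, verify that the derivative/oscillation factors from $\theta_1 = O(2^s/N)$ and $\theta_2 = O(2^{ds}/N^d)$ combine with the polynomial weights $u^{j_1}P(u)^{j_2}$ to give exactly an $N^{-1}$ gain, and then convert the symbol bounds into an $\ell^1(\Z^2)$ kernel bound to apply the bilinear Young inequality. The only cosmetic difference is that the paper packages the kernel/Young step as an invocation of its Lemma \ref{crude-mult} (with $r_1 = N^{-1}$, $r_2 = N^{-d}$), whereas you re-derive that multiplier bound from scratch via integration by parts on the localized kernel; the underlying content is identical.
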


The key point here is the gain of $N^{-1}$ on the right-hand side, which in practice will make any expression estimated using this proposition acceptable (with room to spare).

\begin{proof}  From the same sort of calculations used in the  preceding heuristic discussion, we can expand the expression inside the norm of the left-hand side \eqref{norma} as
$$ \B_{\Proj^{\otimes 2} M}(\tilde F, \tilde G),$$
where the symbol $M \in \Schwartz( (\R \times \Q/\Z)^2)$ is defined by
\begin{multline*}
M( (\alpha_1,\xi_1), (\alpha_2,\xi_2)) \\
\coloneqq \ind{\Height(\alpha_1) = 2^{l_1}} \ind{\Height(\alpha_2) = 2^{l_2}} \eta_{\leq -\Log N+s}(\xi_1) \eta_{\leq -d\Log N+ds}(\xi_2) M_0( (\alpha_1,\xi_1), (\alpha_2,\xi_2) )    
\end{multline*}
with
\begin{multline*}
M_0( (\alpha_1,\xi_1), (\alpha_2,\xi_2)) \\
\coloneqq \E_{n \in [N]} e(\alpha_1 n + \alpha_2 P(n)) e(\xi_1 n + \xi_2 P(n)) \ind{n>N/2}  - m_{\hat \Z}(\alpha_1,\alpha_2) \tilde m_{N,\R}(\xi_1,\xi_2).    
\end{multline*}
Applying Lemma \ref{crude-mult} with $r_1 \coloneqq N^{-1}$ and $r_2 \coloneqq N^{-d}$, Lemma \ref{mag-lem}(iii), and the triangle inequality, as well as the Leibniz rule, it now suffices to establish the bounds
$$
\frac{\partial^{j_1}}{\partial \xi_1^{j_1}} \frac{\partial^{j_2}}{\partial \xi_2^{j_2}} M_0( (\alpha_1,\xi_1), (\alpha_2,\xi_2))
\lesssim_{C_3} 2^{O(\max(2^{\rho l},s))} N^{j_1+dj_2-1}$$
for $0 \leq j_1,j_2 \leq 2$, $\alpha_1 \in (\Q/\Z)_{l_1}$, $\alpha_2 \in (\Q/\Z)_{l_2}$, and $\xi_1 = O( 2^{s}/N)$, $\xi_2 = O( 2^{ds}/N^d)$.

By Lemma \ref{mag-lem}(ii), the sequence $n \mapsto e(\alpha_1 n + \alpha_2 P(n))$ is periodic with some period $q = O_\rho( 2^{O(2^{\rho l})})$.
Splitting into residue classes modulo $q$, and evaluating the derivatives, it suffices by the triangle inequality to show that
$$ \sum_{n \in [N] \backslash [N/2]} w(n)\ind{n=a \mod q}  - \frac{1}{q} \int_{N/2}^{N} w(t)\ dt
\lesssim_{C_3} 2^{O(\max(2^{\rho l},s))} N^{j_1+dj_2} $$
for all $a \in [q]$, where 
$$ w(t) \coloneqq e(\xi_1 t + \xi_2 P(t)) t^{j_1} P(t)^{j_2}.$$
It suffices to show that
$$ w(n) - \frac{1}{q} \int_n^{n+q}
w(t)\ dt \lesssim_{C_3} 2^{O(\max(2^{\rho l},s))} N^{j_1+dj_2-1}$$
for all $n \in [N] \backslash [N/2]$, since the claim then follows by summing over all $n \in [N] \backslash [N/2]$ with $n=a \mod q$ and using the triangle inequality to estimate the remainder.  By the fundamental theorem of calculus, it then suffices to establish the bound
$$ \frac{d}{dt} w(t) \lesssim_{C_3} 2^{O(\max(2^{\rho l},s))} N^{j_1+dj_2-1} $$
for $t \sim N$; but this follows from the hypotheses $\xi_1 = O(2^{s}/N)$, $\xi_2 = O(2^{ds}/N^d)$, and direct calculation.
\end{proof}

Applying this proposition with $\tilde F \coloneqq \T^{l_1}_{\varphi_N} F$, $\tilde G \coloneqq \T^{l_2}_{\tilde \varphi_N} G$, and $s \coloneqq \max(0,s_1,s_2)+1$, and using the functional calculus and Lemma \ref{iw-prop}, we conclude that
$$ \| \tilde A_N( \T^{l_1}_{\varphi_N} F, \T^{l_2}_{\tilde \varphi_N} G ) - \B^{l_1,l_2,m_{\hat \Z}}_{ (\varphi_N \otimes \tilde \varphi_N)\tilde m_{N,\R}}(F,G)  \|_{\ell^p(\Z)}
\lesssim_{C_3} 2^{O(\max(2^{\rho l},s_1,s_2))} N^{-1} \|F\|_{\ell^{p_1}(\Z)} \|G\|_{\ell^{p_2}(\Z)}.
$$
From \eqref{nmax} we certainly have
$$ 2^{O(\max(2^{\rho l},s_1,s_2))} \sum_{N \in \I} N^{-1} \lesssim_{C_3} \langle \max(l,s_1,s_2) \rangle^{O(1)} 2^{O(\rho l)-c \max(l,s_1,s_2) \ind{p_1=p_2=2}} $$
and thus by \eqref{varsum}, \eqref{FG-bound} we see that the tuple
$$
( \tilde A_N( \T^{l_1}_{\varphi_N} F, \T^{l_2}_{\tilde \varphi_N} G ) - \B^{l_1,l_2,m_{\hat \Z}}_{(\varphi_N \otimes \tilde \varphi_N)\tilde m_{N,\R} }(F,G) )_{N \in \I} 
$$
is acceptable.  Thus by the triangle inequality, the acceptability of \eqref{all-all-2} is equivalent to the acceptability of
\begin{equation}\label{all-all-3}
( \B^{l_1,l_2,m_{\hat \Z}}_{(\varphi_N \otimes \tilde \varphi_N)\tilde m_{N,\R} }(F,G))_{N \in \I}.
\end{equation}
From \eqref{simple} it suffices to prove the acceptability of the two subtuples
\begin{equation}\label{all-all-3-sub}
( \B^{l_1,l_2,m_{\hat \Z}}_{(\varphi_N \otimes \tilde \varphi_N)\tilde m_{N,\R} }(F,G))_{N \in \I_\sml},\quad  ( \B^{l_1,l_2,m_{\hat \Z}}_{(\varphi_N \otimes \tilde \varphi_N)\tilde m_{N,\R} }(F,G))_{N \in \I_\lrg},
\end{equation}
where 
\begin{equation}\label{small-def}
\I_{\sml} \coloneqq \{ N \in \I: N \leq 2^{2^{u}} \}
\end{equation}
is the set of ``small scales'', and
\begin{equation}\label{large-def}
\I_{\lrg} \coloneqq \{ N \in \I: N > 2^{2^{u}} \}.
\end{equation}
is the set of ``large scales''. As we shall see, for the small scales one will be able to tolerate the (doubly) logarithmic losses arising from Rademacher--Menshov arguments, and for the large scales one will be able to exploit \eqref{Q-weak} to replace the integers $\Z$ by the adelic integers $\A_\Z$.

At this stage the bilinear operator $\B^{l_1,l_2,m_{\hat \Z}}_{(\varphi_N \otimes \tilde \varphi_N)\tilde m_{N,\R} }$ still has a symbol that depends on $N$, although at least the dependence is now confined to the continuous frequency variables and not the arithmetic ones.  To simplify the dependence further, we observe from \eqref{func-calc-2} that we have the functional calculus
\begin{equation}\label{func-calc-3}
    \B^{l_1,l_2,m_{\hat \Z}}_{ (\varphi_1 \otimes \varphi_2)m}(f,g) = \B^{l_1, l_2, m_{\hat \Z}}_{m}( \T^{l_1}_{\varphi_1} f, \T^{l_2}_{\varphi_2} g)
\end{equation}
whenever $\varphi_1, \varphi_2 \in \Schwartz(\R_{\leq -u})$ and $m \in \Schwartz(\R_{\leq -u}^2)$.  From this calculus and the definition \eqref{mn-def} of $\tilde m_{N,\R}$, we can factor $\B^{l_1,l_2,m_{\hat \Z}}_{(\varphi_N \otimes \tilde \varphi_N)\tilde m_{N,\R} }(F,G)$ as
\begin{equation}\label{bn-ident}
\B^{l_1,l_2,m_{\hat \Z}}_{(\varphi_N \otimes \tilde \varphi_N)\tilde m_{N,\R} }(F,G) = \int_{1/2}^1 \B^{l_1,l_2,m_{\hat \Z}}_{m_*}( \T^{l_1}_{\varphi_{N,t}} F, \T^{l_2}_{\tilde \varphi_{N,t}} G)\ dt
\end{equation}
where $\varphi_{N,t}, \tilde \varphi_{N,t} \in \Schwartz(\R)$ are modulated variants of $\varphi_N, \tilde \varphi_N$ defined by the formulae
\begin{align}
    \varphi_{N,t}(\xi) &\coloneqq \varphi_N(\xi) e( N t \xi ) \label{vnt-def}\\
    \tilde \varphi_{N,t}(\xi) &\coloneqq \tilde \varphi_N(\xi) e( P(N t) \xi ) \label{tvnt-def}
\end{align}
and $m_* \in \Schwartz(\R^2)$ is the symbol
$$ m_* \coloneqq \eta_{\leq -2u} \otimes \eta_{\leq -2du}.$$
The advantage of this formulation \eqref{bn-ident} is that the bilinear operator $\B^{l_1, l_2, m_{\hat \Z}}_{m_*}$ is independent of $N$.  This is particularly useful in the small-scale case $N \in \I_{\sml}$, as it will let us control variational norms of bilinear expressions in terms of linear quantities via a two-parameter version of the Rademacher--Menshov inequality.

In the large-scale case $N \in \I_{\lrg}$ we can express \eqref{bn-ident} in another useful way. Introduce the adelic model functions $F_\A \in L^{p_1}(\A_\Z)$, $G_\A \in L^{p_2}(\A_\Z)$
by the formulae
\begin{equation}\label{vecf-def}
F_\A(x,y) \coloneqq \sum_{\alpha_1 \in (\Q/\Z)_{l_1}} \int_\R \eta_{\leq -2^{u-1}}(\xi_1) \F_\Z F(\alpha_1 + \xi_1) e( -(\xi_1,\alpha_1) \cdot (x,y) ) \ d\xi_1
\end{equation}
and
\begin{equation}\label{vecg-def}
G_\A(x,y) \coloneqq \sum_{\alpha_2 \in (\Q/\Z)_{l_2}} \int_\R \eta_{\leq -2^{u-1}}(\xi_2) \F_\Z G(\alpha_2 + \xi_2) e( -(\xi_2,\alpha_2) \cdot (x,y) ) \ d\xi_2
\end{equation}
for $x \in \R, y \in \hat \Z$, or equivalently on the Fourier side
\begin{align*}
    \F_{\A_{\Z}} F_\A(\xi_1, \alpha_1) &= \ind{\Height(\alpha_1)=2^{l_1}} \eta_{\leq -2^{u-1}}(\xi_1) \F_\Z F(\alpha_1 + \xi_1) \\
    \F_{\A_{\Z}} G_\A(\xi_2, \alpha_2) &= \ind{\Height(\alpha_2)=2^{l_2}} \eta_{\leq -2^{u-1}}(\xi_2) \F_\Z G(\alpha_2 + \xi_2)
\end{align*}
for $\xi_1,\xi_2 \in \R$ and $\alpha_1,\alpha_2 \in \Q/\Z$.  (One can use Lemma \ref{crude-mult} to verify that $F_\A$ does indeed lie in $L^p(\A_\Z)$, and similarly for $G_\A$.)  One can also interpret $F_\A, G_\A$ as the interpolated functions 
$$F_\A = \Sample_{\R_{\leq -2^{u-1}} \times (\Q/\Z)_{l_1}}^{-1} \Pi_{l_1,\leq -2^{u-1}} F, \quad G_\A = \Sample_{\R_{\leq -2^{u-1}} \times (\Q/\Z)_{l_2}}^{-1} \Pi_{l_2,\leq -2^{u-1}} G.$$  

In the large-scale case, $\eta_{\leq -2^{u-1}}$ equals $1$ on the support of $\varphi_{N,t}$, $\tilde \varphi_{N,t}$, and $m_*$ equals $1$ on the support of $\eta_{\leq -2^{u-1}}\otimes \eta_{\leq -2^{u-1}}$, and one can then describe various combinations of $F,G$ as applications of the sampling operator $\Sample$ to various combinations of $F_\A,G_\A$.  More precisely, one observes the identities
\begin{align}
\Pi_{l_1,\leq -2^{u-1}} F &= \Sample F_\A, \label{fa}\\
\Pi_{l_2,\leq -2^{u-1}} G &= \Sample G_\A, \label{ga}\\
\T^{l_1}_{\varphi_{N,t}} F &= \Sample \T_{\varphi_{N,t} \otimes 1} F_\A, \nonumber \\
\T^{l_2}_{\tilde \varphi_{N,t}} G &= \Sample \T_{\tilde \varphi_{N,t} \otimes 1} G_\A, \nonumber\\
\B^{l_1,l_2,m_{\hat \Z}}_{m_*}( \T^{l_1}_{\varphi_{N,t}} F, \T^{l_2}_{\tilde \varphi_{N,t}} G) &= \Sample \B_{1 \otimes m_{l_1,l_2,\hat \Z}}(\T_{\varphi_{N,t} \otimes 1} F_\A,\T_{\tilde \varphi_{N,t} \otimes 1} G_\A)\nonumber
\end{align} 
so that \eqref{bn-ident} can now be written as
$$ \Sample \int_{1/2}^1\B_{(\varphi_{N,t} \otimes \tilde \varphi_{N,t}) \otimes m_{l_1,l_2,\hat \Z}}(F_\A,G_\A)\ dt.$$
All functions on $\A_\Z$ here have Fourier support in the region $(\R_{\leq -2^{u-1}} \times (\Q/\Z)_{l_1}) \times (\R_{\leq -2^{u-1}} \times (\Q/\Z)_{l_2})$, which by Lemma \ref{mag-lem}(ii) is contained in $(\R_{\leq -2^{u-1}} \times (\frac{1}{Q_{\leq l}} \Z/\Z)) \times (\R_{\leq -2^{u-1}} \times (\frac{1}{Q_{\leq l}}\Z/\Z))$.  In this large-scale regime, this is a regime in which Theorem \ref{Sampling} applies, thanks to \eqref{Q-weak}.  In particular, from Theorem \ref{Sampling} (using the normed vector space $\V^r$) we have
\begin{multline}\label{large-var}
\| ( \B^{l_1, l_2, m_{\hat \Z}}_{(\varphi_N \otimes \tilde \varphi_N)\tilde m_{N,\R} }(F,G))_{N \in \I_{\lrg}} \|_{\ell^p(\Z;\V^r)}\\
\sim 
\Big\| \Big(\int_{1/2}^1 \B_{1 \otimes m_{\hat \Z}}(\T_{\varphi_{N,t} \otimes 1} F_\A,\T_{\tilde \varphi_{N,t} \otimes 1} G_\A)\ dt\Big)_{N \in \I_{\lrg}} \Big\|_{L^p(\A_\Z; \V^r))};
\end{multline}
similarly from \eqref{fa}, \eqref{ga}, \eqref{FG-bound}, Theorem \ref{Sampling}, and Lemma \ref{iw-prop} one has
\begin{equation}\label{FGA-bound}
\| F_\A \|_{L^{p_1}(\A_\Z)} \lesssim \langle l \rangle^{O(1)} \|f\|_{\ell^{p_1}(\Z)}; \quad \| G_\A \|_{L^{p_2}(\A_\Z)} \lesssim \langle l \rangle^{O(1)} \|g\|_{\ell^{p_2}(\Z)}.
\end{equation}

In view of the above discussion (and Proposition \ref{high-high-l2}), Theorem \ref{varp} (and hence Theorem \ref{main}) now reduces to establishing the following estimates.

\begin{theorem}[Model operator estimates, I]\label{model-est} Suppose that at least one of $s_1=-u$, $s_2=-u$, or $p \neq 2$ holds.  Then the small-scale model tuple
    \begin{equation}\label{other-small}
     \Big( \int_{1/2}^1 \B^{l_1,l_2,m_{\hat \Z}}_{m_*}( \T^{l_1}_{\varphi_{N,t}} F, \T^{l_2}_{\tilde \varphi_{N,t}} G)\ dt \Big)_{N \in \I_{\sml}} 
    \end{equation}
and the large-scale model tuple
\begin{equation}\label{all-all-large}
 \Big(\int_{1/2}^1 \B_{1 \otimes m_{\hat \Z}}(\T_{\varphi_{N,t} \otimes 1} F_\A,\T_{\tilde \varphi_{N,t} \otimes 1} G_\A) \Big)_{N \in \I_{\lrg}} 
\end{equation}
are both acceptable.
\end{theorem}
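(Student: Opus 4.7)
The plan is to treat the small-scale and large-scale tuples separately, as their proofs require somewhat different tools despite sharing a common conceptual structure.

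For the small-scale tuple \eqref{other-small}, my approach is a two-parameter Rademacher--Menshov reduction. Since $\I_{\sml} \subset \D$ has cardinality at most $O(\log 2^{2^u}) = O(2^u) = 2^{O(\rho l)}$, the variation norm $\V^r$ for $r > 2$ admits a dyadic decomposition that bounds it by a sum of at most $\log|\I_{\sml}| = O(u)$ square functions indexed by dyadic partitions of $\I_{\sml}$. Since the bilinear symbol $\B^{l_1,l_2,m_{\hat \Z}}_{m_*}$ is independent of $N$, all $N$-dependence in \eqref{other-small} resides in the linear operators $\T^{l_1}_{\varphi_{N,t}}$ and $\T^{l_2}_{\tilde\varphi_{N,t}}$, so after Khinchine's inequality to linearize the square function via random signs, it suffices to control \emph{linear} square functions of the form $\bigl(\sum_j|\T^{l_1}_{\varphi_{N_j,t}-\varphi_{N_{j+1},t}} F|^2\bigr)^{1/2}$ and similarly for $G$. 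The Ionescu--Wainger multiplier theorem (Theorem \ref{thm:iw}) transfers these to the real-variable setting, where the modulated-bump-function structure in \eqref{vnt-def}, \eqref{tvnt-def} yields standard Littlewood--Paley bounds. This incurs logarithmic losses in $u$, hence a factor $\langle u\rangle^{O(1)} \lesssim 2^{O(\rho l)}$, which is acceptable. For the additional exponential gain required in the $p_1=p_2=2$ case, I apply Proposition \ref{mod-approx} in reverse to identify the model bilinear difference with a true single-scale average $\tilde A_N(F_N,G_N)$ up to negligible error, and then invoke Theorem \ref{improv}, whose hypothesis is satisfied because $F_N = \T^{l_1}_{\varphi_N}F$ has Fourier support avoiding major arcs of width smaller than $2^{s_1-\Log N}$; non-$\ell^2$ cases then follow by interpolation against the trivial bound from Lemma \ref{crude-mult}.

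For the large-scale tuple \eqref{all-all-large}, I will exploit the clean tensor structure on $\A_\Z = \R \times \hat\Z$. The bilinear operator $\B_{1 \otimes m_{\hat \Z}}$ factors as (the identity in the continuous variable) $\otimes$ (a fixed arithmetic bilinear operator on $\hat\Z$), while the multipliers $\T_{\varphi_{N,t}\otimes 1}, \T_{\tilde\varphi_{N,t}\otimes 1}$ act only on the continuous variable. The variation in $N$ is therefore a purely continuous phenomenon that can be handled by a vector-valued L\'epingle inequality from \cite{MSZ1}, applied to the bilinear continuous averages $\tilde A_{N,\R}$ taking values in the $\hat\Z$-dependent function space, combined with continuous $L^p$-improving bounds for $\tilde A_{N,\R}$. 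The arithmetic factor is absorbed using an $L^p$ boundedness of $A_{\hat\Z}$ obtained from the $p$-adic $L^q$-improving theory in the appendix. To get the exponential decay in $l$ when $p_1=p_2=2$, I invoke Theorem \ref{improv} one more time (combined with Proposition \ref{mod-approx}, applied at a fixed scale $N$ large enough that the major-arc pair is well-defined), passing from the adelic setting back through the sampling isomorphism via Theorem \ref{Sampling}, which is valid in this regime thanks to \eqref{Q-weak} and \eqref{N-big}; other exponents are then recovered by interpolation.

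The main obstacle, I expect, will be the small-scale case when $(p_1,p_2) \neq (2,2)$. The Rademacher--Menshov-plus-Khinchine strategy is cleanest in $L^2$ where orthogonality is available; at other exponents one needs to invoke the vector-valued Ionescu--Wainger theorem (Theorem \ref{thm:iw}) for both arithmetic factors simultaneously, which requires a careful two-parameter setup because the bilinear Ionescu--Wainger theory is not directly available and one must linearize one factor at a time, losing a factor $\langle l\rangle^{O(1)}$ at each stage. A second subtle point is that the desired exponential decay in $\max(l,s_1,s_2)$ in the $p_1=p_2=2$ case, which ultimately rests on Theorem \ref{improv}, must be propagated through all these reductions and interpolations without being overwhelmed by the $\langle u\rangle^{O(1)}$ losses; this requires the constants $C_0, C_1, C_2, C_3$ to be chosen in the right hierarchical order so that the polynomial losses in $l$ remain dominated by the exponential gain $2^{-c\max(l,s_1,s_2)}$ after interpolation with the trivial H\"older-type bound.
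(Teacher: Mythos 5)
Your proposal correctly identifies the two main organizing principles of the paper's proof: the two-parameter Rademacher--Menshov reduction (plus Khinchine and Ionescu--Wainger) at small scales, and the tensor-product structure on $\A_\Z$ (plus vector-valued L\'epingle and $p$-adic $L^p$-improving for $A_{\hat\Z}$) at large scales, with Theorem~\ref{improv} and Proposition~\ref{mod-approx} supplying the key exponential gain. However, there is a genuine gap in how you propose to obtain the required decay in $\max(l,s_1,s_2)$ rather than merely in $l$.

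The definition of acceptability requires the factor $2^{-c\max(l,s_1,s_2)\ind{p_1=p_2=2}}$, and without this summability over $s_1,s_2$ in the reduction to Theorem~\ref{varp} would fail (the $s_i$ range over $[-u, l_{(N)}]$ with $l_{(N)}$ unbounded). You assert that this decay comes from Theorem~\ref{improv} because $F_N = \T^{l_1}_{\varphi_N}F$ has Fourier support avoiding major arcs. This is correct scale by scale when one can appeal to almost-orthogonality of the $F_N, G_N$ to sum over $N$ (this is how Proposition~\ref{high-high-l2} handles the high-high $\ell^2$ case). But in the low-high case $s_1=-u$, $s_2>l$, $p_1=p_2=2$ (which is precisely one of the cases you must treat, since high-high $\ell^2$ was already carved out), the factor $F_N = \Pi_{l_1,\leq -\Log N - u} f$ is a nested Ionescu--Wainger projection, not a difference: $\sum_N \|F_N\|_{\ell^2}^2$ is \emph{not} bounded by $\|f\|_{\ell^2}^2$, so the scale-by-scale Bessel argument breaks down. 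The Rademacher--Menshov route also loses the $s_i$-decay, because after that reduction one is left with a \emph{fixed} bilinear operator $\B^{l_1,l_2,m_{\hat\Z}}_{m_*}$ applied to sums of modulated pieces over all $N$; the paper's single-scale estimate Lemma~\ref{ssu} (which you would reconstruct via Proposition~\ref{mod-approx} applied at the fixed reference scale $2^u$, not at the individual $N$) only yields the gain $2^{-cl}$. The paper closes this gap by a separate mechanism you do not mention: the integration-by-parts identity Lemma~\ref{integ-ident}, which manufactures a factor $2^{-s_1}$ (resp.~$2^{-ds_2}$) directly from the oscillation of the symbol $\tilde m_{N,\R}$ and is applied in tandem with the modified bump functions $\varphi_{N,t,j}$ of \eqref{pntj}--\eqref{tpntj}. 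Structurally, the paper proves a weaker Theorem~\ref{model-est-2} with gain only in $l$ and then upgrades it to Theorem~\ref{model-est} via this integration by parts; your plan proves essentially Theorem~\ref{model-est-2} but not the upgrade.

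A secondary point: the ``standard Littlewood--Paley bounds'' you invoke for the random sums $\T^{l_1}_{\varphi_*}$ do not apply directly, because the modulation $e(Nt\xi)$ effectively translates the Littlewood--Paley kernel by distance $\sim 2^{s_1}$ times its own width. A naive argument would incur losses exponential in $s_1$, which would overwhelm the gain. The paper handles this through the shifted Calder\'on--Zygmund estimates (Theorem~\ref{cz-shift}), which cost only polynomially in the shift parameter $K=O(\max(1,s_i))$; this is what keeps the polynomial loss $\langle\max(l,s_1,s_2)\rangle^{O(1)}$ under control so that the exponential gain survives interpolation.
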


It remains to establish Theorem \ref{model-est}.  One difficulty in this theorem is the need to obtain some decay in $s_1,s_2$ when they are large.  Our main tool for doing this will be the following integration by parts identity. For $j_1,j_2=-1,0,+1$ with $(s_1,j_1),(s_2,j_2) \neq (-u,-1)$, we define the modified bump functions
\begin{equation}\label{pntj}
\varphi_{N,t,j_1}(\xi_1) \coloneqq (2^{-s_1} N \xi_1)^{j_1} \varphi_{N,t}(\xi_1) =  (2^{-s_1} N \xi_1)^{j_1} e(Nt \xi_1) \varphi_N(\xi_1)
\end{equation}
and
\begin{equation}\label{tpntj}
\tilde \varphi_{N,t,j_2}(\xi_2) \coloneqq (2^{-ds_2} N^d \xi_2)^{j_2} \tilde \varphi_{N,t}(\xi_2) =  (2^{-ds_2} N^d \xi_2)^{j_2} e(P(Nt) \xi_2) \tilde \varphi_N(\xi_2).
\end{equation}
Note it is necessary to exclude the cases $(s_1,j_1),(s_2,j_2) = (-u,-1)$ to prevent these functions from developing a singularity at the frequency origin.  

\begin{lemma}[Integration by parts identity] \label{integ-ident}\ 
\begin{itemize}
    \item[(i)] If $s_1 > -u$ then we have
$$ \int_{1/2}^1 \varphi_{N,t} \otimes \tilde \varphi_{N,t}\ dt = 
\frac{2^{-s_1}}{2\pi i} \varphi_{N,t,-1} \otimes \tilde \varphi_{N,t}\Big|_{t=1/2}^{t=1} - 2^{ds_2-s_1} \int_{1/2}^1 \varphi_{N,t,-1} \otimes \tilde \varphi_{N,t,1} \frac{P'(Nt)}{N^{d-1}}\ dt.$$
    \item[(ii)]  If $s_2 > -u$ then we have 
\begin{align*}
    \int_{1/2}^1 \varphi_{N,t} \otimes \tilde \varphi_{N,t}\ dt &=
\frac{2^{-ds_2}}{2\pi i} \varphi_{N,t} \otimes \tilde \varphi_{N,t,-1} \frac{N^{d-1}}{P'(Nt)}\Big|_{t=1/2}^{t=1}
- 2^{s_1-ds_2} \int_{1/2}^1 \varphi_{N,t,1} \otimes \tilde \varphi_{N,t,-1} \frac{N^{d-1}}{P'(Nt)}\ dt \\
& + \frac{2^{-ds_2}}{2\pi i} \int_{1/2}^1 \varphi_{N,t} \otimes \tilde \varphi_{N,t,-1} \frac{N^d P''(Nt)}{P'(Nt)^2}\ dt.
\end{align*}    
\end{itemize}
\end{lemma}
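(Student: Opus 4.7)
The plan is to derive both identities by straightforward integration by parts in the $t$ variable, exploiting the fact that $\varphi_{N,t}$ and $\tilde\varphi_{N,t}$ are simple modulations of the fixed bump functions $\varphi_N, \tilde\varphi_N$. The key mechanical observations, immediate from \eqref{vnt-def}--\eqref{tvnt-def}, are the differentiation rules
\begin{equation*}
\tfrac{d}{dt}\varphi_{N,t}(\xi_1) = 2\pi i N\xi_1\,\varphi_{N,t}(\xi_1) = 2\pi i\cdot 2^{s_1}\varphi_{N,t,1}(\xi_1),\qquad \tfrac{d}{dt}\tilde\varphi_{N,t}(\xi_2) = 2\pi i N P'(Nt)\xi_2\,\tilde\varphi_{N,t}(\xi_2),
\end{equation*}
together with the algebraic identities $N\xi_1\varphi_{N,t} = 2^{s_1}\varphi_{N,t,1}$ and $N^d\xi_2\tilde\varphi_{N,t} = 2^{ds_2}\tilde\varphi_{N,t,1}$, and their reciprocals when the negative-index functions are available. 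The assumption $s_1 > -u$ in (i) (respectively $s_2 > -u$ in (ii)) is used precisely to ensure that $\varphi_N$ (respectively $\tilde\varphi_N$) is supported in a dyadic annulus bounded away from the frequency origin --- this is visible from \eqref{varphi-def}, \eqref{varphip-def} --- so that $\varphi_{N,t,-1}$ (respectively $\tilde\varphi_{N,t,-1}$) is a bona fide Schwartz function and all the manipulations below are legitimate.

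For part (i), the plan is to rewrite the first differentiation rule as $\varphi_{N,t} = \tfrac{2^{-s_1}}{2\pi i}\tfrac{d}{dt}\varphi_{N,t,-1}$ and integrate by parts against the factor $\tilde\varphi_{N,t}(\xi_2)$. The boundary contribution gives the first term on the right hand side verbatim; the interior term is computed using the second differentiation rule together with $\xi_2\tilde\varphi_{N,t} = 2^{ds_2}N^{-d}\tilde\varphi_{N,t,1}$, producing (after collecting the constant $2^{-s_1}\cdot 2^{ds_2}/N^{d-1}$ and absorbing the $2\pi i$'s) exactly $-2^{ds_2-s_1}\int_{1/2}^1 \varphi_{N,t,-1}\otimes\tilde\varphi_{N,t,1}\,P'(Nt)/N^{d-1}\,dt$.

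For part (ii) the coefficient $1/P'(Nt)$ carries its own $t$-dependence, so rather than inverting the $\tilde\varphi_{N,t}$-derivative rule directly I will compute
\begin{equation*}
\tfrac{d}{dt}\Bigl[\tfrac{N^{d-1}}{P'(Nt)}\varphi_{N,t}(\xi_1)\tilde\varphi_{N,t,-1}(\xi_2)\Bigr]
\end{equation*}
by the Leibniz rule. Three terms arise: from $\tfrac{d}{dt}\tilde\varphi_{N,t,-1} = 2\pi i\cdot 2^{ds_2}\tilde\varphi_{N,t}/N^{d-1}\cdot P'(Nt)$ (yes, one checks this using $\tilde\varphi_{N,t,-1} = 2^{ds_2}N^{-d}\xi_2^{-1}\tilde\varphi_{N,t}$) one recovers the target integrand $\varphi_{N,t}\otimes\tilde\varphi_{N,t}$ multiplied by $2\pi i\cdot 2^{ds_2}$; from $\tfrac{d}{dt}\varphi_{N,t} = 2\pi i\cdot 2^{s_1}\varphi_{N,t,1}$ one gets the middle summand; and from $\tfrac{d}{dt}[N^{d-1}/P'(Nt)] = -N^d P''(Nt)/P'(Nt)^2$ one gets the last summand (with a sign that flips when solving). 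Integrating the resulting identity over $[1/2,1]$, isolating $\int \varphi_{N,t}\otimes\tilde\varphi_{N,t}\,dt$, and dividing by the constant $2\pi i\cdot 2^{ds_2}$ produces exactly the three-term formula claimed.

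The argument is purely bookkeeping; there is no genuine analytic obstacle. The main points requiring care are ensuring that $P'(Nt)$ is nonvanishing for $t \in [1/2, 1]$, which is automatic for $t$ bounded away from zero and $N$ sufficiently large (since $\deg P = d \geq 2$, $P'$ has only finitely many real zeros; the restriction $n > N/2$ built into $\tilde A_N$ and the attendant restriction of $t$ to $[1/2,1]$ was introduced in \eqref{tan-def} precisely to avoid these stationary points), and tracking the powers of $2^{s_1}$, $2^{ds_2}$, $N$ and the $2\pi i$'s consistently. Neither identity requires any hypothesis on $f, g, F, G$ or indeed on the scale $N$ beyond what ensures $\varphi_N, \tilde\varphi_N$ are defined as in \eqref{varphi-def}, \eqref{varphip-def}.
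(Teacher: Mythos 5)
Your proposal is correct and follows essentially the same route as the paper: both parts are established by the same integration by parts in $t$ (writing $e(\xi_1 Nt)$, resp. $e(\xi_2 P(Nt))$, as a $t$-derivative and integrating by parts), with your Leibniz-rule formulation for part (ii) being the transparent rewriting of the very IBP the paper performs. The algebraic bookkeeping --- translating between the paper's scalar identities in $e(\xi_1 Nt + \xi_2 P(Nt))$ and the $\varphi_{N,t,j_1}\otimes\tilde\varphi_{N,t,j_2}$ notation via $N\xi_1\varphi_{N,t}=2^{s_1}\varphi_{N,t,1}$ and $N^d\xi_2\tilde\varphi_{N,t}=2^{ds_2}\tilde\varphi_{N,t,1}$ --- checks out, and your identification of where the hypotheses $s_1>-u$, $s_2>-u$ (i.e., \eqref{no-sing}) and the nonvanishing of $P'(Nt)$ on $[1/2,1]$ enter is accurate.
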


Note that the quantity $P'(Nt)$ that appears in some of the denominators here is non-vanishing thanks to the lower bounds $N \geq C_3$ and $t \geq 1/2$; indeed the tuples
\begin{equation}\label{weight-bound}
\left(\frac{P'(Nt)}{N^{d-1}}\right)_{N \in \I}, \left(\frac{N^{d-1}}{P'(Nt)}\right)_{N \in \I}, \left(\frac{N^d P''(Nt)}{P'(Nt)^2}\right)_{N \in \I}
\end{equation}
can all be easily verified to have a $\V^r$ norm of $O(1)$ for all $1/2 \leq t \leq 1$.  This is the main reason why we work with $\tilde A_N$ instead of $A_N$ in most of this paper.

\begin{proof} To prove (i) it suffices to show that
$$
\int_{1/2}^1 e(\xi_1 Nt + \xi_2 P(Nt))\ dt = \frac{e(\xi_1 Nt + \xi_2 P(Nt))}{2\pi i N \xi_1} \Big|_{t=1/2}^{t=1} - \int_{1/2}^1 e(\xi_1 Nt + \xi_2 P(Nt)) \frac{P'(Nt) \xi_2}{\xi_1} \ dt$$
whenever $\xi_1 \neq 0$ and $N \geq C_3$, but this follows by writing $e(\xi_1 Nt) = \frac{1}{2\pi i N \xi_1} \frac{d}{dt} e(\xi_1 Nt)$ and integrating by parts.  Similarly, to prove (ii) it suffices to show that
\begin{align*}
\int_{1/2}^1 e(\xi_1 Nt + \xi_2 P(Nt))\ dt &= \frac{e(\xi_1 Nt + \xi_2 P(Nt))}{2\pi i N \xi_2 P'(Nt)} \Big|_{t=1/2}^{t=1} - \int_{1/2}^1 e(\xi_1 Nt + \xi_2 P(Nt)) \frac{\xi_1}{P'(Nt) \xi_2} \ dt \\
& + \frac{1}{2\pi i} \int_{1/2}^1 e(\xi_1 Nt + \xi_2 P(Nt)) \frac{P''(Nt)}{\xi_2 P'(Nt)^2}\ dt
\end{align*}
whenever $\xi_2 \neq 0$ and $N \geq C_3$, but this follows by writing $e(\xi_2 P(Nt)) = \frac{1}{2\pi iN \xi_2 P'(Nt)} \frac{d}{dt} e(\xi_2 P(Nt))$ and integrating by parts.
\end{proof}

We will now show how Theorem \ref{model-est} is a consequence of Lemma \ref{integ-ident} and the following variant, which works with a fixed choice of $t$ but does not require any decay in the $s_1,s_2$ parameters.

\begin{theorem}[Model operator estimates, II]\label{model-est-2} Let $j_1,j_2 \in \{-1,0,+1\}$ be such that
\begin{equation}\label{no-sing}
(s_1,j_1), (s_2,j_2) \neq (-u,-1).
\end{equation}
Then for every $1/2 \leq t \leq 1$, one has  the small-scale model estimate
    \begin{equation}\label{other-small-2}
    \begin{split}
&   \left \|\left  ( \B^{l_1,l_2,m_{\hat \Z}}_{m_*}( \T^{l_1}_{\varphi_{N,t,j_1}} F, \T^{l_2}_{\tilde \varphi_{N,t,j_2}} G) \right)_{N \in \I_{\sml}} \right\|_{\ell^p(\Z;\V^r)}\\ &\quad \lesssim_{C_3} \langle \max(l,s_1,s_2) \rangle^{O(1)} 2^{O(\rho l)-c l \ind{p_1=p_2=2}}  \|F\|_{\ell^{p_1}(\Z)} \|G\|_{\ell^{p_2}(\Z)}.
\end{split}
    \end{equation}
and the large-scale model estimate    
\begin{equation}\label{all-all-large-2}
\begin{split}
&\left\| \left(\B_{1 \otimes m_{\hat \Z}}(\T_{\varphi_{N,t,j_1} \otimes 1} F_\A,\T_{\tilde \varphi_{N,t,j_2} \otimes 1} G_\A) \right)_{N \in \I_{\lrg}}\right \|_{L^p(\A_\Z; \V^r)} \\
&\quad \lesssim_{C_3} \langle \max(l,s_1,s_2) \rangle^{O(1)} 2^{O(\rho l)-c l \ind{p_1=p_2=2}} \|F_\A\|_{L^{p_1}(\A_\Z)} \|G_\A\|_{L^{p_2}(\A_\Z)}.
\end{split}
\end{equation}
\end{theorem}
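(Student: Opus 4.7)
For proving Theorem \ref{model-est-2}, the key feature making both estimates tractable is that the bilinear averaging operator is now independent of $N$: the scale parameter appears only through the linear Fourier projections $\T^{l_1}_{\varphi_{N,t,j_1}}, \T^{l_2}_{\tilde\varphi_{N,t,j_2}}$ (or their adelic analogues $\T_{\varphi_{N,t,j_1} \otimes 1}, \T_{\tilde\varphi_{N,t,j_2} \otimes 1}$). Consequently the $\V^r$-norm in $N$ can be controlled by essentially linear tools, and the bilinear cost is paid only at a single scale.

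For the small-scale estimate \eqref{other-small-2}, I plan to combine a two-parameter Rademacher--Menshov inequality with Khinchine's inequality to reduce the $\V^r$-estimate over $\I_{\sml}$ to a sum of $\ell^p$-norms of bilinear expressions $\B^{l_1,l_2,m_{\hat \Z}}_{m_*}(\Delta F_j, \Delta G_k)$ applied to dyadic-scale increments of the Fourier-projected inputs. Since $\I_{\sml} \subset [1, 2^{2^u}]$ and $\D$ is $\lambda$-lacunary, $|\I_{\sml}| \leq O_\lambda(2^u) = 2^{O(\rho l)}$, so the Rademacher--Menshov loss is $u^{O(1)} = 2^{O(\rho l)}$, which is absorbable. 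I would control the linear projections by the vector-valued Ionescu--Wainger multiplier theorem (Theorem \ref{thm:iw}), losing only $\langle l \rangle^{O(1)}$. For the single-scale bilinear cost at general exponents, I would use Lemma \ref{crude-mult} applied to $\B^{l_1,l_2,m_{\hat \Z}}_{m_*}$ together with the cardinality bound of Lemma \ref{mag-lem}(iii). In the critical $p_1=p_2=2$ case, I would use Proposition \ref{mod-approx} in reverse to approximate $\B^{l_1,l_2,m_{\hat \Z}}_{m_*}$ by $\tilde A_{N_0}$ for an intermediate scale $N_0 \sim 2^u$, and then apply Theorem \ref{improv}, whose minor-arc hypothesis is satisfied because $F, G$ have Fourier support at Ionescu--Wainger heights exactly $2^{l_1}, 2^{l_2}$; this yields the required exponential gain $2^{-cl}$.

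For the large-scale estimate \eqref{all-all-large-2}, the decisive observation is that the adelic bilinear operator $\B_{1 \otimes m_{\hat \Z}}$ factors as a tensor product of pointwise multiplication in the continuous variable $\R$ and the arithmetic averaging operator $A_{\hat \Z}$ on $\hat \Z$, while the modulations $\T_{\varphi_{N,t,j_1} \otimes 1}, \T_{\tilde\varphi_{N,t,j_2} \otimes 1}$ act trivially in the $\hat \Z$ variable. The bilinear expression thus takes the form $\int_{\hat \Z}(\T^{\R}_{\varphi_{N,t,j_1}} F_\A)(x, y-a)(\T^{\R}_{\tilde\varphi_{N,t,j_2}} G_\A)(x, y-P(a))\,d\mu_{\hat \Z}(a)$, and Minkowski's inequality together with the algebra property \eqref{var-alg} dominates the $\V^r$-norm in $N$ by the $\hat \Z$-integral of the product of the two linear $\V^r$-norms on $\R$. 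After this decoupling, each linear $\V^r$-norm is controlled by the vector-valued L\'epingle inequality of \cite{MSZ1} (see also \cite{MST}), noting that the modulation $e(Nt\xi)$ translates the physical-space kernel by $Nt$ and is absorbed into the shifted Calder\'on--Zygmund framework of Appendix \ref{shift-app}. The remaining arithmetic bilinear multiplier then contributes $\langle l \rangle^{O(1)}$ factors for general exponents via Lemma \ref{mag-lem}(iii) and the $L^p$-improving estimates on $\hat \Z$ from Appendix \ref{sec:app1}; in the $p_1=p_2=2$ case, the required decay $2^{-cl}$ follows from the profinite analogue of Theorem \ref{improv}, obtained by transferring via the Shannon sampling theorem (Theorem \ref{Sampling}) and Proposition \ref{mod-approx}.

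The main obstacle I anticipate is establishing the endpoint bilinear variational estimates in the non-Banach range $p < 1$, where off-the-shelf L\'epingle-type inequalities are not standard. I expect to address this by interpolating between Banach-range bilinear variational estimates (where H\"older's inequality, Lemma \ref{crude-mult}, and the standard vector-valued variational tools of \cite{MSZ1} apply) and the sharp $\ell^2 \times \ell^2 \to \ell^1$ exponential gain from Theorem \ref{improv}; the H\"older hypothesis $\frac{1}{p_1} + \frac{1}{p_2} = \frac{1}{p}$ ensures that interpolation respects the target exponent. A secondary bookkeeping issue is to ensure that the combined losses of $\langle l \rangle^{O(1)} 2^{O(\rho l)}$ arising from Ionescu--Wainger, Rademacher--Menshov, and the shifted Calder\'on--Zygmund framework are dominated by the exponential gain $2^{-cl}$ in the $(2,2)$ case; since $\rho = 1/C_1$ can be chosen arbitrarily small compared to the absolute constant $c$, this is ultimately a matter of careful tracking of constants.
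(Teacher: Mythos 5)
Your small-scale argument is essentially the paper's: the two-parameter Rademacher--Menshov reduction with Khintchine (Corollary \ref{rm-bil}), the single-scale bilinear bound obtained by running Proposition \ref{mod-approx} in reverse at the scale $N=2^u$ so that Theorem \ref{improv} applies (this is Lemma \ref{ssu}), and the Ionescu--Wainger transfer of the randomized multipliers $\varphi_*,\tilde\varphi_*$ to $L^q(\R)$ followed by the shifted Calder\'on--Zygmund bound of Theorem \ref{cz-shift} --- note that Ionescu--Wainger alone does not bound $\T_{\varphi_*}$; it is the shifted CZ step where \eqref{no-sing} and the factor $\max(1,s_1,s_2)^{O(1)}$ actually enter, in the small-scale case too. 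Your large-scale argument for $(p_1,p_2)\neq(2,2)$ also matches the paper: Minkowski over $\hat\Z$ in the fiberwise formulation, then \eqref{var-alg} and H\"older to decouple into the linear variational bounds \eqref{lin-1}--\eqref{lin-2}, handled by the shifted square function estimate and L\'epingle. (Your worry about $p<1$ is moot here: Theorem \ref{end-var} restricts to $p\geq 1$, and duality is only broken later, in Section \ref{pless}.)

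The gap is in the large-scale $(2,2)$ case. Once you apply Minkowski's inequality over $\hat\Z$ and the algebra property \eqref{var-alg} to dominate the $\V^r$ norm by $\int_{\hat\Z}\|\cdot\|_{\V^r}\|\cdot\|_{\V^r}\,d\mu_{\hat\Z}$, you have taken absolute values inside the arithmetic average, and there is no ``remaining arithmetic bilinear multiplier'' left from which to extract the decay $2^{-cl}$; a profinite analogue of Theorem \ref{improv} cannot be applied after that step. The paper instead slices in the continuous variable $x\in\R$ (formulation \eqref{form-2}), keeps $A_{\hat\Z}$ intact as a bilinear operator acting on $L^2(\hat\Z)$-valued sequences, and interchanges the variational and Lebesgue norms via Corollary \ref{bilinear-fkr}: $\|(B(f_N,g_N))_N\|_{L^r(\V^r)}\lesssim\|B\|\,\|(f_N)_N\|_{\V^R(L^r)}\,\|(g_N)_N\|_{\V^R(L^r)}$ for some $R<r$. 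Since the linear variational inputs are only available for $R>2$, this forces one to bound $A_{\hat\Z}\colon L^2\times L^2\to L^r(\hat\Z)$ with gain $2^{-c_r l}$ for some exponent $r$ \emph{strictly greater than} $2$ --- that is Theorem \ref{bile}, whose proof needs not only the transferred $L^1$ estimate (Proposition \ref{bile-2}, obtained by a limiting argument rather than via Theorem \ref{Sampling}) but also the unconditional bound $\|A_{\hat\Z}\|_{L^2\times L^2\to L^q}\lesssim 1$ for some $q>2$, proved by tensoring over primes, the $p$-adic $L^p$-improving estimates of Appendix \ref{sec:app1}, and a separate mean-zero decomposition for large primes. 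The norm-interchange lemma and the need to push the arithmetic exponent above $2$ are the essential ingredients missing from your outline.
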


We assume Theorem \ref{model-est-2} for now and show how it implies Theorem \ref{model-est}.  We give the argument for the large-scale tuple \eqref{all-all-large}, as the treatment of the small-scale tuple \eqref{other-small} is completely analogous.  From Theorem \ref{model-est-2} (with $j_1=j_2=0$), \eqref{FG-bound}, \eqref{FGA-bound} and Minkowski's integral inequality we already obtain the acceptability bound for \eqref{all-all-large} but with the factor $2^{-c \max(l,s_1,s_2) \ind{p_1=p_2=2}}$ replaced by $2^{-cl \ind{p_1=p_2=2}}$.  This gives the claim unless $p_1=p_2=2$ and $\max(s_1,s_2) > l$, so in particular $p=1$.  Since the high-high case $s_1,s_2 > -u$, $p_1=p_2=2$ has already been excluded, this only leaves us with the high-low case $s_1>l$, $s_2=-u$, $p_1=p_2=2$ and the low-high case $s_2>l$, $s_1=-u$, $p_1=p_2=2$.  In the low-high case one applies Lemma \ref{integ-ident}(ii), \eqref{weight-bound}, \eqref{var-alg}, and Minkowski's integral inequality to bound the left-hand side of \eqref{all-all-large} (where the integrand can be viewed as a linear functional applied to $\varphi_{N,t} \otimes \tilde \varphi_{N,t}$) by
$$ \lesssim 2^{-ds_2} \sup_{j_1, j_2=0,\pm 1} \sup_{1/2 \leq t \leq 1}
\| (\B_{1 \otimes m_{\hat \Z}}(\T_{\varphi_{N,t,j_1} \otimes 1} F_\A,\T_{\tilde \varphi_{N,t,j_2} \otimes 1} G_\A) )_{N \in \I_{\lrg}} \|_{L^1(\A_\Z; \V^r)} ,$$
and the acceptability of \eqref{all-all-large} in this case now follows from Theorem \ref{model-est-2} and \eqref{FG-bound}, \eqref{FGA-bound} (noting that the hypothesis \eqref{no-sing} is verified).  In the high-low case one argues similarly using Lemma \ref{integ-ident}(i) instead of Lemma \ref{integ-ident}(ii).

It remains to establish Theorem \ref{model-est-2}.  This will be the purpose of the next three sections of this paper.

\section{The small-scale estimate: applying the Rademacher--Menshov inequality}\label{small-sec}

In this section we establish \eqref{other-small-2}.  A key tool in the small-scale case will be the following two-dimensional version of the Rademacher--Menshov inequality. 

\begin{lemma}[Two-dimensional Rademacher--Menshov]\label{2drm}  Let $K \in \Z_+$, and for any $k_1,k_2 \in [K]$ let $a_{k_1,k_2}$ be a complex number, with the convention that $a_{k_1,k_2}=0$ if $k_1=0$ or $k_2=0$.  Then for any $1 < r < \infty$, one has
\begin{align*}
    &
\| (a_{k,k})_{k \in [K]} \|_{\V^r} \lesssim_r \sum_{M_1,M_2 \in 2^\N \cap [K]} 
\big\| (\Delta a_{M_1 j_1,M_2j_2})_{(j_1,j_2) \in [K/M_1] \times [K/M_2]} \big\|_{\ell^r},
\end{align*} 
where $\Delta a_{M_1 j_1,M_2j_2}:= a_{M_1 j_1,M_2j_2}-a_{M_1(j_1-1),M_2j_2}-a_{M_1j_1,M_2(j_2-1)}+a_{M_1(j_1-1),M_2(j_2-1)}$.
\end{lemma}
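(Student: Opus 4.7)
The plan is to derive the two-dimensional Rademacher--Menshov inequality by combining one application of the classical one-dimensional Rademacher--Menshov inequality along the diagonal with a single dyadic decomposition in the complementary coordinate.

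First, I apply the one-dimensional Rademacher--Menshov inequality (in the form already used elsewhere in the paper, see e.g.\ \cite{MST}) to the scalar sequence $b_k := a_{k,k}$ to obtain
\begin{align*}
\|(a_{k,k})_{k \in [K]}\|_{\V^r} \lesssim_r \sum_{M_1 \in 2^{\N} \cap [K]} \Big(\sum_{j \in [K/M_1]} |a_{M_1 j, M_1 j} - a_{M_1(j-1), M_1(j-1)}|^r \Big)^{1/r}.
\end{align*}
I then split each diagonal increment via the algebraic identity
\begin{align*}
a_{M_1 j, M_1 j} - a_{M_1(j-1), M_1(j-1)}
&= \Delta a_{M_1 j, M_1 j} + \bigl(a_{M_1 j, M_1(j-1)} - a_{M_1(j-1), M_1(j-1)}\bigr)\\
&\quad + \bigl(a_{M_1(j-1), M_1 j} - a_{M_1(j-1), M_1(j-1)}\bigr),
\end{align*}
in which the first term on the right is exactly the scale-$(M_1,M_1)$ mixed difference appearing on the right-hand side of the lemma. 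The triangle inequality in $\ell^r_j$, together with the manifest symmetry between the two remaining pieces, reduces the problem to controlling the $\ell^r_j$-norm of the ``horizontal'' difference $a_{M_1 j, M_1(j-1)} - a_{M_1(j-1), M_1(j-1)}$.

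For this horizontal piece I use the telescoping identity
\begin{align*}
a_{M_1 j, M_1(j-1)} - a_{M_1(j-1), M_1(j-1)} = \sum_{(M_2, j_2) \in \mathcal{D}_j} \Delta a_{M_1 j, M_2 j_2},
\end{align*}
where $\mathcal{D}_j$ is the standard binary-expansion dyadic decomposition of $(0, M_1(j-1)]$ into intervals $(M_2(j_2-1), M_2 j_2]$, together with the elementary observation that summing ``thin'' $(M_1,1)$-mixed differences over a dyadic interval of scale $M_2$ in the second coordinate yields precisely $\Delta a_{M_1 j, M_2 j_2}$. Since $M_1(j-1)$ is a multiple of $M_1$, only scales $M_2 \geq M_1$ occur in $\mathcal{D}_j$ and each scale contributes at most $O(1)$ summands, so Minkowski's inequality in $\ell^r_j$ pulls the sum over $M_2$ outside. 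A direct combinatorial check then shows that each pair $(j_1, j_2)$ arises for at most one value of $j$ (namely $j = j_1$), so the residual sum in $j$ is dominated by the full $\ell^r$ norm $\|\Delta a_{M_1 j_1, M_2 j_2}\|_{\ell^r_{(j_1,j_2)}}$. Assembling these bounds and summing over $M_1$ gives the claim.

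The main obstacle is the combinatorial accounting in the previous paragraph: a naive iteration of one-dimensional Rademacher--Menshov (first along the diagonal and then inside each horizontal slice) is tempting but risks double counting, because the 1D decomposition of $(0, M_1(j-1)]$ depends on $j$ and could in principle cause the same dyadic interval at scale $M_2$ to appear for many values of $j$. This is avoided precisely because $M_1(j-1) \in M_1\Z$, which together with the uniqueness of $j = j_1$ ensures that each dyadic rectangle $(M_1(j_1-1), M_1 j_1] \times (M_2(j_2-1), M_2 j_2]$ is used only $O(1)$ times in the final estimate.
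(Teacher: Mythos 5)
Your argument is correct, but it is organized differently from the paper's. The paper works directly with the definition of the $\V^r$ norm: it fixes an arbitrary increasing sequence $k_0=0 < k_1 < \dots < k_J$, introduces the discrete mixed-difference measure $\mu$, writes $a_{k_j,k_j}-a_{k_{j-1},k_{j-1}}=\mu([k_j]^2\setminus[k_{j-1}]^2)$, splits each L-shaped annulus into two axis-aligned rectangles, decomposes each side of each rectangle into maximal dyadic subintervals (so that at most $O(1)$ dyadic rectangles appear at each pair of scales $(M_1,M_2)$), and concludes via the triangle inequality and the $j$-disjointness of the resulting rectangles. You instead treat the two coordinates asymmetrically: you apply the one-dimensional Rademacher--Menshov inequality (which indeed holds for all $r\geq 1$ by exactly the argument the paper invokes from \cite[Lemma 2.5]{MSZ2}, and yields the $\V^r$ rather than the $V^r$ norm since $a_{0,0}=0$) to the diagonal sequence $b_k=a_{k,k}$, reducing to increments along the dyadic grid $k=M_1 j$. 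You then split the L-shape $[M_1 j]^2\setminus[M_1(j-1)]^2$ into the corner square plus two axis-aligned rectangles (rather than the paper's two overlapping-free rectangles), and dyadically decompose only the non-dyadic side $(0,M_1(j-1)]$, using that it is automatically a multiple of $M_1$ to restrict to scales $M_2\geq M_1$. Your injectivity observation ($j\mapsto(j,j_2(j))$ is injective for fixed $(M_1,M_2)$) correctly replaces the paper's disjointness argument. Both proofs are valid; yours reuses the 1D lemma as a black box at the cost of a slightly more delicate bookkeeping step, while the paper's simultaneous two-coordinate decomposition handles arbitrary (non-dyadic) increasing sequences $(k_j)$ from the start and is more manifestly symmetric in the two coordinates. (One small point: the form of the 1D Rademacher--Menshov inequality you quote is the one at \cite[Lemma 2.5, pp.\ 534]{MSZ2}, not \cite{MST}.)
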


The one-dimensional analogue of this inequality is well known; see e.g., \cite[Lemma 2.5, pp. 534]{MSZ2}.

\begin{proof}  By definition \eqref{vardef} of the $\V^r$ norm, one has
$$  \| (a_{k,k})_{k \in [K]} \|_{\V^r} \lesssim_r \| (a_{k_j,k_j} - a_{k_{j-1},k_{j-1}})_{j \in [J]} \|_{\ell^r}$$
for some sequence $1 \leq k_1 < \dots < k_J \leq K$, with the convention $k_0=0$.  

Let $\mu$ be the discrete complex measure on $[K]^2$ with masses
$$ 
\mu(\{(l_1,l_2)\}) \coloneqq a_{l_1,l_2} - a_{l_1-1,l_2} - a_{l_1,l_2-1} + a_{l_1-1,l_2-1}.$$
By telescoping series we may write
$$
a_{k_j,k_j} - a_{k_{j-1},k_{j-1}} = \mu( [k_j]^2 \backslash [k_{j-1}]^2 ).$$
Observe that the $L$-shaped region $[k_j]^2 \backslash [k_{j-1}]^2$ can be partitioned into the union of two rectangles:
$$ [k_j]^2 \backslash [k_{j-1}]^2 = [k_j] \times ([k_j] \backslash [k_{j-1}]) \uplus ([k_j] \backslash [k_{j-1}]) \times [k_{j-1}].$$
We partition these rectangles further into dyadic subrectangles as follows.  For each $M \in 2^\N \cap [K]$, let ${\mathcal I}_M$ be the collection of all discrete dyadic intervals $I$ in $[K]$ of length $M$, thus $I = [M] + (j-1)M = \{ jM-M+1,\dots,M\}$ for some $j \in [K/M]$.  Every interval $J$ in $[K]$ can then be written as the union of disjoint dyadic intervals $I \in \bigcup_{M \in 2^\N \cap [K]} {\mathcal I}_M$, in such a manner that at most two intervals are used from each collection ${\mathcal I}_M$.  Indeed, one can take the $I$ to be the maximal dyadic intervals contained in $J$: for each scale $M$, the intervals in ${\mathcal I}_M$ that lie in $J$ are consecutive, and all but the two extreme intervals in this sequence will fail to be maximal. Taking Cartesian products, we conclude that the region
$[k_j]^2 \backslash [k_{j-1}]^2$ can be written as the union of dyadic rectangles $I_1 \times I_2$ with $I_1 \in {\mathcal I}_{M_1}, I_2 \in {\mathcal I}_{M_2}$ for some $M_1,M_2 \in 2^\N \cap [K]$, in such a way that each pair $(M_1,M_2)$ is associated to $O(1)$ rectangles $I_1 \times I_2$.  From the triangle inequality, we thus have
$$ \mu([k_j]^2 \backslash [k_{j-1}]^2) \lesssim 
\sum_{M_1,M_2 \in 2^\N \cap [K]} \sup_{I_1 \in {\mathcal I}_{M_1}, I_2 \in {\mathcal I}_{M_2}: I_1 \times I_2 \subset [k_j]^2 \backslash [k_{j-1}]^2} |\mu(I_1 \times I_2)|$$
and hence on taking $\ell^r$ norms
$$  \| (a_{k,k})_{k \in [K]} \|_{\V^r} \lesssim_r 
\sum_{M_1,M_2 \in 2^\N \cap [K]} \bigg\|\Big (\sup_{I_1 \in {\mathcal I}_{M_1}, I_2 \in {\mathcal I}_{M_2}: I_1 \times I_2 \subset [k_j]^2 \backslash [k_{j-1}]^2} |\mu(I_1 \times I_2)|\Big)_{j \in [J]} \bigg\|_{\ell^r};$$
since the rectangles $I_1 \times I_2$ associated to a given region $[k_j]^2 \backslash [k_{j-1}]^2$ are disjoint, we conclude that
$$  \| (a_{k,k})_{k \in [K]} \|_{\V^r} \lesssim_r 
\sum_{M_1,M_2 \in 2^\N \cap [K]} \left\| (\mu(I_1 \times I_2))_{I_1 \in {\mathcal I}_{M_1}, I_2 \in {\mathcal I}_{M_2}} \right\|_{\ell^r}.$$
If $I_1 = [M_1] + (j_1-1)M_1$ and $I_2 = [M_2] + (j_2-1)M_2$ then
$$ \mu(I_1 \times I_2) = a_{M_1 j_1,M_2j_2}-a_{M_1(j_1-1),M_2j_2}-a_{M_1j_1,M_2(j_2-1)}+a_{M_1(j_1-1),M_2(j_2-1)}$$
and the claim follows.
\end{proof}

We can combine this with Khintchine's inequality to conclude:

\begin{corollary}[Rademacher--Menshov for bilinear forms]\label{rm-bil}  Let $K \in \Z_+$, and for any $k \in [K]$ let $f_k \in V, g_k \in W$ be elements of some vector spaces $V,W$.  Let $0 < q < \infty$, and let $B \colon V \times W \to L^q(X)$ be a bilinear map for some measure space $X$.  Then
\begin{equation}\label{bfkgk}
\begin{split}
&\| (B(f_k,g_k))_{k \in [K]} \|_{L^q(X;\V^2)} \\
&\lesssim_q \langle \log K \rangle^{\max(2,\frac{2}{q})} \sup_{\epsilon_1,\epsilon'_1,\dots,\epsilon_K,\epsilon'_K, \in \{-1,+1\}} 
 \bigg\| B\Big( \sum_{k \in [K]} \epsilon_k (f_k-f_{k-1}), \sum_{k \in [K]} \epsilon'_k (g_k-g_{k-1}) \Big) \bigg\|_{L^q(X)}
\end{split}
\end{equation}
with the conventions $f_0=g_0=0$.
\end{corollary}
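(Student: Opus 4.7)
The plan is to apply Lemma \ref{2drm} pointwise at each $x \in X$, using bilinearity to reduce the bilinear quantities to a scalar square function, and then use Khintchine's inequality to rewrite the square function as a randomization of bilinear forms with the same tensor structure. Specifically, for each $x$ set $a_{k_1,k_2}(x) \coloneqq B(f_{k_1},g_{k_2})(x)$, extending $B$ bilinearly; by bilinearity, the mixed difference $\Delta a_{M_1 j_1, M_2 j_2}(x)$ equals $B(\phi^{M_1}_{j_1}, \psi^{M_2}_{j_2})(x)$ where $\phi^{M_1}_{j_1} \coloneqq f_{M_1 j_1} - f_{M_1(j_1-1)}$ and $\psi^{M_2}_{j_2} \coloneqq g_{M_2 j_2} - g_{M_2(j_2-1)}$. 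Lemma \ref{2drm} with $r = 2$ then yields, pointwise in $x$,
\[
\|(B(f_k,g_k)(x))_{k \in [K]}\|_{\V^2} \lesssim \sum_{M_1,M_2 \in 2^\N \cap [K]} \Big(\sum_{j_1,j_2} |B(\phi^{M_1}_{j_1},\psi^{M_2}_{j_2})(x)|^2\Big)^{1/2}.
\]

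Next, I will take $L^q(X)$ norms. Since there are only $O(\log K)$ admissible values of each of $M_1,M_2$, the triangle inequality (when $q \geq 1$) or the quasi-triangle inequality \eqref{quasi} (when $0<q<1$) absorbs the sum over $(M_1,M_2)$ at the cost of a factor of $\langle \log K\rangle^{\max(2,2/q)}$, so it suffices to bound the single-scale quantity
\[
\Big\|\Big(\sum_{j_1,j_2} |B(\phi^{M_1}_{j_1},\psi^{M_2}_{j_2})|^2\Big)^{1/2}\Big\|_{L^q(X)}
\]
uniformly in $M_1,M_2$. Here I would invoke the two-parameter scalar Khintchine inequality: for independent Rademacher signs $(\epsilon_{j_1})$, $(\epsilon'_{j_2})$ and any $0<q<\infty$,
\[
\Big(\sum_{j_1,j_2} |c_{j_1,j_2}|^2\Big)^{q/2} \sim_q \E_{\epsilon,\epsilon'}\Big|\sum_{j_1,j_2} \epsilon_{j_1}\epsilon'_{j_2} c_{j_1,j_2}\Big|^q,
\]
applied pointwise with $c_{j_1,j_2}=B(\phi^{M_1}_{j_1},\psi^{M_2}_{j_2})(x)$. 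Integrating in $x$, using Fubini, and exploiting the bilinearity of $B$ converts the randomized sum into $B\big(\sum_{j_1} \epsilon_{j_1}\phi^{M_1}_{j_1}, \sum_{j_2} \epsilon'_{j_2}\psi^{M_2}_{j_2}\big)$, so the single-scale bound is controlled by the $\epsilon,\epsilon'$-expectation of $\big\|B(\cdots,\cdots)\big\|_{L^q(X)}^q$, hence by the supremum over signs.

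Finally, I need to re-index the block sign sequences. Observe that $\sum_{j_1} \epsilon_{j_1}\phi^{M_1}_{j_1} = \sum_{k \in [K]} \tilde\epsilon_k (f_k - f_{k-1})$, where $\tilde\epsilon_k = \epsilon_{j_1}$ for $k \in (M_1(j_1-1), M_1 j_1]$ is a particular element of $\{-1,+1\}^K$ (constant on dyadic blocks of length $M_1$); the analogous identity holds for the $g$-sum. The supremum over such block-constant sign patterns is dominated by the supremum over all of $\{-1,+1\}^K\times\{-1,+1\}^K$, which produces exactly the right-hand side of \eqref{bfkgk}. The main technical point to verify carefully is the two-parameter Khintchine step, which for general $0<q<\infty$ requires Kahane's inequality (equivalence of Rademacher moments of $\ell^2$-valued sums) in order to iterate the one-parameter Khintchine inequality across the two independent sign families; this is standard but is the step most sensitive to the value of $q$, and in particular is the source of the $2/q$ in the $\max(2,2/q)$ exponent on $\langle \log K\rangle$.
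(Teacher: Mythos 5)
Your argument is correct and follows essentially the same route as the paper's proof: apply Lemma \ref{2drm} pointwise with $a_{k_1,k_2}(x)=B(f_{k_1},g_{k_2})(x)$, use the (quasi-)triangle inequality to pay a $\langle\log K\rangle^{\max(2,2/q)}$ factor for the sum over dyadic scales $(M_1,M_2)$, apply two-parameter Khintchine pointwise, and then observe that the resulting block-constant sign patterns are dominated by the supremum over all of $\{-1,+1\}^K\times\{-1,+1\}^K$. Your explicit remark about the Hilbert-space valued (Kahane) step needed to iterate Khintchine across the two independent sign families is a useful clarification of the step the paper compresses into ``two applications of Khintchine's inequality.''
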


In our applications, the set $[K]$ will index a lacunary set of scales, so the $\log K$ type losses are in fact doubly logarithmic in the scale parameters.  This will allow us to profitably use this corollary for scales as large as $2^{2^u}$.  Note in this corollary that the bilinear operator $B$ is not permitted to depend on $k$, but fortunately the Fourier-analytic manipulations of the preceding section have achieved such an independence of $k$ for the bilinear operator appearing in \eqref{other-small-2}.

\begin{proof}  We may normalize
\begin{equation}\label{supe}
 \sup_{\epsilon_1,\epsilon'_1,\dots,\epsilon_K,\epsilon'_K, \in \{-1,+1\}} \bigg\| B\Big( \sum_{k \in [K]} \epsilon_k (f_k-f_{k-1}), \sum_{k \in [K]} \epsilon'_k (g_k-g_{k-1}) \Big) \bigg\|_{L^q(X)} = 1.
 \end{equation}
 For each $x \in X$, we apply Lemma \ref{2drm} with $a_{k_1,k_2} = B(f_{k_1},g_{k_2})(x)$ and $r=2$ to bound the left-hand side of \eqref{bfkgk} by
$$ \lesssim \bigg\| \sum_{M_1,M_2 \in 2^\N \cap [K]} \big\| (B(\tilde f_{M_1 j_1}, \tilde g_{M_2 j_2}))_{(j_1,j_2) \in [ K/M_1] \times [K/M_2]} \big\|_{\ell^2} \bigg\|_{L^q(X)},$$
where $\tilde f_{M_1 j_1}:=f_{M_1 j_1} - f_{M_1(j_1-1)}$ and $\tilde g_{M_2 j_2} := g_{M_2 j_2} - g_{M_2(j_2-1)}$.
The last norm by the triangle or quasi--triangle inequality \eqref{quasi} is bounded by
$$ \lesssim_q \langle K \rangle^{\max(2,\frac{2}{q})} \sup_{M_1,M_2 \in 2^\N \cap [K]} \left\| (B(\tilde f_{M_1 j_1}, \tilde g_{M_2 j_2}))_{(j_1,j_2) \in [K/M_1] \times [K/M_2]} \right\|_{L^q(X; \ell^2)}.$$
Thus it suffices to show for each $M_1,M_2 \in 2^\N \cap [K]$ that
$$
\Big\| (B(\tilde f_{M_1 j_1}, \tilde g_{M_2 j_2}))_{(j_1,j_2) \in [K/M_1] \times [K/M_2]} \Big\|_{L^q(X; \ell^2)}^q \lesssim_q 1.$$
But by two applications of Khintchine's inequality, one can bound the left-hand side by the expected value of
$$
\bigg\| \sum_{j_1 \in [K/M_1]} \sum_{j_2 \in [K/M_2]} \epsilon_{j_1} \epsilon'_{j_2} B(\tilde f_{M_1 j_1}, \tilde g_{M_2 j_2}) \bigg\|_{L^q(X)}^q,$$
where $\epsilon_{j_1}, \epsilon'_{j_2}$ are independent random Bernoulli signs.  But every instance of this random expression can be factored (after relabeling the signs) in the form of one of the norms in \eqref{supe}, raised to the power $q$, and the claim follows.
\end{proof}

We now apply this estimate to \eqref{other-small-2}.  We enumerate the elements of ${\I}_{\sml}$ in order as $N_1 < \dots < N_K$; we may assume that $K \geq 1$ since otherwise there is nothing to prove.  From \eqref{small-def} we have $K = O( 2^u )$.  Thus by Lemma \ref{rm-bil} we may bound the left-hand side of \eqref{other-small-2} by
$$ u^{O(1)} \| \B^{l_1,l_2,m_{\hat \Z}}_{m_*}( \T^{l_1}_{\varphi_*} F, \T^{l_2}_{\tilde \varphi_*} G ) \|_{\ell^p(\Z)}$$
for some cutoffs $\varphi_*, \tilde\varphi_*$ of the form
\begin{align}
    \varphi_* &= \sum_{k \in [K]} \epsilon_k (\varphi_{N_k,t,j_1} - \varphi_{N_{k-1},t,j_1}) \label{varphistar}\\
    \tilde \varphi_* &= \sum_{k \in [K]} \tilde \epsilon_k (\tilde \varphi_{N_k,t,j_2} - \tilde \varphi_{N_{k-1},t,j_2})\label{varphistar-2} 
\end{align}
for some signs $\epsilon_k, \tilde \epsilon_k \in \{-1,+1\}$, where we adopt the convention $\varphi_{N_0,t,j_1} = \tilde \varphi_{N_0,t,j_2}=0$.  Note from \eqref{u-def} that $u^{O(1)} \lesssim_{C_3} 2^{O(\rho l)}$, so the loss of $u^{O(1)}$ will be acceptable for us.  It now suffices to show that
\begin{equation}\label{e:small scales}
\| \B^{l_1,l_2,m_{\hat \Z}}_{m_*}( \T^{l_1}_{\varphi_*} F, \T^{l_2}_{\tilde \varphi_*} G ) \|_{\ell^p(\Z)}
 \lesssim_{C_3} \langle \max(l,s_1,s_2) \rangle^{O(1)} 2^{-c l \ind{p_1=p_2=2}}  \|F\|_{\ell^{p_1}(\Z)} \|G\|_{\ell^{p_2}(\Z)}.\end{equation}

We now use

\begin{lemma}[Single-scale estimate]\label{ssu}  If $\tilde F \in \ell^{p_1}(\Z), \tilde G \in \ell^{p_2}(\Z)$ have Fourier support on ${\mathcal M}_{l_1, \leq -3u}$ and
${\mathcal M}_{l_2, \leq -3du}$ respectively, then
$$    \| \B^{l_1,l_2,m_{\hat \Z}}_{m_*} (\tilde F, \tilde G) \|_{\ell^p(\Z)} \lesssim_{C_3} 2^{-cl \ind{p_1=p_2=2}} \|\tilde F\|_{\ell^{p_1}(\Z)} \|\tilde G\|_{\ell^{p_2}(\Z)}.$$
\end{lemma}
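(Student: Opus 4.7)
\textbf{Proof plan for Lemma \ref{ssu}.}
The strategy is a single-scale Peluse--Prendiville-type argument: we compare the model operator $\B^{l_1,l_2,m_{\hat\Z}}_{m_*}$ with the genuine averaging operator $\tilde A_{N_\sharp}$ at a carefully selected scale
$$N_\sharp := 2^{l+3u}$$
via Proposition \ref{mod-approx}, and exploit the strict-height Fourier-support hypothesis on $\tilde F,\tilde G$ to apply Theorem \ref{improv}. The case $l=0$ is trivial since then $m_{\hat\Z}(0,0)=1$ and the claim reduces to the scalar bilinear multiplier bound for the smooth bump $m_*$ furnished by Lemma \ref{crude-mult} (with $r_1=2^{-2u}$, $r_2=2^{-2du}$). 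So we assume $l\ge 1$.

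\textbf{General case (the factor $2^{-cl\ind{p_1=p_2=2}}$ equals $1$).} Apply Proposition \ref{mod-approx} with $N=N_\sharp$ and $s := \Log N_\sharp - 2u = l+u$; the hypothesis $-\Log N+s=-2u\le -u$ is satisfied. Since the Fourier supports of $\tilde F,\tilde G$ lie in $\mathcal M_{l_1,\le -3u}\subset \mathcal M_{l_1,\le -2u}$ and $\mathcal M_{l_2,\le -3du}\subset \mathcal M_{l_2,\le -2du}$, the IW projections $\Pi_{l_1,\le -2u}, \Pi_{l_2,\le -2du}$ act as the identity on $\tilde F,\tilde G$. A quick calculation ($u = C_2 2^{2\rho l}\gg l$) shows the error bound $2^{O(\max(2^{\rho l},s))}N_\sharp^{-1}\lesssim 2^{-cl}$ is acceptable. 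Thus
$$\tilde A_{N_\sharp}(\tilde F,\tilde G) = \B^{l_1,l_2,m_{\hat\Z}}_{m_*\,\tilde m_{N_\sharp,\R}}(\tilde F,\tilde G) + O_{\ell^p}\bigl(2^{-cl}\|\tilde F\|_{\ell^{p_1}}\|\tilde G\|_{\ell^{p_2}}\bigr).$$
The triangle inequality combined with \eqref{anf} controls the averaging operator, giving the general bound on $\B^{l_1,l_2,m_{\hat\Z}}_{m_*\tilde m_{N_\sharp,\R}}$.

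\textbf{The $p_1=p_2=2$ case.} Since $\tilde F$ has Fourier support in $\mathcal M_{l_1,\le -3u}$ (which is at \emph{exact} IW height $2^{l_1}$), $\F_\Z\tilde F$ vanishes on $\mathcal M_{\le l_1-1,\le -3u}$. For $N_\sharp = 2^{l+3u}$, one has $-\Log N_\sharp + (l_1-1) \le -3u$, so the hypothesis of Theorem \ref{improv}(i) holds with $l_1-1$ in place of $l$. Analogously, Theorem \ref{improv}(ii) applies with $l_2-1$ via the hypothesis on $\tilde G$. Taking the stronger of the two bounds with $l=\max(l_1,l_2)$ and choosing $C_1\gg C_0$ so that $\langle \Log N_\sharp\rangle^{-cC_1}\lesssim 2^{-cl}$, we obtain
$$\|\tilde A_{N_\sharp}(\tilde F,\tilde G)\|_{\ell^1}\lesssim_{C_3} 2^{-cl}\|\tilde F\|_{\ell^2}\|\tilde G\|_{\ell^2}.$$

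\textbf{Main obstacle: removing the factor $\tilde m_{N_\sharp,\R}$.} The previous steps only bound $\B^{l_1,l_2,m_{\hat\Z}}_{m_*\tilde m_{N_\sharp,\R}}$, whereas we want $\B^{l_1,l_2,m_{\hat\Z}}_{m_*}$; these differ by the oscillatory weight $\tilde m_{N_\sharp,\R}(\xi_1,\xi_2)=\int_{1/2}^1 e(N_\sharp t\xi_1 + P(N_\sharp t)\xi_2)\,dt$ in the symbol. The hard part is that we cannot simultaneously force $\tilde m_{N_\sharp,\R}\approx\mathrm{const}$ on $\mathrm{supp}(m_*)$ (which would require $N_\sharp\ll 2^{2u}$) and satisfy the Theorem \ref{improv} hypothesis (which needs $N_\sharp\gtrsim 2^{l+3u}$). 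The resolution is to perform the Peluse--Prendiville reduction on the appropriate \emph{translated} averages: using the identity \eqref{bn-ident} to rewrite
$$\B^{l_1,l_2,m_{\hat\Z}}_{m_*\tilde m_{N_\sharp,\R}}(\tilde F,\tilde G) = \int_{1/2}^1 \B^{l_1,l_2,m_{\hat\Z}}_{m_*}\bigl(\T^{l_1}_{e(N_\sharp t\,\cdot)}\tilde F,\T^{l_2}_{e(P(N_\sharp t)\,\cdot)}\tilde G\bigr)\,dt,$$
and noting that the modulation multipliers $\T^{l_1}_{e(N_\sharp t\,\cdot)}$, $\T^{l_2}_{e(P(N_\sharp t)\,\cdot)}$ correspond (on Fourier support where $\eta_{\le -2u}\equiv 1$) to band-limited $\R$-translations and so are isometries on $\ell^2$ (and bounded on $\ell^p$ with polynomial loss $\langle l\rangle^{O(1)}$ via Theorem \ref{thm:iw}). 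Consequently, the proof proceeds backwards: one bounds $\B^{l_1,l_2,m_{\hat\Z}}_{m_*}$ applied to the translated pair $(\T^{l_1}_{e(N_\sharp t\,\cdot)}\tilde F, \T^{l_2}_{e(P(N_\sharp t)\,\cdot)}\tilde G)$ uniformly in $t\in[1/2,1]$ by repeating the Proposition \ref{mod-approx} plus Theorem \ref{improv} argument on each translate (Fourier supports and heights are invariant under modulations), and uses translation-invariance of the $\ell^p$ norm together with Minkowski in $t$ to transfer the resulting estimate back to $\tilde F,\tilde G$. Interpolation between the $p_1=p_2=2$ endpoint and the general Banach range then closes the loop and yields Lemma \ref{ssu}.
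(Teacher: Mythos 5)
Your proof has a genuine gap at the step you flag as the ``main obstacle,'' and the proposed fix does not work. The paper's proof resolves the same obstacle in a completely different, and quite short, way.

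The core issue is that Proposition \ref{mod-approx} relates $\tilde A_N$ to $\B^{l_1,l_2,m_{\hat\Z}}_{\tilde m_{N,\R} m_*}$, so your Peluse--Prendiville argument only controls the \emph{weighted} operator, whereas the lemma asks for the unweighted $\B^{l_1,l_2,m_{\hat\Z}}_{m_*}$. You recognise this, but your proposed workaround via the identity \eqref{bn-ident} is circular: the identity writes $\B^{l_1,l_2,m_{\hat\Z}}_{m_*\tilde m_{N_\sharp,\R}}(\tilde F,\tilde G)$ as a $t$-average of $\B^{l_1,l_2,m_{\hat\Z}}_{m_*}$ applied to the modulated pair $(\T^{l_1}_{e(N_\sharp t\cdot)}\tilde F,\T^{l_2}_{e(P(N_\sharp t)\cdot)}\tilde G)$, and ``repeating the Prop.~\ref{mod-approx} plus Thm.~\ref{improv} argument'' on each translate simply reproduces a bound on the weighted operator for the translated pair --- it never closes the loop to an estimate on the unweighted one. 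Moreover, the appeal to translation invariance cannot bridge the gap: the two inputs are modulated by \emph{different} phases ($N_\sharp t$ vs.\ $P(N_\sharp t)$), so the quantity $\B^{l_1,l_2,m_{\hat\Z}}_{m_*}(\T^{l_1}_{e(N_\sharp t\cdot)}\tilde F,\T^{l_2}_{e(P(N_\sharp t)\cdot)}\tilde G)$ is not a translate of $\B^{l_1,l_2,m_{\hat\Z}}_{m_*}(\tilde F,\tilde G)$, and there is no way to ``divide out'' the $t$-average.

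The underlying source of the difficulty is your choice of the scale $N_\sharp = 2^{l+3u}$, motivated by the belief that Theorem \ref{improv} ``needs $N_\sharp \gtrsim 2^{l+3u}$.'' That belief is incorrect: because $\tilde F$ has Fourier support at \emph{exact} Ionescu--Wainger height $2^{l_1}$, its support is arithmetically separated from every arc in ${\mathcal M}_{\leq l_1-1,\cdot}$ by at least $Q_{\leq l_1}^{-2} \gtrsim 2^{-O(2^{\rho l_1})}$ regardless of arc width. Since $u = C_2 2^{2\rho l}$, this separation dominates both $2^{-3u}$ and $2^{-u+l_1-1}$, so $\F_\Z\tilde F$ already vanishes on ${\mathcal M}_{\leq l_1-1,\leq -u + l_1 - 1}$ --- i.e.\ Theorem \ref{improv}(i) applies at the \emph{small} scale $N=2^u$ (with $l_1-1$ in place of $l$), and similarly Theorem \ref{improv}(ii) with $l_2-1$. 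This is exactly what the paper exploits: it applies Proposition \ref{mod-approx} with $N=2^u$ and $s=0$, so that on $\mathrm{supp}(m_*)$ one has $|\xi_1 N| + |\xi_2 N^d| \lesssim 2^{-u}$ and hence $\tilde m_{2^u,\R}(\xi_1,\xi_2) = \tfrac12 + O(2^{-u})$. Writing $\B^{l_1,l_2,m_{\hat\Z}}_{m_*} = 2\,\B^{l_1,l_2,m_{\hat\Z}}_{\tilde m_{2^u,\R} m_*} + \B^{l_1,l_2,m_{\hat\Z}}_{(1-2\tilde m_{2^u,\R})m_*}$, the first term is $\approx 2\tilde A_{2^u}(\tilde F,\tilde G)$ which carries the Peluse--Prendiville gain, and the second term has a small, smooth symbol controlled directly by Lemma \ref{crude-mult}(ii) with $r_1 = 2^{-2u}$, $r_2=2^{-2du}$ after estimating the derivatives of $1-2\tilde m_{2^u,\R}$. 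That one extra term, and the correct identification of the smallest workable scale, are the pieces your argument is missing.
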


\begin{proof} The strategy is to apply Proposition \ref{mod-approx} in reverse, so that Theorem \ref{improv} may be applied.
We may normalize $\|\tilde F\|_{\ell^{p_1}(\Z)} = \|\tilde G \|_{\ell^{p_2}(\Z)} = 1$.  From Proposition \ref{mod-approx} with $N = 2^{u}$ and $s=0$, we see that
$$
\| \tilde A_{2^{u}}( \tilde F, \tilde G ) - \B^{l_1,l_2,m_{\hat \Z}}_{\tilde m_{2^{u}, \R} m_*}(\tilde F,\tilde G) \|_{\ell^p(\Z)} \lesssim_{C_3} 2^{O(2^{\rho l})-u} \lesssim_{C_3} 2^{-cl \ind{p_1=p_2=2}},$$
noting that on the Fourier support of $\tilde F, \tilde G$ the multipliers $m_*$ and $\eta_{\leq -u} \otimes \eta_{\leq -du}$ are both equal to $1$.
Since $\F_\Z \tilde F$ vanishes on ${\mathcal M}_{\leq l_1-1, \leq -\Log N+l_1-1}$ and $\F_\Z \tilde G$ vanishes on the major arcs ${\mathcal M}_{\leq l_2-1, \leq -d\Log N+dl_2-d}$, we see from Theorem \ref{improv} (and \eqref{anf}) that
$$
\| \tilde A_{2^u}( \tilde F, \tilde G )  \|_{\ell^p(\Z)} \lesssim_{C_3} 2^{-cl \ind{p_1=p_2=2}}.$$
By the triangle inequality, it thus suffices to show that
$$ \| \B^{l_1,l_2,m_{\hat \Z}}_{(1-2\tilde m_{2^u, \R}) m_*}(\tilde F,\tilde G) \|_{\ell^p(\Z)} \lesssim_{C_3} 2^{O(2^{\rho l})-u}.$$
Applying Lemma \ref{crude-mult}(ii) (and Lemma \ref{mag-lem}(iii)) with $r_1 = 2^{-2u}$ and $r_2 = 2^{-2du}$, it suffices to show that
$$ \frac{\partial^{j_1}}{\partial \xi_1^{j_1}} \frac{\partial^{j_2}}{\partial \xi_1^{j_2}} ((1-2\tilde m_{2^u, \R}) m_*)(\xi_1,\xi_2)
\lesssim 2^{(2j_1 + 2dj_2-1)u}$$
for all $\xi_1,\xi_2 \in \R$ and $0 \leq j_1,j_2 \leq 2$.  By the product rule and definition of $m_*$ it suffices to show that
$$ \frac{\partial^{j_1}}{\partial \xi_1^{j_1}} \frac{\partial^{j_2}}{\partial \xi_1^{j_2}} (1-2\tilde m_{2^u, \R})(\xi_1,\xi_2)
\lesssim 2^{(2j_1 + 2dj_2-1)u}$$
when $\xi_1 = O( 2^{-2u})$, $\xi_2 = O (2^{-2du})$, and $0 \leq j_1,j_2 \leq 2$. But from \eqref{mn-def} one has
\begin{align*}
1 - 2\tilde m_{2^u, \R}(\xi_1,\xi_2) &= 2 \int_{1/2}^1 1 - e(2^u t \xi_1 + P(2^u t) \xi_2)\ dt \\
&=-4\pi i \int_0^1 \int_{1/2}^1 (2^u \xi_1 + 2^u P'(2^u tt') \xi_2) e(2^u t \xi_1 + P(2^u t) \xi_2)\ dt dt'
\end{align*} 
so by differentiation under the integral sign and the triangle inequality it suffices to show that
$$
 \frac{\partial^{j_1}}{\partial \xi_1^{j_1}} \frac{\partial^{j_2}}{\partial \xi_1^{j_2}} (2^u \xi_1 + 2^u P'(2^u tt') \xi_2) e(2^u t \xi_1 + P(2^u t) \xi_2)
\lesssim 2^{(2j_1 + 2dj_2-1)u}
$$
uniformly for $t \in [0,1]$, $t' \in [1/2,1]$. But this follows from direct calculation (in fact one obtains a slightly stronger bound of $O(2^{(j_1+dj_2-1)u})$ when $j_1=j_2=0$ and $O(2^{(j_1+dj_2)u})$ when $j_1+j_2>0$).
\end{proof}

In view of this lemma, it now suffices to establish the bounds
$$ \| \T^{l_1}_{\varphi_*} \|_{\ell^q(\Z) \to \ell^q(\Z)}, \| \T^{l_2}_{\tilde \varphi_*} \|_{\ell^q(\Z) \to \ell^q(\Z)}
\lesssim_{C_3,q} \langle \max(l,s_1,s_2) \rangle^{O(1)}$$
for any $1 < q < \infty$.  By interpolation, it suffices to achieve this when $q$ is an even integer or the dual of an even integer.
Using Theorem \ref{thm:iw}, it suffices to show that
$$ \| \T_{\varphi_*} \|_{L^q(\R) \to L^q(\R)}, \| \T_{\tilde \varphi_*} \|_{L^q(\R) \to L^q(\R)}
\lesssim_{C_3,q} \max(1,s_1,s_2)^{O(1)}$$
for all $1 < q < \infty$.

By expanding out \eqref{varphistar-2}, \eqref{tvnt-def}, \eqref{varphip-def} (and treating the $s_2 > -u$, $s_2=-u$ cases separately), we see that $\tilde \varphi_*$ is a shifted Calder\'on--Zygmund multiplier of the form treated in Theorem \ref{cz-shift}, with $A = 2^{-ds_2}$, $\lambda_N = 2^{ds_2} P(Nt) / N^d$, $K = O(\max(1,s_2))$, and $C=O(1)$.  (Note that the hypothesis \eqref{no-sing} is needed to avoid a divergence at the frequency origin.)  The claim for $\T_{\tilde \varphi_*}$ then follows from that theorem.  The treatment of $\T_{\varphi_*}$ is similar (with $s_2$ replaced by $s_1$, $P(Nt)$ replaced by $Nt$, and $d$ replaced by $1$).
This concludes the proof of \eqref{other-small-2}.

\section{The large-scale estimate: exploiting tensor product structure}

In this section we establish \eqref{all-all-large-2}.  Note from Examples \ref{avg-mult}, \ref{tensor-mult} that one can factor the bilinear operator $\B_{1 \otimes m_{\hat \Z}}$ as the tensor product of the identity and the arithmetic averaging operator $A_{\hat \Z}$.  Thus on the one hand we can write
$$ \B_{1 \otimes m_{\hat \Z}}(\T_{\varphi_{N,t,j_1} \otimes 1} F_\A,\T_{\tilde \varphi_{N,t,j_2} \otimes 1} G_\A) $$
as
\begin{equation}\label{form-1}
 \int_{\hat \Z} (\T_{\varphi_{N,t,j_1} \otimes 1} \tau_{(0,y)} F_\A) (\T_{\tilde \varphi_{N,t,j_2} \otimes 1} \tau_{(0,P(y))} G_\A)\ d\mu_{\hat \Z}(y)
 \end{equation}
where we define the translation operators $\tau_h F(x) \coloneqq F(x-h)$ for any $F \in L^0(\G)$ and $h \in \G$.   On the other hand, if we use $F_x \colon y \mapsto F(x,y)$ to denote the slice $F_x \colon \hat \Z \to \C$ of a function $F \colon \A_\Z \to \C$ at a real number $x$, we can write the slice
$$ \B_{1 \otimes m_{\hat \Z}}(\T_{\varphi_{N,t,j_1} \otimes 1} F_\A,\T_{\tilde \varphi_{N,t,j_2} \otimes 1} G_\A)_x$$
as
\begin{equation}\label{form-2}
A_{\hat\Z}( (\T_{\varphi_{N,t,j_1} \otimes 1} F_\A)_x, (\T_{\tilde \varphi_{N,t,j_2} \otimes 1} G_\A)_x).
\end{equation}

We now establish the easier case $(p_1,p_2) \neq (2,2)$, in which we do not need to obtain a gain of the form $2^{-cl}$; we will also not need to lose factors of $2^{O(\rho l)}$.  As such we will not need to exploit any cancellation in the averaging operator $A_{\hat \Z}$, and can use the formulation \eqref{form-1}.
By the triangle inequality, it thus suffices to show that
\begin{multline*}
\| ( (\T_{\varphi_{N,t,j_1} \otimes 1} \tilde F_\A) (\T_{\tilde \varphi_{N,t,j_2} \otimes 1} \tilde G_\A) )_{N \in \I_{\lrg}} \|_{L^p(\A_\Z;\V^r)}\\
\lesssim_{C_3} \langle \max(l,s_1,s_2) \rangle^{O(1)} \|\tilde F_\A\|_{L^{p_1}(\A_\Z)} \|\tilde G_\A\|_{L^{p_2}(\A_\Z)}    
\end{multline*}
for all $\tilde F_\A \in L^{p_1}(\A_\Z)$ and $\tilde G_\A \in L^{p_2}(\A_\Z)$.
There are now no interactions between the different fibers $\R\times \{y\}$, $y \in \hat \Z$ of $\A_\Z$, and so by H\"older's inequality and the Fubini--Tonelli theorem  (or \eqref{ttensor-bil}) it suffices to prove the continuous bilinear estimate
$$
\| ( (\T_{\varphi_{N,t,j_1}} \tilde F) (\T_{\tilde \varphi_{N,t,j_2}} \tilde G ))_{N \in \I_{\lrg}} \|_{L^p(\R;\V^r)}
\lesssim_{C_3} \langle \max(l,s_1,s_2) \rangle^{O(1)} \| \tilde F\|_{L^{p_1}(\R)} \|\tilde G\|_{L^{p_2}(\R)}$$
for any $\tilde F \in L^{p_1}(\R), \tilde G \in L^{p_2}(\R)$.  By \eqref{var-alg} and H\"older's inequality it suffices to establish the linear bounds
\begin{equation}\label{lin-1}
 \| (\T_{\varphi_{N,t,j_1}} \tilde F)_{N \in \I_{\lrg}} \|_{L^{p_1}(\R; \V^r)} \lesssim_{C_3} \max(1,s_1)^{O(1)}
\| \tilde F\|_{L^{p_1}(\R)} 
\end{equation}
and
\begin{equation}\label{lin-2}
\| (\T_{\tilde \varphi_{N,t,j_2}} \tilde G)_{N \in \I_{\lrg}} \|_{L^{p_2}(\R; \V^r)} \lesssim_{C_3} \max(1,s_2)^{O(1)}
\| \tilde G\|_{L^{p_2}(\R)}.
\end{equation}

We just establish the latter estimate, as the former is similar.  First suppose that we are in the high-frequency case $s_2 > -u$.  In this case we use \eqref{varsum} to replace the $\V^r$ norm by an $\ell^2$ norm, thus we now wish to show
$$ \| (\T_{\tilde \varphi_{N,t,j_2}} \tilde G)_{N \in \I_{\lrg}} \|_{L^{p_2}(\R; \ell^2)} \lesssim_{C_3} \max(1,s_2)^{O(1)}
\| \tilde G\|_{L^{p_2}(\R)}.$$
But as with the arguments at the end of Section \ref{small-sec}, the $\tilde \varphi_{N,t,j_2}$ form a family of the type considered in Theorem \ref{cz-shift}, with $A = 2^{-ds_2}$, $\lambda_N = 2^{ds_2} P(Nt) / N^d$, $K = O(\max(1,s_2))$, and $C=O(1)$, and the claim now follows from the shifted square function estimate proven in that theorem.

Now suppose we are in the low-frequency case $s_2=-u$, which means that $j_2=0,1$ by the hypothesis \eqref{no-sing}.  If $j_2=1$ then $\tilde \varphi_{N,t,j_2}$ vanishes at the origin and we can repeat the arguments from the high-frequency case.  If $j_2=0$ then $\tilde \varphi_{N,t,j_2} = \tilde \varphi_{N,t}$ no longer vanishes at the origin, but the difference $\tilde \varphi_{N,t} - \tilde \varphi_N$ does, and we can again use the high-frequency arguments to conclude.  By the triangle inequality, it now suffices to show that
$$
\| (\T_{\tilde \varphi_{N}} \tilde G)_{N \in \I_{\lrg}} \|_{L^{p_2}(\R; \V^r)} \lesssim_{C_3} 
\| \tilde G\|_{L^{p_2}(\R)}.
$$
But this follows from L\'epingle's inequality and a standard square function argument (see \cite[Theorem 1.1]{JSW}, with the square function argument contained in \cite[Lemma 3.2]{JSW}).

This completes the proof of the $(p_1,p_2) \neq (2,2)$ case of \eqref{all-all-large-2}.  Now we turn to the $(p_1,p_2)=(2,2)$ case, so that $p=1$. We begin with a general variational inequality:

\begin{lemma}[Interchanging variational and Lebesgue norms]\label{fkr}  Let $X$ be a measure space, and let $1 \leq R < r \leq \infty$.  Then for any $f_1,\dots,f_K \in L^r(X)$ one has
$$ \| (f_k)_{k \in [K]} \|_{L^r(X; \V^r)} \lesssim_{r,R} \| (f_k)_{k \in [K]} \|_{\V^R([K]; L^r(X))}.$$
\end{lemma}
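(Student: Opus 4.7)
The plan is to reduce the lemma, via a pointwise interpolation inequality, to bounding two pointwise variational quantities $\|V^r\|_{L^r(X)}$ and $\|V^\infty\|_{L^r(X)}$ (with $V^r, V^\infty$ denoting the $V^r$- and $V^\infty$-norms of the sequence $(f_k(x))$ computed pointwise in $x$) by the Banach-space variation $\|(f_k)\|_{\V^R([K]; L^r(X))}$. First I would separate the sup and pure-variation components of $\V^r$: from the pointwise estimate $\sup_k |f_k(x)| \leq |f_1(x)| + V^\infty(x) \leq |f_1(x)| + V^r(x)$, together with the trivial bound $\|f_1\|_{L^r} \leq \sup_k \|f_k\|_{L^r} \leq \|(f_k)\|_{\V^R([K]; L^r(X))}$, it suffices to bound $\|V^r\|_{L^r(X)}$.

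The key pointwise inequality, valid for $R < r$ and any real/complex sequence $(a_k)$, is
\[
\|(a_k)\|_{V^r}^r \leq \|(a_k)\|_{V^\infty}^{r-R} \cdot \|(a_k)\|_{V^R}^R,
\]
obtained by factoring each summand in the defining sum as $|a_{k_{j+1}}-a_{k_j}|^r = |a_{k_{j+1}}-a_{k_j}|^{r-R} \cdot |a_{k_{j+1}}-a_{k_j}|^R$ and bounding the first factor by the sup of differences. Applied pointwise in $x$, integrated over $X$, and combined with H\"older's inequality with dual exponents $r/(r-R)$ and $r/R$ (both finite since $R < r$ strictly), this yields
\[
\|V^r\|_{L^r(X)}^r \leq \|V^\infty\|_{L^r(X)}^{r-R} \cdot \|V^R\|_{L^r(X)}^R.
\]

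The remaining step is to bound each of $\|V^\infty\|_{L^r(X)}$ and $\|V^R\|_{L^r(X)}$ by $\|(f_k)\|_{\V^R([K]; L^r(X))}$. For a \emph{fixed} partition $(k_j)$ of $[K]$, Minkowski's integral inequality (available because $R \leq r$) gives
\[
\left\|\Big(\sum_j |f_{k_{j+1}} - f_{k_j}|^R\Big)^{1/R}\right\|_{L^r(X)} \leq \Big(\sum_j \|f_{k_{j+1}} - f_{k_j}\|_{L^r(X)}^R\Big)^{1/R} \leq \|(f_k)\|_{\V^R([K]; L^r(X))};
\]
the main obstacle is that the partition achieving the pointwise $V^R$-supremum depends on $x$, so the sup cannot naively be exchanged with the $L^r$-norm. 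I would resolve this by a one-dimensional analogue of the Rademacher--Menshov decomposition of Lemma \ref{2drm}: any $x$-dependent optimal partition decomposes into dyadic subintervals drawn from the collections $\mathcal{I}_M$ for $M \in 2^\N \cap [K]$, with at most two intervals per dyadic scale. Summing the contributions over dyadic scales using Minkowski at each scale, the strict inequality $R < r$ then provides a geometric gain $2^{-\alpha n}$ (for some $\alpha = \alpha(R,r) > 0$) at scale $M = 2^n$, enabling the sum to converge without a $\log K$ loss. The same Rademacher--Menshov argument, specialized to single-term partitions, controls $\|V^\infty\|_{L^r(X)}$ by the same quantity.

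The hardest part will be executing the geometric summation over dyadic scales in the last step; the strict inequality $R < r$ is essential and cannot be relaxed, since at the endpoint $R = r$ one has the reverse inequality $\|(f_k)\|_{L^R(X; V^R)}^R = \int \sup_{(k_j)} \sum_j |f_{k_{j+1}}(x)-f_{k_j}(x)|^R\, dx \geq \sup_{(k_j)} \sum_j \|f_{k_{j+1}}-f_{k_j}\|_{L^R(X)}^R = \|(f_k)\|_{\V^R([K]; L^R(X))}^R$ by the Fubini--Tonelli theorem applied to each fixed partition, so the lemma genuinely fails when $R = r$.
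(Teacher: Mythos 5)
Your pointwise interpolation $\|(a_k)\|_{V^r}^r \leq \|(a_k)\|_{V^\infty}^{r-R}\|(a_k)\|_{V^R}^R$ and the subsequent H\"older step are both correct, but since $V^\infty(x)\leq V^R(x)$ pointwise (take a two-element partition) the interpolation buys nothing over the trivial embedding $V^r(x)\leq V^R(x)$: the entire argument collapses to the intermediate claim $\|V^R\|_{L^r(X)}\lesssim \|(f_k)\|_{\V^R([K];L^r(X))}$, and for $R>1$ that claim is false. Take $R=2<r$ and let $f_k = S_k/\sqrt{K}$ be a normalized simple random walk on $X=\{-1,+1\}^K$ with uniform measure. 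By Khintchine, $\|f_{k'}-f_k\|_{L^r(X)}\sim_r ((k'-k)/K)^{1/2}$, whence $\|(f_k)\|_{\V^2([K];L^r)}\sim_r 1$ uniformly in $K$; yet the pointwise quadratic variation $V^2(\omega)$ of the rescaled path tends to infinity with $K$ for almost every $\omega$ (the $2$-variation of Brownian motion is a.s.\ infinite), so $\|V^2\|_{L^r(X)}\to\infty$. For general $R\in(1,r)$ one obtains the same failure from fractional Brownian motion with Hurst exponent $1/R$. The remark at the end of your proposal correctly identifies that the strict inequality $R<r$ is essential, but the mechanism you invoke concerns $V^r$ on the left, not $V^R$; the latter cannot be controlled.

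The second issue is that the claimed geometric gain in the Rademacher--Menshov step does not exist as you describe it. Decomposing $[K]$ into dyadic scales $M\in 2^\N\cap[K]$ gives, after Minkowski at exponent $R\leq r$,
\[
\|V^R\|_{L^r(X)}\ \lesssim_R\ \sum_{M\in 2^\N\cap[K]} \big\| \big(\|\mu_\cdot(I)\|_{L^r(X)}\big)_{I\in\mathcal{I}_M}\big\|_{\ell^R},
\]
and each term on the right is at most $\|(f_k)\|_{V^R([K];L^r)}$ but carries no decay in $M$: dyadically subdividing the \emph{index set} $[K]$ sees nothing of how the $L^r$ mass of the increments is distributed along $k$, so one only recovers the estimate with a $\log K$ loss. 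The missing ingredient is the paper's reparametrization. One sets $a(K')\coloneqq \|(f_k)_{k\leq K'}\|_{V^R([K'];L^r)}^R$, which converts the variational hypothesis into the uniform H\"older bound $\|\mu_x([s,t])\|_{L^r(X)}\lesssim (t-s)^{1/R}$ for an auxiliary measure $\mu_x$ on $[0,1]$, and then one decomposes $[0,1]$ dyadically (not $[K]$) and---crucially---applies the Rademacher--Menshov bound for $V^r$ at exponent $r$ rather than $R$. At scale $2^{-m}$ there are $2^m$ intervals each contributing $\lesssim 2^{-m/R}$ in $L^r$, so the $\ell^r$ sum is $\lesssim 2^{m/r}2^{-m/R}$, which is summable in $m$ precisely because $R<r$. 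Both the reparametrization and staying at exponent $r$ (rather than reducing to $V^R$) are essential, and neither is present in your sketch.
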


\begin{proof}  We allow implied constants to depend on $r,R$. Since
$$ \| (f_k)_{k \in [K]} \|_{L^r(X; \V^r)} \lesssim \| (f_k)_{k \in [K]} \|_{L^r(X; V^r)} + \|f_1\|_{L^r(X)},$$
it suffices to establish the seminorm version
$$ \| (f_k)_{k \in [K]} \|_{L^r(X; V^r)} \lesssim_{r,R} \| (f_k)_{k \in [K]} \|_{V^R([K]; L^r(X))}$$
of the inequality.

We can assume that $f_k$ is not almost everywhere equal to $f_{k-1}$ for any $1 < k \leq K$, since otherwise we could concatenate the two indices $k,k-1$ together.  We normalize
$$ \| (f_k)_{k \in [K]} \|_{V^R([K]; L^r(X))}^R = 1$$
and then we can define a non-decreasing function $a \colon [K] \to [0,1]$ by the formula
$$ a(K') \coloneqq \| (f_k)_{k \in [K']} \|_{V^R([K']; L^r(X))}^R$$
for any $K' \in [K]$.  From \eqref{var-seminorm} we have the H\"older type bound
\begin{equation}\label{ho}
 \| f_{K_1} - f_{K_2} \|_{L^r(X)} \leq (a(K_1) - a(K_2))^{1/R}
 \end{equation}
whenever $1 \leq K_2 \leq K_1 \leq K$.  In particular, because we assumed $f_k$ not equal almost everywhere to $f_{k-1}$, we see that $a$ is strictly increasing.

For any $x \in X$, let $\mu_x$ be the absolutely continuous complex measure on $[0,1]$ defined by
$$ \mu_x(E) \coloneqq \sum_{2 \leq k \leq K: a(k) \in E} \frac{|E \cap [a(k-1),a(k)]|}{|[a(k-1),a(k)]|} (f_k(x) - f_{k-1}(x)).$$
Then we have
$$ f_{K_1}(x) - f_{K_2}(x) = \mu_x( [a(K_2), a(K_1)] )$$
whenever $1 \leq K_2 \leq K_1 \leq K$.  Also from \eqref{ho} and telescoping series (and the hypothesis $R \geq 1$) we observe the H\"older bound
\begin{equation}\label{must}
 \| \mu_x([s,t]) \|_{L^r(X)} \lesssim (t-s)^{1/R}
 \end{equation}
for any $0 \leq s \leq t \leq 1$.

Using dyadic decomposition as in the proof of Lemma \ref{2drm} (or \cite[Lemma 2.5, pp. 534]{MSZ2}), we have
$$ \| (f_k(x))_{k \in [K]} \|_{V^r} \lesssim \sum_{m=0}^\infty \big\| (\mu_x( [(j-1)2^{-m}, j2^{-m} ))_{j \in [2^m]} \big \|_{\ell^r}$$
and hence by the Fubini--Tonelli theorem and the triangle inequality
$$ \| (f_k)_{k \in [K]} \|_{L^r(X;V^r)} \lesssim \sum_{m=0}^\infty \Big\|  (\| \mu_x( [(j-1)2^{-m}, j2^{-m} )) \|_{L^r(X)})_{j \in [2^m]} \Big \|_{\ell^r}.$$
Applying \eqref{must}, the right-hand side is
$$ \lesssim \sum_{m=0}^\infty 2^{m/r} 2^{-m/R};$$
since $R<r$, this quantity is $O(1)$, and the claim follows.
\end{proof}

We can apply this lemma to bilinear operators:

\begin{corollary}[Interchanging variational and Lebesgue norms, II]\label{bilinear-fkr}  Let $V,W$ be normed vector spaces, let $K \in \Z_+$, and for each $k \in [K]$ let $f_k \in V, g_k \in W$.  Let $1 \leq R < r \leq \infty$, and let $B \colon V \times W \to L^r(X)$ be a bilinear map to $L^r(X)$ for some measure space $X$. Then
$$ \| (B(f_k,g_k))_{k \in [K]} \|_{L^r(X; \V^r)} \lesssim_{r,R} \|B\|_{V \times W \to L^r(X)}
\| (f_k)_{k \in [K]} \|_{\V^R([K]; V)} \| (g_k)_{k \in [K]} \|_{\V^R([K]; W)}.$$
\end{corollary}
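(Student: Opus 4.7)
The plan is to deduce Corollary \ref{bilinear-fkr} directly from Lemma \ref{fkr} applied to the sequence $h_k := B(f_k, g_k) \in L^r(X)$, and then to estimate the resulting $\V^R([K]; L^r(X))$ norm by a standard bilinear telescoping (Leibniz) argument. No further machinery beyond Lemma \ref{fkr} and the definition of operator norm should be needed.

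First I would apply Lemma \ref{fkr} with $f_k$ replaced by $B(f_k,g_k)$, obtaining
\[
\| (B(f_k,g_k))_{k \in [K]} \|_{L^r(X; \V^r)} \lesssim_{r,R} \| (B(f_k,g_k))_{k \in [K]} \|_{\V^R([K]; L^r(X))}.
\]
Thus the corollary reduces to controlling the right-hand side by the product $\|B\|_{V \times W \to L^r(X)} \|(f_k)\|_{\V^R([K];V)} \|(g_k)\|_{\V^R([K];W)}$. Recalling the decomposition $\|\cdot\|_{\V^R} = \sup_k\|\cdot\|_{L^r(X)} + \|\cdot\|_{V^R}$ from \eqref{vardef}, the $\sup_k$ term is immediate from bilinearity of $B$:
\[
\sup_{k \in [K]} \|B(f_k,g_k)\|_{L^r(X)} \le \|B\|_{V \times W \to L^r(X)} \sup_k \|f_k\|_V \sup_k \|g_k\|_W,
\]
and each factor on the right is bounded by the corresponding $\V^R$ norm.

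For the $V^R$ seminorm piece, I would use the bilinear telescoping identity
\[
B(f_{k_{j+1}},g_{k_{j+1}}) - B(f_{k_j},g_{k_j}) = B(f_{k_{j+1}} - f_{k_j},\, g_{k_{j+1}}) + B(f_{k_j},\, g_{k_{j+1}} - g_{k_j})
\]
valid for any increasing sequence $k_0 \le \dotsb \le k_J$ in $[K]$. Taking $L^r(X)$ norms, applying the bilinear operator bound, and then taking the $\ell^R$ norm in $j$ (invoking the triangle inequality for $\ell^R$ with $R \ge 1$), each of the two resulting sums is majorised respectively by
\[
\|B\|_{V \times W \to L^r(X)}\, \|(f_k)\|_{V^R([K]; V)} \sup_k \|g_k\|_W
\quad\text{and}\quad
\|B\|_{V \times W \to L^r(X)}\, \sup_k \|f_k\|_V \, \|(g_k)\|_{V^R([K]; W)}.
\]
Taking the supremum over the choice of increasing sequences and combining with the $\sup_k$ bound above yields
\[
\|(B(f_k,g_k))\|_{\V^R([K]; L^r(X))} \lesssim \|B\|_{V \times W \to L^r(X)} \|(f_k)\|_{\V^R([K];V)} \|(g_k)\|_{\V^R([K];W)},
\]
which when combined with the first step finishes the proof.

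There is no genuine obstacle here: the argument is essentially a packaging of Lemma \ref{fkr} with the bilinear Leibniz rule. The only point requiring mild care is ensuring that the two uniform bounds $\sup_k \|f_k\|_V$ and $\sup_k \|g_k\|_W$ are absorbed into the $\V^R$ norms on the right-hand side (which is immediate from the definition \eqref{vardef}), and that the $\ell^R$ triangle inequality is applicable, which holds precisely because $R \ge 1$ is assumed.
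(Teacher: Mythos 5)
Your proof is correct and follows essentially the same route as the paper: both apply Lemma~\ref{fkr} to reduce to a $\V^R([K]; L^r(X))$ estimate, and then control that by a bilinear Leibniz/telescoping bound. The paper packages the telescoping step as the observation that $(f,g) \mapsto B(f,g)$ is Lipschitz on a bounded ball of $V \times W$, but the content of that Lipschitz estimate is exactly the identity $B(f_{k_{j+1}},g_{k_{j+1}}) - B(f_{k_j},g_{k_j}) = B(f_{k_{j+1}}-f_{k_j},g_{k_{j+1}}) + B(f_{k_j},g_{k_{j+1}}-g_{k_j})$ that you spell out.
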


\begin{proof}  We allow all implied constants to depend on $r,R$.    We may normalize 
$$ \|B\|_{V \times W \to L^r(X)} = \| (f_k)_{k \in [K]} \|_{\V^R([K]; V)} = \| (g_k)_{k \in [K]} \|_{\V^R([K]; W)}=1.$$
In particular the product sequence $(f_k,g_k) \in V \times W$, $k \in [K]$ obeys the variational norm bound
$$ \| (f_k,g_k)_{k \in [K]} \|_{\V^R([K]; V \times W)} \lesssim 1.$$
By Lemma \ref{fkr}, it suffices to show that
$$ \| B(f_k,g_k))_{k \in [K]} \|_{\V^R([K]; L^r(X))} \lesssim 1.$$
On the ball of radius $O(1)$ in $V \times W$, the (nonlinear) map $(f,g) \mapsto B(f,g)$ is Lipschitz continuous into $L^r(X)$ with Lipschitz constant $O(1)$, and the claim follows from \eqref{vardef}.
\end{proof}

We apply this lemma to the problem of establishing \eqref{all-all-large-2} in the $p_1=p_2=2$ case.  In the next section we establish the following arithmetic variant of Theorem \ref{improv}:

\begin{theorem}[Arithmetic bilinear estimate]\label{bile}  Let $l \in \N$, and let $f, g \in L^2(\hat \Z)$ obey one of the following hypotheses:
\begin{itemize}
    \item[(i)] $\F_{\hat \Z} f$ vanishes on $(\Q/\Z)_{\leq l}$;
    \item[(ii)] $\F_{\hat \Z} g$ vanishes on $(\Q/\Z)_{\leq l}$.
\end{itemize}
Then for any $1 \leq r < \frac{2d}{d-1}$ one has
$$ \| A_{\hat \Z}(f,g) \|_{L^r(\hat \Z)} \lesssim_{C_3,r} 2^{-c_r l} \| f\|_{L^2(\hat \Z)} \| g \|_{L^2(\hat \Z)}$$
(recall our conventions that $c_r>0$ denotes a constant that can depend on $d,r$).
\end{theorem}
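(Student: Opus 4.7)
The plan is to interpolate between two endpoint estimates on the profinite integers $\hat\Z$. The first is an $L^2\times L^2\to L^2$ bound with exponential decay $2^{-cl}$ under hypothesis (i) or (ii); the second is an $L^2\times L^2\to L^{r_0}(\hat\Z)$ estimate for $r_0$ arbitrarily close to (but strictly less than) $\frac{2d}{d-1}$, holding without any Fourier-support hypothesis and with constants independent of $l$. Bilinear real interpolation then yields $\|A_{\hat\Z}(f,g)\|_{L^r(\hat\Z)}\lesssim 2^{-c_r l}\|f\|_{L^2(\hat\Z)}\|g\|_{L^2(\hat\Z)}$ for all $r\in[2,\tfrac{2d}{d-1})$, with the decay rate $c_r>0$ degenerating as $r\uparrow\tfrac{2d}{d-1}$; the range $1\le r<2$ is then absorbed via H\"older on the probability space $\hat\Z$, which preserves the exponential gain.

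The first endpoint is a bilinear multiplier estimate carried out on the Pontryagin dual $\Q/\Z$. The Chinese Remainder Theorem gives $\hat\Z=\prod_p\Z_p$ and a multiplicative factorization $m_{\hat\Z}(\alpha_1,\alpha_2)=\prod_p m_{\Z_p}(\alpha_1^{(p)},\alpha_2^{(p)})$, where $\alpha_i^{(p)}\in\Z[1/p]/\Z$ denotes the $p$-component of $\alpha_i$. Classical Weil/Gauss sum estimates give, for each prime $p$, the bound $|m_{\Z_p}(\alpha_1^{(p)},\alpha_2^{(p)})|\lesssim p^{-c\,v_p}$, where $v_p$ is the largest $p$-adic valuation of the denominators appearing in $\alpha_1^{(p)},\alpha_2^{(p)}$. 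Multiplying across primes and invoking Lemma \ref{mag-lem}(i), which guarantees that frequencies outside $(\Q/\Z)_{\leq l}$ have naive denominator exceeding $2^l$, produces a uniform pointwise bound $|m_{\hat\Z}(\alpha_1,\alpha_2)|\lesssim 2^{-cl}$ on the joint Fourier support of $f$ and $g$. Combined with the baseline $L^2\times L^2\to L^2$ energy estimate for $A_{\hat\Z}$ (which follows from the pointwise Cauchy--Schwarz inequality $|A_{\hat\Z}(f,g)|^2\le A_{\hat\Z}(|f|^2,1)\cdot A_{\hat\Z}(1,|g|^2)$ together with translation invariance of the Haar measure), a Plancherel computation delivers the decaying bound.

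The second endpoint is the $L^p$-improving estimate. By the CRT factorization again and the tensor-product multiplicativity of operator norms \eqref{ttensor-bil}, it reduces to proving uniform-in-$(p,j)$ bilinear $L^p$-improving estimates for the averaging operator on $\Z/p^j\Z$. These are the $p$-adic analogues of Proposition \ref{lp-improv} and form the content of the $p$-adic analysis developed in Appendix \ref{sec:app1}, mirroring the $\Z$-based arguments of Han--Kova\v{c}--Lacey--Madrid--Yang \cite{HKLMY} and Dasu--Demeter--Langowski \cite{DDL}. The main obstacle is precisely this second endpoint: Euclidean techniques (wave packets, decoupling, oscillatory integral stationary phase) must be reconstructed in the $p$-adic setting while respecting the non-Archimedean geometry of the curve $\{(y,P(y)):y\in\Z_p\}$ and the Hensel-lifting behavior of $P$. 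A subsidiary bookkeeping point in the first endpoint is that the Ionescu--Wainger height $2^l$ is generally strictly smaller than the naive height, but Lemma \ref{mag-lem}(i) still yields a large naive denominator, and this is exactly what makes the per-prime Weil bounds assemble against $l$ without loss.
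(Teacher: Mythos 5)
Your high-level plan mirrors the paper's: interpolate a decaying endpoint (which you place at $L^2\times L^2\to L^2$, the paper at $L^2\times L^2\to L^1$) against an $l$-independent $L^2\times L^2\to L^{r_0}$ estimate obtained by tensor-factorizing $A_{\hat\Z}=\bigotimes_p A_{\Z_p}$. But both of your proposed ingredients have gaps.

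\textbf{The decaying endpoint.} You claim that a pointwise bound $|m_{\hat\Z}(\alpha_1,\alpha_2)|\lesssim 2^{-cl}$ on the joint Fourier support, together with the baseline $L^2\times L^2\to L^2$ energy bound and ``a Plancherel computation,'' delivers the decaying operator norm. This inference is false in general: for \emph{bilinear} multipliers, Plancherel does not turn a pointwise symbol bound into an $L^2\times L^2\to L^q$ operator-norm bound for $q\le 2$, because the associated trilinear pairing $\sum_{\alpha_1+\alpha_2=\beta}m(\alpha_1,\alpha_2)\hat f(\alpha_1)\hat g(\alpha_2)\overline{\hat h(\beta)}$ with $\hat f,\hat g,\hat h\in\ell^2$ is unbounded (already for $m\equiv 1$ it equals the pairing $\langle fg,\bar h\rangle$, and on a compact group $fg$ need not lie in $L^2$ when $f,g\in L^2$). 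Concretely, trying to run the Cauchy--Schwarz/Plancherel argument produces intermediate quantities such as $\sum_{\alpha_1}|m_{\hat\Z}(\alpha_1,\alpha_2)|^2$, which by Plancherel in $\alpha_1$ equals $1$ (so no gain), or $\sum_{\alpha_2}|m_{\hat\Z}(\alpha_1,\alpha_2)|^2$, which equals the squared $L^2$ norm of the density of $P_*\mu_{\hat\Z}$ and diverges for $d\ge 2$ (cf.\ Corollary \ref{cor:1}, which only gives $L^s$ control for $s<\frac{d}{d-1}\le 2$). The actual route in the paper is Proposition \ref{bile-2}: the $r=1$ case is proven by transferring to $\Z$ via a soft limiting argument (smooth truncations $f_R, g_R$ on $\Z$ with $R\to\infty$, then $N\to\infty$) and invoking Theorem \ref{improv}, whose minor-arc gain is exactly the Peluse--Prendiville/Bourgain--Chang nonlinear Roth content. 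This is not replaceable by a direct Weil-plus-Plancherel calculation; the paper itself remarks that for prime moduli this estimate is essentially \cite{BC}, \cite{P1}, both of which are substantial theorems.

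\textbf{The $l$-independent endpoint.} After tensor-factorizing, you say it ``reduces to proving uniform-in-$(p,j)$ bilinear $L^p$-improving estimates'' and that the main obstacle is reconstructing Euclidean wave-packet/decoupling/stationary-phase techniques $p$-adically. This misses the real issue and mischaracterizes the appendix. By \eqref{ttensor-bil}, $\|A_{\hat\Z}\|_{L^2\times L^2\to L^q}=\prod_{p}\|A_{\Z_p}\|_{L^2\times L^2\to L^q}$, an infinite Euler product. Proving that each factor is $\lesssim_q 1$ uniformly in $p$ (which is what bilinear interpolation of $L^1\times L^\infty\to L^\infty$ against $L^\infty\times L^1\to L^s$, $s<\tfrac{d}{d-1}$, gives, using the elementary level-set estimate Corollary \ref{cor:1}) would leave you with a divergent product. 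The crucial extra step is \eqref{avg-2}: $\|A_{\Z_p}\|_{L^2\times L^2\to L^q}\le 1$ for all sufficiently large $p$, which the paper establishes by decomposing $f=a+f_0$, $g=b+g_0$ into means and mean-zero parts, Taylor-expanding $|x|^q$, and invoking the decaying $L^1$ bound of Proposition \ref{bile-2} on $\Z_p$. Without this constant-exactly-one step for large $p$, the second endpoint is not established. Finally, the $p$-adic machinery of Appendix \ref{sec:app1} is elementary (factoring $P$ over $\Q_p$, ultratriangle inequality, van der Corput/distributional bounds on level sets), not a $p$-adic reconstruction of decoupling.
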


The key point here is that the exponent $r$ in Theorem \ref{bile} is allowed to be slightly larger than $2$.

To prove \eqref{all-all-large-2} for $r>2$, we use the slice formulation \eqref{form-2}.  It suffices by monotonicity of $\V^r$ norms to work in the range $2 < r < \frac{2d}{d-1}$.    From \eqref{vecf-def}, \eqref{vecg-def} we see that every slice $(F_\A)_x$, $(G_\A)_x$ of $F_\A, G_\A$ take values in the finite-dimensional vector spaces $L^2(\hat \Z)^{(\Q/\Z)_{l_1}}$,$L^2(\hat \Z)^{(\Q/\Z)_{l_2}}$ respectively, and hence so do $\T_{\varphi_{N,t,j_1}\otimes 1} \vec F, \T_{\tilde \varphi_{N,t,j_2}\otimes 1} \vec G$ for any $N$.  By Theorem \ref{bile}, the operator norm of $A_{\hat \Z} \colon L^2(\hat \Z)^{(\Q/\Z)_{l_1}} \times L^2(\hat \Z)^{(\Q/\Z)_{l_2}} \to L^r(\Z/Q\Z)$ is $O_{C_3}(2^{-cl})$.  Applying H\"older's inequality to bound the $L^1(\A_\Z)$ norm by the $L^r(\A_\Z)$ norm, followed Corollary \ref{bilinear-fkr} for some $2 < R < r$, then Cauchy--Schwarz, we conclude\footnote{Strictly speaking, our definitions and arguments are not justified here because the vector spaces $L^2(\hat \Z)$, $L^1(\hat \Z; \V^r)$ are infinite-dimensional.  However, one can approximate $\hat \Z$ by finite cyclic groups $\Z/Q\Z$ to make these spaces finite-dimensional and then take limits to avoid this difficulty; indeed, given the definitions of $F_\A,G_\A$ we can just work with a single large but fixed $Q$.  Alternatively one can extend many of the previous vector-valued definitions to separable Banach spaces.  We leave the details to the interested reader.} that
\begin{multline*}
 \| (A_{\hat \Z}( \T_{\varphi_{N,t,j_1}\otimes 1} (F_\A)_x, \T_{\tilde \varphi_{N,t,j_2} \otimes 1} (G_\A)_x ) )_{N \in \I_{\lrg}} \|_{L^1(\R; L^1(\hat \Z; \V^r))}\\
 \lesssim_{C_3} 2^{-cl}  
\| (\T_{\varphi_{N,t,j_1} \otimes 1} (F_\A)_x)_{N \in \I_{\lrg}} \|_{L^2(\R; \V^R(\I_{\lrg}; L^2(\hat \Z)))}
\| (\T_{\tilde \varphi_{N,t,j_2} \otimes 1} (G_\A)_x)_{N \in \I_{\lrg}} \|_{L^2(\R; \V^R(\I_{\lrg}; L^2(\hat \Z)))},   
\end{multline*}
where we view $x$ as a variable of integration in $\R$. It thus suffices to establish the bounds
$$
\| (\T_{\varphi_{N,t,j_1}\otimes 1} \vec F)_{N \in \I_{\lrg}} \|_{L^2(\R; \V^R(\I_{\lrg}; L^2(\hat \Z)))}
\lesssim_{C_3} \max(1, s_1)^{O(1)} \|\vec F\|_{L^2(\R; L^2(\hat \Z))}$$
and
$$
\| (\T_{\tilde \varphi_{N,t,j_2} \otimes 1} \vec G)_{N \in \I_{\lrg}} \|_{L^2(\R; \V^R(\I_{\lrg}; L^2(\hat \Z)))}
\lesssim_{C_3} \max(1, s_2)^{O(1)} \|\vec G\|_{L^2(\R; L^2(\hat \Z))}$$
for any vector-valued functions $\vec F, \vec G \in L^2(\R; L^2(\hat \Z))$.
But these are simply vector-valued versions of \eqref{lin-1}, \eqref{lin-2}, and are proven in exactly the same fashion (since all of the tools used in the proof extend to the vector-valued setting); in particular, the vector-valued version of L\'epingle's inequality was established in \cite[Theorem 3.1, pp. 810]{MSZ1}, and all linear $L^p$ estimates extend to the vector-valued setting by the Marcinkiewicz--Zygmund inequality.  One may first wish to approximate $L^2(\hat \Z)$ by a finite dimensional Hilbert space to avoid technicalities.  This will conclude the proof of \eqref{all-all-large} (and thus Theorem \ref{main}), once we establish Theorem \ref{bile}.  This is the purpose of the next section.

\section{Arithmetic bilinear estimates}

We now prove Theorem \ref{bile}. It may be worth mentioning that the adelic viewpoint is not strictly necessary here and one could replace the profinite integers $\hat{\Z}$ here with $\Z/Q\Z$.  But then one needs to check that none of the bounds lose any factor of $Q$ (or even $\log Q$) as this would be fatal to the argument. From this point of view the adelic formalism is cleaner and automatically handles uniformity in the $Q$ parameter. 
We begin with the $r=1$ case, which is a limiting case of Theorem \ref{improv} in which the continuous aspect of that theorem degenerates completely, leaving only the arithmetic aspect:

\begin{proposition}\label{bile-2}  Theorem \ref{bile} holds when $r=1$.
\end{proposition}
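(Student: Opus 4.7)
I plan to prove Proposition \ref{bile-2} by transferring the integer-side single-scale minor arc estimate (Theorem \ref{improv}) to the profinite integers via a tensor-product construction. The underlying idea is that when the integer scale $N$ is taken very large relative to the common denominator $Q_{\leq l}$ of the arithmetic frequencies, the truncated integer average $\tilde A_N$, applied to integer functions of the form (slowly varying envelope at scale $M$) $\times$ (periodic factor on $\Z/Q\Z$), should approximately factor as the product of a trivial continuous average acting on the envelope and the arithmetic average $A_{\hat\Z}$ acting on the periodic factor. Running Theorem \ref{improv} against this integer model should then read off the desired $2^{-cl}$ decay for $A_{\hat\Z}$ itself.

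By density I first reduce to the case where $f, g$ are functions on $\Z/Q\Z$ for some $Q$ that is a multiple of $Q_{\leq l}$, so that $(\Q/\Z)_{\leq l} \subset \tfrac{1}{Q}\Z/\Z$. I then fix Schwartz functions $\phi, \psi \in \Schwartz(\R)$ whose Fourier transforms are supported in $[-c_0, c_0]$ for a small $c_0 > 0$, and set
\[
F_M(n) \coloneqq \phi(n/M)\, f(n \bmod Q), \qquad G_M(n) \coloneqq \psi(n/M)\, g(n \bmod Q),
\]
for a large parameter $M$ that will be sent to infinity. The proof then hinges on three calculations: (a) Poisson summation (equivalently, Theorem \ref{Sampling} applied to the adelic lift $F_\A(x,y) = \phi(x/M) f(y)$) gives $\|F_M\|_{\ell^2(\Z)} \sim \sqrt{M}\, \|\phi\|_{L^2(\R)} \|f\|_{L^2(\hat\Z)}$, with the analogous estimate for $G_M$; (b) the Fourier support of $\phi(\cdot/M)$ lies in an $O(1/M)$-neighborhood of $0$, so $\F_\Z F_M$ is concentrated in the $O(1/M)$-neighborhood of $\tfrac{1}{Q}\Z/\Z \setminus (\Q/\Z)_{\leq l}$ (using hypothesis (i) that $\F_{\hat\Z} f$ vanishes on $(\Q/\Z)_{\leq l}$); since distinct elements of $\tfrac{1}{Q}\Z/\Z$ are separated by at least $1/Q$, picking $M \gg Q$ and $N \gg 2^l Q$ guarantees that $\F_\Z F_M$ vanishes on $\mathcal{M}_{\leq l, \leq -\Log N + l}$, so that Theorem \ref{improv}(i) delivers
\[
\|\tilde A_N(F_M, G_M)\|_{\ell^1(\Z)} \lesssim_{C_1} \bigl(2^{-cl} + \langle \Log N\rangle^{-cC_1}\bigr) \|F_M\|_{\ell^2(\Z)} \|G_M\|_{\ell^2(\Z)};
\]
and (c) a direct pointwise computation shows that for $Q \ll N$ and $N^d \ll M$,
\[
\tilde A_N(F_M, G_M)(x) = \phi(x/M)\psi(x/M)\, A_{\hat\Z}(f,g)(x \bmod Q) + o(1),
\]
so that summing in $x$ yields $\|\tilde A_N(F_M, G_M)\|_{\ell^1(\Z)} \sim M \|\phi\psi\|_{L^1(\R)} \|A_{\hat\Z}(f,g)\|_{L^1(\hat\Z)}$.

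Combining (a), (b), (c), dividing through by $M$, and sending $N \to \infty$ (which makes $\langle\Log N\rangle^{-cC_1}$ and the approximation error vanish) will give
\[
\|A_{\hat\Z}(f,g)\|_{L^1(\hat\Z)} \lesssim_{C_1} 2^{-cl}\, \|f\|_{L^2(\hat\Z)} \|g\|_{L^2(\hat\Z)},
\]
which is the $r=1$ case. The case where instead $\F_{\hat\Z} g$ vanishes on $(\Q/\Z)_{\leq l}$ is treated identically using Theorem \ref{improv}(ii), now requiring $N^d \gg 2^{dl} Q$ to arrange the Fourier-support condition on $G_M$ at the dilated scale $\mathcal{M}_{\leq l,\leq -d\Log N + dl}$.

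The main obstacle is making the pointwise approximation in (c) quantitative while keeping all error terms uniform in $Q$. The Taylor estimate $\phi((x-n)/M)\psi((x-P(n))/M) = \phi(x/M)\psi(x/M) + O(N^d/M)$ for $n \in [N/2, N]$ contributes a pointwise error of size $O(N^d/M)$, and the elementary equidistribution bound $\E_{n \in [N/2,N]} h(n \bmod Q) = \E_{a \in \Z/Q\Z} h(a) + O(Q/N)$ (applied to $h(n) \coloneqq f((x{-}n) \bmod Q) g((x{-}P(n)) \bmod Q)$) contributes an additional pointwise error of $O(Q/N)$. After summation in $x$ over the $O(M)$-long envelope support, these become $\ell^1$-errors of order $O(N^d)$ and $O(MQ/N)$ respectively, both $o(M)$ in the regime $Q \ll N \ll M^{1/d}$, so they wash out cleanly under the $M \to \infty$ limit after division by $M$; keeping track of all implicit constants uniformly in $Q$ (which grows with $l$) is the only genuinely delicate part of the argument.
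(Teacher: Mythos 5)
Your proof is essentially the same as the paper's. The paper also reduces to $\Z/Q\Z$, builds test functions $f_R(n) = R^{-1/2}\psi(n/R) f(n \bmod Q)$ with a compactly Fourier-supported envelope $\psi$ (the $1/\sqrt{R}$ normalization is merely a cosmetic variant of your post hoc division by $M$), observes that the Fourier support condition (ii) of Theorem \ref{improv} holds once $N$ and $R$ are large enough relative to $Q, l$, applies Theorem \ref{improv}, computes the pointwise asymptotic of the integer average in the limit, and then takes $\limsup_R$ followed by $\limsup_N$.

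Two small remarks. First, your handling of the limits is slightly less clean than the paper's: you want $Q \ll N \ll M^{1/d}$ with both $N, M$ going to infinity in a correlated way, whereas the paper takes the iterated limit $R\to\infty$ first (for fixed large $N$) and then $N\to\infty$, which avoids having to track the coupling. As a consequence you must invoke the equidistribution bound $\E_{n\in[N/2,N]} h(n\bmod Q) = \E_{a\in\Z/Q\Z} h(a) + O(Q/N)$ to replace $\tilde A_{N,\Z/Q\Z}$ by $A_{\Z/Q\Z}$ in your step (c); the paper sidesteps this by pushing that replacement into the outer $N\to\infty$ limit (or, implicitly, by choosing $N$ a multiple of $2Q$ so that the two agree exactly). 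Second, you consistently apply Theorem \ref{improv} to $\tilde A_N$, which is exactly what the theorem states; the paper's proof writes $A_{N,\Z}$ at this point, which is a minor inaccuracy that your version corrects. None of this changes the argument: the strategy, the reduction, the test functions, and the invocation of Theorem \ref{improv} are all the same.
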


We remark that when $q$ is a prime this result is essentially contained in \cite{BC} (when $P(\nn)=\nn^2$) and \cite{P1} (in the general case); see \cite{DLS} for the strongest current values for the constant $c$.

\begin{proof} For sake of exposition we assume that hypothesis (i) of Theorem \ref{bile}
holds; the case when hypothesis (ii) is assumed one  proceeds similarly.  By a limiting argument we may assume that the functions $F, G$ on $\hat \Z$ factor through a finite quotient $\Z/Q\Z$, in which case the task is to show that
$$ \| A_{\Z/Q\Z}(f,g) \|_{L^1(\Z/Q\Z)} \lesssim_{C_3} 2^{-cl} \| f\|_{L^2(\Z/Q\Z)} \| g \|_{L^2(\Z/Q\Z)}$$
assuming that $\F_{\Z/Q\Z} f$ vanishes on $(\Q/\Z)_{\leq l} \cap (\frac{1}{Q}\Z / \Z)$.

Let $N$ be a large natural number (which we will eventually send to infinity), and
let $R$ be an extremely large real number (which we will also send to
infinity, before sending $N$ to infinity).  In particular, one should
think of $N,R$ as being large compared to $l, Q$.  We define the
functions $f_{R}, g_{R} \in \Schwartz(\Z)$ by the formulae
\begin{align*}
f_R(n) &\coloneqq \frac{1}{\sqrt{R}} \psi( n/R ) f(n \hbox{ mod } Q) \\
g_R(n) &\coloneqq \frac{1}{\sqrt{R}} \psi( n/R ) g(n \hbox{ mod } Q) 
\end{align*}
where $\psi \in \Schwartz(\R)$ is a real even function with
$\| \psi \|_{L^2(\RR)}=1$ whose Fourier transform is supported on
$[-1,1]$.  Clearly $f_R \in L^2(\Z)$ has Fourier transform supported
on the set $\pi( [-1/R,1/R] \times \{ \alpha \in \frac{1}{Q} \Z / \Z \colon \hat f(\alpha) \neq 0 \} )$.
 From the hypothesis (i), we see that if $N, R$ is sufficiently large
(depending on $Q,l$), this union of arcs is disjoint from all of the
arcs in ${\mathcal M}_{\leq l, \leq -\Log N+l}$ (because the frequencies
$\alpha$ with $\hat f(\alpha) \neq 0$ have a non-zero separation from the
frequencies $(\Q/\Z)_{\leq l}$).  By Theorem
\ref{improv}, we conclude for $N,R$ sufficiently large that
\begin{align*}
\| A_{N,\Z}( f_R, g_R ) \|_{L^1(\Z)} \lesssim_{C_1} (2^{-cl} + \Log N^{-cC_1}) \|f_R\|_{L^2(\Z)} \|g_R\|_{L^2(\Z)}.
\end{align*}
From the Riemann integrability of $|\psi|^2$ it is
easy to see that
\begin{align*}
\lim_{R \to \infty} \|f_R\|_{L^2(\Z)} = \|f\|_{L^2(\Z/Q\Z)}
\end{align*}
and similarly
\begin{align*}
\lim_{R \to \infty} \|g_R\|_{L^2(\Z)} = \|g\|_{L^2(\Z/Q\Z)}
\end{align*}
and hence
\begin{align*}
\limsup_{N \to \infty} \limsup_{R \to \infty}
\| A_{N,\Z} ( f_R, g_R ) \|_{L^1(\Z)}
\lesssim 2^{-cl} \|f\|_{L^2(\Z/Q\Z)} \|g\|_{L^2(\Z/Q\Z)}.
\end{align*}
For any $N,R$, the Schwartz function nature of $\psi$ readily gives the asymptotic
$$
A_{N,\Z}( f_R, g_R)(n)
= \frac{1}{R} |\psi(n/R)|^2 A_{\Z/Q\Z}(f,g)(n \mod Q) + O_{N,Q}\big( R^{-2} \langle n/R \rangle^{-10} \big)
$$
and thus by the Riemann integrability of $|\psi|^2$ we obtain
\begin{align*}
\limsup_{R \to \infty}
\| A_{N,\Z}( f_R, g_R ) \|_{L^1(\Z)}
= \| A_{\Z/Q\Z}(f,g) \|_{L^1(\Z/Q\Z)}.
\end{align*}
Taking limits as $N \to \infty$, we then have
\begin{align*}
\limsup_{N \to \infty} \limsup_{R \to \infty}
\| A_{N, \Z}( f_R, g_R ) \|_{L^1(\Z)}
= \| A_{\Z/Q\Z}(f,g) \|_{L^1(\Z/Q\Z)}
\end{align*}
and the claim follows.
\end{proof}

By interpolation with Proposition \ref{bile-2}, we see that to establish the remaining cases of Theorem \ref{bile}, it will suffice to establish the bound
\begin{equation}\label{Avg-prof}
\| A_{\hat \Z} \|_{L^2(\hat \Z) \times L^2(\hat \Z) \to L^q(\hat \Z)} \lesssim_q 1
\end{equation}
for all $1 \leq q < \frac{2d}{d-1}$.   Approximating $\hat \Z$ by the product of finitely many of the $p$-adic groups $\Z_p$, it suffices by limiting arguments to show that
$$
\| A_{\prod_{p \in S} \Z_p} \|_{L^2(\prod_{p \in S} \Z_p) \times L^2(\prod_{p \in S} \Z_p) \to L^q(\prod_{p \in S} \Z_p)} \lesssim_q 1
$$
whenever $S$ is a finite set of primes.  From Examples \ref{avg-mult}, \ref{tensor-mult} we see that the bilinear operator $A_{\prod_{p \in S} \Z_p}$ is the tensor product of the individual operators $A_{\Z_p}$, so by \eqref{ttensor-bil} we may factor the operator norm as
$$
\| A_{\prod_{p \in S} \Z_p} \|_{L^2(\prod_{p \in S} \Z_p) \times L^2(\prod_{p \in S} \Z_p) \to L^q(\prod_{p \in S} \Z_p)} =
\prod_{p \in S} \| A_{\Z_p} \|_{L^2(\Z_p) \times L^2(\Z_p) \to L^q( \Z_p)}.$$

Thus it will suffice to establish the bound
\begin{equation}\label{avg-1}
\| A_{\Z_p} \|_{L^2(\Z_p) \times L^2(\Z_p) \to L^q(\Z_p)} \lesssim_q 1
\end{equation}
for all primes $p$, together with the improvement
\begin{equation}\label{avg-2}
\| A_{\Z_p} \|_{L^2(\Z_p) \times L^2(\Z_p) \to L^q(\Z_p)} \leq 1
\end{equation}
whenever $p$ is sufficiently large depending on $q$.

We begin with \eqref{avg-1}.  By bilinear interpolation it suffices to establish the bounds
\begin{equation}\label{avg-3}
\| A_{\Z_p} \|_{L^1(\Z_p) \times L^\infty(\Z_p) \to L^\infty(\Z_p)} \leq 1
\end{equation}
and
\begin{equation}\label{avg-4}
\| A_{\Z_p} \|_{L^\infty(\Z_p) \times L^1(\Z_p) \to L^s(\Z_p)} \lesssim_s 1
\end{equation}
for all $1 \leq s < \frac{d}{d-1}$.  The estimate \eqref{avg-3} is immediate from the pointwise inequality
$$ |A_{\Z_p}(f,g)| \leq A_{\Z_p}^{\nn}(|f|) \|g\|_{L^\infty(\Z_p)}.$$
To prove \eqref{avg-4}, we similarly use the pointwise inequality
$$ |A_{\Z_p}(f,g)| \leq A_{\Z_p}^{P(\nn)}(|g|) \|f\|_{L^\infty(\Z_p)}$$
so it suffices to show the linear $L^p$ improving bound
$$ \|A_{\Z_p}^{P(\nn)} \|_{L^1(\Z_p) \to L^s(\Z_p)} \lesssim_s 1$$
for $1 \leq s < \frac{d}{d-1}$.  By a limiting argument, it suffices to show that
$$ \|A_{\Z/p^j \Z}^{P(\nn)} \|_{L^1(\Z/p^j \Z) \to L^s(\Z/p^j\Z)} \lesssim_s 1$$
for all $j \in \N$.  By Minkowski's inequality, it suffices to show that the counting function $h \colon \Z/p^j\Z \to \N$ defined by
$$ h(m) \coloneqq \# \{ n \in \Z/p^j\Z: P(n) = m \}$$
has an $L^s(\Z/p^j\Z)$ norm of $O_s(1)$.  But this follows from Corollary \ref{cor:1} in the appendix.  This concludes the proof of \eqref{avg-1}. We remark that this argument in fact yields a weak-type endpoint for \eqref{avg-1}, but it is not clear to us how to use this to obtain a corresponding weak-type endpoint for \eqref{Avg-prof} as the weak $L^p$ spaces do not interact well with tensor products. In any event, for our application any exponent $q$ greater than $2$ would suffice, so endpoint estimates are not needed.

Now we prove \eqref{avg-2}.  By H\"older's inequality we may take $2 < q < \frac{2d}{d-1}$.  We let $l$ be a large number (depending on $q,P$) to be chosen later, and then assume that $p \in \PP$ is a prime that is sufficiently large depending on $l,q,P$. 
From Proposition \ref{bile-2} we then see that
\begin{align}
\label{eq:39}
\| A_{\Z_p}(f,g_0) \|_{L^1(\Z_p)} \lesssim_{C_3} 2^{-cl} \|f\|_{L^2(\Z_p)} \|g_0\|_{L^2(\Z_p)},
\end{align}
whenever $f,g_0 \in L^2(\Z_p)$ with $g_0$ of mean zero, since for $p$ large enough, the only element of $\Z_p^*$ of height at most $2^l$ is the origin.

Interpolating this bound with \eqref{avg-1} (for a slightly larger choice of $q$), we conclude that
\begin{equation}\label{eq:42}
 \| A_{\Z_p}(f,g_0) \|_{L^q(\Z_p)} \lesssim_{q,C_3} 2^{-c_q l} \|f\|_{L^2(\Z_p)} \|g_0\|_{L^2(\Z_p)}
 \end{equation}
(recall our conventions that $c_q>0$ denotes a constant that can depend on $d,q$ and varies from line to line).

Let $f,g \in L^2(\Z_p)$ with $\|f\|_{L^2(\Z_p)}=\|g\|_{L^2(\Z_p)}=1$.  It will suffice to
show that 
\begin{align*}
 \E_{n \in \Z_p} |A_{\Z_p}(f,g)(n)|^q \leq 1.
\end{align*}
Since $|A_{\Z_p}(f,g)| \leq A_{\Z_p}(|f|,|g|)$, we may assume without loss of generality that $f,g$ are non-negative.
We split $f = a + f_0$ and $g = b + g_0$, where
\begin{align*}
a \coloneqq \E_{n \in \Z_p} f(n)
\quad \text{ and } \quad
b \coloneqq \E_{n \in \Z_p} g(n)
\end{align*}
are the means of $f,g$, and $f_0 \coloneqq f - a$, $g_0 \coloneqq g-b$ are the mean zero components.  If we define the ``energies''
\begin{align*}
E_f \coloneqq \| f_0\|_{L^2(\Z_p)}^2
\quad \text{ and } \quad
E_g \coloneqq \| g_0 \|_{L^2(\Z_p)}^2
\end{align*}
then from Pythagoras' theorem we have $0 \leq E_f,E_g \leq 1$ and
\begin{equation}\label{abb}
|a| = (1-E_f)^{1/2}
\quad \text{ and } \quad
|b| = (1-E_g)^{1/2}.
\end{equation}
A short calculation shows that
\begin{align*}
A_{\Z_p}(a,b) = ab
\quad \text{ and } \quad
A_{\Z_p}(f_0,b) = 0
\end{align*}
and hence
\begin{align*}
A_{\Z_p}(f,g) = ab + A_{\Z_p}(f,g_0).
\end{align*}
Since the function $x \mapsto |x|^q$ is continuously twice differentiable, Taylor
expansion yields the pointwise bound
$$
|A_{\Z_p}(f,g)|^q = |ab|^q + q |ab|^{q-1} A_{\Z_p}(f, g_0) + O_q( |A_{\Z_p}(f,g_0)|^2 + |A_{\Z_p}(f,g_0)|^q ).
$$
Since $A_{\Z_p}(a,g_0)$ has mean zero, we have
\begin{align*}
\E_{n \in \Z_p} A_{\Z_p}(f, g_0)(n) &= 
 \E_{n \in \Z_p} A_{\Z_p}(f_0, g_0)(n) \\
 &\leq \|A_{\Z_p}(f_0,g_0)\|_{L^1(\Z_p)} 
\end{align*}  
and thus (since $|a|, |b| \leq 1$ and $q \geq 2$)
$$ \|A_{\Z_p}(f,g)\|_{L^q(\Z_p)}^q \leq |ab|^2 + O_q( \|A_{\Z_p}(f_0,g_0)\|_{L^1(\Z_p)} + \|A_{\Z_p}(f,g_0)\|_{L^2(\Z_p)}^2 + \|A_{\Z_p}(f,g_0)\|_{L^q(\Z_p)}^q ).$$
From \eqref{abb}, \eqref{eq:42}, the $L^2$ boundedness of $f,f_0,g_0$, and H\"older's inequality we conclude
$$ \|A_{\Z_p}(f,g)\|_{L^q(\Z_p)}^q \leq (1-E_f) (1-E_g) + O_{q,C_3}( 2^{-c_q l} (E_f^{1/2} E_g^{1/2} + E_g) ).$$
Since $E_f E_g \leq \min(E_f,E_g) \leq \frac{E_f+E_g}{2}$ one has
$$ (1-E_f) (1-E_g) \leq 1 -\frac{E_f+E_g}{2};$$
since $E_f^{1/2} E_g^{1/2} = O(E_f+E_g)$, the claim follows by choosing $l$ large enough depending on $q,C_3$.
This proves \eqref{avg-4}, and thus Theorem \ref{bile}.  

The proof of Theorem \ref{main} is (finally!) complete.

\section{Breaking duality}\label{pless}

In this section we extend Theorem \ref{main} to certain cases in which $p<1$.  Throughout this section $P \in \Z[\nn]$ is a polynomial of degree $d \geq 2$.

We begin with the following expansion of the range of applicability of \eqref{anf} for these averages.

\begin{lemma}[Single scale estimate below $\ell^1$]\label{single-below}  Let $1 < p_1,p_2 < \infty$ obey the constraints
\begin{equation}\label{dd0}
 \frac{1}{p_1} + \frac{2}{p_2}, \quad \frac{2}{p_1} + \frac{1}{p_2} < 2
 \end{equation}
when $d=2$, or
\begin{equation}\label{dd1}
 \frac{d^2+d-1}{p_1} + \frac{d^2+d+1}{p_2}, \quad 
 \frac{d^2+d+1}{p_1} + \frac{d^2+d-1}{p_2} < d^2+d+1
 \end{equation}
 when $d \geq 3$.  Then for any measure-preserving system $(X,\mu,T)$ one has
$$ \| A^{\nn,P(\nn)}_N(f,g) \|_{L^p(X)} \lesssim_{p_1,p_2,P} \|f\|_{L^{p_1}(X)} \|g\|_{L^{p_2}(X)}$$
 for all $N \geq 1$, $f \in L^{p_1}(X)$, $g \in L^{p_2}(X)$, where $\frac{1}{p} = \frac{1}{p_1}+\frac{1}{p_2}$.  Similarly with $A^{\nn,P(\nn)}_N$ replaced by $\tilde A^{\nn,P(\nn)}_N$.
\end{lemma}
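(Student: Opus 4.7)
The plan is to reduce to the integer shift system via Calder\'on transference and then prove the single-scale bound on $\Z$ using a pointwise H\"older decomposition combined with Young's convolution inequality for $A^{\nn}_N$ and Proposition \ref{lp-improv} for $A^{P(\nn)}_N$. Since the exponents satisfy the H\"older relation $\tfrac{1}{p} = \tfrac{1}{p_1} + \tfrac{1}{p_2}$ by hypothesis, Proposition \ref{transf}(ii) applies (it requires only the H\"older relation and not $p \ge 1$), and reduces the problem to establishing $\|A_N(f,g)\|_{\ell^p(\Z)} \lesssim \|f\|_{\ell^{p_1}(\Z)} \|g\|_{\ell^{p_2}(\Z)}$ uniformly in $N\geq 1$.

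On the integer shift system I would apply pointwise H\"older in the averaging variable $n$ with exponents $r_1, r_2 > 1$ satisfying $\tfrac{1}{r_1}+\tfrac{1}{r_2}=1$,
\[
|A_N(f,g)(x)| \leq \bigl(A^{\nn}_N |f|^{r_1}(x)\bigr)^{1/r_1}\bigl(A^{P(\nn)}_N |g|^{r_2}(x)\bigr)^{1/r_2},
\]
and then take the $\ell^p$ norm using the generalized H\"older inequality with exponents $\alpha,\beta \geq p$ and $\tfrac{1}{\alpha}+\tfrac{1}{\beta}=\tfrac{1}{p}$ (valid even for $p<1$). Young's inequality applied to the normalized convolution $A^{\nn}_N = N^{-1}\mathbf{1}_{[N]}\ast$ gives $\|A^{\nn}_N|f|^{r_1}\|_{\ell^{\alpha/r_1}}^{1/r_1} \lesssim N^{1/\alpha - 1/p_1}\|f\|_{\ell^{p_1}}$ when $p_1 \leq \alpha$, while Proposition \ref{lp-improv}, interpolated as needed with the trivial $\ell^q\to \ell^q$ bound, yields $\|A^{P(\nn)}_N|g|^{r_2}\|_{\ell^{\beta/r_2}}^{1/r_2} \lesssim N^{d(1/\beta - 1/p_2)}\|g\|_{\ell^{p_2}}$ provided $(p_2/r_2, \beta/r_2)$ lies in the admissible $L^p$-improving region. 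The H\"older relation $\tfrac{1}{\alpha}+\tfrac{1}{\beta}=\tfrac{1}{p_1}+\tfrac{1}{p_2}$ causes the overall $N$-exponent to collapse to $(d-1)\bigl(\tfrac{1}{\beta}-\tfrac{1}{p_2}\bigr)$.

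To match the range claimed in the Lemma I would place $A^{P(\nn)}_N$ at the endpoint of Proposition \ref{lp-improv}, namely $(p_2/r_2, \beta/r_2) = (p_\ast, 2)$ with $p_\ast = 2 - 4/(d^2+d+3)$ for $d\geq 3$ (resp.\ $p_\ast = 4/3$ when $d=2$); this fixes $r_2 = p_2/p_\ast$, $r_1 = p_2/(p_2 - p_\ast)$, and $\beta = 2p_2/p_\ast$. A standard support-partition step, decomposing $f$ and $g$ into pieces supported on intervals of length $O(N^d)$ and using H\"older's inequality against the $O(N^d)$ support of $A^{\nn}_N|f|^{r_1}$, will absorb the failure of the direct Young constraint $\alpha \geq p_1$ and will show that the scale factors cancel exactly on the boundary curve $(d^2+d-1)/p_1 + (d^2+d+1)/p_2 = d^2+d+1$ (resp.\ $1/p_1 + 2/p_2 = 2$ when $d=2$). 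Bilinear complex interpolation of this boundary estimate against the trivial H\"older--Banach bound (valid on the region $p\geq 1$) will extend the estimate to the full open region described by the first condition of the Lemma. The symmetric condition $(d^2+d+1)/p_1 + (d^2+d-1)/p_2 < d^2+d+1$ follows by applying the same argument to the transpose bilinear operator $A_N^{**}$ from \eqref{eq:bkhf}, whose polynomial pair $\nn - P(\nn),\, -P(\nn)$ shares the degree structure of the original but with the roles of $f$ and $g$ exchanged. The analogous bound with $\tilde A^{\nn,P(\nn)}_N$ in place of $A^{\nn,P(\nn)}_N$ is then immediate, since the pointwise H\"older estimate is unaffected by restricting the averaging domain to $n > N/2$.

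The hard part will be the numerology of the parameter optimization: the Young constraint $\alpha \geq p_1$ and the requirement to extract a genuine gain from Proposition \ref{lp-improv} (which naively demands $\alpha < p_1$) pull in opposite directions, and it is only after exploiting the $O(N^d)$ spatial support of $A_N(f,g)$ to introduce the correct $N^d$-dependence in the first factor that the target curve appears. Verifying that this boundary curve matches the weights $d^2+d\mp 1$ on $1/p_1$ and $1/p_2$ appearing in the Lemma, and that the bilinear interpolation with the trivial H\"older--Banach bound indeed covers the full strict-inequality region, requires careful bookkeeping with $p_\ast$ and its H\"older dual $p_\ast'$.
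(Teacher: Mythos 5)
Your plan diverges from the paper's at the key step, and the divergence creates a structural gap that the ``support-partition step'' cannot repair. The paper does \emph{not} split the bilinear average into two separate linear polynomial averages: after localizing to intervals of length $O_P(N^d)$ and using the H\"older embedding $\ell^1(I)\hookrightarrow\ell^p(I)$ (which costs $N^{d(1/p-1)}$), it passes to the $\ell^1$ norm and applies Fubini to obtain
\[
\| A^{\nn,P(\nn)}_N(f,g)\|_{\ell^1(\Z)} \leq \sum_{x\in\Z}|f|(x)\, A^{P(\nn)-\nn}_N|g|(x),
\]
so that the entire bilinear object collapses to a \emph{pointwise product} of $|f|$ (with no averaging) against a single degree-$d$ linear polynomial average. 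One then pairs $f\in\ell^{p_1}$ against $A^{P(\nn)-\nn}_N|g|\in\ell^{p_1'}$ by H\"older, and invokes Proposition~\ref{lp-improv} exactly once, for $A^{P(\nn)-\nn}_N\colon\ell^{p_2}\to\ell^{p_1'}$. There is no intermediate exponent pair $(r_1,r_2)$ and hence no constraint forcing those exponents to be H\"older-conjugate.

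Your pointwise H\"older in the averaging variable $n$ with $\frac{1}{r_1}+\frac{1}{r_2}=1$ introduces precisely such a constraint, and it is incompatible with the rest of the plan in the regime of interest. The Young step for $A^{\nn}_N$ on $\ell^{p_1/r_1}$ requires $p_1/r_1\geq 1$, i.e.\ $r_1\leq p_1$; and in order to invoke Proposition~\ref{lp-improv} (or any of its interpolants) the source exponent $p_2/r_2$ must be at least $1$, i.e.\ $r_2\leq p_2$. But then
\[
1=\tfrac{1}{r_1}+\tfrac{1}{r_2}\geq \tfrac{1}{p_1}+\tfrac{1}{p_2}>1
\]
whenever $p<1$, which is impossible. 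Taking $r_1>p_1$ to escape the first constraint is also untenable: tracking the $N$-powers as you propose, the support-partition bound for the first factor then delivers only $N^{d(1/\alpha-1/r_1)}\|f\|_{\ell^{p_1}}$ (since you must pass through $\|A^{\nn}_N|f|^{r_1}\|_{\ell^1}\leq\||f|^{r_1}\|_{\ell^1}=\|f\|_{\ell^{r_1}}^{r_1}$ and then use $\ell^{p_1}(I')\hookrightarrow\ell^{r_1}(I')$), and the total exponent becomes $d(1/p_1-1/r_1)>0$ irrespective of $\alpha,\beta$. The same uncancellable loss appears with $r_2>p_2$. Thus the decomposition cannot be made uniform in $N$ anywhere below the line of duality, and the subsequent bilinear interpolation step cannot create a range out of nothing. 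A separate, smaller issue: the transpose $A_N^{**}$ has polynomial pair $\nn-P(\nn),\,-P(\nn)$, both of degree $d$, so it does \emph{not} share the degree structure $(1,d)$ of $A_N^{\nn,P(\nn)}$; the second condition in the Lemma actually comes from the other Fubini unfolding $\sum_x|g|(x)\,A^{\nn-P(\nn)}_N|f|(x)$, not from a pointwise-H\"older argument applied to $A_N^{**}$.
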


We remark that if \cite[Conjecture 1.5]{HKLMY} holds, the condition should be able to be relaxed to
$$ \frac{d-1}{p_1} + \frac{d}{p_2}, \quad \frac{d}{p_1} + \frac{d-1}{p_2} < d,$$
bringing it in line with \eqref{dd0}.

\begin{proof}  From the pointwise bound $|\tilde A^{\nn,P(\nn)}_N(f,g)| \leq A^{\nn,P(\nn)}_N(|f|,|g|)$ it suffices to establish the claim for $A^{\nn,P(\nn)}_N$.  We may assume that $p<1$ since the claim follows from \eqref{anf} otherwise.  
By the Calder\'on transference principle it suffices to establish this bound for the case of the integer shift $(\Z,\mu_\Z,T_\Z)$.  Noting the pointwise bound
$$ |A^{\nn,P(\nn)}_N(f,g)(x)| \leq \sum_{I \in \mathcal I} \ind{I}(x) A^{\nn,P(\nn)}_N(\ind{I} |f|,\ind{I} |g|)(x),$$
where $I$ ranges over a collection $\mathcal I$ of intervals of length $O_P(N^d)$ and overlap $O_P(1)$, it suffices to establish the claimed bound when $f,g$ are supported in a single one of these intervals $I$, that is to say
$$ \| A^{\nn,P(\nn)}_N(f,g) \|_{\ell^p(\Z)} \lesssim_{p_1,p_2,P} \|f\|_{\ell^{p_1}(I)} \|g\|_{\ell^{p_2}(I)}.$$
As $A^{\nn,P(\nn)}_N(f,g)$ is supported in an interval of length $O_P(N^d)$, we have from H\"older's inequality and the hypothesis $p<1$ that
$$ \| A^{\nn,P(\nn)}_N(f,g) \|_{\ell^p(\Z)} \lesssim_{p_1,p_2,P}
N^{d(\frac{1}{p}-1)} \| A^{\nn,P(\nn)}_N(f,g) \|_{\ell^1(\Z)}.$$
From the triangle inequality and the Fubini--Tonelli theorem one has
$$ \| A^{\nn,P(\nn)}_N(f,g)\|_{\ell^1(\Z)} \leq \sum_{x \in \Z}
|f|(x) A^{P(\nn)-\nn} |g|(x)$$
(cf. \eqref{transpose}), so by H\"older's inequality it suffices to establish the bound
$$ \| A^{P(\nn)-\nn} g\|_{\ell^{p'_2}(\Z)} \lesssim_{p_1,p_2,P} N^{d(\frac{1}{p'_2}-\frac{1}{p_1})} \|g\|_{\ell^{p_1}(\Z)}$$
for any $g \in \ell^{p_1}(\Z)$.  But this follows from the results of \cite{HKLMY} (cf. Proposition \ref{lp-improv}).
\end{proof}

As remarked in the proof of Proposition \ref{lp-improv}, one expects the range of $p_1,p_2$ to be improvable here, at least in the case $d \geq 3$. We remark that the same argument allows one to break duality in \eqref{anf} (that is to say, obtain \eqref{anf} for at least some ranges of exponents $p_1,\dots,p_k$ with $\frac{1}{p_1}+\dots+\frac{1}{p_k} > 1$) for any average $A_N^{P_1(\nn),\dots,P_k(\nn)}$ (or $\tilde A_N^{P_1(\nn),\dots,P_k(\nn)}$) in which all the $P_i$ have degree at most $d$, with at least one of the differences $P_i-P_j$ having degree exactly $d$, for some $d \geq 2$; we leave the details to the interested reader.

Now we can obtain norm convergence results with an explicit range of $p_1,p_2$.

\begin{corollary}[Breaking duality for the mean ergodic theorem]\label{main-ext}  Let $(X,\mu,T)$ be a measure-preserving system with $X$ of finite measure, and let $P(\nn) \in \Z[\nn]$ have degree $d \geq 2$.  If $p_1,p_2,p$ obey the hypotheses in Lemma \ref{single-below}, then the averages $A^{\nn,P(\nn)}_N(f,g)$ converge in $L^p(X)$ norm for all $f \in L^{p_1}(X)$, $g \in L^{p_2}(X)$.
\end{corollary}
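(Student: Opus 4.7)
The proof is a standard density reduction, using Theorem \ref{walsh-thm} on a dense subspace and Lemma \ref{single-below} to propagate convergence by continuity.

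First, I would observe that Theorem \ref{walsh-thm}, applied with $p_1 = p_2 = \infty$ (so that $\frac{1}{p_1}+\frac{1}{p_2} = 0$), gives norm convergence of $A_N^{\nn,P(\nn)}(f_\infty,g_\infty)$ in $L^p(X)$ for every $0 < p < \infty$ whenever $f_\infty, g_\infty \in L^\infty(X)$. Note that this uses in an essential way that $X$ has finite measure, so that $L^\infty(X) \subset L^{p_1}(X) \cap L^{p_2}(X) \cap L^p(X)$.

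Second, Lemma \ref{single-below} furnishes the uniform bilinear bound
\begin{equation}\label{sb-cor}
\|A_N^{\nn,P(\nn)}(f,g)\|_{L^p(X)} \leq C \|f\|_{L^{p_1}(X)} \|g\|_{L^{p_2}(X)}
\end{equation}
for all $N \geq 1$ and $f \in L^{p_1}(X)$, $g \in L^{p_2}(X)$, with a constant $C=C_{p_1,p_2,P}$ independent of $N$. Since $(X,\mu)$ has finite measure and $p_1,p_2<\infty$, the space $L^\infty(X)$ is dense in $L^{p_i}(X)$ for $i=1,2$. So given any $f \in L^{p_1}(X)$ and $g \in L^{p_2}(X)$, I can fix sequences $(f_n), (g_n) \subset L^\infty(X)$ with $f_n \to f$ in $L^{p_1}(X)$ and $g_n \to g$ in $L^{p_2}(X)$; in particular $\|f_n\|_{L^{p_1}(X)}$ is bounded in $n$.

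Third, I would carry out the standard Cauchy argument, taking care of the fact that $p$ may be less than $1$. Writing $\kappa \coloneqq \min(1,p)$ and using the (quasi-)triangle inequality $\|a+b\|_{L^p}^{\kappa} \leq \|a\|_{L^p}^{\kappa} + \|b\|_{L^p}^{\kappa}$, I split
\begin{align*}
\|A_N(f,g) - A_M(f,g)\|_{L^p(X)}^\kappa
&\leq \|A_N(f-f_n,g)\|_{L^p(X)}^\kappa + \|A_N(f_n,g-g_n)\|_{L^p(X)}^\kappa \\
&\quad + \|A_N(f_n,g_n) - A_M(f_n,g_n)\|_{L^p(X)}^\kappa \\
&\quad + \|A_M(f_n,g-g_n)\|_{L^p(X)}^\kappa + \|A_M(f-f_n,g)\|_{L^p(X)}^\kappa.
\end{align*}
By \eqref{sb-cor}, the four ``outer'' terms are each bounded by a constant multiple of
\[
\bigl(\|f-f_n\|_{L^{p_1}(X)}\|g\|_{L^{p_2}(X)} + \|f_n\|_{L^{p_1}(X)}\|g-g_n\|_{L^{p_2}(X)}\bigr)^{\kappa},
\]
uniformly in $N,M$, and this can be made arbitrarily small by choosing $n$ large. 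For that fixed $n$, the remaining ``middle'' term tends to $0$ as $N,M \to \infty$ by the first step. Hence $(A_N(f,g))_{N\in\Z_+}$ is Cauchy in $L^p(X)$; since $L^p(X)$ is complete (as a Banach space when $p\ge 1$ and as a quasi-Banach space when $0<p<1$), the sequence converges, which is the desired conclusion.

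There is no serious obstacle: the corollary is essentially a black-box combination of the Host--Kra--Leibman norm convergence theorem on $L^\infty$ and the $L^{p_1}\times L^{p_2}\to L^p$ boundedness of Lemma \ref{single-below}. The only mild point to watch is the $p<1$ case, handled by replacing the triangle inequality with its $\kappa$-power version as above.
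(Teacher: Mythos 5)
Your proof is correct and follows essentially the same route as the paper's: norm convergence on the dense subspace $L^\infty(X) \times L^\infty(X)$ combined with the uniform $L^{p_1}\times L^{p_2}\to L^p$ bound of Lemma \ref{single-below} and the quasi-triangle inequality for $p<1$. The only (cosmetic) difference is that you invoke Theorem \ref{walsh-thm} for the bounded case while the paper cites Theorem \ref{main}(i); as the paper itself notes, these coincide when $X$ has finite measure, so both are valid, and your choice has the mild advantage of making the corollary rest on the Host--Kra--Leibman result rather than on the main theorem of the paper.
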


\begin{proof}  By Theorem \ref{main}(i) and H\"older's inequality (using the finite measure hypothesis) the claim already holds for (say) $f,g \in L^\infty(X)$.  The claim now follows from Lemma \ref{single-below} and the usual limiting argument (which is still valid in the quasinormed space $L^p(X)$).
\end{proof}

For the remaining components of Theorem \ref{main}, we can similarly break duality, albeit with a much poorer range of exponents:

\begin{proposition}[Breaking duality for all the ergodic theorems]\label{main-ext-2}  Let $P(\nn) \in \Z[\nn]$ have degree $d \geq 2$, and let $\eps>0$.  If $(\frac{1}{p_1},\frac{1}{p_2})$ is in a sufficiently small neighborhood of $(\frac{1}{2},\frac{1}{2})$ (where the neighborhood depends only on $d$,$\eps$), and $\frac{1}{p} \coloneqq \frac{1}{p_1} + \frac{1}{p_2}$, then the conclusions (i)-(iv) of Theorem \ref{main} hold for this choice of $p_1,p_2,p$, where in (iv) we replace the requirement $r>2$ with $r>2+\eps$.
\end{proposition}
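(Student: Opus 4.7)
The plan is to deduce Proposition \ref{main-ext-2} by bilinear real interpolation between the full variational estimate of Theorem \ref{main}(iv) at $(p_1, p_2) = (2, 2)$ and the single-scale estimate of Lemma \ref{single-below} at a nearby non-Banach exponent pair. First I would observe that parts (i)--(iii) of Theorem \ref{main} follow automatically from the variational part (iv) via Proposition \ref{transf}(i), which is valid for all $0 < p < \infty$, so it suffices to establish the $\V^r$ estimate \eqref{var-poly-main} for $r > 2 + \varepsilon$. Since the H\"older condition $1/p = 1/p_1 + 1/p_2$ holds throughout, the Calder\'on transference principle (Proposition \ref{transf}(ii)) reduces matters to the integer shift system, and Proposition \ref{transf}(iii) further reduces to the telescoped averages $\tilde A_N$.

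Next I would set $r_0 \coloneqq 2 + \varepsilon/2$ and invoke Theorem \ref{end-var} to obtain the variational bound $\|(\tilde A_N(f,g))_{N \in \D}\|_{\ell^1(\Z; \V^{r_0})} \lesssim \|f\|_{\ell^2}\|g\|_{\ell^2}$ at $(2,2,1)$. I would also choose $(p_1^*, p_2^*, p^*)$ in the range of Lemma \ref{single-below} with $1/p_1^* + 1/p_2^* > 1$ (so $p^* < 1$), yielding $\|\tilde A_N(f,g)\|_{\ell^{p^*}(\Z)} \lesssim \|f\|_{\ell^{p_1^*}}\|g\|_{\ell^{p_2^*}}$ uniformly in $N$. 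The two estimates would then be combined via the elementary pointwise variational interpolation
\[
\|(a_N)\|_{V^r} \leq 2 \|(a_N)\|_{V^{r_0}}^{r_0/r} \|(a_N)\|_{\ell^\infty}^{1 - r_0/r}, \qquad r > r_0,
\]
together with H\"older's inequality in $\ell^p(\Z)$, producing an $\ell^p(\Z; \V^r)$ bound at intermediate exponents $(1/p_1, 1/p_2) = (1-\theta)(1/2, 1/2) + \theta(1/p_1^*, 1/p_2^*)$ with $r = r_0/(1-\theta)$. For $\theta$ small enough one has $r < 2 + \varepsilon$, and since the segment from $(1/2, 1/2)$ to $(1/p_1^*, 1/p_2^*)$ crosses the duality line $1/p_1 + 1/p_2 = 1$, the interpolated exponents genuinely break duality; as $\varepsilon \to 0$ the admissible neighborhood of $(1/2, 1/2)$ shrinks, matching the proposition statement.

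The main obstacle is that the pointwise variational interpolation above requires control of the maximal function $\sup_N |a_N|$, whereas Lemma \ref{single-below} only furnishes a single-scale estimate. The standard H\"older-in-$n$ derivation of bilinear maximal bounds from Theorem \ref{lin-poly}(iii) (as used in the discussion following Theorem \ref{main}) gives maximal estimates only for exponents with $1/p_1 + 1/p_2 < 1$, which is above the line of duality and therefore useless here. To overcome this I would proceed in two stages: first interpolate Lemma \ref{single-below} with the maximal estimate of Theorem \ref{main}(iii) at $(2, 2, 1)$ to produce an effective bilinear maximal bound at an intermediate non-Banach exponent, and only then combine this maximal bound with the variational $V^{r_0}$ bound via the pointwise $V^r$ inequality and H\"older. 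Alternatively, one can invoke a Marcinkiewicz-style bilinear real interpolation theorem compatible with quasi-Banach targets that accepts a uniform-in-$N$ single-scale bound at one endpoint and a vector-valued variational bound at the other. Tracking the growth $r = r_0/(1 - \theta)$ carefully through the interpolation chain is the key technical point needed to ensure $r$ stays below $2 + \varepsilon$ in a genuine open neighborhood of $(1/2, 1/2)$ below the line of duality. Once (iv) is proven in this neighborhood, the remaining conclusions (i)--(iii) of Theorem \ref{main} follow immediately from Proposition \ref{transf}(i).
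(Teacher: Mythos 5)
The gap you flagged in your third paragraph is fatal as you propose to fill it. Lemma~\ref{single-below} only gives a uniform-in-$N$ single-scale bound, i.e.\ control of the sequence $(\tilde A_N(f,g))_{N}$ in the mixed norm $\ell^\infty(\D;\ell^{p^*}(\Z))$, whereas the pointwise inequality $\|a\|_{V^r} \leq (2\|a\|_{\ell^\infty})^{1-r_0/r}\|a\|_{V^{r_0}}^{r_0/r}$ requires control in $\ell^p(\Z;\ell^\infty(\D))$, that is, an honest maximal estimate. Interpolating between the endpoints $\ell^\infty(\D;\ell^{p^*}(\Z))$ at $(p_1^*,p_2^*)$ and $\ell^1(\Z;\ell^\infty(\D))$ at $(2,2)$ cannot produce an $\ell^p(\Z;\ell^\infty(\D))$ bound at an intermediate exponent below duality: these are different vector-valued norms for the same sequence, and a uniform single-scale bound carries no information about $\sup_N$, so no Marcinkiewicz-type theorem can manufacture the supremum. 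Nor can one import the maximal bound from elsewhere: the H\"older-in-$n$ reduction to Theorem~\ref{lin-poly}(iii) requires $p_0<p_1$ and $p_0'<p_2$ simultaneously, which forces $1/p_1+1/p_2<1$ and is therefore unavailable exactly below the line of duality. So both of your proposed workarounds break at precisely the step you were worried about.

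The paper's route is structurally different and cannot be abbreviated to a post hoc interpolation of the final estimates. It re-runs the proof of Theorem~\ref{main}, substituting Lemma~\ref{single-below} for \eqref{anf} and the quasi-triangle inequality \eqref{quasi} for the triangle inequality so that the reduction to Theorem~\ref{varp} persists, and then interpolates \emph{at the level of the Ionescu--Wainger paraproduct pieces} $\tilde A_N(F_N,G_N)$: the $(2,2,1)$ endpoint of Theorem~\ref{varp} carries the exponential gain $2^{-c\max(l,s_1,s_2)}$, while the other endpoint is a crude $\ell^p(\Z;\ell^\infty)$ bound on the pieces, and for $(1/p_1,1/p_2)$ close enough to $(1/2,1/2)$ the gain dominates and the pieces sum. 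The crude $\ell^\infty$-valued bound is obtained not by interpolating single-scale estimates but via the adelic approximation Proposition~\ref{mod-approx} (checked to survive $p<1$): the $\sup_{N,t}$ is dominated pointwise by products of Hardy--Littlewood maximal functions in the real variable, reducing matters to the single-scale arithmetic estimate \eqref{Avg-prof} and Fubini. Any repair of your argument would have to descend to this decomposed level, at which point it essentially reconstructs the paper's proof.
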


It may be possible to refine the range of $p_1,p_2$ here to match that in Corollary \ref{main-ext} or Lemma \ref{single-below} by a more careful argument, but we will not attempt to do so here.

\begin{proof} (Sketch)  We repeat the proof of Theorem \ref{main}.  By the arguments in Section \ref{transfer-sec}, it suffices to show that Theorem \ref{end-var} holds for the indicated choice of $p_1,p_2,p$.  We then repeat the reductions in Section \ref{iw-decomp-sec} that were used to reduce Theorem \ref{end-var} to Theorem \ref{varp}.  The only differences are that \eqref{anf} is replaced by the more general Lemma \ref{single-below} (which in particular is applicable for $(\frac{1}{p_1}, \frac{1}{p_2})$ sufficiently close to $(\frac{1}{2},\frac{1}{2})$), and uses the quasi--triangle inequality in place of the triangle inequality when $p<1$  (adjusting the exponent $10$ appearing in the argument if necessary).  It then suffices to establish Theorem \eqref{varp} for $(\frac{1}{p_1},\frac{1}{p_2})$ in a neighborhood of $(\frac{1}{2},\frac{1}{2})$.  In fact it suffice to establish the cruder estimate
$$ \| ( \tilde A_N( F_N, G_N ))_{N \in \I} \|_{\ell^p(\Z;\ell^\infty)} \lesssim_{C_3} 2^{O(\max(l,s_1,s_2))}
\|f\|_{\ell^{p_1}(\Z)} \|g\|_{\ell^{p_2}(\Z)}.
$$
for $(\frac{1}{p_1},\frac{1}{p_2})$ in a neighbourhood of $(\frac{1}{2},\frac{1}{2})$, since the claim then follows by interpolation with the $p_1=p_2=2$ case of Theorem \ref{varp} and reducing the size of the neighborhood in an $\eps$-dependent fashion (here we use the interpolation theory\footnote{See for instance \cite{MSZ1} for an overview of this interpolation theory.} of variational norms, as well as the equivalence $\V^\infty \equiv \ell^\infty$).

The contribution of the small scales $\I_\sml$ can now be crudely handled by Lemma \ref{single-below} and the quasi-triangle inequality \eqref{quasi} (since we are now willing to concede factors of $2^{O(l)}$).  Hence we may work entirely with large scales $\I_\lrg$.  It is not difficult to verify that Proposition \ref{mod-approx} extends to the non-Banach regime $p<1$ (basically because Lemma \ref{crude-mult} does, and because one can freely lose powers of $q$ in that proposition).  Applying the arguments in Section \ref{approx-sec} with suitable changes, we reduce to showing that the $\ell^p(\Z; \ell^\infty)$ norm of \eqref{all-all-large} is bounded by
$$ \lesssim_{C_3} 2^{O(\max(l,s_1,s_2))}
\|f\|_{\ell^{p_1}(\Z)} \|g\|_{\ell^{p_2}(\Z)}$$
for $(\frac{1}{p_1},\frac{1}{p_2})$ in a neighborhood of $(\frac{1}{2},\frac{1}{2})$.

In the non-Banach regime we are no longer able to remove the integration in $t$; instead we crudely replace it by a supremum norm.  In lieu of Theorem \ref{model-est-2}, it will now suffice to show that
\begin{equation}\label{all-all-large-3}
\begin{split}
\left\| H\right \|_{L^p(\A_\Z; \V^r)}  \lesssim_{C_3} 2^{O( \max(l,s_1,s_2) )} \|F_\A\|_{L^{p_1}(\A_\Z)} \|G_\A\|_{L^{p_2}(\A_\Z)}
\end{split}
\end{equation}
where $H$ is the maximal operator
$$ H \coloneqq \sup_{t \in [1/2,1]} |\B_{1 \otimes m_{\hat \Z}}(\T_{\varphi_{N,t} \otimes 1} F_\A,\T_{\tilde \varphi_{N,t} \otimes 1} G_\A)|.
$$
(Here we implicitly use the fact that Theorem \ref{Sampling} continues to hold in the range $p<1$.) By a variant of \eqref{form-2}, each slice $H_x$ of $H$ at some $x \in \R$ is given by
$$ H_x = \sup_{t \in [1/2,1]} |A_{\hat \Z}( (T_{\varphi_{N,t} \otimes 1} F_\A)_x, (T_{\tilde \varphi_{N,t} \otimes 1} G_\A)_x )|.$$
We crudely bound
$$ T_{\varphi_{N,t} \otimes 1} F_\A \lesssim_{C_3} 2^{\max(0,s_1)} \mathrm{M}_{\mathrm{HL}} F_\A,$$
$$ T_{\tilde \varphi_{N,t} \otimes 1} G_\A \lesssim_{C_3} 2^{\max(0,s_2)} \mathrm{M}_{\mathrm{HL}} G_\A,$$
where $\mathrm{M}_{\mathrm{HL}}$ denotes the Hardy--Littlewood maximal operator in the $\R$ variable, so that
$$ H_x \lesssim_{C_3} 2^{O( \max(l,s_1,s_2) )} A_{\hat \Z}((\mathrm{M}_{\mathrm{HL}} F_\A)_x, (\mathrm{M}_{\mathrm{HL}} G_\A)_x ).$$
From the Hardy--Littlewood inequality and the Fubini--Tonelli theorem, it now suffices to establish the estimate
$$ \| A_{\hat \Z}(F,G) \|_{L^p(\A_\Z)} \lesssim
\|F\|_{L^{p_1}(\A_\Z)} \|G\|_{L^{p_2}(\A_\Z)}$$
for any $F \in L^{p_1}(\A_\Z)$, $G \in L^{p_2}(\A_\Z)$.  When $p \geq 1$ this follows from H\"older's inequality and the triangle inequality.  For $p < 1$ we can interpolate the $p \geq 1$ estimate with \eqref{Avg-prof} and conclude that
$$ \| A_{\hat \Z}(F,G) \|_{L^1(\A_\Z)} \lesssim
\|F\|_{L^{p_1}(\A_\Z)} \|G\|_{L^{p_2}(\A_\Z)}$$
for all $(\frac{1}{p_1},\frac{1}{p_2})$ sufficiently close to $(\frac{1}{2},\frac{1}{2})$, and the claim now follows from H\"older's inequality.
\end{proof}

\section{Unboundedness of quadratic variation}\label{quad-var}

In this section we show that the quadratic variation of polynomoial averages is unbounded in any Lebesgue space norm.  The counterexample already applies in the linear setting:

\begin{proposition}[Unboundedness of $V^2$]  Let $P(\nn) \in \Z[\nn]$ be a non-constant polynomial, and let $0 < p \leq \infty$.  Let $\I \subseteq \Z_+$ be an infinite set. Then for every $C>0$ there exists a measure-preserving system $(X,\mu,T)$ of total measure $1$ and $f \in L^\infty(X)$ with $\|f\|_{L^\infty(X)} \leq 1$ such that
$$ \| (A_{N,X}^{P(\nn)}(f))_{N \in \I} \|_{L^p(X; V^2)} > C.$$
\end{proposition}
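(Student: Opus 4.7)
My plan is to construct, for each $C > 0$, an explicit probability measure-preserving system and an $L^\infty$-bounded function whose ergodic averages have large quadratic variation in $L^p$. The guiding principle is the classical observation that the $V^2$ variational bound is the borderline endpoint of L\'epingle's inequality and already fails for sums of nearly orthogonal martingale increments: the quadratic variation of $J$ such increments grows like $\sqrt{J}$ while their amplitudes remain bounded. The task is therefore to emulate such a structure along a chosen subsequence of scales in $\I$ by means of a Rademacher randomization.

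Given $C > 0$ and $p$, I would fix a large integer $J = J(C,p,P)$, extract from $\I$ a super-lacunary sequence $N_1 < N_2 < \cdots < N_J$ (so that the polynomial values $P(n)$ at distinct scales are well separated), and take $X = \Z/Q\Z$ with normalized counting measure (total mass $1$) and the left shift $Tx=x-1$, where $Q$ is a large prime much larger than $P(N_J)$. For each $j$, I define a ``thickened'' polynomial orbit
\[
S_j = \{-P(n) + v \mod Q : n \in (N_j, N_{j+1}],\ v \in V_j\},
\]
where $V_j \subset \Z/Q\Z$ is a short interval chosen as large as possible so that the $S_j$'s remain mutually disjoint. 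Then for any signs $\epsilon \in \{-1,+1\}^J$ the function $f_\epsilon = \sum_{j=1}^J \epsilon_j \mathbf{1}_{S_j}$ satisfies $\|f_\epsilon\|_\infty = 1$. The key quantitative step is a direct counting argument showing that the diagonal increment $D_{j,j}(x) = A_{N_{j+1}}^{P(\nn)}\mathbf{1}_{S_j}(x) - A_{N_j}^{P(\nn)}\mathbf{1}_{S_j}(x)$ has magnitude $\geq \tfrac{1}{2}(1 - N_j/N_{j+1})$ for $x$ in a translate of $V_j$ (corresponding to the ``resonant'' diagonal pairs $n=m$), while the off-diagonal terms $D_{k,j}$ with $k \neq j$ are negligible in $L^2(X)$ by the super-lacunarity and the rarity of solutions to $P(n_1)-P(m_1)=P(n_2)-P(m_2)$ across distinct scales. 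Applying Khintchine's inequality to the Rademacher sum then yields
\[
\E_\epsilon\bigl\|V^2(A_N^{P(\nn)} f_\epsilon)\bigr\|_{L^p(X)}^p \gtrsim_p \int_X \Bigl(\sum_j D_{j,j}(x)^2\Bigr)^{p/2} d\mu_X \gtrsim J^{p/2}\cdot \frac{\sum_j |V_j|}{Q},
\]
and the probabilistic method extracts a deterministic choice $\epsilon^\ast$ achieving the same bound.

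The main obstacle lies in the tension between two parameter constraints: the modular requirement $Q \gtrsim_P N_J^d$ (forcing $Q$ large in order for the orbits $\{-P(n)\}$ to embed injectively in $X$) and the need for the ``resonance measure'' $\sum_j |V_j|/Q$ to remain bounded from below independent of $Q$. For $p = \infty$ the proposition is essentially immediate, since $V^2(A_N^{P(\nn)} f_{\epsilon^\ast})(0) \gtrsim \sqrt{J}$ from the pointwise calculation at the origin. For $p < \infty$, I would carefully optimize the lacunarity ratio of the extracted subsequence (as large as forced by $\I$, but exploiting that only the final scale enters the bound on $Q$) and the widths $|V_j|$ (bounded above by the minimum spacing $\min_{j_1\neq j_2}\min|P(n_1)-P(n_2)|$ of polynomial orbits at different scales) so that $J^{p/2}\cdot \sum_j|V_j|/Q$ exceeds $C^p$ for $J$ sufficiently large. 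A backup plan, should this parameter balance prove inadequate for some $\I$, would be to replace the cyclic group by a product-like system with a Rohlin tower of height $N_J$, on which the sequence $(A_{N_j}f)$ can be made to mimic a dyadic martingale with independent bounded increments, directly transferring the classical failure of $V^2$ for such martingales.
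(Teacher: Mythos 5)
Your primary cyclic-group construction has a genuine parameter gap, and it is essentially the one you flagged yourself. For the $J^{p/2}$ gain in the display
$\int_X \big(\sum_j D_{j,j}(x)^2\big)^{p/2}\,d\mu \gtrsim J^{p/2}\,\sum_j|V_j|/Q$
to be true you need the sets on which the $D_{j,j}$ are large to overlap substantially; if instead they are disjoint (which is what $S_j$ disjointness and the locality of $D_{j,j}$ tend to produce), the integral is merely $\sum_j |V_j|/Q$ with no $\sqrt{J}$ amplification. Forcing the $V_j$'s to be a common interval $V$ gives the overlap, but then $|V|$ is bounded by the minimum spacing of $P$ on the whole range of $n$ being used, and $Q$ must be $\gtrsim_P N_J^d$ for the orbit to embed in $\Z/Q\Z$. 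Even optimizing by starting the range at a large $N_1$ and extracting a minimally-growing subsequence from $\I$, one cannot rule out $\I$ being (super-)lacunary, in which case $N_J\ge \lambda^{J}N_1$ and the product $J^{p/2}\cdot |V|/Q$ tends to zero as $J\to\infty$ for any fixed $p<\infty$. So the explicit $\Z/Q\Z$ model does not close for, say, $p=1$, and the ``direct counting'' route as outlined fails.

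Your backup plan — build a system on which the $A_{N_j}$ really do reproduce a dyadic martingale filtration and then cite the classical failure of $V^2$ for martingales — is the right idea and is what the paper does, but the construction is worth supplying rather than gesturing at, since it is the whole content of the proof. The paper takes $X=\TT^K$ with $T(x_1,\dots,x_K)=(x_1+\alpha_1/N_1^{d+1},\dots,x_K+\alpha_K/N_K^{d+1})$ for $\Q$-linearly independent $\alpha_i$, and extracts $N_1<\dots<N_K$ from $\I$ with each $N_k$ huge relative to its predecessors. Then for $n\in[N_k]$ the coordinates $k,\dots,K$ barely move while by Weyl equidistribution the coordinates $1,\dots,k-1$ become equidistributed, so $A_{N_k}^{P(\nn)}f$ is within $\eps$ of the Fubini conditional expectation $\E_k f$ averaging out the first $k-1$ coordinates. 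A measure-preserving map $\pi:\TT^K\to[0,1)$ (reading off first binary digits) identifies $(\E_k)_k$ with the dyadic martingale projections $(\tilde\E_k)_k$, and letting $\eps\to 0$ then $K\to\infty$ yields $\|(\tilde\E_k f)_k\|_{L^p([0,1);V^2)}\lesssim_{C,p}\|f\|_\infty$, contradicting Jones--Wang. Note this is run as a proof by contradiction from the hypothetical bound, not as a direct construction, which neatly handles the uniform-in-system quantifier. What you would need to add to your backup sketch is precisely this torus-rotation mechanism (or an equivalent Rohlin-tower one) together with the Weyl equidistribution step that converts polynomial averages at well-separated scales into exact conditional expectations; without that, ``mimic a dyadic martingale'' is the statement of the lemma rather than its proof.
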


We remark that the case $p=2$ of this proposition (with $f$ controlled in $L^2$ rather than $L^\infty$) was established by the first author in \cite{Kr} (the argument there is given for $P(\nn)=\nn^2$, but extends easily to more general polynomials).  This result relied on a previous result of Lewko and Lewko \cite{LL} who in turn invoked a result of Jones and Wang \cite{jw}.  It turns out that by appealing to the latter results directly we can handle all values of $p$, answering \cite[Conjecture 1]{Kr} in the affirmative.

\begin{proof}  Suppose for contradiction that this were not the case, then we would have the variational inequality
\begin{equation}\label{anx-bound}
 \| (A_{N,X}^{P(\nn)}(f))_{N \in \I} \|_{L^p(X; V^2)} \leq C \|f\|_{L^\infty(X)}
 \end{equation}
for every measure-preserving system $(X,\mu,T)$ and every $f \in L^\infty(X)$.

We apply this inequality to the following multidimensional system in which the different components of the shift have radically different mixing times (so that the averages $A_{N,X}^{P(\nn)}$ behave like martingale expectation operators).  Set $X = \TT^K$ for some $K \in \Z_+$ with Haar probability measure $\mu$, and let $f \colon X \to \C$ be a smooth function.  Fix a sequence $\alpha_1,\alpha_2,\dots,\alpha_K$ of real numbers that are linearly independent over $\Q$ (e.g., one could take $\alpha_i \coloneqq \log p_i$ where $p_i$ is the $i^{\mathrm{th}}$ prime). Let $N_1 < \dots < N_K$ be distinct elements of $\I$, and consider the shift map
$$ T(x_1,\dots,x_K) \coloneqq \left(x_1 + \frac{\alpha_1}{N_1^{d+1}}, \dots, x_K + \frac{\alpha_K}{N_K^{d+1}}\right),$$
where $d$ is the degree of $P$.  Then for any $k \in [K]$, we have
$$ A_{N_k,\TT^K}^{P(\nn)} f(x_1,\dots,x_K) = \E_{n \in [N_k]} f\left( x_1 + \frac{\alpha_1 P(n)}{N_1^{d+1}}, \dots, x_K + \frac{\alpha_K P(n)}{N_K^{d+1}}\right).$$
Let $\eps>0$.  If we assume for each $k \in [K]$ that $N_k$ is sufficiently large depending on $\eps, N_1,\dots,N_{k-1}, d, P, f$, then we have
\begin{align*}
f\left( x_1 + \frac{\alpha_1 P(n)}{N_1^{d+1}}, \dots, x_K + \frac{\alpha_K P(n)}{N_K^{d+1}}\right)
= &f\left(x_1 + \frac{\alpha_1 P(n)}{N_1^{d+1}}, \dots, x_{k-1} + \frac{\alpha_{k-1} P(n)}{N_{k-1}^{d+1}}, x_k, \dots, x_K \right)\\
&+ O(\eps)    
\end{align*}
for all $n \in [N_k]$ and $(x_1,\dots,x_K) \in X$, and thus
$$ A_{N_k,\TT^K}^{P(\nn)} f(x_1,\dots,x_K) = 
\E_{n \in [N_k]} f\left(x_1 + \frac{\alpha_1 P(n)}{N_1^{d+1}}, \dots, x_{k-1} + \frac{\alpha_{k-1} P(n)}{N_{k-1}^{d+1}}, x_k, \dots, x_K \right) + O(\eps).
$$
Because $\alpha_1,\dots,\alpha_{k-1}$ are linearly independent, a standard application of the Weyl equidistribution theorem shows that the sequence
$$ \left( \frac{\alpha_1 P(n)}{N_1^{d+1}} \mod 1, \dots, \frac{\alpha_{k-1} P(n)}{N_{k-1}^{d+1}} \mod 1 \right)$$
is equidistributed over the torus $\TT^{k-1}$.  Thus, if $N_k$ is chosen large enough, we have
$$ A_{N_k,\TT^K}^{P(\nn)} f(x) = \E_k f(x) + O(\eps)$$
for all $k \in [K]$ and $x \in X$, where $\E_k f$ is the conditional expectation
$$ \E_k f(x_1,\dots,x_K) \coloneqq \int_{\TT^{k-1}} f(y_1,\dots,y_{k-1},x_k,\dots,x_K)\ dy_1 \dots dy_{k-1}.$$
Taking variations, we conclude that
$$ \| (A_{N_k,\TT^K}^{P(\nn)} - \E_k f)_{k \in [K]} \|_{L^p(\TT^K;V^2)} \lesssim_K \eps $$
which from \eqref{anx-bound} and the triangle inequality (or quasi--triangle inequality \eqref{quasi}) gives
$$ \| (\E_k f)_{k \in [K]} \|_{L^p(\TT^K;V^2)} \lesssim_{C,p} \|f\|_{L^\infty(\TT^K)} + O_K(\eps).$$
Sending $\eps \to 0$ (noting that the left-hand side does not depend on $\eps$-dependent quantities such as $N_1,\dots,N_K$), we conclude that
$$ \| (\E_k f)_{k \in [K]} \|_{L^p(\TT^K;V^2)} \lesssim_{C,p} \|f\|_{L^\infty(\TT^K)}.$$
for any smooth $f \in L^\infty(\TT^K)$.  Taking limits, we see that we can drop the hypothesis that $f$ is smooth.  

We now define a map $\pi \colon \TT^K \to [0,1)$ by the formula
$$ \pi( x_1 \mod 1, \dots, x_K \mod 1 ) \coloneqq
\sum_{k \in [K]} \frac{\lfloor 2x_k\rfloor}{2^{K-k+1}}$$
for $x_1,\dots,x_K \in [0,1)$.
It is not difficult to see that $\pi$ pushes forward Haar measure on $\TT^K$ to Lebesgue measure on $[0,1)$, and furthermore if $\tilde f\in L^\infty([0,1))$ then
$$ \E_k(\tilde f \circ \pi) = (\tilde \E_k \tilde f) \circ \pi$$
almost everywhere on $\TT^K$, where $\tilde \E_k$ are the martingale projections
$$ \tilde \E_k \tilde f(x) \coloneqq 2^k \int_{(j-1)/2^k}^{j/2^k} f(y)\ dy$$
whenever $j \in [2^k]$ and $x \in [(j-1)/2^k, j/2^k)$.  From this we conclude that
$$ \| (\tilde \E_k \tilde f)_{k \in [K]} \|_{L^p([0,1);V^2)} \lesssim_{C,p} \|\tilde f\|_{L^\infty([0,1))}$$
for all $K \in \N$ and $f \in L^\infty([0,1))$.  Taking $K \to \infty$ and using monotone convergence, we conclude that
$$ \| (\tilde \E_k \tilde f)_{k \in \N} \|_{L^p([0,1);V^2)} \lesssim_{C,p} \|\tilde f\|_{L^\infty([0,1))}.$$
But this contradicts \cite[Proposition 8.1]{jw}.
\end{proof}

\begin{remark}  By considering a suitable product system, one can then construct a single measure-preserving system $(X,\mu,T)$ of total measure $1$ such that the vector-valued operator $f \mapsto (A_N^{P(\nn)}(f))_{N \in \I}$ is unbounded from $L^2(X)$ to $L^p(X;V^2)$.  It is likely that one can sharpen the construction further to find a single $f \in L^2(X)$ for which $\| (A_N^{P(\nn)}(f))_{N \in \I} \|_{V^2}=+\infty$ almost everywhere, but we will not do so here.
\end{remark}

By setting all but one function equal to the constant function $1$, and using the monotonicity of variational norms and $L^p$ norms, we obtain

\begin{corollary}[Failure of variational estimate for $r\leq 2$]\label{r-counter}    Let $P_1,\dots,P_k \in \Z[\nn]$ be polynomials, not all constant, let $0 < p_1,\dots,p_k,p \leq \infty$ and $0 < r \leq 2$.  Let $\I \subseteq \Z_+$ be an infinite set.  Then there does not exist any constant $C>0$ for which one has the estimate
$$ \| (A_{N,X}^{P_1(\nn),\dots,P_k(\nn)}(f_1,\dots,f_k))_{N \in I} \|_{L^p(X; V^r)} \leq C \|f_1\|_{L^{p_1}(X)} \dots \|f_k\|_{L^{p_k}(X)}$$
for all measure-preserving systems $X = (X,\mu,T)$ of total mass one, and all $f_1 \in L^{p_1}(X), \dots, f_k \in L^{p_k}(X)$.
\end{corollary}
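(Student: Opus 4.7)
The plan is a direct reduction to the preceding Proposition, which asserts that the $V^2$ variational estimate fails for linear polynomial averages $A^{P(\nn)}_N$ even when the single input function is controlled in $L^\infty(X)$. The strategy is to specialize the hypothetical multilinear $V^r$ estimate (for some $r \leq 2$) to the case where all but one of the input functions is the constant function $1$, and then exploit two forms of monotonicity: the inclusion $V^2 \leq V^r$ for $r \leq 2$, and the bound $\|f\|_{L^{p_1}(X)} \leq \|f\|_{L^\infty(X)}$ on a probability space.

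In more detail: by relabeling indices, I may assume without loss of generality that $P_1$ is non-constant. Suppose for contradiction that there exists $C>0$ such that the claimed estimate holds for every measure-preserving system of total mass one and all choices of $f_i \in L^{p_i}(X)$. Specialize to $f_2 = \dots = f_k = 1$, and note that these constant functions lie in $L^{p_i}(X)$ with norm $\mu(X)^{1/p_i} = 1$ (interpreted as $1$ when $p_i=\infty$). Since $A_{N,X}^{P_1(\nn),\dots,P_k(\nn)}(f_1,1,\dots,1) = A_{N,X}^{P_1(\nn)}(f_1)$, the hypothesis reduces to
\[
\| (A_{N,X}^{P_1(\nn)}(f_1))_{N \in \I} \|_{L^p(X; V^r)} \leq C \|f_1\|_{L^{p_1}(X)}
\]
for every probability measure-preserving system $(X,\mu,T)$ and every $f_1 \in L^{p_1}(X)$.

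Now I apply the two monotonicity observations. First, the standard $\ell^r$ inclusion gives $\|b\|_{\ell^{r'}} \leq \|b\|_{\ell^r}$ for $0 < r \leq r'$, applied to the sequence of successive differences appearing in \eqref{var-seminorm}; hence $V^2 \leq V^r$ pointwise whenever $r \leq 2$, and likewise for $L^p(X;V^r)$ norms. Second, for any $f_1 \in L^\infty(X)$ with $\|f_1\|_{L^\infty(X)} \leq 1$, the fact that $\mu$ is a probability measure gives $\|f_1\|_{L^{p_1}(X)} \leq \|f_1\|_{L^\infty(X)} \leq 1$. Combining these yields
\[
\| (A_{N,X}^{P_1(\nn)}(f_1))_{N \in \I} \|_{L^p(X; V^2)} \leq \| (A_{N,X}^{P_1(\nn)}(f_1))_{N \in \I} \|_{L^p(X; V^r)} \leq C,
\]
uniformly over all probability measure-preserving systems $(X,\mu,T)$ and all $f_1 \in L^\infty(X)$ with $\|f_1\|_{L^\infty(X)} \leq 1$.

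This directly contradicts the preceding Proposition (applied to the non-constant polynomial $P_1$ and the given infinite set $\I$, with the constant $C$), which guarantees a probability system and an $L^\infty$-unit-ball function for which the above $V^2$ norm exceeds $C$. There is no real obstacle here beyond bookkeeping: the whole content has been offloaded to the linear $V^2$ Proposition. The one point that warrants a line of care is the verification that the constant function $1$ has unit norm in $L^{p_i}(X)$ and that $L^\infty(X) \hookrightarrow L^{p_1}(X)$ contractively, both of which rely on the standing normalization $\mu(X)=1$.
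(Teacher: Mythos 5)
Your proof is correct and matches the paper's argument exactly: the paper derives the corollary by setting all but one input function equal to $1$ and invoking monotonicity of the variational and $L^p$ norms (on a probability space) to reduce to the preceding $V^2$ unboundedness proposition. You have simply written out in full the bookkeeping that the paper leaves implicit.
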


Applying Proposition \ref{transf} in the contrapositive, we see that we similarly obtain a counterexample for the integer shift system in the H\"older exponent case $\frac{1}{p_1} + \dots + \frac{1}{p_k} = \frac{1}{p}$, and we can replace $A_N$ by $\tilde A_N$ in the Banach exponent case $p_1, \dots, p_k \geq 1$.

\appendix

\section{Ionescu--Wainger theory}\label{iw-app}

In this appendix we review some number-theoretic and Fourier-analytic constructions of Ionescu and Wainger \cite{IW} that allow one to apply Fourier projections to ``major arcs'' with good multiplier estimates.  See also \cite{M1}, \cite{MSZ3} for further development of the Ionescu--Wainger theory, and \cite{Pierce} for a recent discussion of the role of superorthogonality in that theory.  We will loosely follow the presentation in \cite{MSZ3}.  A new notational innovation is the introduction of the notion of the height $\Height(\alpha)$ of a profinite frequency $\alpha \in \Q/\Z$.

Throughout this appendix we fix a small quantity $\rho>0$ (in the main paper it is set by the formula \eqref{rho-def}). Let $C^0_\rho$ be a sufficiently large quantity depending on $\rho$.  If $l \leq C^0_\rho$, we define
$$ P_{\leq l} \coloneqq [2^l].$$
For $l > C^0_\rho$, we define $P_{\leq l}$ differently.  We first  define the natural
number
\[
D = D_{\rho} \coloneqq \lfloor 2/\rho \rfloor + 1,
\]
and for any natural number $l\in\N$, set
\begin{align*}
N_0 = N_0^{(l)} \coloneqq \lfloor 2^{\rho l/2} \rfloor + 1,
\quad \text{ and } \quad
Q_0 = Q_0^{(l)} \coloneqq (N_0!)^D.
\end{align*}
Then for $l > C_\rho^0$, we define the set
\begin{align*}
P_{\leq l} \coloneqq \big\{q = Qw: Q|Q_0 \text{ and } w\in W_{\le l}\cup\{1\}\big\},
\end{align*}
where
\begin{align*}
W_{\leq l} \coloneqq \bigcup_{k\in[D]}\bigcup_{(\gamma_1,\dots,\gamma_k)\in[D]^k}\big\{p_1^{\gamma_1} \cdots p_k^{\gamma_k}\colon
 p_1,\ldots, p_k\in(N_0^{(l)}, 2^l]\cap\PP \text{ are distinct}\big\}.
\end{align*}
In other words $W_{\leq l}$ is the set of all products of prime
factors from $(N_0^{(l)}, 2^l]\cap\PP$ of length at most $D$, with exponents between $1$ and $D$.

We observe that (for $C^0_\rho$ large enough) one has 
\begin{equation}\label{2l-include}
[2^l] \subset P_{\leq l}
\end{equation}
for all $l$.  This is trivial for $l \leq C^0_\rho$.  Now suppose that $l > C_\rho^0$ and $q \in [2^l]$.  Observe that there are at most $D$ primes larger than $N_0$ that can divide $q$, and each such prime can divide $q$ at most $D$ times, so the product of all these primes (with multiplicity) lies in $W_{\leq l} \cup \{1\}$.  By the fundamental theorem of arithmetic, the claim will now follow if one can show that $p^j | Q_0$ whenever $p \leq N_0$ and $p^j | q$. Since $j \leq \frac{\log q}{\log p} \leq \frac{l}{\log p}$ (recall our convention that $\log$ is to base $2$), and $p$ divides $N_0!$ at least $\lfloor \frac{N_0}{p}\rfloor$ times, it suffices to establish the inequality
$$ \frac{l}{\log p} \leq D \left\lfloor \frac{N_0}{p}\right\rfloor.$$
Since $D > \frac{2}{\rho} \geq \frac{l}{\log N_0}$, it suffices to show that
$$ \frac{\log N_0}{\log p} \leq (1+\eps_\rho) \left\lfloor \frac{N_0}{p} \right\rfloor$$
for $2 \leq p \leq N_0$, where $\eps_\rho$ is the positive quantity $\eps_\rho \coloneqq \frac{\rho D}{2}-1$. If we set $n \coloneqq \lfloor \frac{N_0}{p} \rfloor$, then $n \in [N_0/2]$ and $\frac{\log N_0}{\log p} \leq \frac{\log N_0}{\log N_0 - \log(n+1)}$, so after some rearranging we reduce to showing that
$$ \log(n+1) \leq \left(1 - \frac{1}{(1+\eps_\rho) n}\right) \log N_0$$
for all $n \in [N_0/2]$.  But this can be easily checked if $C^0_\rho$ (and hence $N_0$) is sufficiently large depending on $\rho$ (one can for instance check the cases $1 \leq n \leq N_0^{1/2}$ and $N_0^{1/2} < n \leq N_0/2$ separately).

We now see that the $P_{\leq l}$ are non-decreasing in $l$ with $\bigcup_{l \in \N} P_{\leq l} = \Z_+$.  We can therefore define the Ionescu--Wainger height $\Height(\alpha) = \Height_\rho(\alpha)$ of an arithmetic frequency $\frac{a}{q} \mod 1$, with $q \in \Z_+$ and $a \in [q]^\times$, by the formula
$$ \Height\left(\frac{a}{q} \mod 1\right) \coloneqq \inf \{ 2^l: l \in \N, q \in P_{\leq l} \}.$$

Now we prove Lemma \ref{mag-lem}.  The claim (i) is immediate from \eqref{2l-include}, with the final claim concerning $\frac{1}{p}\Z/\Z$ following from direct inspection of definitions.  For the first part of (ii) we observe that
\begin{equation}\label{qq}
(\Q/\Z)_{\leq l} = \bigcup_{q \in P_{\leq l}} \frac{1}{q}\Z/\Z
\end{equation}
so it suffices to show that $q \lesssim_\rho 2^{2^{\rho l}}$ for all $q \in P_{\leq l}$.  For $l > C_\rho^0$, we have from definition that
$$ q \leq Q_0 (2^l)^{D^2} \leq N_0^{DN_0} 2^{D^2 l}
\lesssim_\rho 2^{\rho^{-1} 2^{\rho l/2} + \rho^{-2} l}$$
giving the claim; in fact we obtain the slightly sharper bound
\begin{equation}\label{slightly-sharper}
q \lesssim_\rho 2^{O_\rho(2^{\rho l/2})}.
\end{equation}
For the second claim, we need to show that
\begin{align}\label{def:Q_l}
Q_{\leq l} \coloneqq \mathrm{lcm}(q \in \Z_+: q \in P_{\leq l} ) \lesssim_\rho 2^{O(2^l)}.    
\end{align}
The claim is trivial for $l \leq C^0_\rho$.  For $l>C^0_\rho$ we have
$$  \mathrm{lcm}(q \in \Z_+:  q \in P_{\leq l} ) = Q_0 \prod_{p \in (N_0^{(l)}, 2^l]\cap\PP} p^D.$$
From Mertens' theorem we have
$$  \prod_{p \in (N_0^{(l)}, 2^l]\cap\PP} p \lesssim 2^{O(2^l)}$$
and 
$$ Q_0 \leq N_0^{DN_0} \lesssim 2^{O_\rho(2^{\rho l})}$$
giving the claim. The claim (iii) follows from \eqref{qq} and \eqref{slightly-sharper}. This proves Lemma \ref{mag-lem}.

To establish  Theorem \ref{thm:iw}, we observe from Lemma \ref{mag-lem}(ii) and \eqref{slightly-sharper} that the elements of $(\Q/\Z)_{\leq l}$ are separated from each other by $\gtrsim_\rho 2^{-O_\rho(2^{\rho l/2})}$, giving the non-aliasing claim.  The claim \eqref{iw-mult} follows\footnote{The factor $\langle l\rangle$ in this theorem was recently removed in \cite{T}.} from \cite[Theorem 2.1]{MSZ3} (specialized to the one-dimensional case); various special cases of this theorem were previously established in \cite{IW}, see also Remark \ref{rem:1}(i). Note that on the right-hand side one can use the scalar norm rather than the vector-valued norm thanks to the Marcinkiewicz--Zygmund inequality (or Khintchine's inequality).  Finally, the claim for the multipliers \eqref{tlphi-def-2} follows from \eqref{iw-mult} and the triangle inequality.

Now we prove Lemma \ref{iw-prop}.  The Fourier support properties are clear from inspection and the disjointness of the individual major arcs.  The contraction property on $\ell^2$ follows from Plancherel's theorem because the symbol $\Proj \eta_{\leq k}$ is bounded pointwise by $1$.  To obtain the bound \eqref{pilk}, by interpolation we may assume that $q$ is either an even integer or the dual of an even integer.  Then it suffices from Theorem \ref{thm:iw} to establish the bound
\begin{equation}\label{easy}
\| \T_{\eta_{\leq k}} \|_{L^q(\R) \to L^q(\R)} \lesssim_q 1.
\end{equation}
But this follows from Lemma \ref{crude-mult} (with $r = 2^k$).

Finally we establish \eqref{off-decay}.  It suffices to establish the bound 
$$ \| \Pi_{\leq l, \leq k} f \|_{\ell^q(\{ n \in \Z: \mathrm{dist}(n,I) > 2^{m-k} \})} \lesssim_M 2^{-Mm} \|f\|_{\ell^q(I)}$$
for any $m \in \Z_+$.  By interpolation we may assume $q$ is either an even integer or the dual of an even integer.  By adjusting constants in the definition \eqref{m-small} of good major arcs if necessary we may assume that
$$ k \leq -2 v,$$
where
$$ v \coloneqq \lfloor C_\rho 2^{\rho l}\rfloor.$$
We split
$$ \eta_{\leq k} := \eta_{\leq k}^{(1)} + \eta_{\leq k}^{(2)} + \eta_{\leq k}^{(3)},$$
where $\eta_{\leq k}^{(1)}, \eta_{\leq k}^{(2)}, \eta_{\leq k}^{(3)} \in \Schwartz(\R)$ are the functions
\begin{align*}
    \eta_{\leq k}^{(1)} &:= \F_\R( \eta_{\leq m-k} \F_\R^{-1} \eta_{\leq k} ) \\
    \eta_{\leq k}^{(2)} &:= \F_\R( (1-\eta_{\leq m-k}) \F_\R^{-1} \eta_{\leq k} ) \eta_{\leq -v}\\
    \eta_{\leq k}^{(3)} &:= \F_\R( (1-\eta_{\leq m-k}) \F_\R^{-1} \eta_{\leq k} ) (1-\eta_{\leq -v}) \\
    &= - \F_\R( \eta_{\leq m-k} \F_\R^{-1} \eta_{\leq k} ) (1-\eta_{\leq -v})
\end{align*}
We can then decompose
$$ \Pi_{\leq l,\leq k} f = \T^{\leq l}_{\eta_{\leq k}} f = \T^{\leq l}_{\eta_{\leq k}^{(1)}} f + \T^{\leq l}_{\eta_{\leq k}^{(2)}} f + \T^{\leq l}_{\eta_{\leq k}^{(3)}} f.$$
Observe that the inverse Fourier transform of $\eta_{\leq k}^{(1)}$ is supported in $[-2^{m-k}, 2^{m-k}]$, and hence  $\T^{\leq l}_{\eta_{\leq k}^{(1)}} f$ vanishes on the region $\{ n \in \Z: \mathrm{dist}(n,I) > 2^{m-k}\}$.  For $\eta_{\leq k}^{(2)}$, we use Theorem \ref{thm:iw} (and the fact that $(k,-v)$ has good major arcs), \eqref{easy}, Young's inequality, and a rescaling to bound
\begin{align*}
    \| \T^{\leq l}_{\eta_{\leq k}^{(2)}} f \|_{\ell^q(\Z)} &\lesssim_q \langle l \rangle \| \T_{\eta_{\leq k}^{(2)}} \|_{L^q(\R) \to L^q(\R)} \|f\|_{\ell^q(I)} \\
    &\lesssim_q \langle l \rangle \| \T_{\F_\R( (1-\eta_{\leq m-k}) \F_\R^{-1} \eta_{\leq k} )} \|_{L^q(\R) \to L^q(\R)} 
 \|f\|_{\ell^q(I)} \\
    &\lesssim_q \langle l \rangle \|(1-\eta_{\leq m-k}) \F_\R^{-1} \eta_{\leq k} \|_{L^1(\R)} 
 \|f\|_{\ell^q(I)} \\
    &\lesssim_q \langle l \rangle \| \F_\R^{-1} \eta \|_{L^1(\R \backslash [-2^{m-1}, 2^{m-1}])} 
 \|f\|_{\ell^q(I)} 
\end{align*}
and hence the contribution of this term is acceptable by the rapid decrease of $\F_\R^{-1} \eta$.

Finally, for $\eta_{\leq k}^{(3)}$ we use Lemma \ref{crude-mult}(i) (with $r = 2^{k}$) and Lemma \ref{mag-lem}(iii) to bound
$$
\Big\| \T^{\leq l}_{\eta_{\leq k}^{(3)}} f \Big\|_{\ell^q(\Z)} \lesssim_{C_1,q} 2^{O(2^{\rho l})} \|f\|_{\ell^q(I)} \sup_{0 \leq j \leq 2} \int_\R 2^{k(1-j)} \left|\frac{d^j}{d \xi^j} \eta_{\leq k}^{(3)}(\xi)\right|\ d\xi.
$$
Direct calculation using the rapid decay of $\F_\R \eta$ shows that
$$ \int_\R 2^{k(1-j)} \bigg|\frac{d^j}{d \xi^j} \eta_{\leq k}^{(3)}(\xi)\bigg|\ d\xi \lesssim_M 2^{M(k-m+v)} \lesssim 2^{-Mm} 2^{-MC_\rho 2^{\rho l}}$$
and hence the contribution of this term is also acceptable (taking $C_\rho$ large enough).  This concludes the proof of Lemma \ref{iw-prop}.

\section{Shifted Calder\'on--Zygmund theory}\label{shift-app}

In this appendix we review some standard shifted Calder\'on--Zygmund estimates, of the sort that appear for instance in \cite[Lemma 4.8, pp. 346]{Lie2}.  For our appications we will need a vector-valued version of these estimates.

\begin{theorem}[Shifted Calder\'on--Zygmund estimates]\label{cz-shift}  Let $\D$ be a finite $\lambda$-lacunary set for some $\lambda > 1$, and let $A > 0$, $C>0$, $d \geq 1$, and $K \geq 1$.  For each $N \in \D$, let
 $\varphi_N \in \Schwartz(\R)$ be a function of the form
$$ \varphi_N(\xi) \coloneqq \psi(AN^d \xi) e(\lambda_N AN^d \xi)$$
for some $\lambda_N \in [-2^K,2^K]$, where $\psi \in \Schwartz(\R)$ vanishes at the origin and is supported on $[-C,C]$ for some $C>0$, obeying the derivative estimates
$$ \left|\frac{d^j}{d\xi^j} \psi(\xi)\right| \leq C$$
for all $j=0,1,2$ and $\xi \in \R$.  Then for any $1 < p < \infty$ and any separable Hilbert space $(H, \|\cdot\|_H)$, one has
\begin{equation}\label{form}
\| \T_{\sum_{N \in \D} \epsilon_N \varphi_N} \|_{L^p(\R;H) \to L^p(\R; H)} \lesssim_{C,\lambda,d,p} K
\end{equation}
for any complex numbers $\epsilon_N, N \in \D$ with $|\epsilon_N| \leq 1$; in particular, by Khintchine's inequality
$$ \| (\T_{\varphi_N})_{N \in \D} \|_{L^p(\R;H) \to L^p(\R; \ell^2(\D;H))} \lesssim_{C,\lambda,d,p} K.$$
\end{theorem}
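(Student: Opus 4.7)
The plan is to interpret $T := \T_{\sum_N \epsilon_N \varphi_N}$ as a scalar-valued convolution operator and apply the vector-valued Calder\'on--Zygmund theorem. Two ingredients are needed: an $L^2(\R)$ bound with constant $O(1)$, independent of $K$; and a H\"ormander-type kernel regularity estimate with constant $\lesssim K$. Once these are established, the extension to $H$-valued functions is routine since the symbol is scalar and $H$ is a separable Hilbert space; the final square-function statement then follows from Khinchine's inequality applied to independent Rademacher signs $\epsilon_N$.

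The $L^2$ bound reduces by Plancherel to an $L^\infty$ estimate on the symbol $\sum_N \epsilon_N \varphi_N$. The hypothesis that $\psi$ vanishes at the origin, combined with $\|\psi'\|_\infty \leq C$ and $\supp \psi \subset [-C,C]$, gives $|\psi(\eta)| \lesssim_C \min(|\eta|,1)\,\ind{|\eta|\leq C}$, whence $|\varphi_N(\xi)| \lesssim \min(AN^d|\xi|,1)\,\ind{AN^d|\xi|\leq C}$; for any fixed $\xi \neq 0$ this is a geometric series in the $\lambda$-lacunary variable $AN^d|\xi|$ and sums to $O(1)$ uniformly in $\xi$.

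For the H\"ormander condition, first compute the convolution kernel
\[
K_N(x) = \F_\R^{-1}\varphi_N(x) = r_N^{-1}\,\Psi(\lambda_N - x/r_N), \qquad r_N := AN^d, \quad \Psi := \F_\R \psi,
\]
so that $\Psi \in \Schwartz(\R)$ with $\int \Psi = \psi(0) = 0$, $\|K_N\|_{L^1(\R)} \lesssim 1$, and $K_N$ is essentially concentrated on a ball of radius $r_N$ centred at $\lambda_N r_N$. Setting $K_\D := \sum_N \epsilon_N K_N$, the goal is to prove
\[
\int_{|x| > 2|y|}|K_\D(x - y) - K_\D(x)|\,dx \lesssim K
\]
uniformly in $y \neq 0$. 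I will split the sum over $N$ into three regimes. For $r_N > 2|y|$, the mean value theorem and rapid decay of $\Psi$ give $\int|K_N(x - y) - K_N(x)|\,dx \lesssim |y|/r_N$, which sums geometrically to $O(1)$. For $r_N < |y|\cdot 2^{-K}$, the essential support of $K_N$ is contained in $\{|x| \leq (|\lambda_N|+1)r_N\} \subset \{|x| \leq 2|y|\}$, so the contribution from $|x| > 2|y|$ comes only from the Schwartz tail and also sums to $O(1)$. The remaining range $r_N \in [|y|\cdot 2^{-K}, 2|y|]$ spans $O(K)$ dyadic scales and hence, by lacunarity of $\D$, contains $O(K)$ elements of $\D$, each contributing at most $2\|K_N\|_{L^1(\R)} \lesssim 1$; their total contribution is $O(K)$.

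The main obstacle is the geometric bookkeeping in the intermediate regime: one must exploit $|\lambda_N| \leq 2^K$ to show that only $O(K)$ shifted kernels interact non-negligibly with the region $|x| > 2|y|$. This is precisely where the $K$-loss enters and reflects the genuinely shifted nature of the multipliers; without the shifts (i.e., $\lambda_N \equiv 0$), all three cases would give $O(1)$ and one would recover a classical Marcinkiewicz-type estimate.
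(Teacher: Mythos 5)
Your proof is correct and follows essentially the same route as the paper: an $L^2$ bound via Plancherel and lacunary summation, followed by a Calder\'on--Zygmund argument in which the scales $N$ are split into a coarse regime (handled by the mean-value theorem), a fine regime (handled by the shifted Schwartz decay of $\Psi$ together with $|\lambda_N|\le 2^K$), and a medium regime of $O_\lambda(K)$ scales where one simply pays $\|K_N\|_{L^1}\lesssim 1$ per term. The only cosmetic difference is that you invoke the H\"ormander integral condition as a black box, whereas the paper carries out the vector-valued CZ decomposition explicitly and exploits the mean-zero of the bad functions $b_I$; the three-regime decomposition and the location of the $K$-loss are identical.
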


\begin{proof} (Sketch) Let $\varphi:=\sum_{N \in \D} \epsilon_N \varphi_N$. From the hypotheses on $\psi$ one has the bound
$$ |\psi(\xi)| \lesssim_C |\xi| \ind{|\xi| \leq C}$$
and hence from the triangle inequality one has $\| \varphi \|_{L^\infty(\R)} \lesssim_{C,\lambda} 1$.  The $p=2$ case of the theorem then follows from Plancherel's theorem.  By duality it then suffices to establish the $1 < p < 2$ case, and by Marcinkiewicz interpolation it suffices to prove the weak-type $(1,1)$ bound
$$\left|\left \{ x \in \R:  \|\T_{\varphi} f(x)\|_H \geq \alpha \right\}\right| \lesssim_{C,\lambda,d} \frac{K}{\alpha} \|f\|_{L^1(\R;H)}$$
for $f \in L^1(\R;H)$ and $\alpha>0$.  We perform a vector-valued Calder\'on--Zygmund decomposition $f=g+\sum_{I \in \mathcal D} b_I$, where $\|g\|_{L^2(\R;H)}^2 \lesssim \|f\|_{L^1(\R;H)} \alpha$, $I$ ranges over a collection of dyadic intervals $\mathcal D$ with
$$ \sum_{I\in \mathcal D}  |I| \lesssim_{C,\lambda} \frac{1}{\alpha} \|f\|_{L^1(\R;H)},$$
and $b_I \in L^1(\R;H)$ is supported on $I$ with mean zero and 
\begin{equation}\label{bio}
\| b_I \|_{L^1(\R;H)} \lesssim |I|.
\end{equation}
By the previous inequality it suffices to prove
$$\left|\left \{ x \in \Big(\bigcup_{I\in \mathcal D} 100I\Big)^c:  \|\T_{\varphi} f(x)\|_H \geq \alpha \right\}\right| \lesssim_{C,\lambda,d} \frac{K}{\alpha} \|f\|_{L^1(\R;H)},$$
where $aI$ is the interval centered at $I$ of $a>0$ times the length.
By the triangle inequality and Markov's inequality, it thus suffices to show that
\begin{equation}\label{romeg}
 \int_{(100I)^c} \|\T_\varphi b_I(x)\|_H\ dx \lesssim_{C,\lambda} K |I|
\end{equation}
for each $I\in \mathcal D$.  We may expand
$$ \T_\varphi b_I(x) = \sum_{N \in \D} \epsilon_N \int_{\R} (AN^d)^{-1}\F_\R^{-1} \psi\bigg(\frac{x-y-\lambda_NAN^d}{AN^d}\bigg) b_I(y)\ dy.$$
We may assume that $I\in\mathcal D$ is centered at the origin, and 
exploiting the fact that $b_I$ has mean zero we may dominate the left-hand side of \eqref{romeg} by
$$
\sum_{N \in \D} \int_{(100I)^c}  \int_I \frac{1}{AN^d} \bigg|\F_\R^{-1}\psi\bigg(\frac{x-\lambda_NAN^d-y}{AN^d}\bigg)-\F_\R^{-1}\psi\bigg(\frac{x-\lambda_NAN^d}{AN^d}\bigg)\bigg| \|b_I(y)\|_H\ dydx.
$$
So by \eqref{bio} it suffices to show that
$$
\sum_{N \in \D}  \int_{(100I)^c} \frac{1}{AN^d} \bigg|\F_\R^{-1}\psi\bigg(\frac{x-\lambda_NAN^d-y}{AN^d}\bigg)-\F_\R^{-1}\psi\bigg(\frac{x-\lambda_NAN^d}{AN^d}\bigg)\bigg| dx \lesssim_{C,\lambda} K
$$
for all $y \in I$.

Fix $y,I$.  We perform a partition 
$$ \D = \D_{\mathrm{low}} \cup \D_{\mathrm{medium}} \cup \D_{\mathrm{high}}$$
where $\D_{\mathrm{low}}$ consists of those spatial scales $N \in \D$ that are ``low frequency'' (or ``coarse scale'') in the sense that $|I| \leq A N^d$, $\D_{\mathrm{medium}}$ consists of those spatial scales $N \in \D$ that are ``medium frequency'' (or ``medium scale'') in the sense that $\lambda_N^{-1} |I| \leq A N^d < |I|$, and $\D_{\mathrm{high}}$ consists of those spatial scales $N \in \D$ that are ``high frequency'' (or ``fine scale'') in the sense that $A N^d < \lambda_N^{-1} |I|$.

The expression
  $$ \F_\R^{-1}\psi\bigg(\frac{x-\lambda_NAN^d-y}{AN^d}\bigg)-\F_\R^{-1}\psi\bigg(\frac{x-\lambda_NAN^d}{AN^d}\bigg) $$
 can be bounded by $O_C( \langle \frac{x}{AN^d} \rangle^{-2} )$ in the high-frequency case $N \in \D_{\mathrm{high}}$ from the triangle inequality and the hypotheses $y \in I$, $x \in (100I)^c$, by $O_C( \langle \frac{x}{AN^d} - \lambda_N \rangle^{-2} )$ in the medium-frequency case $N \in \D_{\mathrm{medium}}$ from the triangle inequality alone, and by $O_C( \frac{|I|}{A N^d} \langle \frac{x}{AN^d} - \lambda_N \rangle^{-2} )$ in the low-frequency case using the mean-value theorem.  The claim then follows from direct computation and the hypothesis $|\lambda_N| \leq 2^K$.
\end{proof}

\section{Concentration estimates on polynomials}
\label{sec:app1}

In this appendix we work in a $p$-adic field $\QQ_p = \bigcup_{n \in \N} p^{-j} \Z_p$ for $p\in\PP$, although much
of the discussion here would also extend with minor changes to the
real numbers $\R$ or (after adjusting some exponents by factors of two) the
complex numbers $\C$, and the reader may wish to work with the real case
first to build intuition.  We have a norm on the $p$-adics defined by
$|x| \coloneqq p^{-\nu_p(x)}$, where $\nu_p$ is the usual $p$-valuation (with the
usual convention $|0|=0$), as well as a Haar measure $\mu_{\QQ_p}$
on $\QQ_p$ with the following properties for any $x,y \in \QQ_p$ and
$r \in p^\Z \coloneqq \{p^n:n\in\Z\}$:
\begin{enumerate}[label*={(\roman*)}]
\item (ultratriangle inequality) $|x+y| \leq \max(|x|, |y|)$.
\item (multiplicativity) $|xy| = |x| |y|$.
\item (nondegeneracy) $|x| \geq 0$, with equality if and only if $x=0$.
\item (dimension one) $\mu_{\QQ_p}(B(x,r))= r$, where $B(x,r) \coloneqq  \{ y \in \QQ_p: |y-x| \leq r\}$ is the usual ball. 
\end{enumerate}

Note that if $P$ is a polynomial with coefficients in $\QQ_p$, thus
\[
 P(x) = a_d x^d + \dots + a_1x+a_0
\]
for some $a_d,\dots,a_0 \in \QQ_p$, one can define the derivative $P'$
algebraically by the usual formula
\[
 P'(x) \coloneqq d a_d x^{d-1} + \dots + 2a_2x+ a_1.
\]

We then have the following basic estimates on the distribution of $p$-adic polynomials.

\begin{proposition}[Distribution of $p$-adic polynomials]\label{prop:zero} Let
$P(x) = a_d x^d + \dots + a_0$ be a polynomial of degree $d\ge1$ with
coefficients in $\QQ_p$.  Let $r \in p^\Z$, and
let $\Omega$ be the level set
$$ \Omega \coloneqq \{ x \in \QQ_p: |P(x)| \leq r \}.$$
\begin{enumerate}[label*={(\roman*)}]
\item[(i)] (Bernstein inequality) One can cover $\Omega$ by $O_d(1)$ balls
$B$, such that on each ball $B$ one has
\[
 \sup_{x \in B} |P'(x)| \lesssim_d \frac{r}{\mu_{\QQ_p}(B)}.
\]
\item[(ii)]  (Van der Corput estimate) We have
\[
 \mu_{\QQ_p}(\Omega) \lesssim_d \bigg(\frac{r}{|a_d|}\bigg)^{1/d}.
\]
In fact $\Omega$ is covered by $O_d(1)$ balls of radius $\big(\frac{r}{|a_d|}\big)^{1/d}$.
\item[(iii)]  (Distributional estimate) If $d \geq 2$, and $f \colon \QQ_p \to [0,+\infty)$ is the function
\[
 f(y) \coloneqq \frac{1}{r} \mu_{\QQ_p}(\{ x \in \QQ_p: |P(x) - y| \leq r \}),
\]
then
\[
\mu_{\QQ_p}(\{ y \in \QQ_p: f(y) \geq \lambda \}) \lesssim_d \lambda^{-\frac{d}{d-1}} |a_d|^{-\frac{1}{d-1}}.
\]
\end{enumerate}
\end{proposition}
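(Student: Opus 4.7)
The plan is to prove (ii) first via Lagrange interpolation, then derive the Bernstein bound (i) through a maximal-ball Taylor expansion argument, and finally combine (i) with (ii) applied to both $P$ and $P'$ to deduce (iii).

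For (ii), suppose $x_0,\dots,x_d \in \Omega$ are pairwise $\delta$-separated with $\delta \in p^{\Z}$. Expressing the leading coefficient as a divided difference,
\[
a_d = \sum_{i=0}^d \frac{P(x_i)}{\prod_{j\neq i}(x_i-x_j)},
\]
the ultratriangle inequality together with multiplicativity yields $|a_d| \leq r/\delta^d$. Hence any $(r/|a_d|)^{1/d}$-separated subset of $\Omega$ has cardinality at most $d$; balls of radius $(r/|a_d|)^{1/d}$ about a maximal such subset then cover $\Omega$, and the measure bound follows from the one-dimensionality of $\mu_{\QQ_p}$.

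For (i), for each $x \in \Omega$ let $B_x$ be the largest $p$-adic ball containing $x$ on which $\sup|P| \leq C_d r$, for a sufficiently large constant $C_d$. The $B_x$'s are disjoint by ultrametricity and contained in $\{|P|\leq C_d r\}$, so (ii) produces only $O_d(1)$ distinct such balls. Expanding $P(y) = \sum_k b_k (y-x_0)^k$ about the center $x_0$ of $B_x$, the defining bound gives $|b_k|\mu_{\QQ_p}(B_x)^k \lesssim_d r$ for each $k$, and since $|k|_p \leq 1$ in $\QQ_p$ we obtain
\[
\sup_{y \in B_x}|P'(y)| \leq \max_k |b_k|\mu_{\QQ_p}(B_x)^{k-1} \lesssim_d r/\mu_{\QQ_p}(B_x).
\]

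For (iii), fix $y$ with $f(y) \geq \lambda$. Applying (i) to $P(\cdot)-y$ (which has the same leading coefficient $a_d$) covers $\Omega_y = \{|P-y|\leq r\}$ by $O_d(1)$ balls $B$ with $\sup_B |P'| \lesssim_d r/\mu_{\QQ_p}(B)$; since $\mu_{\QQ_p}(\Omega_y) \geq \lambda r$, some ball $B^{*}$ satisfies $\mu_{\QQ_p}(B^{*}) \gtrsim_d \lambda r$ and hence $\sup_{B^{*}}|P'| \lesssim_d 1/\lambda$, placing $B^{*} \subseteq T \coloneqq \{|P'| \leq C_d/\lambda\}$. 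Running the maximal-ball construction from (i) on $P'$ covers $T$ by $O_d(1)$ balls $T_i$ with $\sup_{T_i}|P'| \lesssim_d 1/\lambda$, and (ii) applied to $P'$ (degree $d-1$, leading coefficient $d\, a_d$) gives $\mathrm{rad}(T_i) \lesssim_d (\lambda|a_d|)^{-1/(d-1)}$. Ultrametricity forces $B^{*} \subseteq T_i$ for some $i$. Taylor-expanding $P$ about the center of $T_i$, using the bound on $\sup_{T_i}|P'|$ together with $|j+1|_p \geq 1/d$ (valid since only indices $j+1 \leq d$ arise), yields $\mathrm{diam}(P(T_i)) \lesssim_d \lambda^{-d/(d-1)}|a_d|^{-1/(d-1)}$. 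Hence $y \in P(B^{*})+B(0,r) \subseteq P(T_i)+B(0,r)$ places $E_r \coloneqq \{f \geq \lambda\}$ inside $O_d(1)$ balls of radius $\max(r,\lambda^{-d/(d-1)}|a_d|^{-1/(d-1)})$. Finally, the measure bound in (ii) applied to $P(\cdot)-y$ gives $f(y) \lesssim_d r^{-(d-1)/d}|a_d|^{-1/d}$, so $f(y) \geq \lambda$ forces $r \lesssim_d \lambda^{-d/(d-1)}|a_d|^{-1/(d-1)}$, collapsing the max to the desired bound. The main subtlety is precisely this $p$-adic Taylor step, where the denominators $1/|j+1|_p$ can exceed one but are tamed by the degree constraint; the remainder is routine ultrametric bookkeeping.
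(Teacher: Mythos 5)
Your divided-difference proof of (ii) is correct and is genuinely different from the paper's: the paper deduces (ii) by iterating (i) $d$ times to reach the $d$-th derivative, whereas you obtain the radius bound directly from Lagrange interpolation applied to $d+1$ separated points of $\Omega$. This is a nice shortcut, and it also cleanly supplies the quantitative Gauss-norm fact you later need (that $\sup_{B(x_0,\rho)}|P|\leq M$ forces $|b_k|\rho^k\lesssim_d M$), which otherwise requires a small argument of its own.

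The proof of (i), however, has a real gap. You define $B_x$ to be the largest ball containing $x\in\Omega$ on which $\sup|P|\le C_d r$, observe that distinct $B_x$ are pairwise disjoint and lie in $\{|P|\le C_d r\}$, and then assert that ``(ii) produces only $O_d(1)$ distinct such balls.'' That inference is not valid: a clopen set of small measure in $\QQ_p$ can be the disjoint union of arbitrarily many tiny balls, so disjointness plus a measure (or covering-radius) bound from (ii) does not control the number of maximal constituent balls. The covering balls produced by your proof of (ii) are centred at points of $\Omega$ but are not themselves contained in $\{|P|\le C_d r\}$, so they cannot be used in place of the $B_x$ either (you cannot run the Taylor step on them). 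Put differently, the count $O_d(1)$ in (i) encodes genuine structural information about level sets of a degree-$d$ polynomial that must be proved, not read off from a measure bound; the paper does this by first treating the case where $P$ factors completely over $\QQ_p$ (splitting $\Omega$ according to the nearest root) and then running a downward induction on the number of linear factors split off, choosing at each step a point $\alpha_{j+1}\in\Omega$ maximising $|\prod_{i\le j}(\alpha_{j+1}-\alpha_i)|$. Some argument of that kind (or a Newton-polygon/tree argument) is needed; as written, your (i) is circular in the sense that the only natural route to the count bound seems to pass through the Bernstein property itself. Since (iii) is built on (i), and your route to (iii) is also more involved than the paper's (the paper simply Fubini's $\mu\times\mu(\{(x,y):|P(x)-y|\le r,\,|P'(x)|\lesssim 1/\lambda\})$ and then applies (ii) to $P'$; your version also conflates the cover from (i) applied to $P'$ with the cover from (ii) applied to $P'$ when you assert $\mathrm{rad}(T_i)\lesssim(\lambda|a_d|)^{-1/(d-1)}$), I would recommend keeping your interpolation proof of (ii), adopting the paper's factorisation argument for (i), and then using the paper's Fubini argument for (iii).
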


A model example to keep in mind here is when $P(x) = a_d x^d$ is a monomial, in which case $\Omega$ consists of a single ball of radius $(r/|a_d|)^{1/d}$, with $P' = O_d( |a_d| (r/|a_d|)^{\frac{d-1}{d}} )$ on this ball; also, one can verify that $f(y) = O_d( |a_d|^{-1/d} r^{\frac{1}{d}-1})$ when $|y| \leq r$ and $f(y) = O_d( |a_d|^{-1/d} |y|^{\frac{1}{d}-1})$ when $|y| > r$.  (The reader may wish to first verify these claims with $\QQ_p$ replaced by $\R$ in order to build geometric intuition.)  Note that this example also shows why all the exponents in the proposition are natural from a dimensional analysis (or scaling) perspective.  Taking limits in (ii) as $r \to 0$, we also conclude that
$$ \left\| \frac{d P_* \mu_{\QQ_p}}{d\mu_{\QQ_p}}\right \|_{L^{\frac{d}{d-1},\infty}(\QQ_p)} \lesssim_d |a_d|^{-\frac{1}{d}},$$
where $\frac{d P_* \mu_{\QQ_p}}{d\mu_{\QQ_p}}$ is the Radon--Nikodym derivative (relative to Haar measure $\mu_{\QQ_p}$) of the pushforward measure $P_* \mu_{\QQ_p}$ of $\mu_{\QQ_p}$ by $P$, and $L^{\frac{d}{d-1},\infty}$ is the weak $L^{\frac{d}{d-1}}$ norm; in the monomial case $P(x) = a_d x^d$ one can compute that this Radon--Nikodym derivative is proportional to the function $y \mapsto |a_d|^{-\frac{1}{d}} |y|^{\frac{1}{d}-1}$.

The van der Corput estimate in Proposition \ref{prop:zero}(ii) can be also deduced from \cite[Proposition 3.3. pp. 847]{KoW}, but for the convenience of the reader we provide a self-contained proof.  

\begin{proof}  To prove (i), we first work in the special case that $P$ completely factorizes:
\[
 P(x) = c (x-\alpha_1) \cdots (x-\alpha_d)
\]
for some $c, \alpha_1,\dots,\alpha_d \in \QQ_p$ with $c \neq 0$.  We
can cover $\Omega$ by $\Omega_1 \cup \ldots \cup\Omega_d$, where
\[
 \Omega_i \coloneqq \{ x \in \Omega: |x-\alpha_i| \leq |x-\alpha_j| \text{ for all } j\in[d]\}.
\]
It suffices to establish the claim (i) for a single $\Omega_i$.  Note from
the ultratriangle inequality that for $x \in \Omega_i$ and
$j\in[d]$ one has
\[
 |x-\alpha_j| = \max\{ |x-\alpha_i|, |\alpha_i - \alpha_j| \},
\]
and hence
\[
 |P(x)| = |c| \prod_{j=1}^d \max\{ |x-\alpha_i|, |\alpha_i - \alpha_j| \}.
\]
Thus we see that $\Omega_i \subseteq B(\alpha_i, R)$, where $R \in p^\Z$ is the maximal quantity for which
\[
 |c| \prod_{j=1}^d \max\{ R, |\alpha_i - \alpha_j| \} \leq r.
\]
On the other hand, we have from the product rule and triangle inequality for $x \in B(\alpha_i, R)$ that
\begin{align*}
 |P'(x)| &\lesssim_d |c| \sup_{j\in[d]} \prod_{k \neq j} |x-\alpha_k|\\
&\lesssim_d |c| \sup_{j\in[d]} \prod_{k \neq j} \max\{R, |\alpha_i-\alpha_k|\} \\
&\lesssim_d R^{-1} |c| \prod_{k=1}^d \max\{R, |\alpha_i-\alpha_k|\}\\
&\lesssim_d \frac{r}{R}
\end{align*}
giving the claim (i).

Now suppose that $P$ only partially factorizes, thus
\[
P(x) = (x-\alpha_1) \cdots (x-\alpha_j) Q(x)
\]
for some $0 \leq j \leq d$ and some polynomial $Q$ of degree $d-j$.
The case $j=d$ has already been handled; now suppose inductively that
$j < d$ and the claim (i) has already been proven for $j+1$.  We may
assume $\Omega$ is non-empty since the claim (i) is trivial otherwise.
Let $\alpha_{j+1}$ be an element of $\Omega$ which maximizes the
magnitude of the quantity
$\delta \coloneqq (\alpha_{j+1}-\alpha_1) \cdots (\alpha_{j+1}-\alpha_j)$;
such a quantity exists since $\Omega$ is compact, and $\delta$ is
non-zero by continuity.  Then
\[
 r \geq |P(\alpha_{j+1})| = |\delta| |Q(\alpha_{j+1})|,
\]
so $|Q(\alpha_{j+1})| \leq r/|\delta|$.  By the factor theorem we have
\[
 Q(x) = Q(\alpha_{j+1}) + (x-\alpha_{j+1}) R(x)
\]
for some polynomial $R$ of degree $d-j-1$, thus
\[
 P(x) = (x-\alpha_1) \cdots (x-\alpha_j) Q(\alpha_{j+1}) + (x-\alpha_1) \cdots (x-\alpha_{j+1}) R(x).
\]
By construction, for $x \in \Omega$ we have $|P(x)| \leq r$, and
\[
 |(x-\alpha_1) \cdots (x-\alpha_j) Q(\alpha_{j+1})| \leq |\delta| |Q(\alpha_{j+1})| \leq r,
\]
hence by the ultratriangle inequality we also have
\[
 |(x-\alpha_1) \cdots (x-\alpha_{j+1}) R(x)| \leq r.
\]
By the induction hypothesis we can cover $\Omega$ by $O_d(1)$ balls
$B$ on which the derivative of
$(x-\alpha_1) \dots (x-\alpha_{j+1}) R(x)$ is $O_d(r/\mu_{\QQ_p}(B))$; by the
$j=d$ case we can also say the same about
$(x-\alpha_1) \dots (x-\alpha_j) Q(\alpha_{j+1})$.  Intersecting the
balls together, we can say the same about $P$.  This closes the
induction and establishes the claim for any $0 \leq j \leq d$.
Setting $j=0$, we obtain (i).

Now we establish (ii).  By iterating (i) $d$ times and intersecting the balls together, we can cover
$\Omega$ by $O_d(1)$ balls $B$ on which
$P^{(d)}(x) \lesssim_d r/\mu_{\QQ_p}(B)^d$.  But since $P^{(d)}(x) = d! a_d$, we
have $\mu_{\QQ_p}(B) \lesssim_d (r/|a_d|)^{1/d}$, giving the claim.

Now we prove (iii).  Let $\lambda>0$, and define the set
\[
 E \coloneqq \{ y \in \QQ_p: f(y) \geq \lambda \}.
\]
Our task is to show that
\[
\mu_{\QQ_p}(E) \lesssim_d \lambda^{-\frac{d}{d-1}} |a_d|^{-\frac{1}{d-1}}.
\]
If $y \in E$, then by definition
\[
 \mu_{\QQ_p}(\{ x \in \QQ_p: |P(x) - y| \leq r \}) \geq \lambda r.
\]
By (i), the set in the left-hand side can be covered by $O_d(1)$ balls
$B$, on which $|P'| \lesssim_d r/\mu_{\QQ_p}(B)$.  By the pigeonhole principle,
one of these balls $B$ must intersect the set in a set of measure
$\gtrsim_d \lambda r$, thus $|P'| \lesssim_d r / (\lambda r) = 1/\lambda$ on this ball,
and thus
\[
 \mu_{\QQ_p}(\{ x \in \QQ_p: |P(x) - y| \leq r \text{ and } |P'(x)| \lesssim_d 1/\lambda \})  \gtrsim_d \lambda r.
\]
By the Fubini--Tonelli theorem we conclude that
\[
 \mu_{\QQ_p}\times \mu_{\QQ_p}(\{ (x,y) \in \QQ_p^2: |P(x) - y| \leq r \text{ and } |P'(x)| \lesssim_d 1/\lambda \})  \gtrsim_d \lambda r \mu_{\QQ_p}(E).
\]
But by the Fubini--Tonelli theorem again, the left-hand side is equal to
\[
 r \mu_{\QQ_p}(\{ x \in \QQ_p: |P'(x)| \lesssim_d 1/\lambda \})
\]
and hence by (ii) we obtain
\[
 \lambda r \mu_{\QQ_p}(E) \lesssim_d r \bigg(\frac{1}{\lambda |a_d|}\bigg)^{\frac{1}{d-1}},
\]
giving the claim.
\end{proof}

We can descend from the $p$-adics to a cyclic group of prime power order:

\begin{corollary}[Distribution of polynomials on a cyclic group of prime power order]
\label{cor:1}
Let $Q = p^j$ for some $j \in\Z_+$, and let
$P \in \Z[\nn]$ be a polynomial of degree $d \geq 2$, which we also view as a map from $\Z/Q\Z$ to itself.  Let
$h \colon \Z/Q\Z \to \N$ be the counting function
\[
 h(y) \coloneqq \# \{ x \in \Z/Q\Z: P(x) = y \}.
\]
Then for any $\lambda>0$ we have the weak-type bound
\[
 \# \{ y \in \Z/Q\Z: h(y) \geq \lambda \} \lesssim_{P} \lambda^{-\frac{d}{d-1}} Q.
 \]
In particular, one has
\begin{align}\label{h-est}
\| h \|_{L^s(\Z/Q\Z)} \lesssim_{s,P} 1 
\end{align}
for any $0 < s < \frac{d}{d-1}$.
\end{corollary}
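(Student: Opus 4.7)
\textbf{Proof plan for Corollary \ref{cor:1}.} The plan is to lift the counting problem on $\Z/Q\Z$ to a measure problem on $\Z_p$, so that Proposition \ref{prop:zero}(iii) can be applied directly. The key identification is that the quotient map $\Z_p \to \Z/p^j\Z = \Z/Q\Z$ is surjective with fibers being cosets of $p^j\Z_p$, each of Haar measure $1/Q$. Hence for any $y \in \Z/Q\Z$ and any lift $\tilde y \in \Z_p$,
\[
\mu_{\Z_p}\bigl(\{x \in \Z_p : |P(x)-\tilde y|_p \leq 1/Q\}\bigr) = h(y)/Q,
\]
since the condition on the left is precisely that the reduction of $x$ modulo $Q$ is a preimage of $y$ under $P$.

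Next, I apply Proposition \ref{prop:zero}(iii) to the polynomial $P$ (viewed over $\QQ_p$) with $r = 1/Q$, noting that the leading coefficient $a_d$ of $P$ is a fixed nonzero integer, so $|a_d|_p^{-1/(d-1)} \lesssim_P 1$. This gives
\[
\mu_{\QQ_p}\bigl(\{\tilde y \in \QQ_p : f(\tilde y) \geq \lambda\}\bigr) \lesssim_P \lambda^{-\frac{d}{d-1}},
\]
where $f(\tilde y) = Q \cdot \mu_{\QQ_p}(\{x \in \QQ_p : |P(x)-\tilde y|_p \leq 1/Q\})$. Restricting $x$ to $\Z_p$ only decreases $f$, so if $h(y) \geq \lambda$ then every lift $\tilde y \in y + Q\Z_p$ satisfies $f(\tilde y) \geq h(y) \geq \lambda$. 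The cosets $y + Q\Z_p$ are disjoint as $y$ ranges over $\{h \geq \lambda\}$, each of measure $1/Q$, so
\[
\frac{1}{Q}\#\{y \in \Z/Q\Z : h(y) \geq \lambda\} \leq \mu_{\Z_p}\bigl(\{\tilde y : f(\tilde y) \geq \lambda\}\bigr) \lesssim_P \lambda^{-\frac{d}{d-1}},
\]
which is the desired weak-type estimate after multiplying through by $Q$.

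Finally, for the norm bound \eqref{h-est}, I interpret $L^s(\Z/Q\Z)$ with the normalized counting measure (as specified in Section \ref{abstract-sec}) and use the distribution function identity
\[
\|h\|_{L^s(\Z/Q\Z)}^s = s \int_0^\infty \lambda^{s-1}\, \E_{y\in\Z/Q\Z}\ind{h(y)\geq\lambda}\, d\lambda.
\]
The integral over $\lambda \in (0,1]$ is trivially $O(1/s)$, while on $(1,\infty)$ the weak-type bound just established gives $\E_y\ind{h(y)\geq\lambda} \lesssim_P \lambda^{-d/(d-1)}$, and the remaining $\lambda$-integral converges precisely because $s < d/(d-1)$, yielding the bound $\lesssim_{s,P} 1$.

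The argument is essentially a direct transfer, and no step presents a serious obstacle; the only point requiring care is the bookkeeping around $|a_d|_p$, which is handled by absorbing all dependence on the (fixed) polynomial $P$ into the implied constants.
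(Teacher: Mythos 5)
Your proof is correct and follows essentially the same route as the paper: lift the counting problem to $\QQ_p$ via the identity $h(y)/Q = \mu_{\Z_p}(\{x : |P(x)-\tilde y|_p \leq 1/Q\})$ (the paper writes this as $h(y) = Q\,\mu_{\QQ_p}(\{x: |x|\le 1,\ |P(x)-y'|\le Q^{-1}\})$), then invoke Proposition~\ref{prop:zero}(iii) with $r=1/Q$ together with $|a_d|_p^{-1/(d-1)} \le |a_d|^{1/(d-1)} \lesssim_P 1$. The only difference is that you spell out the derivation of \eqref{h-est} from the weak-type bound via layer-cake, which the paper leaves implicit.
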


As before, the example of a monomial $P(x) = x^d$ shows that the range of $s$ here is best possible. Interestingly, it seems difficult to establish this corollary without some version of the $p$-adic formalism, even though the statement of the corollary does not explicitly mention $p$-adics. Estimate \eqref{h-est} was previously obtained for monomials $P(x)=x^d$ in an unpublished work of Jim Wright on $L^p$-improving estimates for averaging operators on cyclic groups of the form $\Z/p^j\Z$ (private communication).

\begin{proof} We can write $P(x) = a_d x^d + \dots + a_0$, where
$a_0,\dots,a_d \in \Z_p$ are $p$-adic integers, thus they have norm at most
$1$.  Note that
\[
 h(y) = Q \mu_{\QQ_p}( \{ x \in \QQ_p: |x| \leq 1 \text{ and } |P(x) - y'| \leq Q^{-1} \}),
\]
for any $y\in\Z/Q\Z$ and $y' \in B(y,1/Q)$, thus
\[
\# \{ y \in \Z/Q\Z: h(y) \geq \lambda \}
\leq Q \mu_{\QQ_p}(\{ y' \in \QQ_p: Q \mu_{\QQ_p}(\{x\in\QQ_p: |P(x) - y'| \leq Q^{-1} \}) \geq \lambda \}),
\]
and the claim now follows from Proposition \ref{prop:zero}(iii).
\end{proof}

\end{document}